\newtheorem{theorem}{Theorem}[section]
\newtheorem{proposition}[theorem]{Proposition}
\newtheorem{lemma}[theorem]{Lemma}
\newtheorem{claim}[theorem]{Claim}
\newtheorem{corollary}[theorem]{Corollary}
\theoremstyle{definition}
\newtheorem{definition}[theorem]{Definition}
\newtheorem{procedure}[theorem]{Procedure}
\numberwithin{equation}{section}
\newcommand{\eps}{\varepsilon}
\newcommand{\su}{\subseteq}
\newcommand{\sm}{\setminus}
\newcommand{\Cay}{\operatorname{Cayley}}
\newcommand{\ind}{\operatorname{ind}}
\newcommand{\emb}{\operatorname{emb}}
\newcommand{\aut}{\operatorname{aut}}
\newcommand{\into}{\hookrightarrow}
\newcommand{\tH}{\widetilde{H}}
\newcommand{\tk}{\tilde{k}}
\newcommand{\kap}{\kappa}
\renewcommand{\l}{\ell}
\renewcommand{\P}{\operatorname{\mathbb{P}}}
\renewcommand{\theta}{\vartheta}
\renewcommand{\phi}{\varphi}
\newcommand{\E}{\operatorname{E}}
\renewcommand{\log}{\ln}
\newcommand{\liniendicke}{0.5pt}
\begin{document}
\title{On the inducibility problem for random Cayley graphs of abelian groups with a few deleted vertices}
\author{Jacob Fox\thanks{Department of Mathematics, Stanford University, Stanford, CA 94305. Email: {\tt jacobfox@stanford.edu}. Research supported by a Packard Fellowship and by NSF award DMS-185563.} \and Lisa Sauermann\thanks{Department of Mathematics, Stanford University, Stanford, CA 94305. Email: {\tt lsauerma@stanford.edu}.} \and Fan Wei\thanks{Department of Mathematics, Stanford University, Stanford, CA 94305. Email: {\tt fanwei@stanford.edu}.}}

\maketitle

\begin{abstract}
\noindent
Given a $k$-vertex graph $H$ and an integer $n$, what are the $n$-vertex graphs with the maximum number of induced copies of $H$? This question is closely related to the inducibility problem introduced by Pippenger and Golumbic in 1975, which asks for the maximum possible fraction of $k$-vertex subsets of an $n$-vertex graph that induce a copy of $H$. Huang, Lee and the first author proved that for a random $k$-vertex graph $H$, almost surely the $n$-vertex graphs maximizing the number of induced copies of $H$ are the balanced iterated blow-ups of $H$. In this paper, we consider the case where the graph $H$ is obtained by deleting a small number of vertices from a random Cayley graph $\widetilde{H}$ of an abelian group. We prove that in this case, almost surely all $n$-vertex graphs maximizing the number of induced copies of $H$ are balanced iterated blow-ups of $\widetilde{H}$.
\end{abstract}

\section{Introduction}

\subsection{Background}

In 1975 Pippenger and Golumbic \cite{pippenger-golumbic} asked the following question: Given a $k$-vertex graph $H$, what is the maximum number of $k$-vertex subsets in an $n$-vertex graph that induce a copy of $H$? Denoting this number by $\ind(H,n)$ we have $0\leq \ind(H,n)\leq \binom{n}{k}$. Pippenger and Golumbic  defined the \emph{inducibilty} of $H$ as $\ind(H)=\lim_{n\to\infty} (\ind(H,n)/\binom{n}{k})$. This limit exists because $\ind(H,n)/\binom{n}{k}$ is monotone decreasing in $n$ (for $n\geq k$). Note that $0\leq \ind(H)\leq 1$.

In general, determining $\ind(H)$ is very hard and the precise value is only known for a few (explicit) classes of graphs $H$. Pippenger and Golumbic \cite{pippenger-golumbic} showed that for every $k$-vertex graph $H$ the lower bound
\begin{equation}\label{ind-gen-lower-bound}
\ind(H)\geq \frac{k!}{k^{k}-k}
\end{equation}
holds. This lower bound can be obtained from considering balanced iterated blow-ups of $H$, which we will formally define below. Huang, Lee and the first author \cite{fox-huang-lee} proved that for a randomly chosen graph $H$ this bound is almost surely tight. In fact, for a random graph $H$, they proved that the $n$-vertex graphs with the maximum number of induced copies of $H$ are precisely the balanced iterated blow-ups of $H$.  Independently, Yuster \cite{yuster} obtained the latter result for $n\leq 2^{\sqrt{k}}$ and concluded that almost surely $\ind(H)\leq (1+o_k(1))\cdot k!/(k^k-k)$ for a random $k$-vertex graph $H$.

Let us now define balanced iterated blow-ups, see also \cite{fox-huang-lee}. Given a graph $H$ with at least two vertices, a \emph{blow-up} of $H$ is a graph $\Gamma$ whose vertex set can be partitioned into non-empty subsets $W_i$ for $i\in V(H)$ such that for distinct $i,j\in V(H)$ the graph $\Gamma$ is complete between $W_i$ and $W_j$ if $i$ and $j$ are adjacent in $H$ and otherwise $\Gamma$ is empty between $W_i$ and $W_j$. The graph $\Gamma$ is a \emph{balanced blow-up} of $H$, if the sets $W_i$ can be chosen in such a way that their sizes differ by at most one. We call $\Gamma$ a \emph{balanced iterated blow-up} of $H$, if $\vert V(\Gamma)\vert<\vert V(H)\vert$ or if it is a balanced blow-up of $H$ where for each of the subsets $W_i$ the induced subgraph on $W_i$ is again a balanced iterated blow-up of $H$. Thus, a balanced iterated blow-up is a fractal-like construction, as an example see Figure \ref{fig1} showing a balanced iterated blow-up of the cycle $C_6$. Note that if $H$ has $k$ vertices and $n$ is not a power of $k$, there can be different graphs on $n$ vertices which are balanced iterated blow-ups of $H$. However, if $H$ is prime (see Definition \ref{defi-prime}), for each $n$, all $n$-vertex balanced iterated blow-ups of $H$ have the same number of induced copies of $H$.

\begin{figure}
\begin{center}
\begin{tikzpicture}[scale=2]
\input{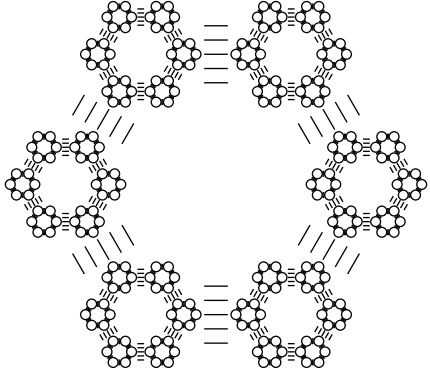}
\end{tikzpicture}
\end{center}
\caption{A balanced iterated blow-up of the cycle $C_6$.}
\label{fig1}
\end{figure}

The problem of determining $\ind(H)$ when $H$ is a path or cycle has received much attention. For cycles $C_k$ with $k\geq 5$, Pippenger and Golumbic \cite{pippenger-golumbic} conjectured that (\ref{ind-gen-lower-bound}) is sharp, in other words $\ind(C_k)=k!/(k^{k}-k)$. For $k\geq 6$, the currently best known upper bound is $\ind(C_k)\leq 2k!/k^{k}$ proved by Kr\'{a}l', Norin, and Volec \cite{kral-norin-volec} improving on earlier bounds by Pippenger and Golumbic \cite{pippenger-golumbic} and by Hefetz and Tyomkyn \cite{hefetz-tyomkyn}. For $k=5$, Balogh, Hu, Lidick\'{y}, and Pfender \cite{balogh-hu-lidicky-pfender} proved $\ind(C_k)= 5!/(5^{5}-5)$ using the Flag algebra method.

For paths, however, the situation is significantly different. Exoo \cite{exoo} observed that for a path $P_k$ on $k\geq 4$ vertices, for large $n$ an $n$-vertex balanced iterated blow-up of the cycle $C_{k+1}$  contains more induced copies of $P_k$ than an $n$-vertex balanced iterated blow-up of  $P_{k}$ itself. This is because in the blow-up of the cycle one can ``rotate'' the path in different ways (and this overcompensates the fact that the parts of the blow-up are slightly smaller). In particular, the lower bound (\ref{ind-gen-lower-bound}) is not sharp for paths $P_k$ on $k\geq 4$ vertices. For $k=4$ and $k=5$, even better constructions were obtained by Even-Zohar and Linial \cite{even-zohar-linial}, building upon a construction of Exoo \cite{exoo} for $k=4$. For large $k$, the balanced iterated blow-up of $C_{k+1}$ is the best known construction. The question of determining $\ind(P_k)$ is still open for all $k\geq 4$.

Note that a path $P_k$ on $k$ vertices can be obtained from deleting one vertex from a cycle $C_{k+1}$ on $k+1$ vertices. The reason for  balanced iterated blow-ups of $C_{k+1}$ having more induced copies of $P_k$ than balanced iterated blow-ups of $P_{k}$ is that the cycle $C_{k+1}$ has a lot of symmetries (and so there are many ways to embed $P_{k}$ into $C_{k+1}$). Put in a different way, $C_{k+1}$ is a Cayley graph of the abelian group $\mathbb{Z}/(k+1)\mathbb{Z}$ with generator 1. Let us consider more generally the situation where $H$ is a graph obtained by deleting a few vertices from a Cayley graph $\tH$ of an abelian group. Then, if $H$ is prime (see Definition \ref{defi-prime}), which is usually the case, the $n$-vertex balanced iterated blow-ups of $\tH$ contain more induced copies of $H$ than $n$-vertex balanced iterated blow-ups of $H$ (as long as we deleted sufficiently few vertices). 

\subsection{Results}

In this paper, we study the case where $\tH$ is a random Cayley graph of an abelian group and $H$ is obtained from $\tH$ by deleting a few vertices. We will show that in this case, almost surely all $n$-vertex graphs maximizing the number of induced copies of $H$ are balanced iterated blow-ups of $\tH$ (see Theorem \ref{thm-main-ind-H-structure} below).

Formally, a Cayley graph of an abelian group is defined as follows.

\begin{definition}\label{defi-cayley} Given an abelian group $G$ and a subset $\Lambda\su G\sm\lbrace 0\rbrace$ with $\Lambda=-\Lambda$, the \emph{Cayley graph} $\Cay(G,\Lambda)$ is the graph with vertex set $G$ in which two vertices $x,y\in G$ are connected if and only if $x-y\in \Lambda$.
\end{definition}

Note that due to $\Lambda=-\Lambda$ and $0\not\in \Lambda$, this is indeed a well-defined undirected graph (without loops).

For a given abelian group $G$ (additively written) and $0<p<1$, we construct a random Cayley graph with vertex set $G$ by choosing a subset $\Lambda\su G\sm\lbrace 0\rbrace$ randomly as follows.

\begin{procedure}\label{procedure-lambda} Let us choose a random subset $\Lambda\su G\sm\lbrace 0\rbrace$ by including each $\lbrace g, -g\rbrace\su G\sm\lbrace 0\rbrace$ into $\Lambda$ independently with probability $p$.
\end{procedure}

Note that  the random set $\Lambda$ by construction always satisfies $\Lambda=-\Lambda$.

Now, we are ready to state our main result. Roughly speaking, it states that for a graph $H$ which is obtained from a random Cayley graph of an abelian group by deleting a few vertices, the $n$-vertex graphs $\Gamma$ maximizing the number of induced copies of $H$ are balanced iterated blow-ups of the Cayley graph $\tH$.

\begin{theorem}\label{thm-main-ind-H-structure}Let $G$ be an abelian group with $\tk$ elements and assume that $0<p<1$ satisfies $\min(p,1-p)\geq 10^6(\log \tk)^{6/5}\tk^{-1/5}$. If $\Lambda\su G$ is chosen according to Procedure \ref{procedure-lambda}, then with probability $1-o(1)$ the Cayley graph $\tH=\Cay(G,\Lambda)$ satisfies the following: For every induced subgraph $H$ of $\tH$ on $k\geq \tk-\frac{1}{4}\log \tk$ vertices and for all $n$, every $n$-vertex graph $\Gamma$ with the maximum number of induced copies of $H$ is a balanced iterated blow-up of $\tH$.
\end{theorem}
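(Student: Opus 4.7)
The plan is to adapt the strategy of Fox--Huang--Lee \cite{fox-huang-lee} for purely random $H$, the crucial new feature being that the extremal structure is a balanced iterated blow-up of the ambient Cayley graph $\tH$ rather than of $H$ itself. I would organize the proof into a pseudorandomness step, a one-level structure theorem, and an iteration on $n$.

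In the pseudorandomness step I would show that with probability $1-o(1)$ the graph $\tH$ satisfies a package of properties: (i) $|\Lambda| = (1 \pm o(1))p\tk$, so every vertex of $\tH$ has degree close to $p\tk$; (ii) sharp codegree and quasirandomness estimates for pairs and small tuples of vertices; (iii) $\tH$, and in fact every induced subgraph $H$ on $k \geq \tk - \tfrac14\log\tk$ vertices, is prime in the sense of Definition \ref{defi-prime}; and (iv) every isomorphism from such an $H$ into $\tH$ arises, up to a translation in $G$, from essentially one labeling, so $\emb(H,\tH)$ is roughly $\tk \cdot \aut(H)$. Each property follows from Chernoff or Azuma concentration together with a union bound, and the quantitative hypothesis $\min(p,1-p)\geq 10^{6}(\log\tk)^{6/5}\tk^{-1/5}$ is exactly tuned so that the total failure probability over the $2^{\tk^{O(1)}}$ bad events remains $o(1)$.

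The core of the argument is a one-level structure theorem: for every $n$-vertex graph $\Gamma$ that maximizes $\ind(H, \Gamma)$, there exists a partition $V(\Gamma) = W_1 \cup \cdots \cup W_{\tk}$ indexed by $V(\tH)$ such that $\Gamma$ is exactly the $\tH$-blow-up on the $W_i$, the sizes $|W_i|$ differ by at most one, and each $\Gamma[W_i]$ is itself extremal for the corresponding problem on $|W_i|$ vertices. My plan to prove this is a convexity/swap argument comparing two contributions to $\ind(H,\Gamma)$: copies of $H$ that use exactly one vertex from each of $\tk$ parts (``blow-up'' copies, weighted by $\emb(H,\tH) \approx \tk \cdot \aut(H)$) against copies that concentrate in fewer parts. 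The Cayley symmetry is what makes this work: the $\tk$ distinct translations of $G$ give $\tk$ essentially independent ways to place $H$ inside $\tH$, and this factor of $\tk$ is why blow-ups of $\tH$ strictly beat blow-ups of $H$ itself---exactly the phenomenon Exoo identified for $P_k$ inside blow-ups of $C_{k+1}$. Induction on $n$, with base case $n<\tk$ handled by the definition of iterated blow-up, then finishes the argument.

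The hard part will be the equality characterization at the heart of the one-level structure theorem: to conclude that $\Gamma$ is an $\tH$-blow-up one needs a quantitative stability statement asserting that any deviation from the $\tH$-blow-up form costs a positive fraction of the maximum $H$-count. Carrying this out here is subtler than in Fox--Huang--Lee because we only have $\binom{\tk}{2}$ random coin flips for the Cayley generators rather than $\binom{k}{2}$ free edges, and because we must control $\emb(H,\tH)$ and the primality of $H$ uniformly over all choices of the $\leq \tfrac14\log\tk$ deleted vertices. Making the slack from $k \geq \tk - \tfrac14\log\tk$ outweigh the loss in effective randomness is where I expect most of the technical work to concentrate.
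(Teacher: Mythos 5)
Your high-level plan mirrors the paper's architecture quite closely: a pseudorandomness step showing $\tH$ is "typical" with probability $1-o(1)$ (Theorem \ref{thm-tH-typcial-whp}), a deterministic structure theorem saying extremal $\Gamma$ on $n\geq\tk$ vertices is a balanced $\tH$-blow-up whose parts are themselves extremal (Theorem \ref{thm-H-reasonable-blow-up}), and iteration built into the recursive definition of a balanced iterated blow-up. Your list (i)--(iv) of pseudorandom properties corresponds roughly to conditions (i)--(iv) of Definition \ref{defi-typical}, and your observation that the $\tk$-fold (really $2\tk$-fold, rotations and reflections) Cayley symmetry of $\tH$ is what makes $\tH$-blow-ups beat $H$-blow-ups is exactly the content of Lemma \ref{lemma-emb-H-prime}. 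Two factual nits: Procedure \ref{procedure-lambda} has only about $\tk/2$ independent coin flips, not $\binom{\tk}{2}$; and the rigidity property forces $\emb(H,\tH)=2\tk$ exactly, not $\approx\tk\cdot\aut(H)$.

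Where the plan is substantially underdeveloped is the proof of the one-level structure theorem. A "convexity/swap argument comparing two contributions to $\ind(H,\Gamma)$" does establish that the parts of a given $\tH$-blow-up must be balanced (that is precisely Proposition \ref{propo-optimization}/Section \ref{sect-optimization}), but it cannot by itself establish that the extremal $\Gamma$ is a blow-up of $\tH$ at all, because you have no candidate partition to swap against. The paper's real work (Sections \ref{sect-proof-blow-up}--\ref{sect-few-unloyal}) is about manufacturing that partition: fix a small signature $S\subseteq V(H)$ of size $O(q^{-1}\log k)$ and a map $\psi\colon S\to V(\Gamma)$ with many extending embeddings; use these extensions to define proto-parts $V_i\subseteq V(\Gamma)$ and prune "bad" vertices from them; classify embeddings as \emph{loyal} or \emph{disloyal} according to whether they follow a single rotation/reflection of $\tH$ relative to the $V_i$; prove (Proposition \ref{propo-few-unloyal}, the technical heart) that disloyal embeddings are negligible, using the signature lemmas, the super-signature device, and the rigidity condition (c); and only then extract the true parts $W_j$ from the $V_i$ and compare $\Gamma$ with the modified graph $\Gamma^*$ via Lemmas \ref{lemma-embeddings-Gamma-Gamma-star}--\ref{coro-gamma-star-embeddings} to force $\Gamma=\Gamma^*$. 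None of this machinery appears in your sketch, and "quantitative stability" as stated would be hard to even formulate without first having the $V_i$/$W_j$ decomposition in hand.
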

In Theorem \ref{thm-main-ind-H-structure} and throughout this paper, the $o(1)$-term tends to zero as $\tk\to\infty$, independently of $p$.

Note that not all balanced iterated blow-ups of $\tH$ on $n$ vertices have precisely the same number of induced copies of $H$, if $H$ is a proper induced subgraph of $\tH$. However, if the number of vertices $n$ is a power of $\tk$, then the balanced iterated blow-up of $\tH$ is unique (up to isomorphism).

If $\tH$ is as in Theorem \ref{thm-main-ind-H-structure}, but $H$ is an induced subgraph on $k= \tk-\lceil\log \tk\rceil-1$ vertices, then almost surely for large $n$, the $n$-vertex balanced iterated blow-ups of $H$ contain more induced copies of $H$ than the $n$-vertex balanced iterated blow-ups of $\tH$. Therefore, the assumption $k\geq \tk-\frac{1}{4}\log \tk$ in Theorem \ref{thm-main-ind-H-structure} is tight up to the constant factor $\frac{1}{4}$.

Theorem \ref{thm-main-ind-H-structure} gives a strong structural result for the $n$-vertex graphs maximizing the number of induced copies of $H$. Using this, it is also possible to determine the inducibility of $H$, if we know the number of automorphisms of $H$.

In order to make this more clear, let us consider a second graph invariant, which was also introduced by Pippenger and Golumbic \cite{pippenger-golumbic} and is closely connected to the inducibility. A graph embedding $H\into \Gamma$ is an injective map $V(H)\into V(\Gamma)$ that sends edges of $H$ to edges of $\Gamma$ and non-edges of $H$ to non-edges of $\Gamma$. A graph automorphism of $H$ is an embedding of $H$ into itself. Let $\emb(H,\Gamma)$ be the number of embeddings $H\into \Gamma$ and set $\emb(H,n)=\max_\Gamma \emb(H,\Gamma)$, where the maximum is taken over all $n$-vertex graphs $\Gamma$. It is easy to see that $\emb(H,n)=\ind(H,n)\cdot \aut(H)$, where $\aut(H)=\emb(H,H)$ denotes the number of graph automorphisms of $H$ (note that every induced copy of $H$ in some graph $\Gamma$ gives precisely $\aut(H)$ embeddings $H\into \Gamma$). In particular, we can define
\[\emb(H)=\lim_{n\to\infty} \frac{\emb(H,n)}{k!\cdot \binom{n}{k}}=\lim_{n\to\infty} \frac{\emb(H,n)}{n^k}\]
and obtain $0\leq \emb(H)\leq 1$ and $\emb(H)=\ind(H)\cdot \aut(H)/k!$.

Pippenger and Golumbic \cite{pippenger-golumbic} introduced this invariant $\emb(H)$ without naming it, and there does not seem to exist a standard name in the literature. Let us therefore call $\emb(H)$ the \emph{embedding-inducibility} of $H$. Since $\emb(H)=\ind(H)\cdot \aut(H)/k!$, the problem of determining the inducibility and the embedding-inducibility of a graph $H$ are equivalent up to determining the number $\aut(H)$ of automorphisms of $H$. Furthermore, for every $n$, the $n$-vertex graphs $\Gamma$ maximizing $\emb(H,\Gamma)$ are the same as those maximizing the number of induced copies of $H$. In particular, Theorem \ref{thm-main-ind-H-structure} is equivalent to an identical structural result for the $n$-vertex graphs $\Gamma$ maximizing $\emb(H,\Gamma)$. In addition, we can also calculate the embedding-inducibility $\emb(H)$ using this structural result.

\begin{theorem}\label{thm-main-ind-H}Under the assumptions of Theorem \ref{thm-main-ind-H-structure}, with probability $1-o(1)$ the Cayley graph $\tH=\Cay(G,\Lambda)$ satisfies the following: For every induced subgraph $H$ of $\tH$ on $k\geq \tk-\frac{1}{4}\log \tk$ vertices, and for all $n$, every $n$-vertex graph $\Gamma$ maximizing $\emb(H,\Gamma)$ is a balanced iterated blow-up of $\tH$. Furthermore, if the group $G$ is not of the form $(\mathbb{Z}/2\mathbb{Z})^m$, then the embedding-inducibility $\emb(H)$ of every such induced subgraph $H$ is
\[\emb(H)=\frac{2}{\tk^{k-1}-1},\]
whereas if $G=(\mathbb{Z}/2\mathbb{Z})^m$ for some positive integer $m$, then the embedding-inducibility $\emb(H)$ of every such induced subgraph $H$ is
\[\emb(H)=\frac{1}{\tk^{k-1}-1}.\]
\end{theorem}

If we know the number $\aut(H)$ of automorphisms of $H$, then from Theorem \ref{thm-main-ind-H} we can easily compute the value of $\ind(H)=\emb(H)\cdot k!/\aut(H)$.

If we take $H\neq \tH$ in Theorem \ref{thm-main-ind-H-structure}, then with probability $1-o(1)$ we have $\aut(H)\leq 2(\tk-k)\leq \frac{1}{2}\log \tk$  (this follows from the fact that with probability $1-o(1)$ the graphs $\tH$ and $H$ satisfy the conditions in Definition \ref{defi-reasonable}). Hence we obtain
\[\ind(H)=\emb(H)\cdot \frac{k!}{\aut(H)}\geq \frac{1}{\tk^{k-1}-1}\cdot \frac{k!}{\ln k}.\]
Note that the term on the right-hand side is larger by a factor of roughly $k\cdot e^{-(\tk-k)}/\ln k\geq k^{3/4-o(1)}$ than the term $k!/(k^k-k)$ in (\ref{ind-gen-lower-bound}), which is obtained from taking a balanced iterated blow-up of $H$ itself. This means that for large $n$, $n$-vertex balanced iterated blow-ups of $\tH$ indeed contain many more induced copies of $H$ than $n$-vertex balanced iterated blow-ups of $H$ itself.

When taking $H=\tH$ in Theorem \ref{thm-main-ind-H-structure}, we see that for $\min(p,1-p)\geq 10^6(\log \tk)^{6/5}\tk^{-1/5}$ almost surely the $n$-vertex graphs $\Gamma$ maximizing the number of induced copies of $\tH$ are the balanced iterated blow-ups of $\tH$. In the terminology of \cite{fox-huang-lee} this means that $\tH$ is almost surely a fractalizer.

We also prove for $\min(p,1-p) \geq 10^3(\log \tk)^{1/2}\cdot \tk^{-1/5}$  that almost surely we have $\aut(\tH)=2\tk$ if the group $G$ is not of the form $(\mathbb{Z}/2\mathbb{Z})^m$, and $\aut(\tH)=\tk$ if $G=(\mathbb{Z}/2\mathbb{Z})^m$ for some positive integer $m$ (see Corollary \ref{coro-aut-tH}). Thus, using $\ind(\tH)=\emb(\tH)\cdot \tk!/\aut(\tH)$, Theorem \ref{thm-main-ind-H} gives the following corollary about the inducibility of the random Cayley graph $\tH$.

\begin{corollary}\label{tH-inducibility}Under the assumptions of Theorem \ref{thm-main-ind-H-structure}, with probability $1-o(1)$ the Cayley graph $\tH=\Cay(G,\Lambda)$ has inducibility
\[\ind(\tH)=\frac{\tk!}{\tk^{\tk}-\tk}.\]
\end{corollary}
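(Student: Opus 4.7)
The plan is to deduce Corollary \ref{tH-inducibility} directly from Theorem \ref{thm-main-ind-H} together with Corollary \ref{coro-aut-tH} (which is stated in the preceding paragraph) and the general identity $\emb(H)=\ind(H)\cdot \aut(H)/k!$ that was established in the discussion preceding Theorem \ref{thm-main-ind-H}. No additional structural argument about blow-ups is needed — the corollary is purely an algebraic consequence of those three inputs.

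First, I would apply Theorem \ref{thm-main-ind-H} in the special case $H=\tH$, so that $k=\tk$. The hypothesis $k\geq \tk-\frac{1}{4}\log \tk$ is then trivially satisfied, and since $\tH$ is an induced subgraph of itself, the theorem yields that, with probability $1-o(1)$,
\[\emb(\tH)=\frac{2}{\tk^{\tk-1}-1}.\]
Second, I would invoke Corollary \ref{coro-aut-tH}: its hypothesis $\min(p,1-p)\geq 10^3(\log \tk)^{1/2}\tk^{-1/5}$ is weaker than the hypothesis $\min(p,1-p)\geq 10^6(\log \tk)^{6/5}\tk^{-1/5}$ of the present corollary (for all sufficiently large $\tk$), so it applies and gives $\aut(\tH)=2\tk$ with probability $1-o(1)$.

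Third, by a union bound, both conclusions hold simultaneously with probability $1-o(1)$. Under these conclusions, the identity $\ind(\tH)=\emb(\tH)\cdot \tk!/\aut(\tH)$ gives
\[\ind(\tH)=\frac{2}{\tk^{\tk-1}-1}\cdot\frac{\tk!}{2\tk}=\frac{\tk!}{\tk\bigl(\tk^{\tk-1}-1\bigr)}=\frac{\tk!}{\tk^{\tk}-\tk},\]
which is precisely the claim.

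The main obstacle is not in the deduction itself, which is a one-line computation, but rather resides entirely in the two inputs: the structural result Theorem \ref{thm-main-ind-H-structure} (from which Theorem \ref{thm-main-ind-H} is derived) and the separate probabilistic statement Corollary \ref{coro-aut-tH} that a random Cayley graph almost surely has no non-trivial automorphisms beyond the $2\tk$ obvious ones (translations and the involution $x\mapsto -x$). Once those are in hand, the derivation of the inducibility value is immediate.
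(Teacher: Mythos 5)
Your proposal is correct and takes essentially the same route as the paper: apply Theorem \ref{thm-main-ind-H} with $H=\tH$ to get $\emb(\tH)=2/(\tk^{\tk-1}-1)$, invoke Corollary \ref{coro-aut-tH} for $\aut(\tH)=2\tk$, and combine via the identity $\ind(\tH)=\emb(\tH)\cdot\tk!/\aut(\tH)$.
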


We remark that the results for random graphs of Huang, Lee and the first author \cite{fox-huang-lee} do not apply for the graphs $H$ we are considering in the theorems above. This is because they only consider graphs that are far from having any symmetries, but in our case by construction $H$ is close to having symmetries coming from symmetries of $\tH$ (namely rotations and reflections, see the beginning of Section \ref{sect-overview}). In fact, even the answer is different, because in our case the optimal structures are not balanced iterated blow-ups of $H$, but  of the Cayley graph $\tH$. 

Our proof begins with some of the ideas in \cite{fox-huang-lee}. However, many of the techniques in \cite{fox-huang-lee} do not apply in our situation. Therefore a new approach is required here.

We remark that we did not optimize the absolute constants in our statements in this paper.

\subsection{Notation}

All $o(1)$-terms tend to zero as $\tk\to\infty$, independently of $p$. All groups and graphs in this paper are assumed to be finite.

Some authors only use the term Cayley graph, if the set $\Lambda$ in Definition \ref{defi-cayley} generates the group $G$, which is equivalent to the graph $\Cay(G,\Lambda)$ being connected. We will not make this restriction. However, for the range of $p$ we are considering in this paper, with probability $1-o(1)$ the set $\Lambda$ will be a generating set of $G$ anyway (see Corollary \ref{coro-lambda-generate}). So this distinction is not important. 

For a graph $H$ and non-empty disjoint subsets $X, Y\su V(H)$, we define the \emph{adjacency} $a_H(X,Y)$ as $a_H(X,Y)=1$ if there are at least $\frac{1}{2}\vert X\vert \vert Y\vert$ edges between $X$ and $Y$ and $a_H(X,Y)=0$ otherwise (so $a_H(X,Y)$ is the density between $X$ and $Y$ rounded to 1 or 0). By a slight abuse of notation, we write $a_H(x,Y)$ instead of $a_H(\lbrace x\rbrace,Y)$ for a vertex $x\not\in Y$ and $a_H(x,y)$ for $a_H(\lbrace x\rbrace,\lbrace y\rbrace)$ for two distinct vertices $x,y\in V(H)$. Note that $a_H(x,y)=1$ if $x$ and $y$ are adjacent and $a_H(x,y)=0$ otherwise, so $a_H(x,y)$ is simply an indicator for the usual vertex adjacency of $x$ and $y$. For a graph $H$ and a vertex $x\in V(H)$, we write $N(x)\su V(H)\sm \lbrace x\rbrace$ for the neighborhood of $x$.

\begin{definition}\label{defi-prime}A graph $H$ is called \emph{prime} if there is no subset $U\su V(H)$ with $2\leq \vert U\vert < \vert V(H)\vert$ such that every vertex outside $U$ is either complete to $U$ or has no edges to $U$.
\end{definition}

Note that in particular, every prime graph on at least three vertices is connected.

\section{Proof Overview}\label{sect-overview}

First, note that Theorem \ref{thm-main-ind-H} implies Theorem \ref{thm-main-ind-H-structure}, because the $n$-vertex graphs $\Gamma$ maximizing $\emb(H,\Gamma)$ are the same as those maximizing the number of induced copies of $H$. Also, we will see below that Corollary \ref{tH-inducibility} follows from Theorem \ref{thm-main-ind-H} using Corollary \ref{coro-aut-tH}. Thus, the main task will be to prove Theorem \ref{thm-main-ind-H}.

For a Cayley graph $\tH=\Cay(G,\Lambda)$ of an abelian group $G$ of size $\tk$, let us define the following automorphisms of $\tH$, which we will call the \emph{rotations} and \emph{reflections} of $\tH$. Recall that $\tH$ has vertex set $G$. The rotations of $\tH$ are given by $G\to G$, $x\mapsto x+g$ for some fixed $g\in G$. The reflections of $\tH$ are given by $G\to G$, $x\mapsto -x+g$ for some fixed $g\in G$. It is easy to see that these maps on the vertex set of $\tH$ indeed define automorphisms of $\tH$. Note that $\tH$ has exactly $\tk$ rotations and exactly $\tk$ reflections. If the group $G$ is not of the form $(\mathbb{Z}/2\mathbb{Z})^m$, then the rotations and reflections are all distinct, giving $2\tk$ different automorphisms of $\tH$. If $G=(\mathbb{Z}/2\mathbb{Z})^m$ for some positive integer $m$, then the set of rotations agrees with the set of reflections, and the rotations and reflections only give $\tk$ different automorphisms of $\tH$.

Our approach to proving Theorem \ref{thm-main-ind-H} is to show that almost surely $\tH$ satisfies the properties in the following definition with appropriate choices of the parameters.

\begin{definition}\label{defi-typical}For $0<q_0<\frac{1}{2}$ and $0<\delta_0<1$, a Cayley graph $\tH=\Cay(G,\Lambda)$ of an abelian group $G$ of size $\tk$ is called \emph{$(q_0,\delta_0)$-typical}, if it satisfies the following conditions:
\begin{itemize}
\item[(i)]For each vertex $v\in V(\tH)$, its degree $\deg(v)$ satisfies $q_0\tk\leq \deg(v)\leq (1-q_0)\tk$.
\item[(ii)]For any two distinct vertices $v,w\in V(\tH)$, there are at least $q_0\tk$ vertices in $V(\tH)\sm\lbrace v,w\rbrace$ that are adjacent to exactly one of the vertices $v$ and $w$.
\item[(iii)]There do not exist disjoint subsets $X,Y\su V(\tH)$ with sizes $\vert X\vert\geq 2\tk^{4/5}$ and $\vert Y\vert\geq 2\tk^{4/5}$ such that between the sets $X$ and $Y$ the graph $\tH$ is complete or empty.
\item[(iv)]For every subset $X\su V(\tH)$ of size $\vert X\vert\geq (1-\delta_0)\tk$ and every injective map $f:X\to V(\tH)$ with $a_{\tH}(v,w)=a_{\tH}(f(v),f(w))$ for all distinct $v,w\in X$, the following holds: Among the $\tk$ rotations and $\tk$ reflections of $\tH$, there exists some $\phi$ such that $f=\phi\vert_X$.
\end{itemize}
\end{definition}

Note that as long as $q_0>2\tk^{-1/5}$, every $(q_0,\delta_0)$-typical Cayley graph $\tH$ on $\tk$ vertices is connected. Indeed, by (i), every connected component has size at least $q_0\tk\geq 2\tk^{4/5}$, and therefore (iii) implies that there cannot exist two different connected components.

The following theorem states that, if $\Lambda\su G$ is chosen randomly according to Procedure \ref{procedure-lambda}, then $\tH=\Cay(G,\Lambda)$ is almost surely $(q_0,\delta_0)$-typical (for an appropriate choice of $q_0$ and $\delta_0$).

\begin{theorem}\label{thm-tH-typcial-whp}Let $G$ be an abelian group with $\tk$ elements and let $0<p<1$. Assume that $p'=\min(p,1-p)$ satisfies $p'\geq 10^3(\log \tk)^{1/2}\cdot \tk^{-1/5}$.
If $\Lambda\su G$ is chosen according to Procedure \ref{procedure-lambda}, then with probability $1-o(1)$ the Cayley graph $\tH=\Cay(G,\Lambda)$ is $(\frac{p'}{50},\frac{p'}{100})$-typical.
\end{theorem}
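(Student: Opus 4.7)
The plan is to verify the four conditions (i)--(iv) of $(q_0, \delta_0)$-typicality individually with probability $1 - o(1)$ each, then apply a union bound. Condition (i) follows since $\Cay(G, \Lambda)$ is vertex-transitive, so every vertex has degree exactly $|\Lambda|$; writing $|\Lambda|$ as a sum of independent pair-indicators with mean $p(\tk - 1)$, a standard Chernoff bound yields $|\Lambda| = (1 + o(1))p \tk$, placing it in $[q_0 \tk, (1-q_0)\tk]$. For (ii), fix distinct $v, w$ and let $Y_{v,w}$ count the $u \notin \{v, w\}$ with $a_{\tH}(u, v) \neq a_{\tH}(u, w)$. For generic $u$ the pairs $\{u - v, v - u\}$ and $\{u - w, w - u\}$ are distinct, making the two indicators independent and contributing $2p(1-p) \geq p'$ to the probability that $u$ is counted; the $O(|G[2]|)$ exceptional values of $u$ contribute negligibly. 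Hence $\E[Y_{v,w}] \gtrsim p' \tk$, and as flipping any single pair-indicator changes $Y_{v,w}$ by $O(1)$, McDiarmid's inequality gives sub-Gaussian concentration. A union bound over the $\binom{\tk}{2}$ pairs $(v, w)$ then yields (ii).

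For (iii), the key observation is that in a Cayley graph, complete bipartite between disjoint $X, Y$ is equivalent to $Y - X \su \Lambda$ and empty bipartite to $(Y - X) \cap \Lambda = \emptyset$; in either case the probability is at most $(1-p')^{m}$ where $m \geq |Y - X|/2$ counts the distinct pairs $\{g, -g\}$ meeting $Y - X$. I plan to split by the size of $|Y - X|$: in the ``large'' regime (say $|Y - X| \geq \tk / \log \tk$), the exponential decay $(1-p')^m \leq e^{-\Omega(p' \tk / \log \tk)}$ comfortably beats the pair-count $\binom{\tk}{s}^2 \leq e^{O(\tk^{4/5} \log \tk)}$ under the hypothesis on $p'$. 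In the ``small'' regime, I would exploit the structural constraint that fixing any $y_0 \in Y$ forces $X \su y_0 - (Y - X)$, so $X$ lies in an ambient set of size $|Y - X|$, severely cutting the count of feasible $(X, Y)$ pairs; the crude bound $(1-p')^{s/2}$ then suffices.

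Condition (iv) will be the main obstacle. Given an adjacency-preserving injection $f: X \to V(\tH)$ with $|X| \geq (1 - \delta_0)\tk$, I would replace $f$ by $\phi \circ f$ for a rotation $\phi$ chosen so that $(\phi \circ f)(x_0) = x_0$ at a base point $x_0 \in X$, and then set $h(y) = (\phi \circ f)(y + x_0) - x_0$ to obtain a map with $h(0) = 0$ and $y_1 - y_2 \in \Lambda \iff h(y_1) - h(y_2) \in \Lambda$. The goal is to prove $h(y) \in \{y, -y\}$ for each $y$ in the domain, and then to show these local choices are globally consistent with a single rotation or reflection. The central probabilistic input is a ``distinct neighborhoods'' statement strengthening (ii): with probability $1 - o(1)$, for every pair $z \neq z'$ of vertices of $\tH$ the adjacency profiles $a_{\tH}(z, \cdot)$ and $a_{\tH}(z', \cdot)$ differ on at least $\delta_0 \tk$ arguments. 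Combined with $|X| \geq (1 - \delta_0)\tk$, this forces $h(y)$ to be essentially the unique vertex whose adjacency profile against $h(X - x_0)$ matches that of $y$ against $X - x_0$, leaving only the two candidates $y$ and $-y$. The real challenge is to close the union bound: the space of candidate partial maps $f$ has size $\tk^{O(\tk)}$, whereas individual probability bounds are only of order $e^{-\tilde{O}(\tk^{3/5})}$. The saving must come from the severe rigidity imposed by an adjacency-preserving map on a set of size $(1 - \delta_0)\tk$: once the images of a small ``anchor'' subset of $O(\log \tk)$ vertices are fixed, the remainder of $h$ is essentially forced, shrinking the effective count of maps to a manageable size.
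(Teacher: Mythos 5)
Your treatment of conditions (i) and (ii) is sound; using McDiarmid's bounded-differences inequality for (ii) is a reasonable alternative to the paper's device of extracting a large independent subfamily and applying Chernoff directly, and the numerics work out.

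The argument for condition (iii), however, has a genuine gap that no choice of large/small threshold can repair. The union bound is over all pairs $(X,Y)$ with $|X|=|Y|=2\tk^{4/5}$, so the entropy is $\log\binom{\tk}{2\tk^{4/5}}^2 = \Theta(\tk^{4/5}\log\tk)$. On the other hand $p'\leq 1/2$ and $|Y-X|\leq \tk$, so the best possible decay exponent is $p'|Y-X|/2 \leq p'\tk/2$, and even using the hypothesis $p'\geq 10^3(\log\tk)^{1/2}\tk^{-1/5}$ this is only $O\big((\log\tk)^{1/2}\tk^{4/5}\big)$, which is eventually dwarfed by $\tk^{4/5}\log\tk$ as $\tk\to\infty$. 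Your ``small regime'' structural observation that $X\su y_0-(Y-X)$ does not rescue the count either: you still have to enumerate the set $Y-X$ itself, which carries entropy $\Theta(|Y-X|\log\tk)$, exactly the quantity the decay cannot beat. The paper's crucial idea here (its Definition \ref{defi-pleasant} and Lemmas \ref{lemma-pleasant-smallprob}--\ref{lemma2-prop3}) is to pass deterministically from the given $(X,Y)$ to a much smaller witness sub-pair $(X',Y')$ with $|X'|=|Y'|=\lfloor\tk^{2/5}\rfloor$ that still carries $\Omega(\tk^{4/5})$ cross-differences lying in pairwise distinct sets $\{g,-g\}$. This shrinks the union count to $e^{O(\tk^{2/5}\log\tk)}$ while leaving the decay exponent at $\Omega(p'\tk^{4/5})=\Omega((\log\tk)^{1/2}\tk^{3/5})$, and now the bound closes. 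The same mismatch afflicts your plan for (iv), and you correctly flag it, but the ``anchor rigidity'' remedy you propose would require conditioning on the random graph to determine the forced extension, which destroys the independence needed for the probability estimate. The paper instead uses the analogous deterministic reduction (Definition \ref{defi-nice}, Lemmas \ref{lemma-nice-smallprob}--\ref{lemma-prop4}): any adjacency-preserving $f$ on $(1-\delta_0)\tk$ vertices that is not close to a rotation or reflection restricts, via a pigeonhole/greedy argument built on the combinatorial Lemma \ref{lemma-many-kappa-pairs}, to a ``nice'' map on a set of size $\lfloor\tk^{2/5}\rfloor$, and one only unions over nice maps. I would also note the paper proves the weaker (iv') and upgrades it to (iv) using (ii), which saves a little work; your plan attacks (iv) directly.
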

We will prove Theorem \ref{thm-tH-typcial-whp} in Section \ref{sect-tH-typcial-whp}.

Note that for $p$ and $p'$ as in Theorem \ref{thm-tH-typcial-whp}, we have $\frac{p'}{50}\geq 2\tk^{-1/5}$, so every $(\frac{p'}{50},\frac{p'}{100})$-typical Cayley graph is connected. Recalling that $\tH=\Cay(G,\Lambda)$ is connected if and only if $\Lambda$ is a generating set for $G$, we can immediately deduce the following corollary from Theorem \ref{thm-tH-typcial-whp}.

\begin{corollary}\label{coro-lambda-generate}Under the assumptions of Theorem \ref{thm-tH-typcial-whp}, with probability $1-o(1)$ the set $\Lambda$ will be a generating set of $G$.
\end{corollary}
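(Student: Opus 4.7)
The plan is to derive the corollary essentially as a one-line consequence of Theorem \ref{thm-tH-typcial-whp} together with the observation already flagged in the preceding paragraph: a $(\tfrac{p'}{50},\tfrac{p'}{100})$-typical Cayley graph is necessarily connected. First I would invoke Theorem \ref{thm-tH-typcial-whp} to reduce the probabilistic statement to a deterministic one. That theorem says that with probability $1-o(1)$ the random Cayley graph $\tH=\Cay(G,\Lambda)$ is $(\tfrac{p'}{50},\tfrac{p'}{100})$-typical; on this event it therefore suffices to show that such a $\tH$ is connected, since for a Cayley graph connectedness is equivalent to $\Lambda$ being a generating set of $G$.

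Next I would verify that the parameter bounds we have are strong enough to force the two typicality properties we need to interact. Under the hypothesis $p' \geq 10^3 (\log \tk)^{1/2}\,\tk^{-1/5}$, the value $q_0 := p'/50$ satisfies $q_0 \geq 2\tk^{-1/5}$, so condition (i) of Definition \ref{defi-typical} forces every vertex of $\tH$ to have degree at least $q_0\tk \geq 2\tk^{4/5}$. In particular, every connected component of $\tH$ contains at least $2\tk^{4/5}$ vertices (a vertex together with its neighbors).

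Now suppose for contradiction that $\tH$ has at least two connected components. Pick any two of them and call them $X$ and $Y$. By the previous paragraph, $\vert X\vert, \vert Y\vert \geq 2\tk^{4/5}$, they are disjoint, and there are no edges between them, so the bipartite graph induced on $(X,Y)$ in $\tH$ is empty. This directly contradicts property (iii) of Definition \ref{defi-typical}. Hence $\tH$ is connected, and so $\Lambda$ generates $G$, completing the deduction. The argument is entirely routine once Theorem \ref{thm-tH-typcial-whp} is in hand — there is no substantive obstacle, only bookkeeping to check that $q_0\tk \geq 2\tk^{4/5}$, which is immediate from the quantitative bound on $p'$.
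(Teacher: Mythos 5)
Your proof is correct and follows essentially the same route as the paper: apply Theorem \ref{thm-tH-typcial-whp} to reduce to the deterministic claim, then use condition (i) to force every component to have size at least $q_0\tk\geq 2\tk^{4/5}$ and condition (iii) to rule out two such components, concluding connectedness and hence that $\Lambda$ generates $G$.
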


We remark that in fact one only needs the much weaker lower bound $p\geq \Omega((\log \tk)\cdot \tk^{-1})$ in order for $\Lambda$ to be generating set of $G$ with probability $1-o(1)$. This is not hard to prove directly, but can also be deduced from a result of Pomerance \cite{pomerance} about the expectation of the number of independently picked random elements of an abelian group that are needed to generate the group.

Furthermore, note that taking $X=V(\tH)$ in condition (iv) in Definition \ref{defi-typical} implies that a $(q_0,\delta_0)$-typical Cayley graph $\tH$ on $\tk$ vertices has no additional automorphisms besides its $\tk$ rotations and $\tk$ reflections. In other words, for every $(q_0,\delta_0)$-typical Cayley graph $\tH=\Cay(G,\Lambda)$ on $\tk$ vertices (for any $0<q_0<\frac{1}{2}$ and $0<\delta_0<1$), we have that $\aut(\tH)=2\tk$ if $G$ is not of the form $(\mathbb{Z}/2\mathbb{Z})^m$, and $\aut(\tH)=\tk$ if $G=(\mathbb{Z}/2\mathbb{Z})^m$ for some positive integer $m$. Hence Theorem \ref{thm-tH-typcial-whp} also implies the following corollary.

\begin{corollary}\label{coro-aut-tH}Under the assumptions of Theorem \ref{thm-tH-typcial-whp}, with probability $1-o(1)$ the Cayley graph $\tH=\Cay(G,\Lambda)$ satisfies $\aut(\tH)=2\tk$ if $G$ is not of the form $(\mathbb{Z}/2\mathbb{Z})^m$, and $\aut(\tH)=\tk$ if $G=(\mathbb{Z}/2\mathbb{Z})^m$ for some positive integer $m$.
\end{corollary}

Note that Corollary \ref{tH-inducibility} follows directly from Theorem \ref{thm-main-ind-H} and Corollary \ref{coro-aut-tH}, using the fact that $\ind(\tH)=\emb(\tH)\cdot \tk!/\aut(\tH)$.


In Theorem \ref{thm-main-ind-H} we want to eventually prove a statement about large induced subgraphs $H$ of $\tH$. The following definition captures useful properties for $H$.

\begin{definition}\label{defi-reasonable}Let $\tH$ be a Cayley graph of an abelian group of size $\tk$ and let $H$ be an induced subgraph of $\tH$ on $k\geq \tk-\frac{1}{4}\log\tk$ vertices. For $0<q<\frac{1}{2}$ and $0<\delta<1$, we call $H$ a \emph{$(q,\delta)$-reasonable} induced subgraph of $\tH$, if it satisfies the following conditions:
\begin{itemize}
\item[(a)]$H$ is prime.
\item[(b)]For any two distinct vertices $v,w\in V(\tH)$, there are at least $qk$ vertices in $V(H)\sm\lbrace v,w\rbrace$ that are adjacent to exactly one of the vertices $v$ and $w$.
\item[(c)]For every subset $X\su V(\tH)$ of size $\vert X\vert\geq (1-\delta)k$ and every injective map $f:X\to V(\tH)$ with $a_{\tH}(v,w)=a_{\tH}(f(v),f(w))$ for all distinct $v,w\in X$, the following holds: Among the $\tk$ rotations and $\tk$ reflections of $\tH$, there exists some $\phi$ such that $f=\phi\vert_X$.
\end{itemize}
\end{definition}

\begin{lemma}\label{lem-typical-reasonable}Let $\tH$ be a $(q_0,\delta_0)$-typical Cayley graph of an abelian group of size $\tk$ and suppose that $0<q<\frac{1}{2}$ and $0<\delta<1$ are such that
\[q\leq q_0-\frac{\log \tk}{4\tk}\quad\text{and}\quad \delta\leq \delta_0-\frac{\log \tk}{4\tk}\quad\text{and}\quad q_0\geq 4\tk^{-1/5}+\frac{\log \tk}{4\tk}.\]
Then every induced subgraph $H$ of $\tH$ on at least $\tk-\frac{1}{4}\log\tk$ vertices is $(q,\delta)$-reasonable.
\end{lemma}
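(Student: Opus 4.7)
The plan is to verify conditions (a), (b), and (c) of Definition \ref{defi-reasonable} in turn, using the bound $\tk - k \leq \frac{1}{4}\log \tk$ throughout.

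Conditions (b) and (c) are routine consequences of (ii) and (iv). For (b), given distinct $v, w \in V(\tH)$, condition (ii) supplies at least $q_0 \tk$ vertices of $V(\tH) \sm \{v, w\}$ adjacent to exactly one of $v, w$; at most $\frac{1}{4}\log \tk$ of them can lie outside $V(H)$, and the hypothesis $q \leq q_0 - \frac{\log \tk}{4\tk}$ then yields at least $q_0 \tk - \frac{1}{4}\log \tk \geq q\tk \geq qk$ such vertices inside $V(H) \sm \{v,w\}$, as required. For (c), any $X \su V(\tH)$ with $|X| \geq (1-\delta)k$ satisfies $|X| \geq (1-\delta)(\tk - \frac{1}{4}\log \tk)$; using $\delta \leq \delta_0 - \frac{\log \tk}{4\tk}$ together with $1 - \delta \leq 1$, a short calculation gives $(1-\delta)(\tk - \frac{1}{4}\log \tk) \geq (1-\delta_0)\tk$, so condition (iv) applies and produces the desired rotation or reflection.

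The main step is condition (a), primality of $H$, and this is where the real work lies. Suppose toward a contradiction that some $U \su V(H)$ with $2 \leq |U| < k$ witnesses the failure of primality, and write $V(H) \sm U = A \cup B$, where $A$ consists of the vertices complete to $U$ in $H$ and $B$ of those anti-complete to $U$. Fixing distinct $u_1, u_2 \in U$, condition (ii) produces at least $q_0 \tk$ vertices of $V(\tH) \sm \{u_1, u_2\}$ adjacent to exactly one of $u_1, u_2$, hence at least $q_0 \tk - \frac{1}{4}\log \tk$ such vertices inside $V(H)$; none of them can belong to $A \cup B$, so they all lie in $U$, giving $|U| \geq q_0 \tk - \frac{1}{4}\log \tk \geq 4\tk^{4/5}$ by the hypothesis on $q_0$. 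Applying condition (iii) in $\tH$ with $X = U$ (which now has size at least $2\tk^{4/5}$) rules out $|A| \geq 2\tk^{4/5}$ and $|B| \geq 2\tk^{4/5}$, so $|A| + |B| < 4\tk^{4/5}$ and hence $|U| > k - 4\tk^{4/5}$.

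Since $|U| < k$, the set $A \cup B$ is nonempty; pick any $x \in A \cup B$. If $x \in A$, then $\deg_{\tH}(x) \geq \deg_H(x) \geq |U| > k - 4\tk^{4/5} \geq \tk - \frac{1}{4}\log \tk - 4\tk^{4/5}$, which by the hypothesis $q_0 \tk \geq 4\tk^{4/5} + \frac{1}{4}\log \tk$ strictly exceeds $(1-q_0)\tk$, contradicting the upper degree bound in (i). If instead $x \in B$, then $x$ has no neighbors in $U$, so $\deg_H(x) \leq |A| + |B| - 1 < 4\tk^{4/5}$, and therefore $\deg_{\tH}(x) \leq \deg_H(x) + (\tk - k) < 4\tk^{4/5} + \frac{1}{4}\log \tk \leq q_0 \tk$, contradicting the lower degree bound in (i). Either way we reach a contradiction, so $H$ is prime. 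The only nonroutine ingredient is the interplay between (iii), which forces $V(H) \sm U$ to be tiny, and the two-sided degree bound in (i), which then has no room to accommodate a vertex that is simultaneously complete or anti-complete to such a large set $U$.
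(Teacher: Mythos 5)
Your proof is correct, and for conditions (b) and (c) it is essentially the paper's own argument, just with slightly different algebra in (c). For (a) you make the same initial move — fix two vertices of $U$ and use typicality condition (ii) to get $|U|\geq 4\tk^{4/5}$ — but then you reverse the order in which (i) and (iii) are applied: you invoke (iii) to force $|A|,|B|<2\tk^{4/5}$ and hence $|U|>k-4\tk^{4/5}$, and finish with a two-sided degree contradiction from (i), whereas the paper uses (i) first to give $|U|\leq(1-q_0)\tk\leq k-4\tk^{4/5}$, so that $V(H)\sm U$ is large, and then lets (iii) deliver the contradiction against a set $Y\su V(H)\sm U$ of size $\geq 2\tk^{4/5}$. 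Both routes work with the same hypotheses and quantitative bounds; yours arguably isolates the role of (i) more cleanly (as a per-vertex degree constraint), while the paper's version is marginally shorter because it doesn't need to split into the two cases $x\in A$ and $x\in B$.
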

\begin{proof}Let $H$ be an induced subgraph of $\tH$ on $k\geq \tk-\frac{1}{4}\log\tk$ vertices. We need to check conditions (a), (b) and (c).

For (a), suppose there is a subset $X\su V(H)$ of size $2\leq \vert X\vert<k$, so that every vertex in $V(H)\sm X$ is either complete or empty to $X$. Let $v,w\in X$ be distinct and let us count the number of vertices in $V(\tH)\sm\lbrace v,w\rbrace$ that are adjacent to exactly one of the vertices $v$ and $w$. Note that no vertex in $V(H)\sm X$ has this property, hence this number is at most $\tk-(k-\vert X\vert)\leq \vert X\vert+\frac{1}{4}\log \tk$. On the other hand, by condition (ii) of $\tH$ this number needs to be at least $q_0\tk\geq 4\tk^{4/5}+\frac{1}{4}\log \tk$. So we can conclude $\vert X\vert\geq 4 \tk^{4/5}$. Furthermore, by condition (i) for $\tH$ any vertex in $V(H)\sm X$ can be complete or empty to at most $(1-q_0)\tk$ vertices. Hence
\[\vert X\vert\leq (1-q_0)\tk\leq \left(1-4\tk^{-1/5}-\frac{\log \tk}{4\tk}\right)\tk=\tk-4\tk^{4/5}-\frac{1}{4}\log \tk\leq k-4\tk^{4/5}\]
and therefore $\vert V(H)\sm X\vert\geq 4\tk^{4/5}$. In particular, we can choose a subset $Y\su V(H)\sm X$ of size $\vert Y\vert\geq 2\tk^{4/5}$ such that between $X$ and $Y$ the graph $H$ (and therefore also the graph $\tH$) is complete or empty. Recalling $\vert X\vert\geq 4 \tk^{4/5}$, this gives a contradiction to condition (iii) of $\tH$.

For (b), note that by condition (ii) of $\tH$ there exist at least $q_0\tk\geq q\tk+\frac{1}{4}\log \tk$ vertices in $V(\tH)\sm \lbrace v,w\rbrace$ that are adjacent to exactly one of the vertices $v$ and $w$. As $\vert V(\tH)\sm V(H)\vert =\tk-k\leq \frac{1}{4}\log\tk$, at least $q\tk\geq qk$ of these vertices lie in $V(H)\sm \lbrace v,w\rbrace$.

For (c), note that every subset $X\su V(H)$ with $\vert X\vert\geq (1-\delta)k$ satisfies
\[\vert X\vert\geq k-\left(\delta_0-\frac{\log \tk}{4\tk}\right)k\geq k-\left(\delta_0-\frac{\log \tk}{4\tk}\right)\tk=k+\frac{1}{4}\log \tk-\delta_0\tk\geq (1-\delta_0)\tk.\]
So (c) follows directly from condition (iv) for $\tH$.\end{proof}

The following theorem states that under certain restrictions for the parameters $q$ and $\delta$, every $(q,\delta)$-reasonable induced subgraph $H$ of $\tH$ satisfies the conclusion of Theorem \ref{thm-main-ind-H} (we will see below that this part of the conclusion actually implies the correct value for $\emb(H)$).

\begin{theorem}\label{thm-H-reasonable-blow-up}Let $\tk\geq 10^{200}$ and let $\tH$ be a Cayley graph of an abelian group of size $\tk$. Let $H$ be a $(q,\delta)$-reasonable induced subgraph of $\tH$ on $k\geq \tk-\frac{1}{4}\log \tk$ vertices with $q\geq 10^4(\log k)^{6/5}k^{-1/5}$ and  $\delta\geq 10^3(\log k)^{1/5}k^{-1/5}$. Then for all $n$, every $n$-vertex graph $\Gamma$ maximizing $\emb(H,\Gamma)$ is a balanced iterated blow-up of $\tH$.
\end{theorem}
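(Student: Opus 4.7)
I would induct on $n$, with vacuous base case $n<\tk$ since any graph on fewer than $\tk$ vertices is by definition a balanced iterated blow-up of $\tH$. For the inductive step, fix an extremal $n$-vertex graph $\Gamma$ with $n\geq \tk$. First I would analyse the balanced iterated blow-up $B_n$ of $\tH$ itself: primality of $H$ (property (a)) forces every embedding $H\hookrightarrow B_n$ to either map the $k$ vertices of $V(H)$ to $k$ distinct top-level parts of $B_n$, or else map them all into a single part. Any intermediate partition $V(H)=S_0\sqcup\cdots\sqcup S_{\tk-1}$ would produce a subset $S_i$ with $2\leq |S_i|<k$ to which every outside vertex is complete or empty (determined entirely by $\tH$-adjacency to $i$), contradicting primality. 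For $n=\tk^m$ this yields the recursion $E(\tk^m)=2\tk\cdot \tk^{(m-1)k}+\tk\cdot E(\tk^{m-1})$, where the factor $2\tk$ counts the embeddings $H\hookrightarrow\tH$; by property (c) (and the fact that no nontrivial rotation/reflection can fix a set of size $(1-\delta)k$) these are exactly the restrictions of the $2\tk$ rotations and reflections of $\tH$. Solving the recurrence yields $\emb(H)=2/(\tk^{k-1}-1)$.

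The central structural task is to produce a partition $V(\Gamma)=W_0\sqcup\cdots\sqcup W_{\tk-1}$ indexed by $V(\tH)$ along which $\Gamma$ is exactly a blow-up of $\tH$. I would fix a reference induced copy $H^*$ of $H$ inside $\Gamma$ (it exists since $\Gamma$ is extremal) and label each $v\in V(\Gamma)$ by its adjacency profile to $V(H^*)$. Property (c) applied inside $\tH$ yields essentially $2\tk$ prototypical profiles, one per rotation/reflection image of a vertex of $\tH$; each $v$ is assigned a closest matching prototype, which collapses under the symmetries to a unique label in $V(\tH)$. Property (b) supplies the gap that makes this labelling well-defined: any two distinct labels differ in at least $qk$ coordinates, which comfortably exceeds the $O(\delta k)$ noise that property (c) tolerates. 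A swapping argument then upgrades the partition to an exact blow-up: any mismatched adjacency between $W_i$ and $W_j$ can be flipped, and tracking the resulting change in $\emb(H,\Gamma)$ via the Step~1 recursion shows the flip strictly increases the embedding count, contradicting extremality. A convexity argument on $\sum_\phi\prod_i |W_i|^{|\phi^{-1}(i)|}$ (summed over the $2\tk$ embeddings $\phi:H\hookrightarrow\tH$) forces the part sizes to differ by at most one, and once $\Gamma$ is a balanced blow-up of $\tH$ the embedding count factorizes so each $W_i$ must itself be extremal on $|W_i|$ vertices; the induction hypothesis then finishes the proof.

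The main obstacle is the labelling and rigidity step. The difficulty is two-fold: first, some vertices of $\Gamma$ may have profiles to $V(H^*)$ that do not cleanly match any rotation/reflection prototype, and second, the labelling derived from one choice of $H^*$ must be shown to be consistent, robust, and to reach every vertex of $\Gamma$. This is exactly why the quantitative hypotheses $q\geq 10^4(\log k)^{6/5}k^{-1/5}$ and $\delta\geq 10^3(\log k)^{1/5}k^{-1/5}$ are imposed: they provide the gap between valid and invalid labels and enough slack in property (c) to absorb union bounds over all $v\in V(\Gamma)$ and over many reference copies $H^*$. A secondary difficulty is controlling embeddings of $H$ into $\Gamma$ that are ``partially concentrated'' (with mixed block sizes), which do not appear in the clean $B_n$ recursion but must be shown to contribute strictly less than the dominant ``all distinct'' and ``all in one block'' cases, so that the extremal $\Gamma$ is forced into the clean blow-up form rather than some near-blow-up with a few stray vertices.
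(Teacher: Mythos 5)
Your outline correctly identifies the high-level architecture: establish a partition of $V(\Gamma)$ indexed by $V(\tH)$, show $\Gamma$ is exactly a blow-up, use convexity to balance part sizes, and recurse. Step~1 (the embedding count in $B_n$, and the role of primality in Claim~\ref{claim-emb-into-blow}) matches the paper. However, the central technical step --- establishing the partition and showing that essentially all embeddings respect it --- is left as a gap, and this is precisely where the paper's proof lives.

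Two concrete concerns. First, the labelling mechanism you propose (fix one induced copy $H^*\subset\Gamma$ and label each $v\in V(\Gamma)$ by its adjacency profile to $V(H^*)$) is more fragile than the paper's. A single copy $H^*$ occupies only $k$ vertices of a potentially huge $\Gamma$, and you have no a priori guarantee that $H^*$ sits ``in general position'' with respect to the (as yet unknown) blow-up structure. The paper avoids this by choosing a small signature $S\subset V(H)$ of size $O((\log k)/q)$ (Lemma~\ref{lemma-small-signature}), pigeonholing over maps $\psi:S\to V(\Gamma)$ with many extensions, and then defining candidate parts $V_i'$ as the union of images of vertex $i$ over \emph{all} embeddings extending $\psi$ --- this aggregation is what makes the partition robust. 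Using the full $H^*$ rather than a small signature would also cost a prohibitive $n^k$ factor if one tried to mimic the pigeonhole.

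Second, and more importantly, the quantitative heart of the proof is showing that the contribution of embeddings not aligned with a rotation or reflection is negligible. You flag this (``controlling embeddings of $H$ into $\Gamma$ that are partially concentrated'') and assert the hypotheses on $q$ and $\delta$ suffice, but this is exactly Proposition~\ref{propo-few-unloyal}, whose proof occupies Sections~\ref{sect-properties-Vi}--\ref{sect-few-unloyal} of the paper and requires a nontrivial case analysis (Lemma~\ref{lemma-five-properties}: embeddings landing too often in $V_{\textnormal{rest}}$, embeddings hitting a single $V_i$ twice, embeddings inducing non-locally-consistent labellings, and embeddings inducing ``fancy'' labellings with many inconsistent pairs), each case estimated via counting lemmas such as Corollary~\ref{coro-embedding-small-set} and Lemma~\ref{lemma-Vi-disjoint-pairs-wrong-adj}. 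Without some version of this analysis, there is no justification for the claim that the flipping argument in your Step~2 strictly increases the embedding count: the loss from flipping an edge is bounded by the number of embeddings through that pair, and one must first show that the overwhelming majority of these are ``regulated'' (Lemma~\ref{lemma-phi-loyal-between-W}), which in turn hinges on Proposition~\ref{propo-few-unloyal}. So the proposal identifies the obstacle correctly but does not provide the machinery to overcome it; the hypotheses on $q,\delta$ are indeed calibrated to make this machinery go through, but that is a non-obvious fact the proposal does not justify.
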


We will prove Theorem \ref{thm-H-reasonable-blow-up} in Sections \ref{sect-prep-proof-blow-up} to \ref{sect-optimization}. The following claim characterizes how the embeddings $H\into \Gamma$ in Theorem \ref{thm-H-reasonable-blow-up} actually look like when $H$ is a blow-up of $\tH$.

\begin{claim}\label{claim-emb-into-blow}Let $H$ be a $(q,\delta)$-reasonable induced subgraph of some Cayley graph $\tH$ (for some $0<q<\frac{1}{2}$ and $0<\delta<1$). If $\Gamma$ is a blow-up of $\tH$ with parts $W_i$ for $i\in V(\tH)$, then every embedding $H\into \Gamma$ is either mapping all of $H$ into a single part $W_i$ or there exists a rotation or reflection $\phi$ of $\tH$ such that each vertex $x\in V(H)$ is mapped into $W_{\phi(x)}$.
\end{claim}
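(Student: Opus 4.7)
The plan is to analyze where each vertex of $H$ lands among the parts of the blow-up, and use primality plus property (c) of $(q,\delta)$-reasonable to pin down the structure.

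More concretely, let $f\colon V(H)\into V(\Gamma)$ be an embedding, and for each $x\in V(H)$ let $i(x)\in V(\tH)$ be the unique index such that $f(x)\in W_{i(x)}$. The first key observation is that whenever $x,y\in V(H)$ are distinct with $i(x)\neq i(y)$, the blow-up structure forces $a_{\Gamma}(f(x),f(y))=a_{\tH}(i(x),i(y))$, while $f$ being an embedding forces $a_{\Gamma}(f(x),f(y))=a_H(x,y)=a_{\tH}(x,y)$ (using that $H$ is an induced subgraph of $\tH$). Hence on pairs with distinct images under $i$, the map $i$ preserves adjacency and non-adjacency.

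Next I would use primality of $H$, which is guaranteed by condition (a) of Definition~\ref{defi-reasonable}, to show that either $i$ is constant or $i$ is injective. Fix $i_0\in V(\tH)$ and let $U=\lbrace x\in V(H): i(x)=i_0\rbrace$. For any $z\in V(H)\sm U$, we have $i(z)\neq i_0$, so by the previous paragraph $a_H(z,x)=a_{\tH}(i(z),i_0)$ for every $x\in U$; that is, $z$ is either complete or empty to $U$ in $H$. Primality of $H$ then forces $\vert U\vert\leq 1$ or $U=V(H)$. Applying this to every $i_0\in V(\tH)$, we conclude that either some single $W_{i_0}$ contains all of $f(V(H))$, or $i\colon V(H)\to V(\tH)$ is injective.

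In the injective case, the first paragraph gives $a_{\tH}(x,y)=a_{\tH}(i(x),i(y))$ for all distinct $x,y\in V(H)$, so $i$ is an injective adjacency-preserving map from the subset $V(H)\su V(\tH)$ into $V(\tH)$, and clearly $\vert V(H)\vert=k\geq (1-\delta)k$. Condition (c) of Definition~\ref{defi-reasonable} then yields a rotation or reflection $\phi$ of $\tH$ with $i=\phi\vert_{V(H)}$, so $f(x)\in W_{\phi(x)}$ for every $x\in V(H)$, as required. There is essentially no obstacle here beyond setting up the dichotomy correctly; the only place a subtlety could hide is making sure that in the non-injective case the primality bound $\vert U\vert\leq 1$ is incompatible with having two vertices landing in two distinct parts, which is exactly what forces the ``all in one part'' alternative.
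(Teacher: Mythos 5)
Your argument is correct and is essentially the same as the paper's: both reduce to showing that for each part $W_{i_0}$, the preimage $U=\theta^{-1}(W_{i_0})$ has the property that every vertex of $H$ outside $U$ is complete or empty to $U$, invoke primality to conclude $\vert U\vert\leq 1$ or $U=V(H)$, and in the injective case apply condition~(c) of Definition~\ref{defi-reasonable} with $X=V(H)$ to obtain the rotation or reflection $\phi$. The only cosmetic difference is that you set up a dichotomy from the start while the paper first assumes the "not all in one part" case; the content is identical.
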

\begin{proof}
Let $\theta:H\into \Gamma$ be an embedding and assume that $\theta$ does not map all of $H$ into a single part $W_i$. Note that for each $i\in V(\tH)$, in the graph $H$ each vertex outside $\theta^{-1}(W_i)$ is either complete or empty to $\theta^{-1}(W_i)$. Recall that $H$ is prime by condition (a) in Definition \ref{defi-reasonable}. Since we assumed $\vert \theta^{-1}(W_i)\vert<\vert V(H)\vert$, this implies that $\vert \theta^{-1}(W_i)\vert\leq 1$ for every $i\in V(\tH)$. Hence there is an injective map $f:V(H)\to V(\tH)$ such that $\theta(x)\in W_{f(x)}$ for all $x\in V(H)$. Note that for all distinct $x,y\in V(H)$ we have $a_{\tH}(f(x),f(y))=a_{\Gamma}(\theta(x),\theta(y))=a_H(x,y)$. Thus, by property (c) in Definition \ref{defi-reasonable}, there must be a rotation or reflection $\phi$ of $\tH$ such that $f(x)=\phi(x)$ for all $x\in V(H)$. Then $\theta(x)\in W_{\phi(x)}$ for all  $x\in V(H)$, as desired.
\end{proof}

It is not hard to deduce Theorem \ref{thm-main-ind-H} from Theorem \ref{thm-tH-typcial-whp}, Lemma \ref{lem-typical-reasonable}, Theorem \ref{thm-H-reasonable-blow-up} and Claim \ref{claim-emb-into-blow}.

\begin{proof}[Proof of Theorem \ref{thm-main-ind-H}] Let $G$ be an abelian group of size $\tk\geq 10^{200}$, let $p'=\min(p,1-p)$ and assume that $p'\geq 10^6(\log \tk)^{6/5}\tk^{-1/5}$. By Theorem \ref{thm-tH-typcial-whp}, with probability $1-o(1)$ the Cayley graph $\tH=\Cay(G,\Lambda)$ is $(\frac{p'}{50},\frac{p'}{100})$-typical. We claim that whenever this happens, $\tH$ also satisfies the conclusion of Theorem \ref{thm-main-ind-H}. In order to check this, assume that $\tH$ is $(\frac{p'}{50},\frac{p'}{100})$-typical and let $H$ be an induced subgraph of $\tH$ on $k\geq \tk-\frac{1}{4}\log\tk$ vertices. Note that $p'\geq 10^6(\log \tk)^{6/5}\tk^{-1/5}\geq 10^6(\log \tk)/\tk$ and therefore
\[\frac{p'}{75}\leq \frac{p'}{50}-\frac{\log \tk}{4\tk}\quad\text{and}\quad \frac{p'}{150}\leq \frac{p'}{100}-\frac{\log \tk}{4\tk}\quad\text{and}\quad \frac{p'}{50}\geq 4\tk^{-1/5}+\frac{\log \tk}{4\tk}.\]
So by Lemma \ref{lem-typical-reasonable}, $H$ is a $(\frac{p'}{75},\frac{p'}{150})$-reasonable induced subgraph of $\tH$. Note that by $\tk \geq k\geq \tk-\frac{1}{4}\log\tk\geq \frac{3}{4}\tk$ we have
\[\frac{p'}{75}\geq \frac{4}{3}\cdot 10^4(\log \tk)^{6/5}\tk^{-1/5}\geq 10^4(\log k)^{6/5}\left(\frac{3}{4}\tk\right)^{-1/5}\geq 10^4(\log k)^{6/5}k^{-1/5}\]
and 
\[\frac{p'}{150}\geq \frac{4}{3}\cdot 10^3(\log \tk)^{6/5}\tk^{-1/5}\geq 10^3(\log k)^{1/5}\left(\frac{3}{4}\tk\right)^{-1/5}\geq 10^3(\log k)^{1/5}k^{-1/5}.\]
Thus, by Theorem \ref{thm-H-reasonable-blow-up} for all $n$, every $n$-vertex graph $G$ maximizing $\emb(H,G)$ is a balanced iterated blow-up of $\tH$.

It only remains to show the desired formula for $\emb(H)$. Let us first assume that the group $G$ is not of the form $(\mathbb{Z}/2\mathbb{Z})^m$. Then we need to show that $\emb(H)=2/(\tk^{k-1}-1)$.

If $n=\tk^\ell$ for some positive integer $\ell$, then there is a unique balanced iterated blow-up $\Gamma$ of $\tH$ on $n$ vertices. We claim that the number of embeddings $H\into \Gamma$ is $2\tk^\ell(\tk^{(k-1)(\ell-1)}+\tk^{(k-1)(\ell-2)}+\dots+1)$. Let us prove this by induction. If $\ell=1$, then $\Gamma=\tH$ and by condition (c) in Definition \ref{defi-reasonable} applied to $X=V(H)$, the only embeddings $H\into \tH$ are the restrictions of the $2\tk$ rotations and reflections of $\tH$. So the number of embeddings $H\into \Gamma$ is indeed $2\tk=2\tk^1\cdot 1$, if $\ell=1$.

Now let us assume $\ell>1$, then $\Gamma$ consists of $\tk$ parts, each of which has size $\tk^{\ell-1}$ and is itself a balanced iterated blow-up of $\tH$. By Claim \ref{claim-emb-into-blow}, each embedding $H\into \Gamma$ of must either map all vertices of $H$ into the same part of the blow-up or it must map each vertex of $H$ into a prescribed part according to one of the $2\tk$ rotations or reflections of $\tH$. There are $2\tk\cdot(\tk^{\ell-1})^k=2\tk^\ell\cdot \tk^{(k-1)(\ell-1)}$ embeddings $H\into \Gamma$ of the second kind (there are $2\tk$ choices for the rotation or reflection, and then for each vertex of $H$ there are $\tk^{\ell-1}$ potential images to choose from inside the part prescribed by the rotation or reflection). For the first kind, we just need to count the number of embeddings from $H$ into each of the parts. By induction, this number is $2\tk^{\ell-1}(\tk^{(k-1)(\ell-2)}+\dots+1)$ for each part and there are $\tk$ parts. Thus, all in all the number of embeddings $H\into \Gamma$ is indeed
\[2\tk^\ell\cdot \tk^{(k-1)(\ell-1)}+\tk\cdot 2\tk^{\ell-1}(\tk^{(k-1)(\ell-2)}+\dots+1)=2\tk^\ell(\tk^{(k-1)(\ell-1)}+\tk^{(k-1)(\ell-2)}+\dots+1).\]
Now we obtain
\begin{multline*}
\emb(H)=\lim_{\ell\to\infty}\frac{\emb(H,\tk^\ell)}{(\tk^\ell)^k}=\lim_{\ell\to\infty}\frac{2\tk^\ell(\tk^{(k-1)(\ell-1)}+\tk^{(k-1)(\ell-2)}+\dots+1)}{\tk^{k\ell}}\\
=\lim_{\ell\to\infty}\frac{2}{\tk^{k-1}}\left(1+\tk^{-(k-1)}+\dots+\tk^{-(\ell-1)(k-1)}\right)=\frac{2}{\tk^{k-1}}\cdot \frac{1}{1-\tk^{-(k-1)}}=\frac{2}{\tk^{k-1}-1},
\end{multline*}
as desired.

In the case where $G=(\mathbb{Z}/2\mathbb{Z})^m$ for some positive integer $m$, the argument is very similar. The only difference is that in this case we have only $\tk$ different rotations and reflections (because the $\tk$ rotations agree with the $\tk$ reflections). Hence all factors of $2$ disappear from the previous calculation, and in this case we obtain $\emb(H)=1/(\tk^{k-1}-1)$.
\end{proof}

Thus, it remains to prove Theorems \ref{thm-tH-typcial-whp} and \ref{thm-H-reasonable-blow-up}, which we will do in Section \ref{sect-tH-typcial-whp} and Sections \ref{sect-prep-proof-blow-up} to \ref{sect-optimization}, respectively.

\section{Preparations for the proof of Theorem \ref{thm-H-reasonable-blow-up}}\label{sect-prep-proof-blow-up}

Sections \ref{sect-prep-proof-blow-up} to \ref{sect-optimization} are devoted to the proof of Theorem \ref{thm-H-reasonable-blow-up}, with the main argument being given in Section \ref{sect-proof-blow-up}. In this section, we will prepare ourselves for the proof by fixing the set-up and collecting some useful tools.

\subsection{Set-up}

For Sections \ref{sect-prep-proof-blow-up} to \ref{sect-optimization}, let $\tH=\Cay(G,\Lambda)$ be a fixed Cayley graph of an abelian group $G$ of size $\tk\geq 10^{200}$. As in Theorem \ref{thm-H-reasonable-blow-up}, let $H$ be a $(q,\delta)$-reasonable induced subgraph of $\tH$ on $k\geq \tk-\frac{1}{4}\log \tk$ vertices, and assume that $q\geq 10^4(\log k)^{6/5}k^{-1/5}$ and $\delta\geq 10^3(\log k)^{1/5}k^{-1/5}$.

It is not difficult to check that the assumption $\tk\geq 10^{200}$ implies $\tk^{1/40}\geq 100\log \tk$ and therefore (using $k\geq \frac{1}{2}\tk$)
\begin{equation}\label{ineq-k-logtk}
k^{1/20}\geq \frac{1}{2}\cdot \tk^{1/20}\geq 5\cdot 10^3\cdot (\log \tk)^2.
\end{equation}

From (\ref{ineq-k-logtk}) we can see that $10^4(\log k)^{6/5}k^{-1/5}< \frac{1}{2}k^{-1/10}\leq \tk^{-1/10}\leq 10^{-20}$ and
\[10^{-2}\cdot \frac{10^4(\log k)^{6/5}k^{-1/5}}{\log (1/(10^4(\log k)^{6/5}k^{-1/5}))}<10^{-2}\cdot \frac{10^4(\log k)^{6/5}k^{-1/5}}{\log (k^{1/10})}=10^3(\log k)^{1/5}k^{-1/5}\leq \delta.\]
Furthermore, note that Definition \ref{defi-reasonable} is monotone in $q$ (if it holds for some value of $q$, the it also holds for all smaller values). By decreasing $q$, we can and will from now on assume that
\begin{equation}\label{ineq-q-lower-bound}
10^{-20}\geq q\geq 10^4(\log k)^{6/5}k^{-1/5}
\end{equation}
and 
\begin{equation}\label{ineq-delta-q}
\delta>10^{-2}\frac{q}{\log(1/q)}.
\end{equation}

From (\ref{ineq-q-lower-bound}), we in particular obtain
\begin{equation}\label{eq-ineq-q-k1}
\frac{\log k}{q}<\frac{k^{1/5}}{10^4}<\frac{\sqrt{k}}{10^4}<\frac{k}{10^4}.
\end{equation}

It might be easiest to follow the ideas of the proof when thinking of $q$ as a small constant. However, the proof works more generally for $q\geq 10^4(\log k)^{6/5}k^{-1/5}$. Before we go into the main part of the argument for proving Theorem \ref{thm-H-reasonable-blow-up} in the Section \ref{sect-proof-blow-up}, we devote the rest of this section to some preparatory lemmas.

\subsection{Signatures}

Huang, Lee and the first author \cite{fox-huang-lee} made the following definition, which will also be an important concept in our proof. In the literature, sets with the property in Definition \ref{defi-signature} are more commonly called locating sets.

\begin{definition}[\cite{fox-huang-lee}]\label{defi-signature} A signature of the graph $H$ is a subset $S\su V(H)$ such that $N(v)\cap S\neq N(w)\cap S$ whenever $v,w\in V(H)\sm S$ are distinct vertices.
\end{definition}

Note that whenever $S$ is a signature of $H$, and $S\su S'\su V(H)$, then $S'$ is a signature of $H$ as well. The following lemma is the reason why signatures are helpful for our purposes.

\begin{lemma}\label{lemma-signature-disjoint-sets}
Let $S$ be a signature of the graph $H$, and suppose we are given a function $f: S\to V(\Gamma)$ for some graph $\Gamma$. Let $H'$ be an induced subgraph of $H$ such that $S\su V(H')$. For every vertex $i\in V(H')\sm S$, let us define $V_i\su V(\Gamma)$ to be the set of vertices $v\in V(\Gamma)$ for which there exists at least one embedding $\theta: H'\into \Gamma$ with $\theta(s)=f(s)$ for all $s\in S$ and $\theta(i)=v$. Then the sets $V_i$ for $i\in V(H')\sm S$ are disjoint.
\end{lemma}

\begin{proof}
Note that for every $i\in V(H')\sm S$ and every $v\in V_i$, the adjacencies between $v$ and the vertices $f(s)$ for $s\in S$ must be the same as the adjacencies between $i$ and the vertices $s\in S$. But these adjacencies uniquely determine $i$, as $N(i)\cap S\neq N(j)\cap S$ for any vertex $j\in V(H')\sm S$ with $j\neq i$. Therefore a vertex $v$ cannot simultaneously lie in $V_i$ and $V_j$ for $i\neq j$. Thus, the sets $V_i$ for $i\in V(H')\sm S$ are indeed disjoint.
\end{proof}

\begin{lemma}\label{lemma-large-signature} Every subset $S\su V(H)$ of size $\vert S\vert \geq (1-q)k$ is a signature of $H$.
\end{lemma}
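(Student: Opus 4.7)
The plan is to use condition (b) of Definition \ref{defi-reasonable} directly, exploiting that the complement $V(H)\sm S$ is too small to absorb all of the distinguishing vertices guaranteed by that condition.

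First, I would fix two distinct vertices $v, w \in V(H)\sm S$, and recall that our goal is to find at least one vertex of $S$ that is adjacent in $H$ to exactly one of $v, w$. By condition (b) of Definition \ref{defi-reasonable} applied to $v, w$ (which are in particular vertices of $\tH$), there are at least $qk$ vertices of $V(H) \sm \{v,w\}$ that are adjacent in $\tH$ to exactly one of $v, w$; since all these vertices lie in $V(H)$ and the adjacencies in $H$ agree with those in $\tH$, they are in fact adjacent in $H$ to exactly one of $v, w$.

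Next, I would bound how many of these distinguishing vertices can fail to lie in $S$. Since $|S| \geq (1-q)k$, we have $|V(H)\sm S| \leq qk$, and since $v, w \in V(H)\sm S$, the set $(V(H)\sm S)\sm\{v,w\}$ has size at most $qk - 2$. Therefore at most $qk - 2$ of the distinguishing vertices lie outside $S$, leaving at least $qk - (qk-2) = 2$ of them inside $S$. In particular, there exists $s \in S$ adjacent in $H$ to exactly one of $v$ and $w$, so $N(v)\cap S \neq N(w)\cap S$.

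Since $v, w$ were arbitrary distinct vertices of $V(H)\sm S$, this verifies the defining property of Definition \ref{defi-signature}, and $S$ is a signature of $H$. There is no real obstacle here — the whole argument is a one-step counting argument — the only subtlety is remembering to subtract $2$ for $v$ and $w$ themselves, which in fact gives us a bit of slack (two distinguishing vertices in $S$ rather than merely one).
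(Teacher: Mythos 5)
Your proof is correct and is essentially the same counting argument as the paper's, just phrased directly rather than by contradiction: the paper assumes $N(v)\cap S = N(w)\cap S$ and derives that at most $k-2-|S| \leq qk-2 < qk$ vertices can distinguish $v$ and $w$, contradicting condition (b), while you count the distinguishers forward and find at least two must land in $S$. Same bound, same use of condition (b); no issues.
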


\begin{proof}Suppose that that for two distinct vertices $v,w\in V(H)\sm S$ we have $N(v)\cap S= N(w)\cap S$. Then all vertices in $S$ have the same adjacency to $v$ and $w$, so there could be at most $k-2-\vert S\vert <qk$ vertices in $V(H)\sm \lbrace v,w\rbrace$ that are adjacent to exactly one of the vertices $v$ and $w$. This would contradict condition (b) in Definition \ref{defi-reasonable}.
\end{proof}

The following lemma is very similar to \cite[Lemma 2.5]{fox-huang-lee}.

\begin{lemma}[see Lemma 2.5 in \cite{fox-huang-lee}]\label{lemma-small-signature}  For every subset $X\su V(H)$ of size $\vert X\vert \geq (1-\frac{q}{2})k$, there is a signature $S$ of $H$ of size $\vert S\vert \leq \frac{5}{q}\log k$ with $S\su X$.
\end{lemma}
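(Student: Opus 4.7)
The proof will follow the probabilistic method along the lines of \cite[Lemma 2.5]{fox-huang-lee}. My plan is to form $S$ as a random subset of $X$ by including each vertex of $X$ independently with probability $p^{*} = (4+\eta)\log(k)/(qk)$ for a small fixed $\eta \in (0,1)$ (say $\eta = 1/10$), and then to show that with positive probability $S$ is simultaneously a signature of $H$ and has size at most $(5/q)\log k$. Any such outcome yields the desired $S$.

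First I would identify many ``separating'' vertices inside $X$. For each pair of distinct $v, w \in V(H)$, condition (b) of Definition \ref{defi-reasonable} supplies at least $qk$ vertices of $V(H)\setminus\{v,w\}$ adjacent to exactly one of $v, w$. Since $|V(H)\setminus X| = k - |X| \leq qk/2$, at least $qk/2$ of them lie in $X\setminus\{v,w\}$; call this set $A_{vw}$. The key observation is that if $S$ fails to be a signature, then some pair $v, w \in V(H) \setminus S$ is not distinguished by $S$, which forces every $s \in S$ to be adjacent to both or neither of $v, w$, and in particular $S \cap A_{vw} = \emptyset$. A union bound over all $\binom{k}{2}$ pairs therefore gives
\[\P[S \text{ is not a signature}] \leq \binom{k}{2}(1-p^{*})^{qk/2} \leq k^{2}\exp(-p^{*} qk/2) = k^{-\eta/2},\]
which is far below $1$ since $k$ is huge by (\ref{ineq-k-logtk}).

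Next I would control $|S|$. Since $|S|$ is stochastically dominated by a $\mathrm{Bin}(k, p^{*})$ random variable with mean at most $(4+\eta)\log(k)/q$, a standard Chernoff bound gives $\P[|S| > (5/q)\log k] \leq \exp(-c\log(k)/q)$ for some absolute $c > 0$; by (\ref{ineq-q-lower-bound}) the quantity $\log(k)/q$ is enormous, so this tail is negligible. Combining the two bounds exhibits a subset $S \subseteq X$ that is a signature with $|S| \leq (5/q)\log k$.

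The delicate point will be calibrating the constant in $p^{*}$: it must be strictly above $4$ so that the union bound over $\sim k^{2}$ pairs leaves a useful failure probability, but strictly below $5$ so that after concentration the size bound $|S|\leq (5/q)\log k$ holds with positive probability. Markov's inequality on $|S|$ alone would be too weak to fit both demands simultaneously, so Chernoff is essential; fortunately, the strong lower bound $q \geq 10^{4}(\log k)^{6/5}k^{-1/5}$ from (\ref{ineq-q-lower-bound}) makes the Chernoff tail astronomically small, so any $\eta \in (0,1)$ suffices and there is plenty of slack in the final union bound.
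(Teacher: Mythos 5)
Your proof is correct, and it follows the same overall probabilistic-method strategy as the paper, but with a different sampling model. The paper chooses $t = \lfloor (5/q)\log k\rfloor$ elements of $X$ uniformly at random \emph{with repetition}, so that $|S| \le t$ holds deterministically and no concentration estimate on the size is needed; the only event to bound is that some undistinguished pair survives, and a single union bound with $((1-q)k/|X|)^t \le e^{-qt/2} \le k^{-2}$ finishes the job. Your Bernoulli model (include each $x\in X$ independently with probability $p^*$) yields a random $|S|$ and hence needs a second bound via Chernoff, which you correctly identify as necessary and verify is ample given (\ref{ineq-q-lower-bound}). The two approaches are genuinely equivalent in strength here; the with-repetition sampling is simply the cleaner bookkeeping since it collapses the two tail events into one, which is exactly the observation you make at the end about Markov being too weak if you use the Bernoulli model. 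One small thing you could tighten: when you pass from ``$S$ is not a signature'' to ``$S\cap A_{vw}=\emptyset$,'' you should note explicitly that $S\subseteq X$, $v,w\notin S$, and $A_{vw}\subseteq X\setminus\{v,w\}$ together ensure that any separating vertex of $A_{vw}$ landing in $S$ would have distinguished $v$ from $w$ --- you do say this, but the conditioning on $v,w\notin S$ is folded in silently, and in the Bernoulli model the events ``$v\notin S$,'' ``$w\notin S$,'' and ``$S\cap A_{vw}=\emptyset$'' are all functions of disjoint coordinates, so the product structure you implicitly use is indeed valid.
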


\begin{proof} Let $t=\lfloor \frac{5}{q}\log k\rfloor$ and note that $t\geq  \frac{4}{q}\log k$. We choose the set $S\su X$ by taking $t$ independent random elements of $X$ (possibly with repetition). For any pair of distinct vertices $v,w\in V(H)$, by condition (b) in Definition \ref{defi-reasonable}, there are at most $(1-q)k$ vertices in $X\sm \lbrace v,w\rbrace\su V(H)\sm \lbrace v,w\rbrace$ with the same adjacency to $v$ and $w$. So the probability that each of the $t$ chosen vertices has the same adjacency to $v$ and $w$ is at most
\[\left(\frac{(1-q)k}{\vert X\vert}\right)^t\leq \left(\frac{(1-q)k}{(1-\frac{q}{2})k}\right)^t=\left(1-\frac{q/2}{1-\frac{q}{2}}\right)^t\leq \left(1-\frac{q}{2}\right)^t\leq e^{-qt/2}\leq e^{-2\log k}=k^{-2}.\]
In other words, for any pair of distinct vertices $v,w\in V(H)$, the probability of having $N(v)\cap S= N(w)\cap S$ is at most $k^{-2}$. By a union bound, we see that with probability at least $\frac{1}{2}$ we have  $N(v)\cap S\neq  N(w)\cap S$ for all distinct $v,w\in V(H)\sm S$ (which means that $S$ is a signature).
\end{proof}

Note that in Definition \ref{defi-signature} we can equivalently write $(N(v)\Delta N(w))\cap S\neq \emptyset$ instead of $N(v)\cap S\neq N(w)\cap S$. This motivates the following definition, which is a strengthening of the concept of a signature.

\begin{definition}\label{defi-super-signature} For $0<r<1$, an $r$-super-signature of the graph $H$ is a non-empty subset $S\su V(H)$ such that $\vert (N(v)\Delta N(w))\cap S\vert\geq r\vert S\vert$, whenever $v,w\in V(H)\sm S$ are distinct vertices.
\end{definition}

\begin{lemma}\label{lemma-small-super-signature} For every subset $X\su V(H)$ of size $\vert X\vert \geq (1-\frac{q}{2})k$, there is a $\frac{q}{4}$-super-signature $S$ of $H$ of size $\vert S\vert \leq \frac{33}{q}\log k$ with $S\su X$.
\end{lemma}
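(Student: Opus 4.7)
The plan is to mimic the probabilistic construction in Lemma~\ref{lemma-small-signature}, but strengthen it using a Chernoff-type concentration bound rather than a simple union bound on the probability of complete agreement.

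Set $t = \lfloor (33/q)\log k \rfloor$, which is at least $32 \log k / q$ for $k$ sufficiently large, and which is much smaller than $|X|$ by the lower bound on $q$ in (\ref{ineq-q-lower-bound}) and the size bound on $X$. Choose $S \subseteq X$ uniformly at random among all $t$-element subsets of $X$, so that $|S| = t \leq (33/q)\log k$ deterministically. For any two distinct vertices $v, w \in V(H)$, condition (b) of Definition~\ref{defi-reasonable} gives $|(N(v) \Delta N(w)) \cap V(H)| \geq qk$, and since $X$ omits at most $k - |X| \leq (q/2)k$ vertices of $V(H)$, we obtain
\[
|(N(v) \Delta N(w)) \cap X| \;\geq\; qk - \tfrac{q}{2}k \;=\; \tfrac{q}{2}k.
\]
Let $D_{vw} = |(N(v) \Delta N(w)) \cap S|$. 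Then $D_{vw}$ has a hypergeometric distribution with mean at least $t \cdot (q/2)k/|X| \geq qt/2$, since $|X| \leq k$.

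Applying the Chernoff bound for hypergeometric random variables with deviation parameter $1/2$ (such concentration bounds for sampling without replacement are at least as strong as their binomial analogues), we get
\[
\Pr[\,D_{vw} < qt/4\,] \;\leq\; \exp(-qt/16) \;\leq\; e^{1/16} \cdot k^{-33/16}.
\]
Union-bounding over the fewer than $k^2/2$ pairs of distinct vertices $v, w \in V(H)$, the probability that some pair has $D_{vw} < qt/4$ is at most $(e^{1/16}/2) \cdot k^{-1/16}$, which is strictly less than $1$. Hence with positive probability $D_{vw} \geq qt/4 = (q/4)|S|$ for every pair of distinct vertices $v, w \in V(H)$, and in particular for all pairs in $V(H) \setminus S$. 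Such an outcome yields a $(q/4)$-super-signature $S \subseteq X$ of size at most $(33/q)\log k$, as required.

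The routine calculations to verify are that $t$ fits inside $X$ (using $\log k/q \leq k^{1/5}/10^4$ from (\ref{eq-ineq-q-k1})) and that the constants $33$, $1/2$, $1/4$ are consistent with the Chernoff bound we invoke; the essential point is that going from a signature (avoiding exact agreement) to a super-signature (avoiding near-agreement) costs only a constant factor in the sample size, because a linear fraction of $X$ lies in each symmetric difference. The main mild obstacle is the hypergeometric concentration step; if one prefers to avoid citing it, the same argument goes through by sampling with replacement and instead tracking the number of samples rather than $|S|$, since with $qt/16 \geq 2\log k$ the probability of any repeated vertex or any bad pair is still controlled by a union bound.
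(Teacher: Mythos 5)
Your proof is correct and takes essentially the same approach as the paper: a random $t$-element subset of $X$ with $t \approx (33/q)\log k$, a Chernoff lower-tail bound for each pair $v,w$, and a union bound over pairs. The only difference is cosmetic: you sample without replacement and invoke hypergeometric concentration (which is valid), whereas the paper samples with replacement and separately bounds the probability that all samples are distinct; both versions are standard and interchangeable.
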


\begin{proof}Let $t=\lfloor \frac{33}{q}\log k\rfloor$ and note that $t\geq  \frac{32}{q}\log k$ and $t\leq \frac{33}{q}\log k< \frac{1}{100}\sqrt{k}<\frac{k}{2}< \vert X\vert$ by  (\ref{eq-ineq-q-k1}). Again, let us choose vertices $s_1,\dots, s_t\in X$ independently and uniformly at random and set $S=\lbrace s_1,\dots, s_t\rbrace$. Then $S$ always satisfies $S\su X$ and $\vert S\vert \leq t\leq \frac{33}{q}\log k$.

The probability that $s_1,\dots, s_t$ are distinct is
\[\frac{\vert X\vert}{\vert X\vert}\cdot \frac{\vert X\vert-1}{\vert X\vert} \dotsm \frac{\vert X\vert-t+1}{\vert X\vert}\geq \left(1-\frac{t}{\vert X\vert}\right)^t\geq 1-\frac{t^2}{\vert X\vert}\geq 1-\frac{(\sqrt{k}/100)^2}{k/2}\geq \frac{3}{4},\]
where in the second step we used Bernoulli's inequality (recall that $t< \vert X\vert$).

For the moment, fix distinct vertices $v,w\in H$. We want to find an upper bound for the probability that the number of indices $i=1,\dots,t$ with $s_i\in N(v)\Delta N(w)$ is smaller than $\frac{q}{4}t$. For $i=1,\dots,t$ let $Z_i$ be the indicator random variable of the event $s_i\in N(v)\Delta N(w)$. Let $z=\vert (N(v)\Delta N(w))\cap X\vert/\vert X\vert$ be the probability for $Z_i=1$. Since $\vert N(v)\Delta N(w)\vert\geq qk$ by condition (b) in Definition \ref{defi-reasonable} and $\vert X\vert \geq (1-\frac{q}{2})k$, we have $\vert (N(v)\Delta N(w))\cap X\vert\geq \frac{q}{2}k$ and therefore
\[z=\frac{\vert (N(v)\Delta N(w))\cap X\vert}{\vert X\vert}\geq \frac{qk/2}{k}=\frac{q}{2}.\]
Note that $Z_1+\dots+Z_t\sim B(t,z)$ is a binomially distributed random variable. Hence the Chernoff bound for lower tails of binomial random variables (see for example \cite[Theorem A.1.13]{alon-spencer}) yields
\[\P\left[Z_1+\dots+Z_t<\frac{q}{4}t\right]\leq \P\left[Z_1+\dots+Z_t<zt-\frac{z}{2}t\right]< e^{-(zt/2)^2/(2zt)}=e^{-zt/8}\leq e^{-qt/16} \leq k^{-2}.\]
Thus, for each pair of distinct vertices $v,w\in H$ the event that there are fewer than $\frac{q}{4}t$ indices $i=1,\dots,t$ with $s_i\in N(v)\Delta N(w)$ has probability at most $k^{-2}$. By the union bound over $\binom{k}{2}$ pairs of distinct vertices, we obtain that with probability at least $\frac{1}{2}$, for any distinct $v,w\in V(H)$ there are at least $\frac{q}{4}t$ indices $i=1,\dots,t$ with $s_i\in N(v)\Delta N(w)$.

All in all, with probability at least $\frac{1}{4}$, the vertices $s_1,\dots, s_t$ are distinct and for any distinct $v,w\in V(H)$ there are at least $\frac{q}{4}t$ indices $i=1,\dots,t$ with $s_i\in N(v)\Delta N(w)$. But then $\vert S\vert =t$ and for any distinct $v,w\in V(H)$ we have $\vert (N(v)\Delta N(w))\cap S\vert\geq \frac{q}{4}t=\frac{q}{4}\vert S\vert$. Thus, the desired $\frac{q}{4}$-super-signature exists.
\end{proof}

\subsection{Counting embeddings}

In this subsection we prove several lemmas bounding the number of graph embeddings with certain restriction. We will use these lemmas repeatedly during the proof of Theorem \ref{thm-H-reasonable-blow-up}.

Our bounds will be stated in term of the function $\E_\l(m)$ introduced in the following definition.

Note that for integers $\l\geq 1$ and $m\geq 0$, there is a unique way to write $m=m_1+\dots+m_{\l}$ with non-negative integers $m_1,\dots,m_{\l}$ such that $\lceil \frac{m}{\l}\rceil\geq m_1\geq \dots\geq m_\l\geq \lfloor \frac{m}{\l}\rfloor$.

\begin{definition}\label{defi-functions-E} For integers $\l\geq 1$ and $m\geq 0$, define
\[\E_\l(m)=m_1\dotsm m_\l,\]
where $m_1,\dots,m_{\l}$ are such that $\lceil \frac{m}{\l}\rceil\geq m_1\geq \dots\geq m_\l\geq \lfloor \frac{m}{\l}\rfloor$ and $m_1+\dots+m_{\l}=m$.
\end{definition}

Note that we have $\left\lfloor\frac{m}{\l}\right\rfloor^{\l}\leq \E_\l(m)\leq \left(\frac{m}{\l}\right)^{\l}$ (with equalities when $m$ is divisible by $\l$). If $m$ is large compared to $\l$, then $\E_\l(m)\sim \left(\frac{m}{\l}\right)^{\l}$, but for $m<\l$ we have $\E_\l(m)=0$. Also note that $\E_\l(m)$ is monotone increasing as a function of $m$ (for $\l$ fixed).

The following lemma states some useful properties of the function $\E_{\l}(m)$. The third part is a slight strengthening of \cite[Proposition 4.3(ii)]{fox-huang-lee}.

\begin{lemma}\label{lemma-functions-E}Let $\l, \l'\geq 1$ and $m,m'\geq 0$ be integers. Then the following statements hold.
\begin{itemize}
\item[(i)] For any non-negative integers $\tilde{m}_1,\dots,\tilde{m}_\l$ with $\tilde{m}_1+\dots+\tilde{m}_\l\leq m$, we have $\tilde{m}_1\dotsm \tilde{m}_\l\leq \E_\l(m)$.
\item[(ii)] $\E_\l(m)\cdot\ \E_{\l'}(m')\leq \E_{\l+\l'}(m+m')$.
\item[(iii)] If $m\geq \l$ and $\l'\leq \l$ as well as $m'\leq (1-\mu)m$ for some $\mu\geq 0$, then
\[\E_{\l'}(m')\leq e^{3(\l-\l')-\mu\l/2}\left(\frac{\l}{m}\right)^{\l-\l'}\E_\l(m).\]
\end{itemize}
\end{lemma}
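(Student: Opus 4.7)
The plan is to prove the three parts in order, with parts (i) and (ii) being quick and part (iii) absorbing the bulk of the work.

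For part (i), I would use the standard ``smoothing" argument. First, if any $\tilde m_i = 0$, the product is zero and the inequality is trivial, so I may assume all $\tilde m_i\geq 1$. If the sum is strictly less than $m$, I can increase one coordinate by $1$, which strictly increases the product; so I may assume $\tilde m_1 + \cdots + \tilde m_\l = m$. Finally, whenever two parts satisfy $\tilde m_i \geq \tilde m_j + 2$, the replacement $(\tilde m_i, \tilde m_j)\mapsto (\tilde m_i - 1, \tilde m_j + 1)$ satisfies $(\tilde m_i - 1)(\tilde m_j + 1) - \tilde m_i \tilde m_j = \tilde m_i - \tilde m_j - 1 \geq 1$, so the product strictly increases. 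Iterating, the maximum is attained at the balanced partition, whose product equals $\E_\l(m)$.

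Part (ii) is an immediate corollary of (i): concatenate the balanced partitions witnessing $\E_\l(m)$ and $\E_{\l'}(m')$ to form a tuple of $\l+\l'$ non-negative integers summing to $m+m'$, whose product equals $\E_\l(m)\cdot \E_{\l'}(m')$; by (i) this is $\leq \E_{\l+\l'}(m+m')$.

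For part (iii), which is the main work, I would combine two ingredients. The first is a ``part-splitting'' identity derived from (ii): writing $\l'' = \l - \l'$, $a = \lfloor m/\l\rfloor\geq 1$, and $b = m - a\l$, one has the factorization $\E_\l(m) = \E_{\l''}(a\l'') \cdot \E_{\l'}(a\l' + b) = a^{\l''}\cdot \E_{\l'}(a\l' + b)$ (obtained by grouping $\l''$ balanced parts into one product and the remaining $\l'$ into the other). This already yields $\E_{\l'}(m - a\l'') \leq \E_\l(m)/a^{\l''}$. The second ingredient is a ``smooth decay in $m$'' estimate: a one-step computation shows that for any $m$ with $\lfloor m/\l'\rfloor \geq 1$, the ratio $\E_{\l'}(m-1)/\E_{\l'}(m)$ is at most $1 - 1/(\lfloor m/\l'\rfloor + 1) \leq 1 - \l'/(2m)\leq e^{-\l'/(2m)}$, so iterating from some $m_2$ down to $m_1\leq m_2$ gives $\E_{\l'}(m_1)\leq \E_{\l'}(m_2)\exp(-\tilde\mu\l'/2)$ where $\tilde\mu = (m_2-m_1)/m_2$.

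I would then split into cases depending on the relative size of $\mu$ and $\l''/\l$. When $\mu m\geq a\l''$ (equivalently $m'\leq m - a\l''$), I apply the smooth decay between $m_2 = m - a\l''$ and $m_1 = m'$, then combine with the part-splitting identity; the exponents $3(\l-\l')$ and $-\mu\l/2$ then arise from bounding $(m/(a\l))^{\l''}\leq 2^{\l''}\leq e^{\l''}$ and carefully comparing $\tilde\mu\l'/2$ with $\mu\l/2$ (the difference is $\l''(\mu b + a\l')/(2m_2)$, which is $O(\l'')$). When instead $\mu m < a\l''$ (so $\mu$ is very small and $\mu\l/2$ essentially vanishes), the claim reduces to $\E_{\l'}(m')\leq e^{3\l''}(\l/m)^{\l''}\E_\l(m)$, which I would verify directly using monotonicity $\E_{\l'}(m')\leq \E_{\l'}(m)$ together with the crude bounds $\E_{\l'}(m)\leq (m/\l')^{\l'}(1+\l'/m)^{\l'}$ and $\E_\l(m)\geq (m/\l)^{\l}(1-\l/m)^{\l}$, using $(\l/\l')^{\l'}\leq e^{\l''}$ to contribute the dominant factor. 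The hardest part of the proof will be this case bookkeeping: the factor $e^{3(\l-\l')}$ is precisely calibrated to absorb both the discretization losses (which can be as large as $2^{\l''}$ when $m/\l$ is close to $1$) and the slack $(\l/\l')^{\l'}\leq e^{\l''}$, and the verifications in each regime require tracking these losses uniformly.
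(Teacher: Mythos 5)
Your parts (i) and (ii) are correct and essentially identical to the paper's argument: a smoothing step reducing to the balanced partition, followed by concatenation plus part (i).

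Your part (iii) is a genuinely different route. The paper compares the two balanced partitions $m_1\geq\cdots\geq m_\l$ (of $m$) and $m_1'\geq\cdots\geq m_{\l'}'$ (of $m'$) coordinate-by-coordinate: it observes that one of the alternatives $m_i\geq m_i'$ for all $i\leq\l'$, or $m_i\leq m_i'$ for all $i\leq\l'$, must hold, and then estimates $\prod_i m_i'/\prod_i m_i$ using only $m/(2\l)\leq m_i\leq 2m/\l$ and the sum bound $\sum_{i\leq\l'}(m_i-m_i')\geq(\mu-\l''/\l)m$. Your route instead peels off the $\l''$ small parts via part (ii) and then uses a one-step decay estimate for $\E_{\l'}(\cdot)$.

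However, your sketch has two problems, one cosmetic and one substantive. (1) The ``factorization'' $\E_\l(m)=a^{\l''}\,\E_{\l'}(a\l'+b)$ is not an identity when $b>\l'$: for $\l=3$, $\l'=1$, $m=5$, one has $\E_3(5)=2\cdot 2\cdot 1=4$ while $a^{\l''}\E_{\l'}(a\l'+b)=1^2\cdot\E_1(3)=3$. What you actually use is the inequality $a^{\l''}\E_{\l'}(a\l'+b)\leq\E_\l(m)$, which does follow from (ii), so this is fixable. Also the parenthetical ``(equivalently $m'\leq m-a\l''$)'' is only an implication, not an equivalence — the clean case split is on $m'\lessgtr m-a\l''$, not on $\mu m\lessgtr a\l''$. (2) The substantive gap is your second case. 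After monotonicity $\E_{\l'}(m')\leq\E_{\l'}(m)$, your lower bound $\E_\l(m)\geq(m/\l)^\l(1-\l/m)^\l$ degenerates entirely when $m$ is close to $\l$ — at $m=\l$ it reads $\E_\l(m)\geq 0$, which is useless. A concrete failure: take $\l=2$, $\l'=1$, $m=m'=3$, $\mu=0$; then $a=1$, $a\l''=1<3=m'$, so we are in the second case, and the target inequality $\E_1(3)=3\leq e^{3}(2/3)\E_2(3)\approx 13.4$ is true, but your crude bounds give only $\E_1(3)/\E_2(3)\leq 3\cdot(4/3)/\bigl[(3/2)^2(1/3)^2\bigr]=16>13.4$, so they do not establish it. To close this case you would need a sharper comparison, for instance relating $\E_{\l'}(m')$ to the product of the $\l'$ largest parts of the balanced $\l$-partition of $m$ (which is itself of the form $\E_{\l'}(\cdot)$), instead of bounding $\E_{\l'}(m)$ and $\E_\l(m)$ independently by rational functions of $m/\l$ and $m/\l'$; this essentially leads back to the paper's coordinate-wise comparison.
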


Parts (i) and (ii) of Lemma \ref{lemma-functions-E} are relatively easy to see. For part (iii), the same proof as in \cite[proof of Proposition 4.3(ii)]{fox-huang-lee} works. For the reader's convenience, we provide a proof of all three parts of Lemma \ref{lemma-functions-E} in the appendix.

The following lemmas will be useful tools in the proof of Theorem \ref{thm-H-reasonable-blow-up}.

\begin{lemma}\label{lemma-signature-extensions} Let $\Gamma$ be a graph and let $U\su V(\Gamma)$ be a subset of its vertices. Suppose that $f:X\to V(\Gamma)$ is a function defined on a subset $X\su V(H)$ such that $X$ is a signature of $H$. Then the number of embeddings $\theta:H\into \Gamma$ with $\theta(v)=f(v)$ for every $v\in X$ and $\theta(v)\in U$ for all $v\in V(H)\sm X$ is at most $\E_{k-\vert X\vert}(\vert U\vert)$.
\end{lemma}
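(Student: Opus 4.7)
\textbf{Plan for the proof of Lemma \ref{lemma-signature-extensions}.} If no embedding $\theta$ with the stated properties exists, the count is zero and we are done, so I will assume at least one such $\theta$ exists, which forces $f$ to be injective on $X$. For each $v \in V(H) \sm X$, I will define
\[
A_v = \bigl\{\, u \in U \colon a_\Gamma(u, f(x)) = a_H(v, x) \text{ for all } x \in X\,\bigr\},
\]
the set of candidate images in $U$ for $v$ that are compatible (with respect to adjacencies to $f(X)$) with extending $f$ to an embedding of $H$. Any embedding $\theta$ meeting the hypotheses must send each $v \in V(H) \sm X$ into $A_v$, so the total number of such embeddings is at most $\prod_{v \in V(H) \sm X} |A_v|$ (this crude bound ignores injectivity and adjacency constraints among vertices of $V(H) \sm X$, but these can only reduce the count).

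The key step is to show that the sets $\{A_v\}_{v \in V(H) \sm X}$ are pairwise disjoint. If $v, w \in V(H) \sm X$ are distinct, then because $X$ is a signature of $H$, we have $N(v) \cap X \neq N(w) \cap X$, so there is some $x \in X$ with $a_H(v, x) \neq a_H(w, x)$. Any $u \in A_v$ satisfies $a_\Gamma(u, f(x)) = a_H(v, x)$, while any $u' \in A_w$ satisfies $a_\Gamma(u', f(x)) = a_H(w, x)$; hence $A_v \cap A_w = \emptyset$. (Note injectivity of $f$ on $X$ is not even needed for this step.) Consequently
\[
\sum_{v \in V(H) \sm X} |A_v| = \Bigl|\bigcup_{v \in V(H) \sm X} A_v\Bigr| \leq |U|.
\]

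Finally, applying Lemma \ref{lemma-functions-E}(i) with $\l = k - |X|$ and the non-negative integers $\tilde{m}_v = |A_v|$, whose sum is bounded by $|U|$, gives
\[
\prod_{v \in V(H) \sm X} |A_v| \leq \E_{k - |X|}(|U|),
\]
which completes the proof. No step here looks hard; the only subtle point is recognizing that the signature hypothesis precisely provides the pairwise disjointness of the $A_v$, which is exactly the structural feature that makes the extremal product bound $\E_{k-|X|}(|U|)$ applicable.
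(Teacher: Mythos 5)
Your proof is correct and takes essentially the same approach as the paper: you bound the count by a product over $v \in V(H)\sm X$ of the number of candidate images, and you use the signature property to show these candidate sets are pairwise disjoint inside $U$, so Lemma~\ref{lemma-functions-E}(i) applies. The only cosmetic difference is that the paper's candidate set for each $v$ consists of those $u\in U$ actually realized as $\theta(v)$ by some valid $\theta$, whereas your $A_v$ is the (a priori larger) set of $u\in U$ compatible with the adjacency constraints to $f(X)$; both choices satisfy the same disjointness property and yield the same bound.
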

\begin{proof} Let $\l=k-\vert X\vert$ and enumerate the vertices in $V(H)\sm X$ from 1 to $\l$. For $i=1,\dots,\l$, let $U_i\su U$ be the set of vertices $u\in U$ for which there exists at least one embedding $\theta:H\into \Gamma$ satisfying the properties in the lemma and mapping vertex $i$ to $u$. Clearly, the number of embeddings $\theta:H\into \Gamma$ with the desired properties is at most $\vert U_1\vert \dotsm \vert U_{\l}\vert$, because each vertex $i$ needs to be mapped into $U_i$ and the images of the vertices in $X$ are already determined by $f$.

Lemma \ref{lemma-signature-disjoint-sets} (applied with $H'=H$) implies that the sets  $U_1, \dots, U_{\l}$ are disjoint. Hence $\vert U_1\vert + \dots+ \vert U_{\l}\vert\leq \vert U\vert$ and the number of embeddings $\theta:H\into \Gamma$ with the properties in the lemma is at most
\[\vert U_1\vert \dotsm \vert U_{\l}\vert\leq \E_{\l}(\vert U\vert)=\E_{k-\vert X\vert}(\vert U\vert),\]
where in the first inequality we used Lemma \ref{lemma-functions-E}(i).
\end{proof}

\begin{lemma}\label{lemma-embedding-small-set}Let $\Gamma$ be a graph on $n\geq k$ vertices and $U\su V(\Gamma)$ a subset of size $\vert U\vert\leq \gamma n$ for some $0<\gamma<1$. Furthermore, let $H'$ be an induced subgraph of $H$ on $k'\geq k-4$ vertices and let $X\su V(H')$ be a subset of size $\vert X\vert \geq \beta k$ for some $\gamma<\beta\leq \frac{q}{3}$. Assume that $\beta k\geq \frac{20}{q}(\log k)^2$. Then the number of embeddings $\theta:H'\into \Gamma$ with $\theta(x)\in U$ for every $x\in X$ is at most
\[ \left(\frac{e^4\gamma}{\beta}\right)^{\beta k}\cdot \frac{\E_k(n)}{n^{k-k'}}.\]
\end{lemma}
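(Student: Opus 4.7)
The plan is to extend $X$ by a small signature of $H'$ and then mimic the proof of Lemma~\ref{lemma-signature-extensions}. First I would apply Lemma~\ref{lemma-small-signature} to the set $V(H')\sm X\su V(H)$: from $\vert X\vert=\beta k\leq qk/3$, $k'\geq k-4$, and $kq\geq 10^4\log k\geq 24$ (a consequence of \eqref{eq-ineq-q-k1}), it follows that $\vert V(H')\sm X\vert\geq k-4-\beta k\geq (1-q/2)k$. The lemma then yields a signature $S_0$ of $H$ with $S_0\su V(H')\sm X$ and $\vert S_0\vert\leq (5/q)\log k$. Since $S_0\su V(H')$ and $H'$ is an induced subgraph of $H$, $S_0$ is also a signature of $H'$.

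Next, mimicking the proof of Lemma~\ref{lemma-signature-extensions}, I would fix an injective adjacency-preserving map $\theta\vert_{S_0}:S_0\to V(\Gamma)$ (of which there are at most $n^{\vert S_0\vert}$) and, for each $v\in V(H')\sm S_0$, let
\[P_v=\{u\in V(\Gamma)\sm \theta(S_0)\,:\, a_\Gamma(u,\theta(s))=a_H(v,s)\text{ for all }s\in S_0\}.\]
Any extension $\theta:H'\into\Gamma$ must send each $v\in V(H')\sm S_0$ into $P_v$, and the signature property of $S_0$ makes the sets $P_v$ pairwise disjoint; moreover, vertices $v\in X$ must land in $P_v\cap U$. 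Hence the number of extensions with $\theta(X)\su U$ is at most
\[\prod_{v\in X}\vert P_v\cap U\vert\cdot\prod_{v\in V(H')\sm(S_0\cup X)}\vert P_v\vert\;\leq\;\E_{\beta k}(\vert U\vert)\cdot\E_{k'-\vert S_0\vert-\beta k}(n),\]
where both inequalities use Lemma~\ref{lemma-functions-E}(i), together with $\sum_{v\in X}\vert P_v\cap U\vert\leq\vert U\vert\leq\gamma n$ and $\sum_{v\in V(H')\sm(S_0\cup X)}\vert P_v\vert\leq n$ (which follow from disjointness of the $P_v$'s).

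Summing over $\theta\vert_{S_0}$, using the AM--GM bound $\E_{\beta k}(\gamma n)\leq (\gamma n/(\beta k))^{\beta k}$, and applying Lemma~\ref{lemma-functions-E}(iii) with $\ell=k$, $\ell'=k'-\vert S_0\vert-\beta k$, $m=m'=n$ (so $\mu=0$) to bound
\[\E_{k'-\vert S_0\vert-\beta k}(n)\leq e^{3(k-k'+\vert S_0\vert+\beta k)}\left(\frac{k}{n}\right)^{k-k'+\vert S_0\vert+\beta k}\E_k(n),\]
a straightforward algebraic simplification collapses everything to
\[\left(\frac{\gamma}{\beta}\right)^{\beta k}\cdot e^{3(k-k'+\vert S_0\vert+\beta k)}\cdot k^{k-k'+\vert S_0\vert}\cdot\frac{\E_k(n)}{n^{k-k'}}.\]

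The remaining step, which is the main obstacle, is to absorb the extra prefactor into $e^{4\beta k}$, which reduces after taking logarithms to the inequality $(k-k'+\vert S_0\vert)(3+\log k)\leq\beta k$. Using $k-k'\leq 4$, $\vert S_0\vert\leq (5/q)\log k$, $q\leq 1$, and $\log k\geq 4$, the left-hand side is at most $\log k\cdot(6/q)\cdot 2\log k=12(\log k)^2/q$, and the hypothesis $\beta k\geq 20(\log k)^2/q$ then delivers the required inequality. This numerical verification is the only place where the technical lower bound on $\beta k$ is used, and the careful bookkeeping of the $\E_\ell(m)$ factors through Lemma~\ref{lemma-functions-E} is the most delicate ingredient of the argument.
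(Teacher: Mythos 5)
Your proof is correct and follows essentially the same path as the paper's: extract a small signature $S_0\subseteq V(H')\setminus X$ via Lemma~\ref{lemma-small-signature}, fix $\theta\vert_{S_0}$, use disjointness of the resulting fibers together with AM--GM on the $X$-part and Lemma~\ref{lemma-functions-E}(iii) on the rest, and absorb the polynomial prefactor into $e^{4\beta k}$ using $\beta k\geq \frac{20}{q}(\log k)^2$. The only gap is cosmetic: you implicitly take $\vert X\vert=\beta k$, whereas the hypothesis is $\vert X\vert\geq\beta k$, so you should first note (as the paper does) that you may delete elements of $X$ to reach $\vert X\vert=\lceil\beta k\rceil$, since shrinking $X$ only weakens the constraint and hence can only increase the count; the ceiling also accounts for $\beta k$ possibly being non-integral.
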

\begin{proof}By removing elements from the set $X$, we may assume without loss of generality that $X$ has size $\vert X\vert=\lceil \beta k\rceil\leq \beta k+1\leq \frac{q}{3}k+1\leq \frac{q}{2}k-4$ (for the last inequality see (\ref{eq-ineq-q-k1})). Then $\vert V(H')\sm X\vert \geq (1-\frac{q}{2})k$ and by Lemma \ref{lemma-small-signature} there exists a signature $S$ of $H$ of size $\vert S\vert\leq \frac{5}{q}\log k$ with $S\su V(H')\sm X$.

There are at most $n^{\vert S\vert}$ possibilities for $\theta\vert_S$. Let us fix one particular choice  for $\theta\vert_S$, then we have determined $\theta(s)$ for all $s\in S$. For each remaining vertex $v\in V(H')\sm S$, let $U_v\su V(\Gamma)$ be the set of possible images of $v$ when extending our chosen $\theta\vert_S$ to an embedding $\theta:H'\into \Gamma$ with the properties in the lemma. By the condition on $\theta$, we have $U_v\su U$ for $v\in X$. The number of extensions of $\theta\vert_S$ is at most $\prod_{v\in V(H')\sm S}\vert U_v\vert$. Lemma \ref{lemma-signature-disjoint-sets} implies that the sets $U_v$ for $v\in V(H')\sm S$ are disjoint. Thus, $\sum_{v\in X}\vert U_v\vert\leq \vert U\vert\leq \gamma n$ and $\sum_{V(H')\sm S}\vert U_v\vert\leq n$. Hence we obtain (using Lemma \ref{lemma-functions-E}(i) and Lemma \ref{lemma-functions-E}(iii))
\[\prod_{v\in V(H')\sm (S\cup X)}\vert U_v\vert\leq E_{k'-\vert S\vert-\vert X\vert}(n)\leq e^{3(k-k'+\vert S\vert+\vert X\vert)}\left(\frac{k}{n}\right)^{k-k'+\vert S\vert+\vert X\vert}\E_k(n).\]
and (using the inequality of arithmetic and geometric mean)
\[\prod_{v\in X}\vert U_v\vert\leq \left(\frac{\sum_{v\in X}\vert U_v\vert}{\vert X\vert}\right)^{\vert X\vert}\leq \left(\frac{\gamma n}{\beta k}\right)^{\vert X\vert}= \left(\frac{\gamma}{\beta}\right)^{\vert X\vert}\cdot \left(\frac{n}{k}\right)^{\vert X\vert}\leq \left(\frac{\gamma}{\beta}\right)^{\beta k}\cdot \left(\frac{n}{k}\right)^{\vert X\vert}\]
Thus, the number of extensions of this particular choice for $\theta\vert_S$ is at most
\[\prod_{v\in V(H')\sm S}\vert U_v\vert=\prod_{v\in V(H')\sm (S\cup X)}\vert U_v\vert \cdot \prod_{v\in X}\vert U_v\vert\leq e^{3(k-k'+\vert S\vert+\vert X\vert)}\left(\frac{k}{n}\right)^{k-k'+\vert S\vert}\left(\frac{\gamma}{\beta}\right)^{\beta k}\E_k(n).\]
Adding this up over all the at most $n^{\vert S\vert}$ choices for $\theta\vert_S$ gives that the total number of embeddings $\theta:H'\into \Gamma$ with $\theta(x)\in U$ for every $x\in X$ is at most
\[n^{\vert S\vert}\cdot e^{3(k-k'+\vert S\vert+\vert X\vert)}\left(\frac{k}{n}\right)^{k-k'+\vert S\vert}\left(\frac{\gamma}{\beta}\right)^{\beta k}\E_k(n)=
e^{3(k-k'+\vert S\vert+\vert X\vert)}k^{k-k'+\vert S\vert}\left(\frac{\gamma}{\beta}\right)^{\beta k}\frac{\E_k(n)}{n^{k-k'}}.\]
Now, noting that (recall that $q<1$ and $k\geq \tk/2\geq 10^{199}$)
\[e^{3(k-k'+\vert S\vert+\vert X\vert)}k^{k-k'+\vert S\vert}\leq e^{3\cdot (4+(5/q)\log k+\beta k+1)}e^{(\log k)(4+(5/q)\log k)}\leq e^{(20/q)(\log k)^2+3\beta k}\leq e^{4\beta k}\]
gives the desired bound.
\end{proof}

\begin{corollary}\label{coro-embedding-small-set}Let $\Gamma$ be a graph on $n\geq k$ vertices and $U\su V(\Gamma)$ a subset of size $\vert U\vert\leq \gamma n$ for some $0<\gamma<1$. Furthermore, let $H'$ be an induced subgraph of $H$ on $k'\geq k-4$ vertices and assume that $0<\beta<1$ satisfies $\gamma<\beta\leq \frac{q}{3}$ and $\beta k>\frac{20}{q}(\log k)^2$. Then the number of embeddings $\theta:H'\into \Gamma$ with $\theta(x)\in U$ for at least $\beta k$ vertices $x\in V(H')$ is at most
\[ \left(\frac{e^6\gamma}{\beta^2}\right)^{\beta k}\cdot \frac{\E_k(n)}{n^{k-k'}}.\]
\end{corollary}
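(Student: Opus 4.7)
The plan is to deduce the corollary from Lemma \ref{lemma-embedding-small-set} by a straightforward union bound over the possible sets $X\su V(H')$ recording which vertices of $H'$ are mapped into $U$. The main observation is that any embedding $\theta:H'\into\Gamma$ with at least $\beta k$ vertices sent into $U$ satisfies $\theta(x)\in U$ for every $x$ in some subset $X\su V(H')$ with $|X|=\lceil \beta k\rceil$. So the quantity we want to bound is at most
\[
\sum_{\substack{X\su V(H')\\ |X|=\lceil \beta k\rceil}}\#\bigl\{\theta:H'\into\Gamma\ :\ \theta(x)\in U\text{ for all }x\in X\bigr\}.
\]
(This is an over-count, since each $\theta$ is counted $\binom{m}{\lceil\beta k\rceil}$ times where $m$ is the number of vertices of $H'$ sent to $U$, but an upper bound is all we need.)

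Next, I would apply Lemma \ref{lemma-embedding-small-set} to each term of the sum; since $|X|=\lceil\beta k\rceil\ge \beta k$ and the other hypotheses of that lemma ($\gamma<\beta\le q/3$, $\beta k\ge (20/q)(\log k)^2$, $k'\ge k-4$) are precisely the hypotheses of the corollary, each summand is bounded by $(e^4\gamma/\beta)^{\beta k}\cdot \E_k(n)/n^{k-k'}$. The number of subsets $X\su V(H')$ of size $\lceil\beta k\rceil$ is at most
\[
\binom{k'}{\lceil\beta k\rceil}\le \binom{k}{\lceil\beta k\rceil}\le \left(\frac{e k}{\lceil\beta k\rceil}\right)^{\lceil\beta k\rceil}\le \left(\frac{e}{\beta}\right)^{\lceil\beta k\rceil}\le \frac{e}{\beta}\left(\frac{e}{\beta}\right)^{\beta k},
\]
using the standard estimate $\binom{n}{m}\le (en/m)^m$ together with $\lceil\beta k\rceil\le \beta k+1$.

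Putting these bounds together, the total count is at most
\[
\frac{e}{\beta}\cdot\left(\frac{e}{\beta}\right)^{\beta k}\cdot \left(\frac{e^{4}\gamma}{\beta}\right)^{\beta k}\cdot\frac{\E_k(n)}{n^{k-k'}}
\ =\ \frac{e}{\beta}\cdot\left(\frac{e^{5}\gamma}{\beta^2}\right)^{\beta k}\cdot\frac{\E_k(n)}{n^{k-k'}}.
\]
It therefore suffices to absorb the leading factor $e/\beta$ into one extra power of $e$ per coordinate, i.e.\ to verify that $e/\beta \le e^{\beta k}$, equivalently $1-\log\beta\le \beta k$. This is the only slightly delicate point of the proof, and it will follow from the hypotheses: the assumption $\beta k>(20/q)(\log k)^2$ combined with $q<1$ yields $\beta k>20(\log k)^2$, while $\beta>20(\log k)^2/k$ gives $\log(1/\beta)<\log k$; so $1-\log\beta<1+\log k\le 20(\log k)^2<\beta k$ for $k$ large enough (which holds since $k\ge \tk/2\ge 10^{199}$). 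This last step is the only real work in the proof; everything else is bookkeeping. Once this inequality is in hand, the desired bound $(e^6\gamma/\beta^2)^{\beta k}\cdot \E_k(n)/n^{k-k'}$ follows immediately.
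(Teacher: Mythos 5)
Your proof is correct and follows essentially the same route as the paper's: a union bound over subsets $X\su V(H')$ of size $\lceil\beta k\rceil$, the standard binomial estimate giving at most $(e/\beta)^{\beta k+1}$ such subsets, an application of Lemma \ref{lemma-embedding-small-set} to each, and then the observation $\log(1/\beta)+1<\beta k$ (phrased by you as $e/\beta\le e^{\beta k}$) to absorb the off-by-one in the exponent into the base. The paper's bookkeeping differs only cosmetically — it writes $(e/\beta)^{\beta k+1}=e^{(\log(1/\beta)+1)(\beta k+1)}$ and bounds the exponent directly — so there is nothing substantively different here.
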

\begin{proof} First, note that $\beta>\frac{20}{q}(\log k)^2/k>2/k$ and therefore $\log(1/\beta)<\log k<\beta k/20$. Hence $\log(1/\beta)+1<\beta k$ and therefore
\[(\log(1/\beta)+1)(\beta k+1)=(\log(1/\beta)+1)\beta k+(\log(1/\beta)+1)<(\log(1/\beta)+2)\beta k.\]
Let us apply Lemma \ref{lemma-embedding-small-set} for all choices of $X\su V(H')$ of size $X=\lceil \beta k\rceil$. The number of choices for $X$ is
\[\binom{k'}{\lceil \beta k\rceil}\leq \left(\frac{ek'}{\lceil \beta k\rceil}\right)^{\lceil \beta k\rceil}\leq \left(\frac{ek}{\beta k}\right)^{\beta k+1}=e^{(\log(1/\beta)+1)(\beta k+1)}< e^{(\log(1/\beta)+2)\beta k}=\left(\frac{e^2}{\beta}\right)^{\beta k}.\]
Multiplying this with the bound in Lemma \ref{lemma-embedding-small-set} gives the desired result.\end{proof}

\begin{lemma}\label{lemma-number-rot-refl}Fix any vertex $x\in V(\tH)$. Then, among the $\tk$ rotations $\phi$ of $\tH$, there are exactly $k$ rotations $\phi$ with $x\in \phi(V(H))$ and for the remaining $\tk-k$ rotations we have $x\not\in \phi(V(H))$. Similarly, among the $\tk$ reflections $\phi$ of $\tH$, there are exactly $k$ reflections $\phi$ with $x\in \phi(V(H))$ and for the remaining $\tk-k$ reflections we have $x\not\in \phi(V(H))$.
\end{lemma}
\begin{proof}First, let us count the number of rotations $\phi$ with $x\in \phi(V(H))$. Each rotation is given as $y\mapsto y+g$ for some $g\in G$ and in order to have $x\in \phi(V(H))$ there must be a vertex $y\in V(H)\su G$ with $x=y+g$. There are $k$ choices for $y$ and hence $k$ choices for $g$. This shows that there are exactly $k$ rotations $\phi$ with $x\in \phi(V(H))$. We can analogously prove that there are  exactly $k$ reflections $\phi$ with $x\in \phi(V(H))$ by considering the equation $x=-y+g$.
\end{proof}

\begin{lemma}\label{lemma-emb-H-prime}For every integer $n\geq \tk$, 
we have $\emb(H,n)\geq \tk^{1/4}\cdot \E_k(n)$.
\end{lemma}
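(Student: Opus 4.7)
The plan is to construct an explicit $n$-vertex graph $\Gamma$ with the required number of embeddings. I take $\Gamma$ to be a balanced blow-up of $\tH$ with parts $W_i$ (for $i\in V(\tH)$) of sizes $n_i\in\{\lfloor n/\tk\rfloor,\lceil n/\tk\rceil\}$ summing to $n$. For each of the $2\tk$ rotations or reflections $\phi$ of $\tH$ and each function $\theta\colon V(H)\to V(\Gamma)$ with $\theta(x)\in W_{\phi(x)}$ for every $x\in V(H)$, the map $\theta$ is an embedding $H\into\Gamma$: for distinct $x,y\in V(H)$ one has $\phi(x)\neq\phi(y)$, so $\theta(x),\theta(y)$ lie in distinct parts, and their adjacency in $\Gamma$ equals that of $\phi(x),\phi(y)$ in $\tH$, which equals that of $x,y$ in $H$ because $\phi$ is an automorphism of $\tH$ and $H$ is an induced subgraph. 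For each fixed $\phi$ the number of such $\theta$ is $\prod_{x\in V(H)}n_{\phi(x)}$.

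Each embedding obtained this way is produced by at most two pairs $(\phi,\theta)$: given such a $\theta$, the map $f\colon V(H)\to V(\tH)$ sending $x$ to the index of the part containing $\theta(x)$ is injective and adjacency-preserving, and by condition (c) of $(q,\delta)$-reasonableness (applied with $X=V(H)$), $f$ must be the restriction of some rotation or reflection of $\tH$; in an abelian group, a rotation $y\mapsto y+g$ or a reflection $y\mapsto -y+g$ is determined by its value at any single vertex, so at most one rotation and one reflection restrict to a given $f$. Hence
\[
  \emb(H,n)\geq \emb(H,\Gamma)\geq \tfrac{1}{2}\sum_{\phi}\prod_{x\in V(H)}n_{\phi(x)}.
\]
Applying the AM--GM inequality to the $2\tk$ summands on the right, combined with Lemma~\ref{lemma-number-rot-refl} (each $i\in V(\tH)$ appears in $\phi(V(H))$ for exactly $2k$ of the $2\tk$ choices of $\phi$, and for each such $\phi$ there is a unique $x$ with $\phi(x)=i$), the total product over all $(\phi,x)$ equals $\prod_i n_i^{2k}$, and one obtains
\[
  \emb(H,n)\geq \tk\bigl(\textstyle\prod_i n_i\bigr)^{k/\tk}.
\]

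It remains to verify $\tk(\prod_i n_i)^{k/\tk}\geq 2\tk^{1/4}\E_k(n)$ for every $n\geq\tk$. Writing $n=q\tk+r$ with $0\leq r<\tk$ gives $(\prod_i n_i)^{k/\tk}=q^k(1+1/q)^{rk/\tk}$, and writing $n=q_*k+r_*$ with $0\leq r_*<k$ gives $\E_k(n)=q_*^k(1+1/q_*)^{r_*}$. A short case analysis according to whether $\lfloor n/k\rfloor=\lfloor n/\tk\rfloor$ or $\lfloor n/k\rfloor>\lfloor n/\tk\rfloor$ reduces the desired inequality to an explicit estimate; the pivotal input is
\[
  (k/\tk)^k\geq \tk^{-1/4}e^{-o(1)},
\]
which follows from $k\log(k/\tk)\geq -(\log\tk)/4-o(1)$ via the inequality $\log(1-x)\geq -x-x^2$ applied to $x=(\tk-k)/\tk\leq (\log\tk)/(4\tk)$. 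The main obstacle is precisely this final verification: the bound must be uniform in $n$, and the AM--GM lower bound on $\sum_\phi\prod_{x}n_{\phi(x)}$ is quantitatively tight when $\lfloor n/\tk\rfloor$ is small, so one must carefully track the integer rounding in both $\prod_i n_i$ and $\E_k(n)$ (in particular separating the cases $n=\tk$, $\tk<n<2\tk$, and $n\geq 2\tk$) to see that the loss from quantization is absorbed by the slack coming from $\tk\geq 10^{200}$.
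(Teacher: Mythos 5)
Your construction (a balanced blow-up of $\tH$) and the AM--GM step over the $2\tk$ rotations and reflections are exactly the paper's. You have spotted a genuine subtlety the paper glosses over when it writes $\emb(H,\Gamma')\geq\sum_{\phi}\prod_{v}n_{\phi(v)}$: if every element of $G$ has order at most two, then the rotation $y\mapsto y+g$ and the reflection $y\mapsto-y+g$ are the same map, so the sum over-counts each embedding exactly twice, and your factor $\tfrac12$ is then needed. (For any other $G$ it is not: a rotation and a reflection can agree only on a coset of the $2$-torsion subgroup, whose order divides $\tk$ and hence is at most $\tk/2<k=|V(H)|$, so the $2\tk$ restrictions $\phi|_{V(H)}$ are pairwise distinct and each embedding is produced by exactly one $\phi$.)

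The gap is in the last step. After the AM--GM you have $\emb(H,n)\geq\tk\,\E_{\tk}(n)^{k/\tk}$ and must show this is $\geq 2\tk^{1/4}\E_k(n)$. You propose a direct case analysis on the integer rounding in $\E_{\tk}(n)$ and $\E_k(n)$, but you do not carry it out and explicitly flag it as ``precisely the main obstacle.'' The paper's route is to apply Lemma \ref{lemma-functions-E}(iii) with $\ell=\tk$, $\ell'=k$, $m=m'=n$, $\mu=0$, giving $\E_k(n)\leq e^{3(\tk-k)}(\tk/n)^{\tk-k}\E_{\tk}(n)\leq e^{3(\tk-k)}\E_{\tk}(n)^{k/\tk}$, and then $e^{3(\tk-k)}\leq\tk^{3/4}$. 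Starting from the paper's $2\tk\,\E_{\tk}(n)^{k/\tk}$ this yields $2\tk^{1/4}\E_k(n)$; starting from your $\tk\,\E_{\tk}(n)^{k/\tk}$ it yields only $\tk^{1/4}\E_k(n)$, a factor of $2$ short. So as written the proof is incomplete. There are at least two clean ways to close it: (a) split off the exponent-two case and observe that otherwise no $\tfrac12$ is lost; or (b) observe that the $e^{3(\ell-\ell')}$ in Lemma \ref{lemma-functions-E}(iii) is in this application really $(2e^2)^{\tk-k}$ (the appendix proof bounds a factor $2^{\ell-\ell'}$ by $e^{\ell-\ell'}$ wastefully), and $\tk(2e^2)^{-(\tk-k)}\geq\tk^{1-(2+\log 2)/4}=\tk^{(2-\log 2)/4}>2\tk^{1/4}$ for $\tk\geq10^{200}$. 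Either closes the gap, but neither appears in your argument.
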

\begin{proof} Let $\Gamma'$ be a balanced blow-up of $\tH$ with $n$ vertices. This means $\Gamma'$ is a blow-up of $\tH$ where the parts of the blow-up have sizes $n_{v}$ with $\lceil \frac{n}{\tk}\rceil\geq n_v\geq \lfloor \frac{n}{\tk}\rfloor$ for all $v\in V(\tH)$ and $n=\sum_{v\in V(\tH)}n_v$. Note that then $\prod_{v\in V(\tH)}n_v=\E_{\tk}(n)$.

If $\phi$ is a rotation of $\tH$, then we can form $\prod_{v\in V(H)}n_{\phi(v)}$ embeddings $H\into \Gamma'$ by mapping each vertex $v$ into the part of the blow-up belonging to $\phi(v)$. Thus, using the inequality between arithmetic and geometric mean, we obtain
\[\emb(H,\Gamma')\geq \sum_{\phi}\prod_{v\in V(H)}n_{\phi(v)}\geq \tk\left(\prod_{\phi}\prod_{v\in V(H)}n_{\phi(v)}\right)^{1/\tk}=\tk\left(\prod_{\phi}\prod_{v\in \phi(V(H))}n_{v}\right)^{1/\tk}\]
(here, the sum and the product are over the $\tk$ rotations $\phi$). By Lemma \ref{lemma-number-rot-refl}, every vertex $v\in V(\tH)$ is in the image $\phi(V(H))$ for exactly $k$ rotations $\phi$. Thus,
\[\emb(H,\Gamma')\geq \tk\left(\prod_{v\in V(\tH)}n_{v}^{k}\right)^{1/\tk}=\tk\left(\prod_{v\in V(\tH)}n_{v}\right)^{k/\tk}=\tk\cdot \E_{\tk}(n)^{k/\tk}.\]
Recall that $\E_{\tk}(n)\leq (n/\tk)^{\tk}$ and therefore, using Lemma \ref{lemma-functions-E}(iii), we have
\[\E_k(n)\leq e^{3(\tk-k)}\left(\frac{\tk}{n}\right)^{\tk-k}\E_{\tk}(n)\leq e^{3(\tk-k)}\left(\E_{\tk}(n)^{-1/\tk}\right)^{\tk-k}\E_{\tk}(n)=e^{3(\tk-k)}\E_{\tk}(n)^{k/\tk}.\]
Hence,
\[\emb(H,n)\geq \emb(H,\Gamma')\geq \tk\cdot \E_{\tk}(n)^{k/\tk}\geq \tk\cdot e^{-3(\tk-k)}\E_k(n)\geq \tk^{1/4}\cdot \E_k(n),\]
as $\tk -k\leq \frac{1}{4}\log \tk$. This finishes the proof of the lemma.
\end{proof}

\subsection{Final preparations}

The following lemma is Lemma 4.4 in \cite{fox-huang-lee}, we repeat the proof here for the reader's convenience. A similar lemma can also be found in \cite[Lemma 2.3]{hefetz-tyomkyn}.

\begin{lemma}[Lemma 4.4 in \cite{fox-huang-lee}]\label{lemma-each-vertex-many-embeddings} Let $n\geq 2$ and let $\Gamma$ be a graph on $n$ vertices with $\emb(H,\Gamma)=\emb(H,n)$. Then every vertex of $\,\Gamma$ is in the image of at least $\frac{k}{n+k}\emb(H,n)$ embeddings $H\into \Gamma$.
\end{lemma}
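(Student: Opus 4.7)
The plan is to use a vertex-swapping (cloning) argument combined with double counting. For each vertex $v\in V(\Gamma)$, write $e_v$ for the number of embeddings $H\into\Gamma$ whose image contains $v$. Since each embedding has image of size $k$, double counting gives $\sum_{v\in V(\Gamma)} e_v = k\cdot \emb(H,\Gamma)$. Our goal is to prove $e_v\geq \frac{k}{n+k}\emb(H,\Gamma)$ for every vertex $v$.

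For a pair of distinct vertices $v,w\in V(\Gamma)$, define the \emph{twin swap} $\Gamma_{vw}$ as the graph obtained from $\Gamma$ by deleting $v$ and inserting a new vertex $v'$ whose neighborhood is $N_\Gamma(w)$. Thus $\Gamma_{vw}$ still has $n$ vertices, and in $\Gamma_{vw}$ the vertices $v'$ and $w$ are non-adjacent twins (they have identical neighborhoods and are not joined). Write $a_{vw}$ for the number of embeddings $H\into\Gamma$ whose image contains both $v$ and $w$. The key step is to lower bound $\emb(H,\Gamma_{vw})$ by splitting its embeddings into two disjoint classes:
\begin{itemize}
\item Embeddings $H\into \Gamma_{vw}$ avoiding $v'$: these are precisely the embeddings into $\Gamma_{vw}\setminus\{v'\}=\Gamma\setminus\{v\}$, of which there are $\emb(H,\Gamma)-e_v$.
\item Embeddings $H\into \Gamma_{vw}$ using $v'$ but not $w$: replacing $v'$ by $w$ in such an embedding gives a bijection with embeddings $H\into\Gamma$ using $w$ but not $v$. (The adjacency check works because $N_{\Gamma_{vw}}(v')=N_\Gamma(w)\setminus\{v\}$ and $v$ is not in the image on the $\Gamma$ side.) The number of these is $e_w-a_{vw}$.
\end{itemize}
Since $\Gamma$ maximizes the number of embeddings among $n$-vertex graphs, $\emb(H,\Gamma_{vw})\leq \emb(H,\Gamma)$, which yields $e_w - a_{vw}\leq e_v$ for every $w\neq v$.

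Finally, sum the inequality $e_w-a_{vw}\leq e_v$ over all $w\in V(\Gamma)\setminus\{v\}$. Using $\sum_{w\neq v} e_w = k\cdot \emb(H,\Gamma)-e_v$ and the identity $\sum_{w\neq v} a_{vw}=(k-1)e_v$ (because each embedding counted by $e_v$ contributes $k-1$ other image-vertices), we obtain
\[
k\cdot \emb(H,\Gamma) - e_v - (k-1)e_v \leq (n-1)e_v,
\]
which rearranges to $e_v\geq \frac{k}{n+k-1}\emb(H,\Gamma)\geq \frac{k}{n+k}\emb(H,\Gamma)$, as desired.

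The main subtlety (and the only thing needing careful verification) is the bijection in the second bullet of the case split: one must check that the new neighborhood assignment $N_{\Gamma_{vw}}(v')=N_\Gamma(w)$ correctly transfers the embedding property when swapping $v'$ and $w$, and that in particular no loops or double-counts are introduced (this is where the fact that $v'$ and $w$ are non-adjacent matters, matching the injectivity of an embedding on the $H$ side).
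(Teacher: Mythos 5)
Your proof is correct, and it takes a genuinely different (and arguably cleaner) route than the paper's. The paper argues by contradiction: assume $v$ is in too few embeddings, delete $v$, then use an averaging argument to find a single good vertex $w$ of $\Gamma\setminus\{v\}$ to duplicate, and show the resulting $n$-vertex graph beats $\Gamma$, contradiction. You instead argue directly: for \emph{every} $w\neq v$ you form the twin-swap graph $\Gamma_{vw}$, use maximality of $\Gamma$ to extract the local inequality $e_w - a_{vw}\leq e_v$, and then sum over $w$ together with the two double-counting identities $\sum_{w\neq v} e_w = k\,\emb(H,\Gamma) - e_v$ and $\sum_{w\neq v}a_{vw}=(k-1)e_v$. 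This avoids the contradiction framing and the averaging step, and as a small bonus yields the marginally sharper bound $e_v\geq \frac{k}{n+k-1}\emb(H,\Gamma)$. One pedantic nit: as you note in passing, after deleting $v$ the neighborhood of the new twin $v'$ is $N_\Gamma(w)\setminus\{v\}$, which coincides with the neighborhood of $w$ in $\Gamma\setminus\{v\}$; it is worth stating this explicitly (rather than writing $N_{\Gamma_{vw}}(v')=N_\Gamma(w)$), since it is exactly this equality that makes $v'$ and $w$ non-adjacent twins in $\Gamma_{vw}$ and hence makes the swap bijection in your second bullet work. With that clarification the argument is airtight.
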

\begin{proof}Suppose some vertex $v\in V(\Gamma)$ is contained in the image of fewer than $\frac{k}{n+k}\emb(H,n)$ embeddings $H\into \Gamma$. When deleting this vertex $v$ from the graph $\Gamma$, we obtain
\[\emb(H,\Gamma\sm\lbrace v\rbrace)>\emb(H,\Gamma)-\frac{k}{n+k}\emb(H,n)=\frac{n}{n+k}\emb(H,n).\]
On average, each vertex $w\in V(\Gamma\sm\lbrace v\rbrace)$ is in the image of $\frac{k}{n-1}\emb(H, \Gamma\sm\lbrace v\rbrace)$ embeddings $H\into \Gamma\sm\lbrace v\rbrace$. Hence there exists a vertex $w\in V(\Gamma\sm\lbrace v\rbrace)$ that appears in at least $\frac{k}{n-1}\emb(H, \Gamma\sm\lbrace v\rbrace)\geq \frac{k}{n}\emb(H, \Gamma\sm\lbrace v\rbrace)$ embeddings $H\into \Gamma\sm\lbrace v\rbrace$. Now let the graph $\Gamma'$ be obtained from $\Gamma\sm\lbrace v\rbrace$ by making an additional copy of the vertex $w$ (unconnected to the original vertex $w$). Then $\Gamma'$ has $n$ vertices and
\begin{multline*}
\emb(H,\Gamma')\geq \emb(H, \Gamma\sm\lbrace v\rbrace)+\frac{k}{n}\emb(H, \Gamma\sm\lbrace v\rbrace)=\frac{n+k}{n}\emb(H, \Gamma\sm\lbrace v\rbrace)\\
>\frac{n+k}{n}\cdot \frac{n}{n+k}\emb(H,n)=\emb(H,n)
\end{multline*}
This is a contradiction, as $\emb(H,n)$ is the maximum number of embeddings of $H$ into a graph on $n$ vertices.
\end{proof}

\begin{corollary}\label{coro-each-vertex-many-embeddings} Let $n\geq \tk$ and let $\Gamma$ be a graph on $n$ vertices with $\emb(H,\Gamma)=\emb(H,n)$. Then every vertex of $\Gamma$ is in the image of at least $\E_k(n)/n$ embeddings $H\into \Gamma$.
\end{corollary}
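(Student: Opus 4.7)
The plan is to combine the two lemmas immediately preceding the corollary, namely Lemma \ref{lemma-each-vertex-many-embeddings} and Lemma \ref{lemma-emb-H-prime}. By Lemma \ref{lemma-each-vertex-many-embeddings}, every vertex of $\Gamma$ is contained in the image of at least $\frac{k}{n+k}\emb(H,n)$ embeddings $H\into \Gamma$. By Lemma \ref{lemma-emb-H-prime}, we have $\emb(H,n)\geq 2\tk^{1/4}\cdot \E_k(n)$. So each vertex of $\Gamma$ is contained in the image of at least
\[\frac{k}{n+k}\cdot 2\tk^{1/4}\cdot \E_k(n)\]
embeddings $H\into \Gamma$.

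It then suffices to show $\frac{2k\tk^{1/4}}{n+k}\geq \frac{1}{n}$, i.e. $2kn\tk^{1/4}\geq n+k$. Since $n\geq \tk\geq k$, we have $n+k\leq 2n$, so it is enough to check $k\tk^{1/4}\geq 1$, which holds trivially (recall $\tk\geq 10^{200}$ and $k\geq \tk-\frac{1}{4}\log\tk\geq 1$). This completes the proof, so the argument is essentially a one-line consequence of the two preceding lemmas; there is no real obstacle.
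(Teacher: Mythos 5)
Your proof is correct and follows exactly the same route as the paper: apply Lemma \ref{lemma-each-vertex-many-embeddings} to get a lower bound of $\frac{k}{n+k}\emb(H,n)$, plug in the bound $\emb(H,n)\geq 2\tk^{1/4}\E_k(n)$ from Lemma \ref{lemma-emb-H-prime}, and use $n+k\leq 2n$ together with $k\tk^{1/4}\geq 1$. There is nothing to add.
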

\begin{proof}By Lemma \ref{lemma-emb-H-prime} and $n\geq \tk\geq k$, we have
\[\frac{k}{n+k}\emb(H,n)\geq \frac{k}{2n}\cdot \tk^{1/4}\cdot \E_k(n)\geq \frac{\E_k(n)}{n}.\]
So the statement follows immediately from Lemma \ref{lemma-each-vertex-many-embeddings}.
\end{proof}

The following lemma provides values for certain parameters that will play a crucial role in the proof of Theorem \ref{thm-H-reasonable-blow-up}.

\begin{lemma}\label{lemma-epsilon-parameters} Let us define the following parameters:
\begin{align*}
\eps_1&=\frac{q}{3}\\
\eps_2&=10^{-2}\frac{q}{\log(1/q)}\\
\eps_3&=10^{-5}\frac{q}{(\log(1/q))^2}\\
\eps_4&=10^{-7}\frac{q^2}{(\log(1/q))^2}\\
\eps_5&=10^{-19}\frac{q^4}{(\log(1/q))^4}.
\end{align*}
Then the following inequalities hold:
\begin{align}
\eps_5<\eps_4<\eps_3<\eps_2<\eps_1&< \frac{q}{2}<10^{-20}\label{ineq-eps1-q}\\
\eps_5<\eps_4<\eps_3<\eps_2&< \frac{1}{100}\label{ineq-eps2-100}\\
\log(1/\eps_2)\cdot \eps_2&< \frac{\log 2}{8}\eps_1\label{ineq-eps2-eps1}\\
\eps_2&< \delta\label{ineq-eps2-delta}\\
\log(1/\eps_3)\cdot \eps_3&< \frac{1}{100}\eps_2\label{ineq-eps3-eps2}\\
\eps_2>\eps_3>\eps_3^2&>\frac{10^6}{q}\cdot \frac{(\log k)^2}{k}\label{ineq-eps3-k}\\
\eps_4&< \frac{q}{20}\eps_3\label{ineq-eps4-eps3}\\
\eps_5&\leq \frac{1}{10^5}\eps_4^2\label{ineq-eps5-eps4}\\
\eps_5&<\frac{q}{10^4}\label{ineq-eps5-q}\\
\eps_1>\eps_2>\eps_3>\eps_4>\eps_5&> \frac{40}{q}\cdot \frac{(\log k)^2}{k}> \frac{10^3}{q}\cdot \frac{\log k}{k}>\frac{10^3}{k}.\label{ineq-eps5-logk-k}
\end{align}
\end{lemma}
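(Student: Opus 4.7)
The plan is to verify the list of inequalities by direct, if slightly tedious, algebraic manipulation from the definitions. The argument rests on three recurring ingredients: (1) the upper bound $q\le 10^{-20}$ from (\ref{ineq-q-lower-bound}), which gives $\log(1/q)\ge 20\log 10$ and in particular makes $\log\log(1/q)$ negligible compared to $\log(1/q)$; (2) the lower bound $q\ge 10^{4}(\log k)^{6/5}k^{-1/5}$, which yields $\log(1/q)\le \tfrac{1}{5}\log k$ (after absorbing the $\log 10^4$ and $\log\log k$ terms into the factor of $1/5$); and (3) the inequality $k^{1/20}\ge 5\cdot 10^{3}(\log\tk)^{2}\ge 5\cdot 10^{3}(\log k)^{2}$ from (\ref{ineq-k-logtk}). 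I would first record these three facts as a preamble, and then handle the inequalities in groups of similar flavor.

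The ordering inequalities (\ref{ineq-eps1-q}) and (\ref{ineq-eps2-100}) are immediate by plugging in $\log(1/q)\ge 46$ and comparing the formulas for $\eps_1,\ldots,\eps_5$. The ``log-absorbing'' inequalities (\ref{ineq-eps2-eps1}), (\ref{ineq-eps3-eps2}), (\ref{ineq-eps4-eps3}), (\ref{ineq-eps5-eps4}) and (\ref{ineq-eps5-q}) reduce to bounding $\log(1/\eps_i)$: one checks that $\log(1/\eps_i)\le 2\log(1/q)$ for $i\ge 2$, using $\log\log(1/q)\le \tfrac{1}{10}\log(1/q)$ and absorbing the additive constants from the prefactors $10^{-2},\ldots,10^{-19}$. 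Then each inequality becomes a direct comparison of powers of $q$ and $\log(1/q)$, which matches the constants chosen in the definitions (for instance, (\ref{ineq-eps4-eps3}) becomes $10^{-2}q<q/20$, and (\ref{ineq-eps5-eps4}) becomes an equality $10^{-5}=10^{-5}$). Inequality (\ref{ineq-eps2-delta}) is immediate from (\ref{ineq-delta-q}).

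The two remaining inequalities (\ref{ineq-eps3-k}) and (\ref{ineq-eps5-logk-k}) involve $k$, and will be the main obstacle—here one really needs ingredients (2) and (3) simultaneously. For (\ref{ineq-eps3-k}), the content is $\eps_3^{2}>10^{6}q^{-1}(\log k)^{2}/k$, i.e.\ $q^{3}k>10^{16}(\log(1/q))^{4}(\log k)^{2}$. Substituting the lower bound $q\ge 10^{4}(\log k)^{6/5}k^{-1/5}$ replaces the left side with something $\ge 10^{12}(\log k)^{18/5}k^{2/5}$, and then ingredient (2) lets us bound $(\log(1/q))^{4}\le (\log k)^{4}/625$, so the inequality reduces to $k^{2/5}\gtrsim (\log k)^{12/5}$, which follows comfortably from (\ref{ineq-k-logtk}). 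The calculation for (\ref{ineq-eps5-logk-k}) is similar: one needs $q^{5}k>40\cdot 10^{19}(\log(1/q))^{4}(\log k)^{2}$, and substituting $q\ge 10^{4}(\log k)^{6/5}k^{-1/5}$ gives $q^{5}k\ge 10^{20}(\log k)^{6}$, which after applying $(\log(1/q))^{4}\le (\log k)^{4}/625$ beats the required bound with room to spare. The other clauses of (\ref{ineq-eps5-logk-k}), namely $40(\log k)^{2}/(qk)>10^{3}(\log k)/(qk)>10^{3}/k$, are trivial.

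I expect the hardest part will simply be bookkeeping: keeping track of the precise numerical constants to confirm that the choices $10^{-2},10^{-5},10^{-7},10^{-19}$ in the definitions of $\eps_2,\eps_3,\eps_4,\eps_5$ are tight enough to satisfy all of the stated inequalities simultaneously, particularly the nonlinear comparisons (\ref{ineq-eps3-eps2}), (\ref{ineq-eps4-eps3}) and (\ref{ineq-eps5-eps4}), where several of these parameters interact. Once the three preamble facts are isolated, however, every single inequality reduces to a short deterministic computation and no clever idea is required.
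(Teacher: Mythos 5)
Your proposal is correct and follows the same direct-verification approach as the paper's appendix proof: record that $q\le 10^{-20}$ controls $\log(1/q)$ from below, that the lower bound on $q$ controls $\log(1/q)$ in terms of $\log k$, and that (\ref{ineq-k-logtk}) controls $\log k$ in terms of powers of $k$, then check each inequality by bookkeeping. The paper reaches the bound on $\log(1/\eps_i)$ slightly more slickly, by first showing $\eps_2>q^2$ and $\eps_3>q^3$ via the single inequality $2\cdot10^{-3}/\log(1/q)\ge q$, but this is the same idea as your direct log expansion. One small inaccuracy worth flagging: your blanket claim ``$\log(1/\eps_i)\le 2\log(1/q)$ for $i\ge 2$'' is actually false for $i=4,5$ (e.g.\ $\log(1/\eps_5)=19\log10+4\log\log(1/q)+4\log(1/q)>2\log(1/q)$), and the three inequalities (\ref{ineq-eps4-eps3}), (\ref{ineq-eps5-eps4}), (\ref{ineq-eps5-q}) that you group with the ``log-absorbing'' ones do not involve $\log(1/\eps_i)$ at all; but since the bound is only actually invoked for $i=2,3$, where it does hold, and since your parenthetical examples correctly reduce those three to pure power-of-$q$ comparisons, the argument goes through unaffected.
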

The proof of Lemma \ref{lemma-epsilon-parameters} is a straightforward calculation. For the reader's convenience, we provide the details in the appendix. We remark that the actual values of the $\eps_i$ are not important, we just need the inequalities (\ref{ineq-eps1-q}) to (\ref{ineq-eps5-logk-k}) to hold.

\section{Proof of Theorem \ref{thm-H-reasonable-blow-up}}\label{sect-proof-blow-up}

Recall that we fixed $\tH$ and $H$ at the beginning of Section \ref{sect-prep-proof-blow-up}. Let us also fix $\eps_1$ to $\eps_5$ as in Lemma \ref{lemma-epsilon-parameters}.

The goal of this section is to prove Theorem \ref{thm-H-reasonable-blow-up}, apart from the proofs of several propositions and lemmas which we will postpone to the following sections. Note that every graph $\Gamma$ on $n<\tk$ is by definition a balanced iterated blow-up of $\tH$. Hence we may assume that $n\geq \tk$. Let $\Gamma$ be a graph on $n$ vertices maximizing the number of embeddings $H\into \Gamma$, which means $\emb(H,\Gamma)=\emb(H,n)$. We need to prove that $\Gamma$ is a balanced iterated blow-up of $\tH$.

Note that by Lemma \ref{lemma-emb-H-prime} we have
\begin{equation}\label{eq-emb-H-Gamma-big}
\emb(H,\Gamma)=\emb(H,n)\geq \tk^{1/4}\cdot \E_k(n).
\end{equation}
and by Corollary \ref{coro-each-vertex-many-embeddings} each vertex of $\Gamma$ is contained in the image of at least $\E_k(n)/n$ embeddings $H\into \Gamma$.

From now on, let us fix a signature $S\su V(H)$ of the graph $H$ of size $\vert S\vert \leq \frac{5}{q}\log k$. Such a signature exists by Lemma \ref{lemma-small-signature} applied to $X=V(H)$.

There are $n^{\vert S\vert}$ maps $\psi:S\to V(\Gamma)$. Hence, by the pigeonhole principle, there must be a map $\psi:S\to V(\Gamma)$ such that there exist at least $\emb(H,\Gamma)/n^{\vert S\vert}$ embeddings $H\into \Gamma$ extending $\psi$.

For every $i\in V(H)\sm S$, let $V_i'\su V(\Gamma)$ be the set of vertices $v\in V(\Gamma)$ for which there exists at least one embedding $H\into \Gamma$ extending $\psi$ and mapping vertex $i$ to $v$. Then each embedding $H\into \Gamma$ extending $\psi$ needs to map every vertex $i\in V(H)\sm S$ into the set $V_i'\su V(\Gamma)$. Note that by Lemma \ref{lemma-signature-disjoint-sets} (applied with $H'=H$) the sets $V_i'$ for $i\in V(H)\sm S$ are disjoint.

\begin{definition}\label{defi-bad}For distinct $i,j\in V(H)\sm S$, let us call a pair of vertices $(v_i,v_j)\in V_i'\times V_j'$ \emph{bad}, if $a_\Gamma(v_i,v_j)\neq a_H(i,j)$. For $i\in V(H)\sm S$, let us call a vertex $v\in V_i'$ \emph{bad}, if it is part of at least $\eps_5 n$ bad pairs $(v,w)$. In other words, $v\in V_i'$ is bad if there are at least $\eps_5 n$ different vertices $w\in \bigcup_{j\in V(H)\sm (S\cup \lbrace i\rbrace)} V_j'$ such that $(v,w)$ is a bad pair.
\end{definition}

For every $i\in V(H)\sm S$, let us define $V_i$ to be the set obtained from $V_i'$ by deleting all bad vertices in $V_i'$. Then the sets $V_i$ for $i\in V(H)\sm S$ are disjoint (since the sets $V_i'$ for $i\in V(H)\sm S$ are disjoint). We will see in Lemma \ref{lemma-bad-vertex} that every bad vertex is in the image of only few embeddings $H\into \Gamma$ extending $\psi$, and hence there are many embeddings $H\into \Gamma$ extending $\psi$ and mapping $i$ into $V_i$ for each $i\in V(H)\sm S$ (see Lemma \ref{lemma-embeddings-Vi}). These and other useful properties of the sets $V_i'$ and $V_i$ will be established in Section \ref{sect-properties-Vi}.

\begin{definition}\label{defi-loyal}For a rotation or reflection $\phi$ of $\tH$, an embedding $\theta:H\into\Gamma$ is called \emph{$\phi$-loyal} if the image of $\theta$ contains at most one vertex from $V_i$ for each $i\in V(H)\sm S$ and if for at least $(1-\eps_2)k$ vertices $x\in V(H)$ we have $\theta(x)\in V_{\phi(x)}$ (in particular $\phi(x)\in V(H)\sm S$ in order for $V_{\phi(x)}$ to be defined). An embedding $\theta:H\into\Gamma$ is called \emph{loyal}, if it is $\phi$-loyal for some rotation or reflection $\phi$ of $\tH$, and \emph{disloyal} otherwise.
\end{definition}

A crucial step in the proof of Theorem \ref{thm-H-reasonable-blow-up} is to show the following proposition.

\begin{proposition}\label{propo-few-unloyal}Let $x, x'\in V(H)$ be distinct vertices and let $z, z'\in V(\Gamma)$. Then there are at most $k^{-6}\cdot \E_k(n)/n^{2}$ disloyal embeddings $\theta:H\into\Gamma$ with $\theta(x)=z$ and $\theta(x')=z'$.
\end{proposition}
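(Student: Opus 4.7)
The plan is to classify disloyal embeddings by \emph{why} loyalty fails, and bound each class separately using the signature and super-signature extension lemmas from Section~\ref{sect-prep-proof-blow-up} together with Definition~\ref{defi-reasonable}(c).

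I would first apply Lemma~\ref{lemma-small-super-signature} with $X = V(H)\setminus\{x,x'\}$ (large enough since $qk\gg 4$ by (\ref{ineq-q-lower-bound})) to obtain a $\tfrac{q}{4}$-super-signature $S^* \subseteq V(H)\setminus\{x,x'\}$ of size $|S^*|\leq 33(\log k)/q$. Super-signatures are signatures, so $S^*\cup\{x,x'\}$ is a signature; hence, for any fixed restriction $\theta|_{S^*}$, Lemma~\ref{lemma-signature-extensions} combined with Lemma~\ref{lemma-functions-E}(iii) bounds the number of completions to an embedding $\theta:H\into\Gamma$ with $\theta(x)=z$, $\theta(x')=z'$ by $e^{O(|S^*|)}(k/n)^{|S^*|+2}\E_k(n)$. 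Multiplying by the $n^{|S^*|}$ possible restrictions gives the much too weak bound $e^{O(|S^*|)}k^{|S^*|}\E_k(n)/n^2$, so the needed $k^{-6}$ savings must come from exploiting disloyalty.

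Set $V^* := \bigcup_j V_j$ and $T := \theta^{-1}(V^*)\subseteq V(H)$. A disloyal $\theta$ falls into one of three cases: (I) \emph{collision}, the image contains two distinct vertices in some single $V_j$; (II) \emph{escape}, we are not in case (I) and $|T|\leq (1-\eps_2/2)k$; (III) the image has at most one vertex per $V_j$, $|T|>(1-\eps_2/2)k$, yet $\theta$ is not $\phi$-loyal for any rotation or reflection $\phi$. Cases~(I) and (II) I would handle by Corollary~\ref{coro-embedding-small-set} applied to each $V_j$ (using $|V_j|=O(n/k)$) and to $V(\Gamma)\setminus V^*$ (of size $\leq\eps_4 n$), both to be established as structural lemmas about the $V_i$'s in Section~\ref{sect-properties-Vi}; the $\eps_i$'s in Lemma~\ref{lemma-epsilon-parameters} are calibrated precisely so that each of these bounds is $o(k^{-6}\E_k(n)/n^2)$.

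In Case~(III), define $f:T\to V(H)\setminus S$ by $\theta(v)\in V_{f(v)}$; this is injective since we are not in case~(I), and $|T|>(1-\eps_2/2)k>(1-\delta)k$ by (\ref{ineq-eps2-delta}). If $f$ preserved $\tH$-adjacencies then Definition~\ref{defi-reasonable}(c) would give $f=\phi|_T$ for some rotation or reflection $\phi$, making $|\{v:\theta(v)\in V_{\phi(v)}\}|\geq|T|>(1-\eps_2)k$ and hence $\theta$ be $\phi$-loyal, a contradiction. So $f$ must produce at least one bad pair $(\theta(v),\theta(v'))\in V_{f(v)}\times V_{f(v')}$. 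To count Case~(III) embeddings, I would union-bound over the $2\tk$ rotations and reflections $\phi$: for each $\phi$, such an embedding satisfies $|\{v\in T:f(v)\neq\phi(v)\}|\geq\eps_2 k/2$. I would then use the $\tfrac{q}{4}$-super-signature property to detect a positive fraction of these ``wrong'' vertices on $S^*$, combined with the bad-pair budget (each $\theta(v)\in V_{f(v)}$ lies in fewer than $\eps_5 n$ bad pairs by the definition of $V_j$), to force $\theta|_{S^*}$ into one of very few restricted profiles.

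The principal obstacle is Case~(III). A single ``bad pair'' is too weak a conclusion, and the per-vertex bad-pair budget $\eps_5 n$ can exceed $k$ when $n$ is much larger than $k$, so the budget by itself is insufficient. The key extra leverage is the super-signature property of $S^*$: any persistent deviation from rotational or reflective structure must already be visible on $S^*$ (because non-agreement of $f$ with $\phi$ on many vertices, combined with the $\tfrac{q}{4}$-super-signature condition, forces distinguishable neighborhood patterns on $S^*$), and this is what localizes $\theta|_{S^*}$ to a small collection of profiles and produces the $k^{-6}$ factor once combined with the extension bound from Lemma~\ref{lemma-signature-extensions}.
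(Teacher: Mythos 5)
Your three-way decomposition of disloyal embeddings into collision, escape, and genuine misalignment is a reasonable coarsening of the paper's dichotomy (Lemma~\ref{lemma-five-properties} splits these into properties (I)--(IV) plus loyal), and your treatment of escape by Corollary~\ref{coro-embedding-small-set} applied to $V_{\textnormal{rest}}$ matches the paper's Lemma~\ref{lemma-typ1}. But the collision case and the misalignment case are where the proof actually has to work, and neither is handled correctly.

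For collision (two image vertices in one $V_j$), Corollary~\ref{coro-embedding-small-set} applied to $U=V_j$ does not go through: you only know that \emph{two} vertices of $H$ land in $V_j$, so the only usable $\beta$ would have $\beta k=2$, which violates the hypothesis $\beta k>\tfrac{20}{q}(\log k)^2$; moreover $|V_j|$ can be as large as $\tfrac{24}{q}\tfrac{(\log k)^2}{k}n$ (Lemma~\ref{lemma-V-small}), which is not $O(n/k)$ and forces $\gamma\geq\beta$, also violating the corollary's hypotheses. The actual savings come from a different source: if $z_1,z_2\in V_j$, both are non-bad, so each is in at most $\eps_5 n$ bad pairs, which means $|\{w:a_\Gamma(w,z_1)\neq a_\Gamma(w,z_2)\}|\lesssim\eps_5 n$; meanwhile condition~(b) of Definition~\ref{defi-reasonable} forces at least $qk-O(1)$ preimages to land in this small set, and \emph{then} Lemma~\ref{lemma-embedding-small-set} applies with $\beta\approx q/3$ and $\gamma\approx 3\eps_5$ (this is the paper's Lemma~\ref{lemma-typ2}). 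For misalignment, your stated mechanism -- that the $\tfrac{q}{4}$-super-signature ``localizes $\theta|_{S^*}$ to few profiles'' and that this combined with Lemma~\ref{lemma-signature-extensions} produces $k^{-6}$ -- is not how the gain arises and does not give the right magnitude: even with a single profile for $\theta|_{S^*}$, extension by Lemma~\ref{lemma-signature-extensions} gives $\E_{k-|S^*|-2}(n)$, which is nowhere near $k^{-6}\E_k(n)/n^2$. The essential fact you are missing is Lemma~\ref{lemma-Vi-disjoint-pairs-wrong-adj}: at most $\tfrac{20}{q}(\log k)^2$ disjoint pairs $(i,j)$ can have $a_\Gamma(V_i,V_j)\neq a_H(i,j)$. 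Once $f$ has $\Omega(\eps_2 k)$ disjoint $f$-inconsistent pairs, almost all of them are pairs where $a_\Gamma(V_{f(v)},V_{f(w)})=a_H(f(v),f(w))\neq a_H(v,w)$, so a uniform random choice of $\theta(v)\in V_{f(v)}$ and $\theta(w)\in V_{f(w)}$ satisfies the required adjacency with probability at most $\tfrac12$. That independence over $\Omega(\eps_2 k)$ disjoint pairs is what produces the $2^{-\Theta(\eps_2 k)}$ factor (Lemma~\ref{lemma-typ4-theta}); the super-signature's role is instead to bound the \emph{number of candidate maps} $f$ by $e^{4\log(1/\eps_3)\eps_3 k}$ (Lemma~\ref{lemma-typ4-f}), and the two bounds combine via $\eps_3\log(1/\eps_3)\ll\eps_2$. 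You would also need to first peel off the sub-case where some single vertex has $\geq\eps_4 k$ inconsistent partners (the paper's ``not locally mostly consistent'' case), since that is handled by yet another argument using the bad-pair budget of that one vertex, and the deletion step of Lemma~\ref{lemma-locally-mostly-consistent-fancy} that constructs the fancy restriction requires local mostly-consistency. Finally, the proposed union bound over $\phi$ is in the wrong direction: a Case~(III) embedding disagrees with \emph{every} $\phi$ on $\geq\eps_2 k/2$ vertices, so the relevant event is an intersection over $\phi$, not a union, and summing the per-$\phi$ counts would not correctly upper-bound the total.
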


We will prove Proposition \ref{propo-few-unloyal} in Section \ref{sect-few-unloyal}, using the properties of the sets $V_i$ established in Section \ref{sect-properties-Vi}. Proposition \ref{propo-few-unloyal} gives the following corollary, which we will prove at the end of Section \ref{sect-few-unloyal}.

\begin{corollary}\label{coro-few-unloyal}Let $x\in V(H)$ and $z\in V(\Gamma)$. Then there are at most $k^{-6}\cdot \E_k(n)/n$ disloyal embeddings $\theta:H\into\Gamma$ with $\theta(x)=z$. Furthermore, the total number of disloyal embeddings $H\into\Gamma$ is at most $k^{-6}\cdot \E_k(n)$.
\end{corollary}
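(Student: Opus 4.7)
The plan is to deduce Corollary \ref{coro-few-unloyal} directly from Proposition \ref{propo-few-unloyal} by two successive summations; no additional argument is needed. First, since $k \geq 2$, I fix any vertex $x' \in V(H) \sm \lbrace x \rbrace$. For every embedding $\theta : H \into \Gamma$ the value $\theta(x')$ is a uniquely determined vertex of $\Gamma$, so partitioning the disloyal embeddings with $\theta(x) = z$ according to this value yields
\[\#\lbrace \theta \text{ disloyal} : \theta(x) = z \rbrace = \sum_{z' \in V(\Gamma)} \#\lbrace \theta \text{ disloyal} : \theta(x) = z,\, \theta(x') = z' \rbrace.\]
Applying Proposition \ref{propo-few-unloyal} to each of the $n$ summands gives the upper bound $n \cdot k^{-6} \cdot \E_k(n)/n^2 = k^{-6} \cdot \E_k(n)/n$, which is the first claim.

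For the second claim, I would then sum the first claim over all $z \in V(\Gamma)$. Since each disloyal embedding $\theta$ has a unique image $\theta(x)$, this sum equals the total number of disloyal embeddings $H \into \Gamma$, giving the upper bound $n \cdot k^{-6} \cdot \E_k(n)/n = k^{-6} \cdot \E_k(n)$, as required.

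There is no genuine obstacle here; the corollary is a routine consequence of the proposition by summation. What makes this work so cleanly is precisely that Proposition \ref{propo-few-unloyal} produces an extra factor of $n^{-1}$ beyond what the corollary asks for, so that each of the two summation steps can afford to lose a factor of $n$ without weakening the final exponent $k^{-6}$. In effect, the proposition is stated in exactly the form needed to make this deduction trivial, and all of the real work lies in proving the proposition itself.
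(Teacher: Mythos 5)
Your proof is correct and matches the paper's own argument exactly: both fix an auxiliary vertex $x' \neq x$, sum the bound of Proposition \ref{propo-few-unloyal} over all $z' \in V(\Gamma)$ to get the first claim, and then sum the first claim over all $z \in V(\Gamma)$ to get the second. Nothing further is needed.
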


In order to establish the blow-up structure of $\Gamma$, let us define a subset $W_j\su V(\Gamma)$ for each $j\in V(\tH)$ as follows:

\begin{definition}\label{defi-Wj} For $j\in V(\tH)$, let $W_j\su V(\Gamma)$ consist of those vertices $w\in V(\Gamma)$ for which there are at least $(1-\eps_1)k$ vertices $i\in V(H)\sm(S\cup \lbrace j\rbrace)$ satisfying $w\not\in V_i$ and $a_\Gamma(w,V_i)=a_{\tH}(j,i)$.
\end{definition}

So, roughly speaking, $W_j$ consist of those vertices $w$ such that for most $i\in V(H)\sm S$, the adjacency between $w$ and the set $V_i$ is the same as the adjacency between $j$ and $i$. We wish to show that $\Gamma$ is a blow-up of $\tH$ with parts $W_j$ for $j\in V(\tH)$. The following three lemmas state important properties of the sets $W_j$ and will be proved in Section \ref{sect-properties-Wj}.

\begin{lemma}\label{lemma-loyal-unregulated}Let $\phi$ be a rotation or reflection of $\tH$. Then there are at most $k^{-7}\cdot \E_k(n)$ different $\phi$-loyal embeddings $\theta:H\into\Gamma$ that satisfy $\theta(x)\not\in W_{\phi(x)}$ for some $x\in V(H)$.
\end{lemma}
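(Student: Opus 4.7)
I will prove Lemma \ref{lemma-loyal-unregulated} by bounding, pair by pair over $(x_0,z_0)\in V(H)\times V(\Gamma)$ with $z_0\notin W_{\phi(x_0)}$, the number of $\phi$-loyal embeddings $\theta:H\into\Gamma$ satisfying $\theta(x_0)=z_0$, and then summing over the at most $kn$ such pairs. The per-pair target is thus $k^{-8}\E_k(n)/n$, producing the claimed total bound $k^{-7}\E_k(n)$ (with an additional factor of $k$ absorbed since each offending embedding may be counted through several choices of $x_0$).

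Fix such a pair and write $j_0=\phi(x_0)$. Unpacking $z_0\notin W_{j_0}$ via Definition \ref{defi-Wj} produces a set $B\subseteq V(H)\setminus(S\cup\{j_0\})$ with $|B|\geq \tfrac12\eps_1 k$ on which $a_\Gamma(z_0,V_i)\neq a_{\tH}(j_0,i)$; here at most one exceptional index with $z_0\in V_i$ is discarded, using disjointness of the $V_i$. For each $i\in B$ the ``minority'' subset $T_i:=\{v\in V_i: a_\Gamma(v,z_0)=a_{\tH}(j_0,i)\}\subseteq V_i$ then satisfies $|T_i|<|V_i|/2$. For any $\phi$-loyal $\theta$ with $\theta(x_0)=z_0$, let $D:=\{y:\theta(y)\notin V_{\phi(y)}\}$ (so $|D|\leq \eps_2 k$ by Definition \ref{defi-loyal}). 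For each non-deviating $y\in V(H)\setminus D$ with $\phi(y)\in B$, the embedding identity $a_\Gamma(z_0,\theta(y))=a_H(x_0,y)=a_{\tH}(j_0,\phi(y))$ (the last equality using $\phi\in\aut(\tH)$) combined with $\theta(y)\in V_{\phi(y)}$ forces $\theta(y)\in T_{\phi(y)}$. Since $|V(\tH)\setminus V(H)|\leq\tfrac14\log\tk$ and $\eps_1\gg\eps_2$, at least $\tfrac14\eps_1 k$ vertices $y$ are forced into their shrunken sets $T_{\phi(y)}$.

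To convert this into a count I would split over the deviation set $D$; for each fixed $D$, the image of $V(H)\setminus D$ lies in the disjoint union $\bigsqcup_i V_i$ with at most one vertex per $V_i$, and the at least $\tfrac14\eps_1 k$ forced $y$'s are restricted to $T_{\phi(y)}\subsetneq V_{\phi(y)}$, while the image of $D$ lies in $V(\Gamma)$ (in fact, for $\phi$-loyal $\theta$, the deviating vertices land almost entirely outside $\bigsqcup_i V_i$). The non-deviating count is then bounded by $\prod_i|V_i|$ with one factor $|V_i|$ replaced by $|T_i|<|V_i|/2$ for each forced index, giving a baseline at most $\E_k(n)$ (by Lemma \ref{lemma-functions-E}(i)) times a saving factor of $2^{-\Omega(\eps_1 k)}$. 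The deviation count is bounded using Corollary \ref{coro-embedding-small-set} applied to the small complement $V(\Gamma)\setminus\bigcup_i V_i$ (which should be of size $o(n)$ by results of Section \ref{sect-properties-Vi}), together with a binomial factor $\binom{k}{\leq\eps_2 k}\leq\exp\bigl(O(\eps_2 k\log(1/\eps_2))\bigr)$.

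The main obstacle will be ensuring that the exponential saving $2^{-\Omega(\eps_1 k)}$ dominates the combined blowup coming from the deviation part, uniformly in $n$. The binomial entropy estimate is controlled by inequality (\ref{ineq-eps2-eps1}) of Lemma \ref{lemma-epsilon-parameters}, which gives $\eps_2\log(1/\eps_2)<\tfrac{\log 2}{8}\eps_1$ and so absorbs the $\binom{k}{\leq\eps_2 k}$ factor; smallness of $V(\Gamma)\setminus\bigcup_i V_i$ then ensures that the Corollary \ref{coro-embedding-small-set} contribution for the deviating vertices shrinks further by a factor small enough to leave the extra $k^{-8}/n$ margin we need per pair. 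The actual routine calculation should combine these ingredients into a clean bound of $k^{-8}\E_k(n)/n$, which summed over $(x_0,z_0)$ completes the proof.
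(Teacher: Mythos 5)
Your overall architecture — pass to a fixed pair $(x_0,z_0)$ with $z_0\notin W_{\phi(x_0)}$, exploit the at least $\tfrac12\eps_1k$ indices $i$ with the wrong adjacency $a_\Gamma(z_0,V_i)\neq a_{\tH}(j_0,i)$ to halve the number of legal images in each such $V_i$, pay a binomial entropy factor for the deviation set $D$, and use \eqref{ineq-eps2-eps1} to make the $2^{-\Omega(\eps_1 k)}$ saving dominate — is exactly the paper's. (The paper packages this into Lemma~\ref{lemma-phi-loyal-wrong-W} and Corollary~\ref{coro-phi-loyal-wrong-W}, which fix two vertices $x,x'$ rather than one; that is a cosmetic difference.)

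However, there is a genuine gap in your handling of the deviating vertices. You write that ``for $\phi$-loyal $\theta$, the deviating vertices land almost entirely outside $\bigsqcup_i V_i$'' and propose to apply Corollary~\ref{coro-embedding-small-set} to the small set $V(\Gamma)\setminus\bigcup_i V_i=V_{\textnormal{rest}}$. This is false: $\phi$-loyalty (Definition~\ref{defi-loyal}) only requires that the image of $\theta$ hit each $V_i$ at most once and that $\theta(y)\in V_{\phi(y)}$ for at least $(1-\eps_2)k$ vertices $y$. A deviating vertex $y\in D$ may perfectly well land in $V_j$ for some $j\notin\phi(Y)$. The correct set confining the images of $D$ is $U=V(\Gamma)\setminus\bigcup_{y\in Y}V_{\phi(y)}$, which is $V_{\textnormal{rest}}$ together with all the $V_j$ not hit by the non-deviating vertices. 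Since the parts $V_i$ may be quite uneven, $|U|$ can be a macroscopic fraction of $n$ (it is only bounded trivially by $n-|Y|$), so the hypothesis $|U|\leq\gamma n$ with $\gamma<q/3$ needed for Corollary~\ref{coro-embedding-small-set} cannot be ensured.

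Relatedly, your non-deviation bound ``$\prod_i|V_i|\leq\E_k(n)$'' is already a bound on a product over all $i\in V(H)\setminus S$, which leaves no multiplicative room for the deviating part; combining it with a crude $n^{|D|}$ for the deviating vertices loses a factor $(\text{poly}(k))^{|D|}$ that is \emph{not} absorbed by $2^{-\Omega(\eps_1 k)}$ when $\log(1/q)\ll\log k$ (which holds in the allowed parameter range). The paper closes this correctly by \emph{first} bounding the non-deviation choices only over $y\in Y$ by $2^{-\eps_1 k/2}\E_{|Y|}(n-|U|)$, \emph{then} using that $Y\cup\{x,x'\}$ is a signature (Lemma~\ref{lemma-large-signature}) to bound the remaining extensions via Lemma~\ref{lemma-signature-extensions} by $\E_{k-|Y|-2}(|U|)$, and finally combining with the superadditivity $\E_{|Y|}(n-|U|)\cdot\E_{k-|Y|-2}(|U|)\leq\E_{k-2}(n)$ from Lemma~\ref{lemma-functions-E}(ii) — no smallness assumption on $U$ is needed. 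You should replace the Corollary~\ref{coro-embedding-small-set} step with this signature-extension-plus-multiplicativity argument.
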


\begin{lemma}\label{lemma-phi-loyal-between-W}Let $j,j'\in V(\tH)$ be distinct and let $w\in W_j$ and $w'\in W_{j'}$. Assume that $a_{\Gamma}(w,w')\neq a_{\tH}(j,j')$. Then there are at most $k^{-4}\E_k(n)/n^2$ loyal embeddings $H\into \Gamma$ whose image contains both $w$ and $w'$.
\end{lemma}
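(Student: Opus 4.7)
My plan is to combine a disjointness argument for the sets $W_j$, a short adjacency-matching observation that rules out the ``aligned'' case, and a quantitative refinement of Lemma \ref{lemma-loyal-unregulated}.

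First I would verify that the sets $\{W_j\}_{j\in V(\tH)}$ are pairwise disjoint. Suppose $w\in W_j\cap W_{j'}$ with $j\neq j'$. Intersecting the two defining conditions, at least $(1-2\eps_1)k$ indices $i\in V(H)\sm(S\cup\{j,j'\})$ would satisfy $a_{\tH}(j,i)=a_{\Gamma}(w,V_i)=a_{\tH}(j',i)$. But condition (b) in Definition \ref{defi-reasonable} supplies at least $qk$ indices $i\in V(H)\sm\{j,j'\}$ with $a_{\tH}(j,i)\neq a_{\tH}(j',i)$, and $\eps_1=q/3$ gives $2\eps_1<q$, a contradiction.

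Next, I would note that every loyal embedding $\theta$ containing $w$ and $w'$ must be ``unregulated'' in the sense of Lemma \ref{lemma-loyal-unregulated}. Let $x=\theta^{-1}(w)$, $x'=\theta^{-1}(w')$, and let $\phi$ be a rotation or reflection witnessing that $\theta$ is $\phi$-loyal. If both $\theta(x)\in W_{\phi(x)}$ and $\theta(x')\in W_{\phi(x')}$, then by disjointness of the $W_j$'s we would have $\phi(x)=j$ and $\phi(x')=j'$. Since $\phi$ is a graph automorphism of $\tH$ and $H$ is an induced subgraph of $\tH$,
\[a_{\Gamma}(w,w')=a_{H}(x,x')=a_{\tH}(x,x')=a_{\tH}(\phi(x),\phi(x'))=a_{\tH}(j,j'),\]
contradicting the hypothesis. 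Hence $\theta(x)\notin W_{\phi(x)}$ or $\theta(x')\notin W_{\phi(x')}$.

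The main obstacle will be to convert this unregulation into the claimed quantitative bound: Lemma \ref{lemma-loyal-unregulated} summed over the $2\tk$ rotations and reflections yields only $O(k^{-6}\E_k(n))$, which is a factor $n^2$ too large. The plan to recover the missing $n^{-2}$ is to count $\phi$-loyal embeddings with the two images $w,w'$ fixed. Assuming WLOG $w\notin W_{\phi(x)}$, the defect set
\[D=\{i\in V(H)\sm(S\cup\{\phi(x)\}):w\notin V_i\ \text{and}\ a_{\Gamma}(w,V_i)\neq a_{\tH}(\phi(x),i)\}\]
has size at least $\eps_1 k-O(1)$. For any vertex $y\in V(H)$ with $\phi(y)\in D$ and $\theta(y)\in V_{\phi(y)}$, the fact that $\theta$ is an embedding and $\phi$ preserves adjacencies of $\tH$ forces $\theta(y)$ to lie in the minority part of $V_{\phi(y)}$ with respect to adjacency to $w$, a set of size at most $|V_{\phi(y)}|/2$. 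Since at least $(1-\eps_2)k$ vertices $y\in V(H)$ satisfy $\theta(y)\in V_{\phi(y)}$, since $|V(\tH)\sm V(H)|\leq\tfrac{1}{4}\log\tk$, and since (by Lemma \ref{lemma-epsilon-parameters}) $\eps_2\log k\ll\eps_1$, this ``minority forcing'' applies to $\Omega(\eps_1 k)$ values of $y$ and yields an exponential saving of $2^{\Omega(\eps_1 k)}$ in the per-$(\phi,x,x')$ count of $\phi$-loyal embeddings extending $(\theta(x),\theta(x'))=(w,w')$. This saving easily absorbs the $2\tk\cdot k^2$ choices of $(\phi,x,x')$, the $\binom{k}{\eps_2 k}$ choices of exceptional-position set, and the factor $n^2$ gap. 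The hardest technical point is integrating this minority-forcing argument with the extension and counting tools (Lemma \ref{lemma-signature-extensions} and the uniform upper bound $|V_i|=O(n/k)$ that will be established in Section \ref{sect-properties-Vi}) so as to produce a clean pointwise bound rather than only an average over $(w,w')$.
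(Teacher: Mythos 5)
Your plan reconstructs the paper's argument nearly step for step: the disjointness of the $W_j$'s is Claim \ref{claim-W-disjoint}, the adjacency-matching observation forcing $w\notin W_{\phi(x)}$ or $w'\notin W_{\phi(x')}$ is exactly the first half of the paper's proof of this lemma, and the minority-forcing count with both endpoints fixed is precisely the content of Lemma \ref{lemma-phi-loyal-wrong-W}, whose summation over $(\phi,x,x')$ then gives the result. The technical step you flag at the end is carried out in the paper via Lemma \ref{lemma-signature-extensions} together with the $\E_\ell$ machinery of Lemma \ref{lemma-functions-E} (using $\sum_i|V_i|\le n$ rather than a pointwise $|V_i|=O(n/k)$ bound), but that is a bookkeeping choice; the route you describe is the one the paper takes.
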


\begin{lemma}\label{lemma-W-partition}The sets $W_j$ for $j\in V(\tH)$ are disjoint and their union is the whole vertex set $V(\Gamma)$.\end{lemma}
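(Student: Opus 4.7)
The plan is to handle the two assertions of the lemma separately: disjointness of the $W_j$ will follow from a short direct argument using condition (b) of Definition~\ref{defi-reasonable}, while the fact that the $W_j$ cover $V(\Gamma)$ will require a more elaborate counting argument built from Corollaries~\ref{coro-each-vertex-many-embeddings} and~\ref{coro-few-unloyal}, a pigeonhole step, and Lemma~\ref{lemma-embedding-small-set} or Corollary~\ref{coro-embedding-small-set}.

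For disjointness, I will suppose $w \in W_j \cap W_{j'}$ for distinct $j, j' \in V(\tH)$. Intersecting the two defining sets from Definition~\ref{defi-Wj} produces at least $(1 - 2\eps_1)k$ vertices $i \in V(H) \sm \{j, j'\}$ with $a_\tH(j, i) = a_\Gamma(w, V_i) = a_\tH(j', i)$, so $(V(H) \sm \{j, j'\}) \cap (N_\tH(j) \triangle N_\tH(j'))$ has size at most $2\eps_1 k$. This contradicts condition (b) of Definition~\ref{defi-reasonable}, which forces this size to be at least $qk$, since $2\eps_1 < q$ by~\eqref{ineq-eps1-q}.

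For coverage, I fix $w \in V(\Gamma)$ and combine Corollary~\ref{coro-each-vertex-many-embeddings} (which gives at least $\E_k(n)/n$ embeddings with $w$ in their image) with Corollary~\ref{coro-few-unloyal} summed over the $k$ possible preimages of $w$ (which discards at most $k^{-5} \E_k(n)/n$ disloyal ones). A first pigeonhole over the $2\tk$ rotations and reflections of $\tH$ and a second over the preimage $x \in V(H)$ of $w$ then yield some $\phi$, some $x$, and a family $\mathcal{E}$ of at least $N := \E_k(n)/(4k\tk n)$ $\phi$-loyal embeddings $\theta$ with $\theta(x) = w$. Setting $j := \phi(x)$, I will argue $w \in W_j$. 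Call $i \in V(H) \sm (S \cup \{j\})$ \emph{bad} if either $w \in V_i$ or $a_\Gamma(w, V_i) \neq a_\tH(j, i)$, and correspondingly declare $y = \phi^{-1}(i) \in V(H) \sm \{x\}$ bad. Assuming for contradiction that at least $\eps_1 k$ such $y$ are bad, each $\theta \in \mathcal{E}$ is $\phi$-loyal and so has $\theta(y) \in V_{\phi(y)}$ for a set $I_\theta$ of at least $(1-\eps_2)k$ indices; together with $\eps_2 \ll \eps_1$ from~\eqref{ineq-eps2-eps1} and the bound on $|S|$, at least $\tfrac34 \eps_1 k$ bad $y$ still lie in $I_\theta$. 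For each such $y$, the identity $a_\Gamma(w, \theta(y)) = a_H(x, y) = a_\tH(j, \phi(y))$ (using that $\phi$ is a $\tH$-automorphism and $H$ is an induced subgraph) forces $\theta(y)$ into a subset $V_{\phi(y)}^\star \su V_{\phi(y)}$ of vertices with the correct adjacency to $w$, which has size at most $|V_{\phi(y)}|/2$ by the definition of badness. Double counting the pairs $(\theta, y)$ with $\theta \in \mathcal{E}$, $y \in I_\theta$, and $y$ bad gives $\geq \tfrac34 N \eps_1 k$ on one side, and on the other side I estimate, for each fixed bad $y$, the number of embeddings $H \into \Gamma$ with $\theta(x) = w$ and $\theta(y) \in V_{\phi(y)}^\star$ by applying Corollary~\ref{coro-embedding-small-set} with $U = V_{\phi(y)}^\star$ and an appropriately chosen $\beta$; summing across bad $y$ yields a strictly smaller total, which is the contradiction.

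The main obstacle will be the second counting bound: I need the estimate on embeddings $\theta \in \mathcal{E}$ with $\theta(y) \in V_{\phi(y)}^\star$, summed over bad $y$, to be exponentially smaller than $\tfrac34 N \eps_1 k$. This requires controlling the typical size of each $V_i$ so that the density ratio $\gamma = |V_{\phi(y)}^\star|/n$ is meaningfully smaller than $1/k$ (a property that should come from the analysis in Section~\ref{sect-properties-Vi}), and tuning $\beta$ in Corollary~\ref{coro-embedding-small-set} so that the savings factor $(e^6 \gamma/\beta^2)^{\beta k}$ crushes the polynomial pigeonhole overhead $4k\tk$ absorbed into $N$. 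The slack encoded in inequalities~\eqref{ineq-eps1-q}--\eqref{ineq-eps5-logk-k}, especially $\eps_2 \ll \eps_1$ and the gap between $\eps_5$ and $q$, is exactly calibrated to make this work, which is what ultimately closes the argument.
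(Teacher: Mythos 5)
Your proof of disjointness is correct and essentially identical to the paper's Claim~\ref{claim-W-disjoint}: intersecting the two defining conditions from Definition~\ref{defi-Wj} yields $a_{\tH}(j,i)=a_{\tH}(j',i)$ for at least $(1-2\eps_1)k$ vertices $i$, contradicting condition (b) because $2\eps_1<q$.

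The coverage part, however, has a genuine gap. Your double count sums, over each single bad $y$, the number of $\theta\in\mathcal{E}$ with $\theta(x)=w$ and $\theta(y)\in V_{\phi(y)}^\star$, and you propose to bound each summand via Corollary~\ref{coro-embedding-small-set} with $U=V_{\phi(y)}^\star$. But that corollary counts embeddings that send \emph{at least} $\beta k$ vertices into $U$, and its hypothesis requires $\beta k>\frac{20}{q}(\log k)^2\gg 1$; constraining the single vertex $y$ corresponds to $\beta k=1$, which is out of range. Nor can you circumvent this with a direct estimate: fixing $\theta(x)=w$ and $\theta(y)\in V_{\phi(y)}^\star$ for one $y$ only buys a multiplicative factor of order $\gamma n\cdot(k/n)^2\approx\gamma k^2$ relative to $\E_k(n)/n$, and even exploiting $\phi$-loyalty the saving from a single bad $y$ is merely a factor of $\tfrac12$ (since $|V_{\phi(y)}^\star|\le\tfrac12|V_{\phi(y)}|$). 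Summing these per-$y$ bounds over the $\lesssim k$ bad indices gives roughly $\tfrac{k}{2}$ times the number of $\phi$-loyal embeddings, while the left side of the double count is about $\tfrac34\eps_1 k\cdot N$; since $\eps_1<\tfrac12$, this is \emph{not} a contradiction. The essential point you lose is compounding: each $\theta\in\mathcal{E}$ forces $\theta(y)\in V_{\phi(y)}^\star$ for $\approx\eps_1 k/2$ different bad $y$ \emph{simultaneously}, and the factor-$\tfrac12$ savings must be multiplied across all of them to obtain the exponential gain $2^{-\eps_1 k/2}$. A per-$y$ double count destroys exactly this multiplicativity. This is precisely what the paper's Lemma~\ref{lemma-phi-loyal-wrong-W} does: after fixing $\theta$ on a large set $Y\cup\{x,x'\}$, it bounds $\prod_{y\in Y\cap I'}\tfrac12|V_{\phi(y)}|$ in one shot and extracts $2^{-\eps_1 k/2}$. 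The paper then packages this as Corollary~\ref{coro-phi-loyal-wrong-W} and Corollary~\ref{coro-vertex-W-rest}, after which the coverage claim (the paper's Claim~\ref{claim-W-union}) is a one-line comparison against Corollary~\ref{coro-each-vertex-many-embeddings}. To repair your argument you would essentially need to reprove Lemma~\ref{lemma-phi-loyal-wrong-W}, so you may as well invoke it directly.
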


Thus, the sets $W_j$ for $j\in V(\tH)$ form a partition of $V(\Gamma)$. Let us now make the following definition:

\begin{definition}\label{defi-regulated}For a rotation or reflection $\phi$ of $\tH$, an embedding $\theta:H\into\Gamma$ is called \emph{$\phi$-regulated} if $\theta(x)\in W_{\phi(x)}$ for all $x\in V(H)$. An embedding $\theta:H\into\Gamma$ is called \emph{regulated}, if it is $\phi$-regulated for some rotation or reflection $\phi$ of $\tH$, and \emph{unregulated} otherwise.
\end{definition}

Our strategy for establishing the desired blow-up structure of $\Gamma$ with parts $W_j$ for $j\in V(\tH)$ is to analyze the regulated embeddings. The next lemma states that there are only few unregulated embeddings, and in the following corollary we will deduce that there must be many regulated embeddings.

\begin{lemma}\label{lemma-few-unregulated}There are at most $\E_k(n)$ unregulated embeddings $H\into\Gamma$.
\end{lemma}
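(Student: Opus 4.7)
The plan is to split unregulated embeddings into two disjoint classes: those that are disloyal, and those that are loyal but still unregulated. Both classes can then be bounded using results already established.

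First, I would observe that every unregulated embedding $\theta : H \hookrightarrow \Gamma$ is either disloyal, or it is $\phi$-loyal for some rotation or reflection $\phi$ of $\tH$. In the latter case, since $\theta$ is not regulated, it is not $\phi'$-regulated for any rotation or reflection $\phi'$; in particular, it is not $\phi$-regulated, so there exists some vertex $x \in V(H)$ with $\theta(x) \notin W_{\phi(x)}$. Thus $\theta$ is counted by Lemma~\ref{lemma-loyal-unregulated} for this particular $\phi$.

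The number of disloyal embeddings is at most $k^{-6}\cdot\E_k(n)$ by Corollary~\ref{coro-few-unloyal}. For each of the $2\tk$ rotations and reflections $\phi$ of $\tH$, Lemma~\ref{lemma-loyal-unregulated} bounds the number of $\phi$-loyal embeddings that fail to be $\phi$-regulated by $k^{-7}\cdot\E_k(n)$. Summing over $\phi$ (even though a single unregulated loyal embedding may be $\phi$-loyal for several $\phi$, which only overcounts), and using that $\tk \leq k + \tfrac{1}{4}\log \tk \leq 2k$ by our assumption $k \geq \tk - \tfrac{1}{4}\log \tk$, we get
\[
\#\{\text{unregulated embeddings}\} \;\leq\; k^{-6}\cdot\E_k(n) \;+\; 2\tk \cdot k^{-7}\cdot\E_k(n) \;\leq\; 5k^{-6}\cdot\E_k(n).
\]
Since $k \geq \tfrac{1}{2}\tk \geq \tfrac{1}{2}\cdot 10^{200}$, we have $5k^{-6} \leq 1$, and so this is at most $\E_k(n)$, as required.

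There is no real obstacle here beyond the bookkeeping above; the essential work has already been done in Proposition~\ref{propo-few-unloyal} (giving Corollary~\ref{coro-few-unloyal}) and Lemma~\ref{lemma-loyal-unregulated}. The only subtlety is making sure that every unregulated embedding indeed falls into one of the two types counted, which is immediate from Definition~\ref{defi-loyal} and Definition~\ref{defi-regulated}.
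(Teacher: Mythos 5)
Your proof is correct and takes essentially the same route as the paper: split unregulated embeddings into disloyal ones (bounded via Corollary~\ref{coro-few-unloyal}) and loyal-but-unregulated ones (bounded by summing Lemma~\ref{lemma-loyal-unregulated} over the $2\tk$ rotations and reflections, using $2\tk\leq 4k$), arriving at $5k^{-6}\E_k(n)\leq\E_k(n)$. The bookkeeping and the final constant match the paper's argument exactly.
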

\begin{proof}Recall that by Corollary \ref{coro-few-unloyal} there are at most $k^{-6}\E_k(n)$ disloyal embeddings $H\into\Gamma$. So it remains to bound the number of unregulated embeddings $H\into\Gamma$ that are $\phi$-loyal for some rotation or reflection $\phi$ of $\tH$.

So let us fix a rotation or reflection $\phi$ of $\tH$. Each unregulated $\phi$-loyal embedding $\theta:H\into\Gamma$ must satisfy $\theta(x)\not\in W_{\phi(x)}$ for some $x\in V(H)$ (since otherwise $\theta$ would be $\phi$-regulated). Thus, by Lemma \ref{lemma-loyal-unregulated} there are at most $k^{-7}\E_k(n)$ unregulated $\phi$-loyal embeddings $\theta:H\into\Gamma$.

Since there are at most $2\tk\leq 4k$ rotations and reflections $\phi$ of $\tH$, there are at most $4k^{-6}\E_k(n)$ unregulated loyal embeddings. So all in all, there can be at most $k^{-6}\E_k(n)+4k^{-6}\E_k(n)=5k^{-6}\E_k(n)\leq \E_k(n)$ unregulated embeddings $H\into\Gamma$.
\end{proof}

\begin{corollary}\label{coro-number-regulated-embeddings}There are at least $(\tk^{1/4}-1)\cdot \E_k(n)$ regulated embeddings $H\into \Gamma$.
\end{corollary}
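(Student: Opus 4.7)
The plan is a one-line subtraction, combining the lower bound on the total number of embeddings $H \into \Gamma$ with the upper bound on the unregulated ones. By (\ref{eq-emb-H-Gamma-big}), we already know
\[
\emb(H,\Gamma) = \emb(H,n) \geq 2\tk^{1/4}\cdot \E_k(n),
\]
which comes from Lemma \ref{lemma-emb-H-prime} applied to the optimal $\Gamma$. On the other hand, Lemma \ref{lemma-few-unregulated} guarantees that at most $\E_k(n)$ of these embeddings are unregulated.

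Therefore, the number of regulated embeddings $H \into \Gamma$ is at least
\[
\emb(H,\Gamma) - \E_k(n) \;\geq\; 2\tk^{1/4}\cdot \E_k(n) - \E_k(n) \;\geq\; \tk^{1/4}\cdot \E_k(n),
\]
where the last inequality uses $\tk^{1/4} \geq 1$ (in fact $\tk \geq 10^{200}$, so the slack is enormous). There is no obstacle here; the real work has already been done in establishing Lemma \ref{lemma-emb-H-prime} (which produced the $2\tk^{1/4}$ factor precisely so that this subtraction leaves the target $\tk^{1/4}$) and in the sequence Lemma \ref{lemma-loyal-unregulated}, Corollary \ref{coro-few-unloyal}, Lemma \ref{lemma-few-unregulated} that culminated in the crude $\E_k(n)$ bound on unregulated embeddings. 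Thus the corollary follows immediately.
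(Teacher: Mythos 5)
Your proof is correct and is essentially identical to the paper's: both apply the lower bound $\emb(H,\Gamma)\geq 2\tk^{1/4}\E_k(n)$ from (\ref{eq-emb-H-Gamma-big}), subtract the at most $\E_k(n)$ unregulated embeddings guaranteed by Lemma \ref{lemma-few-unregulated}, and use $\tk^{1/4}\geq 1$ to finish. No differences worth noting.
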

\begin{proof}By (\ref{eq-emb-H-Gamma-big}) there are at least $\tk^{1/4}\cdot \E_k(n)$ embeddings $H\into \Gamma$ and by Lemma \ref{lemma-few-unregulated} at most $\E_k(n)$ of these embeddings are unregulated. Hence there are at least $(\tk^{1/4}-1)\cdot \E_k(n)$ regulated embeddings $H\into \Gamma$.
\end{proof}

\begin{claim}\label{claim-number-regulated}For every rotation or reflection $\phi$ of $\tH$, the number of $\phi$-regulated embeddings $H\into \Gamma$ is at most
\[\prod_{x\in V(H)}\vert W_{\phi(x)}\vert\leq \E_k(n).\]
\end{claim}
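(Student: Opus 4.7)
The proof is quite short and relies on combining the definitions with Lemma \ref{lemma-W-partition} and Lemma \ref{lemma-functions-E}(i). My plan is to verify the two inequalities separately.

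For the first inequality, I would argue directly from the definition. By Definition \ref{defi-regulated}, every $\phi$-regulated embedding $\theta : H \into \Gamma$ satisfies $\theta(x) \in W_{\phi(x)}$ for every $x \in V(H)$. Hence even disregarding the injectivity requirement, the number of such maps (and in particular of such embeddings) is at most $\prod_{x \in V(H)} |W_{\phi(x)}|$.

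For the second inequality, the idea is to invoke Lemma \ref{lemma-functions-E}(i) with the multiset of sizes $|W_{\phi(x)}|$ for $x \in V(H)$. Since $\phi$ is a rotation or reflection of $\tH$, it is a bijection $V(\tH) \to V(\tH)$, so its restriction to $V(H)$ is injective and the image $\phi(V(H)) \subseteq V(\tH)$ has exactly $k$ elements. By Lemma \ref{lemma-W-partition}, the sets $W_j$ for $j \in V(\tH)$ partition $V(\Gamma)$, so in particular they are disjoint and satisfy $\sum_{j \in V(\tH)} |W_j| = n$. Therefore
\[
\sum_{x \in V(H)} |W_{\phi(x)}| = \sum_{j \in \phi(V(H))} |W_j| \leq \sum_{j \in V(\tH)} |W_j| = n.
\]
Applying Lemma \ref{lemma-functions-E}(i) with $\ell = k$ and $\tilde{m}_1, \ldots, \tilde{m}_k$ being the $k$ numbers $|W_{\phi(x)}|$ for $x \in V(H)$ (in some order) then yields $\prod_{x \in V(H)} |W_{\phi(x)}| \leq \E_k(n)$, which combined with the first inequality completes the proof.

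There is no real obstacle here: both inequalities are immediate once one has Lemma \ref{lemma-W-partition} (establishing that the $W_j$ form a partition of $V(\Gamma)$) and Lemma \ref{lemma-functions-E}(i) (the standard AM-GM-type statement about $\E_\ell(m)$). The content of the claim is essentially just to record the bookkeeping consequence of these two facts, which will be used in subsequent sections to compare the count of regulated embeddings to the optimal value $\E_k(n)$ and thereby force $\Gamma$ to be a balanced iterated blow-up of $\tH$.
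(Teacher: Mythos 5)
Your proof is correct and follows exactly the same two-step route as the paper: the first inequality is immediate from Definition \ref{defi-regulated}, and the second follows by noting the $W_{\phi(x)}$ are disjoint (Lemma \ref{lemma-W-partition}) with total size at most $n$, then applying Lemma \ref{lemma-functions-E}(i).
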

\begin{proof}
It follows from Definition \ref{defi-regulated} that the number of $\phi$-regulated embeddings is at most $\prod_{x\in V(H)}\vert W_{\phi(x)}\vert$. The inequality  follows from Lemma \ref{lemma-functions-E}(i) and the fact that the sets $W_{\phi(x)}$ are all disjoint by Lemma \ref{lemma-W-partition}.
\end{proof}

The following two statements will be needed later.

\begin{claim}\label{claim-embHn-small} $\emb(H,n)\leq 3k^{-k+1}n^k$.
\end{claim}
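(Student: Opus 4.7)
The plan is to combine the bounds on regulated and unregulated embeddings that have already been established. Every embedding $H\into\Gamma$ is either unregulated or $\phi$-regulated for some rotation or reflection $\phi$ of $\tH$. By Lemma \ref{lemma-few-unregulated}, the number of unregulated embeddings is at most $\E_k(n)$. By Claim \ref{claim-number-regulated}, for each of the $2\tk$ rotations and reflections $\phi$, the number of $\phi$-regulated embeddings is at most $\E_k(n)$. Summing over $\phi$ and adding the unregulated contribution, I would conclude
\[\emb(H,n) \;=\; \emb(H,\Gamma) \;\leq\; (2\tk+1)\,\E_k(n).\]

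To convert this into the form $3k^{-k+1}n^k$, I would use two elementary estimates. The first is $\E_k(n) \leq (n/k)^k$, which follows from the AM--GM inequality as noted right after Definition \ref{defi-functions-E}. The second is $2\tk + 1 \leq 3k$, which follows from the hypothesis $k \geq \tk - \tfrac{1}{4}\log \tk$: rearranging gives $2\tk + 1 \leq 2k + \tfrac{1}{2}\log \tk + 1$, and since $k \geq \tk/2 \geq 10^{199}/2$ (a consequence of the assumption $\tk \geq 10^{200}$, already recorded in (\ref{ineq-k-logtk})) the correction term $\tfrac{1}{2}\log \tk + 1$ is far smaller than $k$. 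Combining the two estimates yields
\[\emb(H,n) \;\leq\; 3k \cdot (n/k)^k \;=\; 3 k^{-k+1} n^k,\]
as required. There is no genuine obstacle here; this is a short bookkeeping step that simply collates Lemma \ref{lemma-few-unregulated} and Claim \ref{claim-number-regulated} with a trivial size estimate on $\E_k(n)$.
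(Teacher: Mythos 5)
Your proof is correct and follows essentially the same route as the paper: decompose $\emb(H,\Gamma)$ into regulated embeddings (bounded via Claim~\ref{claim-number-regulated} as at most $2\tk\cdot\E_k(n)$) plus unregulated ones (at most $\E_k(n)$ by Lemma~\ref{lemma-few-unregulated}), then use $\E_k(n)\le(n/k)^k$ and $2\tk+1\le 3k$. The only cosmetic difference is in how the inequality $2\tk+1\le 3k$ is justified, which is immaterial.
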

\begin{proof}
As there are at most $\E_k(n)$ different $\phi$-regulated embeddings $H\into \Gamma$ for each rotation or reflection $\phi$ of $\tH$, the total number of regulated embeddings $H\into \Gamma$ is at most $2\tk\cdot \E_k(n)$. Furthermore, by Lemma \ref{lemma-few-unregulated} there are at most $\E_k(n)$ unregulated embeddings $H\into \Gamma$. Thus,
\[\emb(H,n)\leq (2\tk+1)\cdot \E_k(n)\leq 3k\cdot \E_k(n)\leq 3k\cdot \left(\frac{n}{k}\right)^k=3k^{-k+1}n^k,\]
where we used that $3k\geq 3\tk-\log \tk\geq 2\tk+1$.
\end{proof}

\begin{lemma}\label{lemma-size-Wj}We have $1\leq \vert W_j\vert\leq k^{-4/5}\cdot n$ for every $j\in V(\tH)$.
\end{lemma}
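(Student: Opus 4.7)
The plan is to derive both bounds from a single pigeonhole-style comparison between the global lower bound on the number of regulated embeddings from Corollary \ref{coro-number-regulated-embeddings} (at least $\tk^{1/4}\E_k(n)$) and the upper bound $\prod_{x\in V(H)}|W_{\phi(x)}|$ from Claim \ref{claim-number-regulated}, summed over the $2\tk$ rotations and reflections $\phi$. The key structural inputs will be Lemma \ref{lemma-number-rot-refl}, which splits these $2\tk$ maps into the $2k$ many with $j\in\phi(V(H))$ and the $2(\tk-k)\le\tfrac{1}{2}\log\tk$ many with $j\notin\phi(V(H))$, and Lemma \ref{lemma-W-partition}, which gives $\sum_{i\in V(\tH)}|W_i|=n$.

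For the lower bound $|W_j|\ge 1$, I would argue by contradiction: if $W_j=\emptyset$, then no $\phi$-regulated embedding can exist when $j\in\phi(V(H))$, because the unique $x\in V(H)$ with $\phi(x)=j$ would have to be sent into the empty set. The remaining $2(\tk-k)$ rotations/reflections each contribute at most $\E_k(n)$ by Claim \ref{claim-number-regulated}, for a total of at most $\tfrac{1}{2}(\log\tk)\,\E_k(n)$. Since $\tk\ge 10^{200}$, this is much smaller than $\tk^{1/4}\E_k(n)$, contradicting Corollary \ref{coro-number-regulated-embeddings}.

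For the upper bound $|W_j|\le k^{-4/5}n$, I would again suppose the opposite, $w_j:=|W_j|>k^{-4/5}n$, and bound the number of $\phi$-regulated embeddings rotation/reflection by rotation/reflection. When $j\notin\phi(V(H))$, the $k$ factors $w_{\phi(x)}$ are indexed by distinct vertices of $V(\tH)\setminus\{j\}$, so they sum to at most $n-w_j\le(1-k^{-4/5})n$; Lemma \ref{lemma-functions-E}(i) and (iii) (with $\ell=\ell'=k$, $m=n$, $m'=n-w_j$, $\mu=w_j/n$) give a bound of $\E_k(n-w_j)\le e^{-w_jk/(2n)}\E_k(n)\le e^{-k^{1/5}/2}\E_k(n)$. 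When $j\in\phi(V(H))$, I would factor out $w_j$ and apply Lemma \ref{lemma-functions-E}(iii) with $\ell=k$, $\ell'=k-1$, $m=n$, $m'=n-w_j$ to the remaining $k-1$ factors, whose sum is at most $n-w_j$, obtaining $\E_{k-1}(n-w_j)\le e^{3-w_jk/(2n)}(k/n)\E_k(n)$, so that the $\phi$-regulated count is at most $(w_jk/n)\,e^{3}e^{-w_jk/(2n)}\E_k(n)\le e^3 k\,e^{-k^{1/5}/2}\E_k(n)$ (using $w_j\le n$ and $w_jk/n\ge k^{1/5}$). Summing via Lemma \ref{lemma-number-rot-refl}, the total number of regulated embeddings is at most $\bigl(2k\cdot e^3 k+\tfrac{1}{2}\log\tk\bigr)e^{-k^{1/5}/2}\E_k(n)\le k^3 e^{-k^{1/5}/2}\E_k(n)$, which for $k\ge\tfrac{1}{2}\tk\ge\tfrac{1}{2}\cdot 10^{200}$ is vastly smaller than $\tk^{1/4}\E_k(n)$, again contradicting Corollary \ref{coro-number-regulated-embeddings}.

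The main technical care lies in the case $j\in\phi(V(H))$ of the upper bound: one must peel off the factor $w_j$ \emph{before} applying Lemma \ref{lemma-functions-E}(iii) with $\ell'=k-1$ rather than $\ell'=k$. Only this setup produces the compensating factor $k/n$ that, when multiplied by $w_j$, assembles the combination $w_jk/n\ge k^{1/5}$ which drives the stretched-exponential decay $e^{-k^{1/5}/2}$ strong enough to absorb the polynomial loss $2k$ from the number of rotations/reflections hitting $j$ and still beat $\tk^{1/4}$.
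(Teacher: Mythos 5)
Your proof is correct and follows essentially the same route as the paper: the lower bound $|W_j|\ge 1$ is the paper's argument verbatim, and the upper bound uses the same pigeonhole comparison against Corollary \ref{coro-number-regulated-embeddings} via Lemmas \ref{lemma-number-rot-refl}, \ref{lemma-W-partition}, and \ref{lemma-functions-E}. The only cosmetic difference is that you split the rotations/reflections according to whether $j\in\phi(V(H))$ and apply Lemma \ref{lemma-functions-E}(iii) with $\ell'=k$ or $\ell'=k-1$ accordingly, whereas the paper handles all $2\tk$ maps uniformly by peeling off one arbitrary factor (bounded crudely by $n$) and applying the $\ell'=k-1$ case throughout; both give the same stretched-exponential bound $O(k^2 e^{-k^{1/5}/2})\E_k(n)$, which is beaten by $\tk^{1/4}\E_k(n)$.
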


We will prove Lemma \ref{lemma-size-Wj} in Section \ref{sect-properties-Wj}. In order to continue with our strategy to analyze the regulated embeddings, we make the following definition.

\begin{definition}\label{defi-significant}Let us call a rotation or reflection $\phi$ of $\tH$ \emph{significant} if the number of $\phi$-regulated embeddings $H\into \Gamma$ is at least $\E_k(n)/k$, and let us call $\phi$ \emph{insignificant} otherwise.
\end{definition}

Note that by Claim \ref{claim-number-regulated} for every significant rotation or reflection $\phi$ we must have
\[\prod_{x\in V(H)}\vert W_{\phi(x)}\vert\geq \frac{\E_k(n)}{k}.\]

\begin{claim}\label{claim-many-significant}There are at least $2\log \tk$ significant rotations and reflections $\phi$ of $\tH$.
\end{claim}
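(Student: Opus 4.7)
The plan is to prove Claim \ref{claim-many-significant} by a straightforward double-counting over the pairs (embedding, rotation/reflection) together with the bounds already established in Corollary \ref{coro-number-regulated-embeddings} and Claim \ref{claim-number-regulated}. First, I would consider the sum
\[
\Sigma \;=\; \sum_{\phi}\; \bigl|\{\phi\text{-regulated embeddings } H\hookrightarrow \Gamma\}\bigr|,
\]
where $\phi$ ranges over the $2\tk$ rotations and reflections of $\tH$. Since every regulated embedding $\theta:H\hookrightarrow \Gamma$ is $\phi$-regulated for at least one $\phi$ (by Definition \ref{defi-regulated}), the sum $\Sigma$ is at least the number of regulated embeddings, which by Corollary \ref{coro-number-regulated-embeddings} is at least $\tk^{1/4}\cdot \E_k(n)$.

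Next, I would split $\Sigma$ according to whether $\phi$ is significant or insignificant. By Definition \ref{defi-significant}, each insignificant $\phi$ contributes fewer than $\E_k(n)/k$ embeddings to $\Sigma$. Using the bound $k\geq \tk-\frac14 \log\tk\geq \tk/2$ (valid since $\tk\geq 10^{200}$), the total contribution of all insignificant rotations and reflections is at most $2\tk\cdot \E_k(n)/k\leq 4\E_k(n)$. On the other hand, by Claim \ref{claim-number-regulated} each significant $\phi$ contributes at most $\E_k(n)$. Letting $N$ denote the number of significant rotations and reflections, we obtain
\[
N\cdot \E_k(n) \;+\; 4\,\E_k(n) \;\geq\; \Sigma \;\geq\; \tk^{1/4}\cdot \E_k(n),
\]
and therefore $N\geq \tk^{1/4}-4$.

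Finally, I would verify the numerical inequality $\tk^{1/4}-4\geq 2\log \tk$, which holds easily under the assumption $\tk\geq 10^{200}$: in that range $\tk^{1/4}\geq 10^{50}$, whereas $2\log \tk = 2\ln \tk$ is bounded by a few hundred. There is no genuine obstacle in this argument; the entire work has been done in assembling Corollary \ref{coro-number-regulated-embeddings} (which guarantees enough regulated embeddings in total) and Claim \ref{claim-number-regulated} (which caps the contribution of any single $\phi$). The claim then follows purely from pigeonhole, and the only thing to watch is the factor $2\tk/k\leq 4$ used to bound the insignificant contribution.
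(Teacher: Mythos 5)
Your proposal is correct and follows essentially the same argument as the paper: use Corollary \ref{coro-number-regulated-embeddings} to get at least $\tk^{1/4}\E_k(n)$ regulated embeddings, bound the insignificant ones' contribution by $2\tk\cdot\E_k(n)/k\leq 4\E_k(n)$, cap each significant $\phi$'s contribution by $\E_k(n)$ via Claim \ref{claim-number-regulated}, and finish with $\tk^{1/4}-4\geq 2\log\tk$ from (\ref{ineq-k-logtk}). The only cosmetic difference is that you package the double count as an explicit sum $\Sigma$ over all $\phi$, while the paper argues directly about the number of embeddings regulated by some insignificant versus some significant $\phi$; the two formulations are equivalent.
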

\begin{proof}
By definition, the number of embeddings $H\into \Gamma$ that are $\phi$-regulated for some insignificant $\phi$ is at most
\[2\tk\cdot \frac{\E_k(n)}{k}\leq 4k\cdot \frac{\E_k(n)}{k}=4\E_k(n).\]
So by Corollary \ref{coro-number-regulated-embeddings} and (\ref{ineq-k-logtk}), there must be at least
\[(\tk^{1/4}-5)\cdot \E_k(n)\geq (100(\log \tk)-5)\cdot \E_k(n)\geq 2\log \tk\cdot \E_k(n)\]
embeddings $H\into \Gamma$ that are $\phi$-regulated for some significant $\phi$. But for each $\phi$, by Claim \ref{claim-number-regulated}, there can be at most $\E_k(n)$ different $\phi$-regulated embeddings. Thus, the number of significant rotations and reflections $\phi$ of $\tH$ is at least $2\log \tk$.
\end{proof}

Recall that by Lemma \ref{lemma-W-partition} the sets $W_j$ for $j\in V(\tH)$ form a partition of $V(\Gamma)$. Let us now define a graph $\Gamma^*$ on the vertex set $V(\Gamma)$ by leaving $\Gamma$ unchanged inside each of the sets $W_j$ for $j\in V(\tH)$ and setting $a_{\Gamma^*}(w,w')=a_{\tH}(j,j')$ for all distinct $j,j'\in V(\tH)$ and $(w,w')\in W_j\times W_{j'}$. In other words, $\Gamma^*$ is a blow-up of $\tH$ with parts $W_j$ for $j\in V(\tH)$, and inside the parts $W_j$ the graph $\Gamma^*$ agrees with $\Gamma$. We wish to prove that $\Gamma=\Gamma^*$.

Since $\Gamma$ was chosen among all $n$-vertex graphs to maximize $\emb(H,\Gamma)$, we must have
\[\emb(H,\Gamma^*)\leq \emb(H,\Gamma).\]
Let $M$ be the number of pairs $\lbrace w,w'\rbrace$ of two distinct vertices in $V(\Gamma)$ such that $a_{\Gamma^*}(w,w')\neq a_{\Gamma}(w,w')$. In other words, $M$ is the number of pairs of vertices with different adjacencies in $\Gamma$ and $\Gamma^*$. For each such pair $\lbrace w,w'\rbrace$, the indices $j,j'\in V(\tH)$ with $w\in W_{j}$ and $w'\in W_{j'}$ must satisfy $j\neq j'$ and $a_{\Gamma}(w,w')\neq a_{\tH}(j,j')$. 

Our next goal is to prove that $M=0$. This will imply that $\Gamma=\Gamma^*$ and so $\Gamma$ will be a blow-up of $\tH$ with parts $W_j$ for $j\in V(\tH)$. Our strategy for proving $M=0$ will be to analyze the embeddings $H\into \Gamma$ and the embeddings $H\into \Gamma^*$. Recall that $\Gamma$ and $\Gamma^*$ have the same vertex set. So each embedding $H\into \Gamma$ gives an injective map $V(H)\into V(\Gamma^*)$ which may or may not be an embedding $H\into \Gamma^*$ (and vice versa).

\begin{lemma}\label{lemma-embeddings-Gamma-Gamma-star} There are at most $M\cdot k^{-3}\E_k(n)/n^2$ embeddings $H\into \Gamma$ that do not correspond to embeddings $H\into \Gamma^*$.
\end{lemma}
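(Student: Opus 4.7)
The plan is to show that any embedding $\theta: H \into \Gamma$ which fails to be an embedding into $\Gamma^*$ is witnessed by at least one of the $M$ ``bad'' pairs of vertices, and then to count embeddings pair by pair.

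First, I would observe the following. Suppose $\theta: H \into \Gamma$ is an embedding that is not an embedding $H \into \Gamma^*$. Since $\Gamma$ and $\Gamma^*$ have the same vertex set and $\theta$ is an injection, there must exist distinct $x, x' \in V(H)$ such that $a_\Gamma(\theta(x), \theta(x')) \neq a_{\Gamma^*}(\theta(x), \theta(x'))$. Setting $w = \theta(x)$ and $w' = \theta(x')$, the pair $\{w,w'\}$ is one of the $M$ pairs of distinct vertices in $V(\Gamma)$ on which $\Gamma$ and $\Gamma^*$ disagree. By definition of $\Gamma^*$, there exist distinct $j, j' \in V(\tH)$ with $w \in W_j$, $w' \in W_{j'}$, and $a_\Gamma(w,w') \neq a_\tH(j,j')$.

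Next, for each fixed such pair $\{w,w'\}$ (with its associated $j \neq j' \in V(\tH)$), I would bound the number of embeddings $\theta: H \into \Gamma$ whose image contains both $w$ and $w'$, splitting into loyal and disloyal embeddings. Lemma \ref{lemma-phi-loyal-between-W} applies directly and yields at most $k^{-4}\E_k(n)/n^2$ loyal embeddings whose image contains both $w$ and $w'$. For the disloyal contribution, I would sum Proposition \ref{propo-few-unloyal} over the $k(k-1)$ ordered pairs of distinct vertices $(x,x') \in V(H) \times V(H)$, with $(z,z') = (w,w')$ in each application. This gives at most $k(k-1) \cdot k^{-6}\E_k(n)/n^2 \leq k^{-4}\E_k(n)/n^2$ disloyal embeddings whose image contains both $w$ and $w'$. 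Adding the two contributions bounds the total number of embeddings $H\into\Gamma$ whose image contains both $w$ and $w'$ by $2k^{-4}\E_k(n)/n^2$.

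Finally, I would sum this bound over all $M$ of the disagreement pairs $\{w,w'\}$. Each embedding $\theta: H \into \Gamma$ that is not an embedding $H \into \Gamma^*$ is counted at least once (possibly multiple times, which is harmless for an upper bound), so the total number of such embeddings is at most
\[
M \cdot 2k^{-4}\E_k(n)/n^2 \;\leq\; M \cdot k^{-3}\E_k(n)/n^2,
\]
using $2/k \leq 1$ since $k \geq 2$. There is no real obstacle here: the key estimates (Lemma \ref{lemma-phi-loyal-between-W} and Proposition \ref{propo-few-unloyal}) have already been set up to produce exactly the per-pair bounds that, when summed via the crude union bound over the $M$ bad pairs, deliver the desired inequality.
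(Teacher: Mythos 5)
Your proof is correct and follows essentially the same argument as the paper: reduce to bounding, for each of the $M$ disagreement pairs $\{w,w'\}$, the number of embeddings whose image contains both $w$ and $w'$, then split into loyal embeddings (handled by Lemma \ref{lemma-phi-loyal-between-W}) and disloyal embeddings (handled by summing Proposition \ref{propo-few-unloyal} over the possible preimage pairs $(x,x')$). The only cosmetic difference is that you count $k(k-1)$ ordered pairs where the paper uses the slightly looser bound $k^2$; both give the same final estimate.
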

\begin{proof}For each embedding $\theta: H\into \Gamma$ that does not correspond to an embedding $H\into \Gamma^*$, the image of $\Gamma$ must contain two distinct vertices $w,w'\in V(\Gamma)$ with $a_{\Gamma^*}(w,w')\neq a_{\Gamma}(w,w')$. There are precisely $M$ pairs $\lbrace w,w'\rbrace$ of distinct vertices in $V(\Gamma)$ such that $a_{\Gamma^*}(w,w')\neq a_{\Gamma}(w,w')$. So it suffices to prove that for any such pair $\lbrace w,w'\rbrace$, there are at most $k^{-3}\E_k(n)/n^2$ embeddings $H\into \Gamma$ whose image contains both $w$ and $w'$.

So let us fix distinct $w,w'\in V(\Gamma)$ with $a_{\Gamma^*}(w,w')\neq a_{\Gamma}(w,w')$. Let $j,j'\in V(\tH)$ be the indices such that $w\in W_j$ and $w'\in W_{j'}$. Note that by the definition of $\Gamma^*$, the condition $a_{\Gamma^*}(w,w')\neq a_{\Gamma}(w,w')$ implies $j\neq j'$. Hence $a_{\Gamma^*}(w,w')= a_{\tH}(j,j')$ and therefore $a_{\Gamma}(w,w')\neq a_{\tH}(j,j')$. So by Lemma \ref{lemma-phi-loyal-between-W} there are at most $k^{-4}\E_k(n)/n^2$ loyal embeddings $H\into \Gamma$ whose image contains both $w$ and $w'$.

Let us now bound the number of disloyal embeddings $H\into \Gamma$ whose image contains both $w$ and $w'$. For any such disloyal embedding $\theta: H\into \Gamma$, there are $x,x'\in V(H)$ with $\theta(x)=w$ and $\theta(x')=w'$ (and note that $x\neq x'$, since $w\neq w'$). There are at most $k^2$ possibilities for $x,x'\in V(H)$, and for each of them there are at most $k^{-6}\E_k(n)/n^2$ disloyal embedding $\theta: H\into \Gamma$ with $\theta(x)=w$ and $\theta(x')=w'$ (by Proposition \ref{propo-few-unloyal}). Hence, the number of disloyal embeddings $H\into \Gamma$ whose image contains both $w$ and $w'$ is at most $k^{-4}\E_k(n)/n^2$.

So, all in all, there are indeed at most $2k^{-4}\E_k(n)/n^2< k^{-3}\E_k(n)/n^2$ embeddings $H\into \Gamma$ whose image contains both $w$ and $w'$. This finishes the proof.
\end{proof}

\begin{lemma}\label{lemma-Gamma-star-embeddings}Let $j,j'\in V(\tH)$ be distinct and let $w\in W_j$ and $w'\in W_{j'}$. Then there are at least $k^{-1}\E_k(n)/n^2$ embeddings $H\into \Gamma^*$ whose image contains both $w$ and $w'$.
\end{lemma}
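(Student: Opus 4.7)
The plan is to construct many explicit embeddings $H \into \Gamma^*$ whose image contains both $w$ and $w'$, by exploiting the blow-up structure of $\Gamma^*$. Since $\Gamma^*$ is a blow-up of $\tH$ with parts $W_i$ for $i \in V(\tH)$ (which are disjoint by Lemma \ref{lemma-W-partition}), for any rotation or reflection $\phi$ of $\tH$ and any choice $\theta(x) \in W_{\phi(x)}$ for each $x \in V(H)$, the resulting map $\theta \colon V(H) \to V(\Gamma^*)$ is automatically an embedding: injectivity follows because $\phi$ is a bijection and the parts are disjoint, while for distinct $x,y \in V(H)$ the adjacency $a_{\Gamma^*}(\theta(x), \theta(y))$ equals $a_{\tH}(\phi(x), \phi(y)) = a_{\tH}(x,y) = a_H(x,y)$, using in turn the blow-up structure of $\Gamma^*$, the fact that $\phi$ is an automorphism of $\tH$, and that $H$ is an induced subgraph of $\tH$. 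To force $w$ and $w'$ into the image, I will look for a $\phi$ with $j, j' \in \phi(V(H))$ and pin $\theta(x_1) = w$, $\theta(x_2) = w'$ for $x_1 = \phi^{-1}(j)$, $x_2 = \phi^{-1}(j')$, leaving the remaining vertices $x \in V(H) \sm \lbrace x_1, x_2\rbrace$ free to map anywhere in $W_{\phi(x)}$.

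For this construction to produce many embeddings, I need $\phi$ to be significant so that Claim \ref{claim-number-regulated} and Definition \ref{defi-significant} together yield $\prod_{x \in V(H)} \vert W_{\phi(x)}\vert \geq \E_k(n)/k$. To guarantee the existence of a significant $\phi$ with $\lbrace j, j'\rbrace \su \phi(V(H))$, I will combine Lemma \ref{lemma-number-rot-refl} with Claim \ref{claim-many-significant}: for each of $j$ and $j'$, exactly $2(\tk - k) \leq \frac{1}{2}\log \tk$ of the $2\tk$ rotations or reflections fail the containment condition, so a union bound leaves at most $\log \tk$ offending $\phi$, whereas at least $2\log \tk$ rotations or reflections are significant. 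Hence at least $\log \tk > 0$ significant $\phi$ satisfy $\lbrace j, j'\rbrace \su \phi(V(H))$, and I pick any one of them.

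For the chosen $\phi$, the number of embeddings produced is
\[
\prod_{x \in V(H) \sm \lbrace x_1, x_2\rbrace} \vert W_{\phi(x)}\vert
\;=\; \frac{\prod_{x \in V(H)} \vert W_{\phi(x)}\vert}{\vert W_j\vert \cdot \vert W_{j'}\vert}
\;\geq\; \frac{\E_k(n)/k}{k^{-8/5}\, n^2}
\;=\; \frac{k^{3/5}\, \E_k(n)}{n^2},
\]
where the numerator uses the significance bound and the denominator uses $\vert W_j\vert, \vert W_{j'}\vert \leq k^{-4/5} n$ from Lemma \ref{lemma-size-Wj}. Since $k^{3/5} \geq k^{-1}$, this comfortably exceeds the required lower bound $k^{-1} \E_k(n)/n^2$, completing the argument. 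The proof is quite direct and presents no real obstacle: the hard work has already been absorbed into Claim \ref{claim-many-significant} (which guarantees plenty of significant rotations and reflections) and Lemma \ref{lemma-size-Wj} (which controls the part sizes), and the target bound has considerable slack so that the somewhat crude use of Lemma \ref{lemma-size-Wj} causes no trouble.
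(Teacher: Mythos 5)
Your proposal is correct and follows essentially the same approach as the paper: pick a significant $\phi$ with $j,j'\in\phi(V(H))$ by comparing Lemma \ref{lemma-number-rot-refl} against Claim \ref{claim-many-significant}, then pin $\theta(\phi^{-1}(j))=w$, $\theta(\phi^{-1}(j'))=w'$ and fill in the remaining coordinates freely within the parts $W_{\phi(x)}$. The only minor difference is that the paper divides out by the crude bound $\vert W_j\vert,\vert W_{j'}\vert\leq n$, which already gives the required $k^{-1}\E_k(n)/n^2$, whereas you use the sharper $\vert W_j\vert,\vert W_{j'}\vert\leq k^{-4/5}n$ from Lemma \ref{lemma-size-Wj} to obtain the stronger count $k^{3/5}\E_k(n)/n^2$ — a harmless overshoot, and Lemma \ref{lemma-size-Wj} is indeed available at this point in the argument without circularity, so the proof is fine.
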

\begin{proof}We claim that there is a significant rotation or reflection $\phi$ of $\tH$ such that $j,j'\in \phi(V(H))$. Note that by Lemma \ref{lemma-number-rot-refl} there are at most $2(\tk-k)\leq \frac{1}{2}\log \tk$ rotations and reflections $\phi$ with $j\not\in \phi(V(H))$. Similarly, there at most $2(\tk-k)\leq \frac{1}{2}\log \tk$ rotations or reflections $\phi$ with $j'\not\in \phi(V(H))$. Since by Claim \ref{claim-many-significant} there are at least $2\log \tk$ significant rotations and reflections, there must exist a significant rotation or reflection $\phi$ with $j,j'\in \phi(V(H))$.

Since $\phi$ is significant, we have
\[\vert W_j\vert\cdot \vert W_{j'}\vert \prod_{x\in V(H)\sm \lbrace \phi^{-1}(j),\phi^{-1}(j')\rbrace}\vert W_{\phi(x)}\vert=\prod_{x\in V(H)}\vert W_{\phi(x)}\vert\geq \frac{\E_k(n)}{k}.\]
As $\vert W_j\vert\leq n$ and $\vert W_{j'}\vert\leq n$, we can conclude
\[\prod_{x\in V(H)\sm \lbrace \phi^{-1}(j),\phi^{-1}(j')\rbrace}\vert W_{\phi(x)}\vert\geq \frac{1}{k}\cdot \frac{\E_k(n)}{n^2}.\]
On the other hand we can construct an embedding $\theta: H\into \Gamma^*$ by setting $\theta(\phi^{-1}(j))=w\in W_j$, $\theta(\phi^{-1}(j'))=w'\in W_{j'}$ and choosing any $\theta(x)\in W_{\phi(x)}$ for all $x\in V(H)\sm \lbrace \phi^{-1}(j),\phi^{-1}(j')\rbrace$. As we then have $\theta(x)\in W_{\phi(x)}$ for all $x\in V(H)$, it is easy to see that each such $\theta$ is indeed an embedding $H\into \Gamma^*$. Clearly, the image of $\theta$ contains both $w$ and $w'$. The number of different embeddings $\theta: H\into \Gamma^*$ we can construct in this way is $\prod_{x\in V(H)\sm \lbrace \phi^{-1}(j),\phi^{-1}(j')\rbrace}\vert W_{\phi(x)}\vert\geq k^{-1}\E_k(n)/n^2$, which finishes the proof of the lemma.\end{proof}

\begin{corollary}\label{coro-gamma-star-embeddings} There are at least $M\cdot 2k^{-3}\E_k(n)/n^2$ embeddings $H\into \Gamma^*$ that do not correspond to embeddings $H\into \Gamma$.
\end{corollary}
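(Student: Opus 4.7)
The plan is to obtain the bound by summing a lower bound over all $M$ pairs $\{w,w'\}$ counted by $M$, and then dividing by a trivial upper bound on how many such pairs a single embedding can hit.

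First, I would observe the key dichotomy: if $\theta\colon H\hookrightarrow \Gamma^*$ is an embedding whose image contains both $w$ and $w'$, say $\theta(x)=w$ and $\theta(x')=w'$ for distinct $x,x'\in V(H)$, then $a_H(x,x')=a_{\Gamma^*}(w,w')\neq a_\Gamma(w,w')$. Hence the same vertex map $\theta$ cannot be an embedding $H\hookrightarrow \Gamma$. So every embedding $H\hookrightarrow \Gamma^*$ whose image contains a pair $\{w,w'\}$ counted by $M$ automatically fails to correspond to an embedding $H\hookrightarrow \Gamma$.

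Next, for each of the $M$ pairs $\{w,w'\}$ with $a_{\Gamma^*}(w,w')\neq a_\Gamma(w,w')$, I would invoke the definition of $\Gamma^*$ and the fact that $\Gamma$ and $\Gamma^*$ agree inside each $W_j$: this forces $w\in W_j$ and $w'\in W_{j'}$ for some distinct $j,j'\in V(\tH)$. Then Lemma \ref{lemma-Gamma-star-embeddings} gives at least $k^{-1}\E_k(n)/n^2$ embeddings $H\hookrightarrow \Gamma^*$ whose image contains both $w$ and $w'$, and by the previous paragraph none of them correspond to embeddings $H\hookrightarrow \Gamma$.

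Summing this lower bound over all $M$ pairs yields at least $M\cdot k^{-1}\E_k(n)/n^2$ incidences, but a given embedding $H\hookrightarrow \Gamma^*$ may be counted multiple times if its $k$-vertex image contains several of the bad pairs. Since an image of size $k$ contains only $\binom{k}{2}<k^2/2$ unordered pairs, each embedding is counted at most $k^2/2$ times. Dividing,
\[
\frac{M\cdot k^{-1}\E_k(n)/n^2}{k^2/2}=M\cdot\frac{2\,\E_k(n)}{k^3 n^2},
\]
which is exactly the claimed bound. No step here looks like a real obstacle — the only thing to be careful about is the overcounting factor, and the crude bound $\binom{k}{2}\le k^2/2$ is already strong enough.
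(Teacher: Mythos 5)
Your proof is correct and follows essentially the same approach as the paper's: invoke Lemma \ref{lemma-Gamma-star-embeddings} for each of the $M$ pairs, observe that adjacency disagreement means none of those embeddings correspond to embeddings $H\hookrightarrow\Gamma$, and divide by the overcounting factor $\binom{k}{2}\le k^2/2$. The only cosmetic difference is that you spell out explicitly why such an embedding fails to be an embedding of $H$ into $\Gamma$, whereas the paper asserts it in one line.
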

\begin{proof}Consider any of the $M$ pairs $\lbrace w,w'\rbrace$ of distinct vertices in $V(\Gamma)$ with $a_{\Gamma^*}(w,w')\neq a_{\Gamma}(w,w')$. Let $j,j'\in V(\tH)$ be the indices such that $w\in W_j$ and $w'\in W_{j'}$. Note that by the definition of $\Gamma^*$, the condition $a_{\Gamma^*}(w,w')\neq a_{\Gamma}(w,w')$ implies $j\neq j'$. By Lemma \ref{lemma-Gamma-star-embeddings} there are at least $k^{-1}\E_k(n)/n^2$ embeddings $H\into \Gamma^*$ whose image contains both $w$ and $w'$. As $a_{\Gamma^*}(w,w')\neq a_{\Gamma}(w,w')$, none of these embeddings corresponds to an embedding $H\into \Gamma$.

Each embedding $\theta: H\into \Gamma^*$ that does not correspond to an embedding $H\into \Gamma$ may be counted towards these $k^{-1}\E_k(n)/n^2$ embeddings for at most $\binom{k}{2}\leq k^2/2$ pairs $\lbrace w,w'\rbrace$ (since the image of $\theta$ only consists of $k$ vertices). Hence the total number of embeddings $H\into \Gamma^*$ that do not correspond to embeddings $H\into \Gamma$ is at least
\[\frac{M\cdot k^{-1}\E_k(n)/n^2}{k^2/2}=M\cdot 2k^{-3}\frac{\E_k(n)}{n^2},\]
as desired.
\end{proof}

Combining Lemma \ref{lemma-embeddings-Gamma-Gamma-star} and Corollary \ref{coro-gamma-star-embeddings}, we obtain
\[\emb(H,\Gamma^*)\geq \emb(H,\Gamma)-M\cdot k^{-3}\frac{\E_k(n)}{n^2}+M\cdot 2k^{-3}\frac{\E_k(n)}{n^2}=\emb(H,\Gamma)+M\cdot k^{-3}\frac{\E_k(n)}{n^2}.\]
Recall that on the other hand $\emb(H,\Gamma^*)\leq \emb(H,\Gamma)$. As $\E_k(n)>0$ (since $n\geq \tk\geq k$), this implies $M=0$. Thus, $\Gamma=\Gamma^*$ is a blow-up of $\tH$ with parts $W_j$ for $j\in V(\tH)$.

So far, we have shown that for all positive integers $n\geq \tk$, every $n$-vertex graph $\Gamma$ maximizing $\emb(H,\Gamma)$ is a blow-up of $\tH$.

Let $\Gamma'$ be any $n$-vertex blow-up of $\tH$ with parts $U_j\su V(\Gamma')$ for $j\in V(\tH)$. Then $\sum_{j\in V(\tH)}\vert U_j\vert=n$. By Claim \ref{claim-emb-into-blow}, every embedding $H\into \Gamma'$ is either mapping all of $H$ into a single part $U_j$ or there exists a rotation or reflection $\phi$ of $\tH$ such that each vertex $x\in V(H)$ is mapped into $U_{\phi(x)}$. Conversely, for every rotation or reflection $\phi$ of $\tH$, any map $\theta:V(H)\to \Gamma'$ with $\theta(x)\in U_{\phi(x)}$ for all  $x\in V(H)$ indeed defines an embedding $H\into \Gamma'$. Thus, 
\[\emb(H,\Gamma')=\sum_{\phi}\left(\prod_{x\in V(H)}\vert U_{\phi(x)}\vert\right)+\sum_{j\in V(\tH)}\emb(H,\Gamma'[U_j]),\]
where the fist sum is over all the different rotations and reflections $\phi$ of $\tH$. In particular we obtain
\[\emb(H,\Gamma')\leq \sum_{\phi}\left(\prod_{x\in V(H)}\vert U_{\phi(x)}\vert\right)+\sum_{j\in V(\tH)}\emb(H,\vert U_j\vert).\]
Note that equality holds if and only if for each part $U_j$, the graph $\Gamma'[U_j]$, given its number $\vert U_j\vert$ of vertices, maximizes the number of embeddings of $H$.

In particular, we can conclude that for the graph $\Gamma$, each of its parts $W_j$ must maximize the number of embeddings of $H$ among all graphs with $\vert W_j\vert$ vertices.

For non-negative integers $n_j$ for $j\in V(\tH)$ with $\sum_{j\in V(\tH)} n_j=n$, set
\[N_\phi((n_j)_{j\in V(\tH)})=\prod_{x\in V(H)} n_{\phi(x)}=\prod_{j\in \phi(V(H))}n_j\]
for every rotation or reflection $\phi$ of $\tH$. Furthermore, set
\[T((n_j)_{j\in V(\tH)})=\sum_{\phi}N_\phi((n_j)_{j\in V(\tH)})+\sum_{j\in V(\tH)}\emb(H,n_j).\]
Then $T((n_j)_{j\in V(\tH)})$ is precisely the number of embeddings $H\into \Gamma'$ for an $n$-vertex blow-up $\Gamma'$ of $\tH$ with parts of sizes $n_j$ for $j\in V(\tH)$ and such that each part is maximizing the number of embeddings of $H$ (among all graphs with the same number $n_j$ of vertices).

In particular,
\[\emb(H,\Gamma)=T((\vert W_j\vert)_{j\in V(\tH)}),\]
and as $\Gamma$ maximizes the number of embeddings of $H$ among all $n$-vertex graphs, we must have
\[T((n_j)_{j\in V(\tH)})\leq T((\vert W_j\vert)_{j\in V(\tH)})\]
for any non-negative integers $n_j$ for $j\in V(\tH)$ with $\sum_{j\in V(\tH)} n_j=n$.

Note that for each rotation or reflection $\phi$ of $\tH$, the number of $\phi$-regulated embeddings $H\into \Gamma$ is precisely 
\[N_\phi((\vert W_j\vert)_{j\in V(\tH)})=\prod_{x\in V(H)} \vert W_{\phi(x)}\vert\]
Hence, by Corollary \ref{coro-number-regulated-embeddings} we have
\[\sum_{\phi}N_\phi((\vert W_j\vert)_{j\in V(\tH)})\geq (\tk^{1/4}-1)\cdot \E_k(n).\]
Furthermore recall that by Lemma \ref{lemma-size-Wj} we have $1\leq \vert W_j\vert\leq k^{-4/5}\cdot n$ for all $j\in V(\tH)$. Thus, the following Proposition implies that $\vert \vert W_j\vert-\vert W_{j'}\vert\vert\leq 1$ for all $j,j'\in V(\tH)$. In other words, $\Gamma$ is a balanced blow-up with parts $W_j$ for $j\in V(\tH)$.

\begin{proposition}\label{propo-optimization} Let $n_j$ for $j\in V(\tH)$ be non-negative integers with $\sum_{j\in V(\tH)} n_j=n$ and such that $T((n_j)_{j\in V(\tH)})$ is maximized among all choices of $n_j\geq 0$ with $\sum_{j\in V(\tH)} n_j=n$. Let us furthermore assume that
\[\sum_{\phi}N_\phi((n_j)_{j\in V(\tH)})\geq (\tk^{1/4}-1)\cdot \E_k(n)\]
(where the sum is over all the different rotations and reflections $\phi$ of $\tH$) and $1\leq n_j\leq k^{-4/5}\cdot n$ for all $j\in V(\tH)$. Then $\vert n_j-n_{j'}\vert\leq 1$ for all $j,j'\in V(\tH)$.
\end{proposition}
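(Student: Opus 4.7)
Suppose for contradiction that there exist $j_0, j_1 \in V(\tH)$ with $a := n_{j_0} \geq n_{j_1} + 2 =: b + 2$. Consider the \emph{rebalancing swap} $\tilde n$ defined by $\tilde n_{j_0} = a-1$, $\tilde n_{j_1} = b+1$, and $\tilde n_j = n_j$ for $j \notin \{j_0,j_1\}$. Since $a\geq 2$ and $b\geq 0$, this is a valid non-negative tuple summing to $n$, so by the optimality of $(n_j)$ we have $T(n) \geq T(\tilde n)$, which decomposes as $F_0 + G_0 \geq 0$ where $F_0 = \sum_\phi N_\phi(n) - \sum_\phi N_\phi(\tilde n)$ and $G_0 = \sum_j \emb(H,n_j) - \sum_j \emb(H,\tilde n_j)$. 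The goal is to derive the contradiction $F_0 + G_0 < 0$.

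The term $|G_0|$ is negligible. Using Claim \ref{claim-embHn-small} together with $n_j \leq k^{-4/5}n$, one computes $|G_0| \leq 12 k^{-k+1}(k^{-4/5}n)^k$, and since $\E_k(n) \geq (n/(2k))^k$ for $n \geq 2k$, this gives $|G_0| \leq 12 k\cdot 2^k\cdot k^{-4k/5}\E_k(n) \leq e^{-\Omega(k)}\E_k(n)$, which is far smaller than the lower bound $\sum_\phi N_\phi \geq \tk^{1/4}\E_k(n)$.

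For $F_0$ the strategy is as follows. Classify the $2\tk$ rotations/reflections $\phi$ of $\tH$ into four cases according to whether $j_0$ and/or $j_1$ lies in $\phi(V(H))$. By Lemma \ref{lemma-number-rot-refl}, each of $j_0$ and $j_1$ lies in $\phi(V(H))$ for exactly $2k$ of the $\phi$'s, and together with $\tk - k \leq (\log\tk)/4$ this implies that the "case 2" class (both $j_0,j_1\in\phi(V(H))$) contains $\geq 2k - O(\log\tk)$ rotations/reflections, while the three remaining classes collectively contain only $O(\log\tk)$ of them. Since $N_\phi$ is bilinear in $(n_{j_0}, n_{j_1})$, a direct computation yields
\[
F_0 \;=\; B - C - D(a-b-1) \;=\; s_{j_0}(n) - s_{j_1}(n) + D,
\]
where $B, C, D$ are respectively the case-1, case-3, and case-2 subsums of the products $\prod_{x:\phi(x)\neq j_0}n_{\phi(x)}$, $\prod_{x:\phi(x)\neq j_1}n_{\phi(x)}$, $\prod_{x:\phi(x)\neq j_0,j_1}n_{\phi(x)}$, and $s_j := \sum_{\phi: j\in\phi(V(H))}\prod_{x:\phi(x)\neq j}n_{\phi(x)}$ is the marginal derivative of $F$.

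Choosing $j_0,j_1$ carefully (e.g., $n_{j_0}$ maximum and $n_{j_1}$ minimum), one bounds the case-1 and case-3 summands via $n_j \leq k^{-4/5}n$ so that $|B|, |C| \leq O(\log\tk)\cdot (k^{-4/5}n)^{k-1}$. On the other hand, the case-1, case-3, and case-4 terms contribute at most $O(\log\tk)\cdot (k^{-4/5}n)^k$ to $\sum_\phi N_\phi$, which is a factor of $e^{-\Omega(k)}$ below $\tk^{1/4}\E_k(n)$, so the case-2 contribution $abD$ must itself satisfy $abD \gtrsim \tk^{1/4}\E_k(n)$. Comparing $D(a-b-1) \geq D$ with the bound on $|B - C|$ and using both hypotheses (the constraint $n_j \leq k^{-4/5}n$ controls $|B|, |C|$, and the lower bound on $\sum_\phi N_\phi$ keeps $D$ substantial) yields $F_0 < -|G_0|$, the required contradiction. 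The main obstacle is this quantitative comparison of $D$ with $|B-C|$: it cannot be done by crude $\ell^\infty$ bounds alone and requires leveraging the invariance of $\sum_\phi N_\phi$ under the $2\tk$ rotations and reflections of $\tH$ (which, after averaging over a natural set of candidate pairs, forces $B = C$ on average and lets one locate a pair where $|B - C|$ is appropriately small).
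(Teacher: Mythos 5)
Your overall skeleton matches the paper's: perform a rebalancing swap $\tilde n_{j_0}=a-1$, $\tilde n_{j_1}=b+1$, classify the $2\tk$ rotations/reflections by which of $j_0,j_1$ they hit, observe that the both-hit class contributes a term proportional to $(a-b-1)$ while the singleton classes are sparse ($O(\log\tk)$ each), and try to show the both-hit term dominates. The decomposition $F_0=B-C-D(a-b-1)$ and the identification of the both-hit class as the driver are exactly the paper's Step $(\Phi_{a,b})$.

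However, there are two genuine quantitative gaps, both stemming from the same issue: you compare quantities against $\tk^{1/4}\E_k(n)$ when the correct yardstick is roughly $(n_a-n_b)N/n^2$, which is smaller by a factor on the order of $n^2/(k^{9/5})$ and can be arbitrarily smaller as $n\to\infty$. Concretely:

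(1) For $G_0$: bounding the four terms $\emb(H,n_a)$, $\emb(H,n_a-1)$, $\emb(H,n_b+1)$, $\emb(H,n_b)$ separately by Claim~\ref{claim-embHn-small} and adding gives $|G_0|\leq 12k^{-k+1}(k^{-4/5}n)^k$, which indeed is $e^{-\Omega(k)}\E_k(n)$, but this has no $1/n^2$ factor. Meanwhile $-F_0$ will only be bounded below by something like $(n_a-n_b)k^{9/5}N/n^2$, so for $n$ large the crude $G_0$ bound swamps it and no contradiction results. The paper avoids this by proving a second-difference estimate (Lemma~\ref{lemma-optimization2}, via the Pippenger--Golumbic first-difference bounds in Lemma~\ref{lemma-optimization1}) that yields $\emb(H,m)-2\emb(H,m-1)+\emb(H,m-2)\leq 2k^{-k+4}m^{k-2}$, which then telescopes to $G_0\leq(n_a-n_b)\cdot 2k^{-k+4}n_a^{k-2}$ (Corollary~\ref{coro-optimization3}); combined with $n_a\leq k^{-4/5}n$ this produces a bound of the needed form $(n_a-n_b)2^{-k}N/n^2$. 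Your proposal never captures the cancellation between $\emb(H,n_a)-\emb(H,n_a-1)$ and $\emb(H,n_b+1)-\emb(H,n_b)$.

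(2) For $B-C$: you correctly flag this as the main obstacle, but the mechanism you propose (averaging over candidate pairs and appealing to invariance to find a pair with small $|B-C|$) is both vaguer and structurally different from what is actually needed, and is not obviously workable since the pair $(a,b)$ is forced to be the max/min pair in order to exploit $n_b\leq n/\tk$ and $n_a\leq k^{-4/5}n$. The paper's route is deterministic: since $a$ is hit by all but $O(\log\tk)$ rotations/reflections (Lemma~\ref{lemma-number-rot-refl}), the index sets $\phi(V(H))\sm\{a\}$ and $\phi'(V(H))\sm\{b\}$ have symmetric difference $O(\log\tk)$, so one can telescope one product into the other through $O(\log\tk)$ single-index replacements. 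Each replacement costs $|n_i-n_j|\cdot(\text{product of }k-2\text{ factors})\leq(n_a-n_b)\E_{k-2}(n)$, and $\E_{k-2}(n)\leq e^6k^2\E_k(n)/n^2\leq e^6k^{7/4}N/n^2$ (Lemma~\ref{lemma-optimization4}). This gives the bound $B-C\leq(n_a-n_b)e^6k^{7/4}(\log\tk)^2 N/n^2$ (Lemma~\ref{lemma-optimization5}), which has precisely the right scaling in $(n_a-n_b)$ and $N/n^2$ to be beaten by the both-hit term $D(a-b-1)\geq\frac14(n_a-n_b)k^{9/5}N/n^2$. Crude bounds $|B|,|C|\lesssim(\log\tk)(k^{-4/5}n)^{k-1}$ are, for the same reason as in (1), insufficient once $n$ is large.

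In short: the blueprint is right, but both quantitative lemmas you would need (a second-difference estimate for $\emb(H,\cdot)$, and a telescoping product-difference estimate) are missing, and are exactly the content of Lemmas~\ref{lemma-optimization1}--\ref{lemma-optimization5} and Corollary~\ref{coro-optimization3} in the paper.
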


We will prove Proposition \ref{propo-optimization} in Section \ref{sect-optimization}.

So we have seen that for all positive integers $n\geq \tk$, every $n$-vertex graph $\Gamma$ maximizing $\emb(H,\Gamma)$ is a balanced blow-up of $\tH$, and furthermore each of its parts must also maximize the number of embeddings of $H$ among all graphs with the same number vertices. Thus, each of the parts must also be a balanced blow-up of $\tH$. Continuing like that, we can conclude that every $n$-vertex graph $\Gamma$ maximizing $\emb(H,\Gamma)$ is a balanced iterated blow-up of $\tH$. This proves Theorem \ref{thm-H-reasonable-blow-up}.

\section{Proofs of Lemmas \ref{lemma-loyal-unregulated}, \ref{lemma-phi-loyal-between-W}, \ref{lemma-W-partition}, and \ref{lemma-size-Wj}}
\label{sect-properties-Wj}

In this section, we will study the properties of the sets $W_j$ for $j\in V(\tH)$ and in particular prove Lemmas \ref{lemma-loyal-unregulated}, \ref{lemma-phi-loyal-between-W}, \ref{lemma-W-partition}, and \ref{lemma-size-Wj}.

\begin{claim}\label{claim-W-disjoint}The sets $W_j$ for $j\in V(\tH)$ are all disjoint.\end{claim}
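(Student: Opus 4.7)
The plan is to assume for contradiction that some vertex $w$ lies in both $W_j$ and $W_{j'}$ for distinct $j,j'\in V(\tH)$, and derive a contradiction with property (b) of Definition \ref{defi-reasonable}. The heuristic is that membership of $w$ in $W_j$ forces the ``adjacency profile'' of $w$ (relative to the sets $V_i$) to mimic that of $j$ in $\tH$, and simultaneous membership in $W_{j'}$ forces it to mimic $j'$; but $j$ and $j'$ cannot agree on too many neighbors by the reasonability hypothesis.

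More precisely, by Definition \ref{defi-Wj} applied to both $W_j$ and $W_{j'}$, there are subsets $A,B\subseteq V(H)\setminus S$ with $|A|,|B|\geq (1-\eps_1)k$ such that $a_\Gamma(w,V_i)=a_{\tH}(j,i)$ for all $i\in A$ (with $i\neq j$) and $a_\Gamma(w,V_i)=a_{\tH}(j',i)$ for all $i\in B$ (with $i\neq j'$). Since $|V(H)\setminus S|\leq k$, a simple inclusion-exclusion bound gives $|A\cap B|\geq |A|+|B|-k\geq (1-2\eps_1)k$. For every $i\in A\cap B\subseteq V(H)\setminus\{j,j'\}$, comparing the two equalities forces $a_{\tH}(j,i)=a_{\tH}(j',i)$, i.e.\ $i$ has the same adjacency to $j$ and to $j'$.

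Now I invoke property (b) of Definition \ref{defi-reasonable} for the two distinct vertices $j,j'\in V(\tH)$: there are at least $qk$ vertices in $V(H)\setminus\{j,j'\}$ adjacent to exactly one of $j,j'$. Hence the number of vertices in $V(H)\setminus\{j,j'\}$ with \emph{equal} adjacency to $j$ and $j'$ is at most $(1-q)k$. Since $\eps_1=q/3$ by Lemma \ref{lemma-epsilon-parameters}, we have $(1-2\eps_1)k=(1-2q/3)k>(1-q)k$, contradicting the lower bound on $|A\cap B|$ just obtained.

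The argument is essentially a two-line pigeonhole once the definitions are unpacked, so there is no real obstacle; the only thing to double-check is the arithmetic $1-2\eps_1>1-q$, which is exactly why $\eps_1$ was chosen to be $q/3$ (see (\ref{ineq-eps1-q})) rather than something closer to $q/2$.
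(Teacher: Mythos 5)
Your proof is correct and follows essentially the same route as the paper's: both proofs note that membership of $w$ in $W_j$ and $W_{j'}$ forces at least $(1-2\eps_1)k$ common indices $i$ on which $a_{\tH}(j,i)=a_{\tH}(j',i)$, and then derive a contradiction with condition (b) of Definition \ref{defi-reasonable} using $2\eps_1<q$. The presentation via explicit sets $A,B$ and inclusion-exclusion is just a slightly more verbose packaging of the same counting.
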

\begin{proof}Suppose there is a vertex $w\in W_j\cap W_{j'}$ for distinct $j,j'\in V(\tH)$. Then there must be at least $(1-2\eps_1)k$ vertices $i\in V(H)\sm(S\cup \lbrace j,j'\rbrace)$ satisfying $w\not\in V_i$ as well as $a_\Gamma(w,V_i)=a_{\tH}(j,i)$ and $a_\Gamma(w,V_i)=a_{\tH}(j',i)$. In particular, we must have $a_{\tH}(j,i)=a_{\tH}(j',i)$ for at least $(1-2\eps_1)k$ vertices $i\in V(H)\sm \lbrace j,j'\rbrace$. So there can be at most $2\eps_1 k$ vertices in $V(H)\sm \lbrace j,j'\rbrace$ that are adjacent to exactly one of the vertices $j$ and $j'$. As $2\eps_1<q$ by (\ref{ineq-eps1-q}), this contradicts condition (b) in Definition \ref{defi-reasonable}. Hence, there cannot be a vertex $w\in W_j\cap W_{j'}$ for distinct $j,j'\in V(\tH)$.
\end{proof}

The following lemma is the main tool for proving Lemmas \ref{lemma-loyal-unregulated} and  \ref{lemma-phi-loyal-between-W}.

\begin{lemma}\label{lemma-phi-loyal-wrong-W}Let $\phi$ be a rotation or reflection of $\tH$ and fix vertices $x, x'\in V(H)$ and $w, w'\in V(\Gamma)$ such that $x\neq x'$ and $w\not\in W_{\phi(x)}$. Then there are at most $k^{-8}\cdot \E_k(n)/n^{2}$ different $\phi$-loyal embeddings $\theta:H\into\Gamma$ with $\theta(x)=w$ and $\theta(x')=w'$.
\end{lemma}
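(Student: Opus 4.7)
The plan is to exploit the assumption $w \notin W_{\phi(x)}$ to extract a large collection of indices where the adjacency of $w$ to $V_i$ is incompatible with what a $\phi$-loyal embedding sending $x$ to $w$ would require, and then use this to constrain the embeddings severely. Unpacking Definition~\ref{defi-Wj}, the hypothesis says that fewer than $(1-\eps_1)k$ indices $i \in V(H)\sm(S\cup\{\phi(x)\})$ satisfy both $w \notin V_i$ and $a_\Gamma(w, V_i) = a_{\tH}(\phi(x),i)$. Since the sets $V_i$ are pairwise disjoint, $w$ lies in at most one of them, so the set
\[
I \,=\, \{i \in V(H) \sm (S \cup \{\phi(x)\}) : a_\Gamma(w, V_i) \neq a_{\tH}(\phi(x), i)\}
\]
satisfies $|I| \geq \eps_1 k - |S| - 2 \geq \eps_1 k/2$. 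For each $i \in I$, the set $V_i^* = \{z \in V_i : a_\Gamma(z,w) = a_{\tH}(\phi(x), i)\}$ has $|V_i^*| \leq |V_i|/2$, since by the definition of $I$ fewer than half the vertices of $V_i$ carry the adjacency $a_{\tH}(\phi(x),i)$ to $w$.

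The next step is to push this constraint back through $\phi$ and combine it with $\phi$-loyalty. Set $J = (\phi^{-1}(I) \cap V(H)) \sm \{x, x'\}$; using that $\phi$ is a bijection on $V(\tH)$ and $|V(\tH)\sm V(H)| \leq (\log \tk)/4$, we get $|J| \geq \eps_1 k/3$. Now fix a $\phi$-loyal embedding $\theta: H \into \Gamma$ with $\theta(x)=w$, $\theta(x')=w'$, and take $y \in J$ with $\theta(y) \in V_{\phi(y)}$. Since $\theta$ is an embedding, $a_\Gamma(\theta(y), w) = a_H(y, x) = a_{\tH}(y, x)$; since $\phi$ is an automorphism of $\tH$, $a_{\tH}(y, x) = a_{\tH}(\phi(y), \phi(x))$. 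As $\phi(y) \in I$, this puts $\theta(y) \in V_{\phi(y)}^*$. Loyalty forces at most $\eps_2 k$ vertices $y$ to have $\theta(y) \notin V_{\phi(y)}$, so at least $|J| - \eps_2 k \geq \eps_1 k/4$ vertices $y \in J$ satisfy $\theta(y) \in V_{\phi(y)}^*$.

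The final step is a counting argument. I will invoke Lemma~\ref{lemma-small-signature} (applied to $X=V(H)$) to produce a signature $T$ of $H$ with $\{x, x'\} \subseteq T$ and $|T| \leq (5/q)\log k + 2$, and then estimate, for each subset $P \subseteq J \sm T$ of an appropriately chosen size of order $\eps_1 k$, the number of embeddings $\theta$ satisfying $\theta(x)=w$, $\theta(x')=w'$, and $\theta(y) \in V_{\phi(y)}^*$ for every $y \in P$. The estimate proceeds by first choosing $\theta|_{T \cup P}$, using that the sets $V_{\phi(y)}$ for $y \in P$ are disjoint subsets of $V(\Gamma)$ of total size at most $n$, that $|V_{\phi(y)}^*| \leq |V_{\phi(y)}|/2$, and AM--GM to bound $\prod_{y \in P}|V_{\phi(y)}^*|$ by $(n/(2|P|))^{|P|}$, producing the crucial saving factor $2^{-|P|}$. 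Then $\theta|_{T \cup P}$ is extended to all of $V(H)$ via Lemma~\ref{lemma-signature-extensions}, using that $T\cup P$ inherits the signature property from $T$, and the total is obtained by summing over $P$. The main obstacle is showing that the exponential gain $2^{-|P|}$ with $|P| = \Theta(qk)$ suffices to absorb both the signature penalty of size $k^{|T|} = k^{\Theta(\log k/q)}$ and the combinatorial loss from the choice of $P$, yielding the claimed $k^{-8}\cdot \E_k(n)/n^2$; this is where the inequalities of Lemma~\ref{lemma-epsilon-parameters} together with (\ref{ineq-k-logtk}) and the lower bound $q \geq 10^4(\log k)^{6/5}k^{-1/5}$ (which in particular give $q^2 k \gg (\log k)^2$) become crucial.
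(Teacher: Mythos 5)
The overall architecture of your argument matches the paper's: extract a large set $I$ of indices where the adjacency of $w$ to $V_i$ is incompatible, pull it back through $\phi$, and then exploit that a $\phi$-loyal embedding must put $\theta(y)$ into the "wrong half" $V_{\phi(y)}^*$ for many $y$, giving an exponential saving. The identification of $I$, of $V_i^*$, and the use of condition~(c) via $\phi$-loyalty are all correct. However, the counting step has a genuine gap that is fatal as written.

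The problem is the combination of AM--GM with an unconstrained extension via Lemma~\ref{lemma-signature-extensions}. You bound $\prod_{y\in P}|V^*_{\phi(y)}|\leq (n/(2|P|))^{|P|}$ using the total budget $n$ for the $P$-coordinates, and then extend $\theta|_{T\cup P}$ to the remaining $k-|T|-|P|$ coordinates via Lemma~\ref{lemma-signature-extensions}, again with budget $n$, producing $\E_{k-|T|-|P|}(n)$. This double-counts: the product
\[
\left(\frac{n}{|P|}\right)^{|P|}\cdot \E_{k-|T|-|P|}(n)
\]
exceeds $\E_{k-|T|}(n)$ by a factor of roughly $2^{(k-|T|)H(|P|/(k-|T|))}\approx (1/\eps_1)^{\eps_1 k}$ (entropy of the split), and since $\eps_1<10^{-20}$ this is far larger than your gain $2^{-|P|}\approx 2^{-\eps_1 k}$. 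In other words, the exponential saving from $|V^*_{\phi(y)}|\leq |V_{\phi(y)}|/2$ is swamped by the loss incurred by bounding each of the two factors with the full budget $n$. The paper avoids this entirely: it constrains the extension to $U=V(\Gamma)\sm\bigcup_{y\in Y}V_{\phi(y)}$ (legitimate, because $\phi$-loyalty forbids two vertices in the same $V_i$) and then combines $\E_{|Y|}(n-|U|)\cdot\E_{k-|Y|-2}(|U|)\leq\E_{k-2}(n)$ via Lemma~\ref{lemma-functions-E}(ii), so the budget $n$ is genuinely partitioned. You must keep the actual sum $\sum_{y\in P}|V_{\phi(y)}|$ rather than replace it by $n$, and feed the complement into the extension.

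A secondary issue is the enumeration of $P$. As phrased --- sum over all subsets $P\subseteq J\sm T$ of size $\Theta(\eps_1 k)$ --- the binomial coefficient $\binom{|J\sm T|}{|P|}$ is again much larger than $2^{|P|}$ (e.g.\ for $|P|=|J\sm T|/4$ it is about $2^{0.81|J\sm T|}$). The loyal constraint gives you that at most $\eps_2 k$ vertices of $J$ are "bad", so the correct union bound is over the complement of $P$ in $J\sm T$, a set of size $\leq\eps_2 k$; only then is the loss of order $(e\eps_1/\eps_2)^{\eps_2 k}$, which \eqref{ineq-eps2-eps1} was designed to absorb. The paper folds this in automatically by parametrizing by the set $Y$ of "good" vertices of size $\lceil(1-\eps_2)k-2\rceil$ and letting $Y\cup\{x,x'\}$ itself serve as the signature; your idea of instead using a small signature $T$ and a separate set $P$ is a viable alternative decomposition (the $k^{|T|}\approx e^{O((\log k)^2/q)}$ cost is affordable by \eqref{ineq-eps3-k}), but only if you parametrize the sum by the $\leq\eps_2 k$ bad vertices, not by $P$ itself.
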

\begin{proof}Let $I\su V(H)\sm(S\cup \lbrace \phi(x)\rbrace)$ denote the set of vertices $i\in V(H)\sm(S\cup \lbrace \phi(x)\rbrace)$ such that $w\not\in V_i$ and $a_\Gamma(w,V_i)\neq a_{\tH}(\phi(x),i)$. As $w\not\in W_{\phi(x)}$, by Definition \ref{defi-Wj} we have (noting that there is at most one $i$ with $w\in V_i$, since the sets $V_i$ are all disjoint)
\[\vert I\vert\geq \vert V(H)\sm(S\cup \lbrace \phi(x)\rbrace)\vert-1-(1-\eps_1)k=k-\vert S\vert -2-(1-\eps_1)k=\eps_1 k-\vert S\vert -2.\]
Recall that $\vert S\vert \leq \frac{5}{q}\log k$. Furthermore, let us define $I'\su V(\tH)\sm(\phi^{-1}(S)\cup \lbrace x\rbrace)$ by $I'=\phi^{-1}(I)$. Then
\[\vert I'\vert=\vert I\vert\geq \eps_1 k-\vert S\vert -2\geq \eps_1 k-\frac{5}{q}\log k -2.\]
and for each $y\in I'$ we have $w\not\in V_{\phi(y)}$ and $a_\Gamma(w,V_{\phi(y)})\neq a_{\tH}(\phi(x),\phi(y))$.

For each $\phi$-loyal embedding $\theta:H\into\Gamma$ with $\theta(x)=w$ and $\theta(x')=w'$, there is a set $Y\su V(H)\sm \lbrace x,x'\rbrace$ of size $\vert Y\vert \geq (1-\eps_2)k-2$ such that for all $y\in Y$ we have $\phi(y)\in V(H)\sm S$ and $\theta(y)\in V_{\phi(y)}$. For each such $\theta$, we can choose such a set $Y$ with $\vert Y\vert = \lceil(1-\eps_2)k-2\rceil$ (by deleting some of the elements if $Y$ is larger).

The number of subsets $Y\su V(H)\sm \lbrace x,x'\rbrace$ of size $\vert Y\vert = \lceil(1-\eps_2)k-2\rceil$ is
\begin{multline*}
\binom{k-2}{ \lceil(1-\eps_2)k-2\rceil}\leq \binom{k}{ k-2-\lfloor\eps_2 k\rfloor}=\binom{k}{ \lfloor\eps_2 k\rfloor+2}\leq \left(\frac{ek}{\lfloor\eps_2 k\rfloor+2}\right)^{\lfloor\eps_2 k\rfloor+2}\leq \left(\frac{ek}{\eps_2 k}\right)^{\eps_2 k+2}\\
=\left(\frac{e}{\eps_2}\right)^{\eps_2 k+2}\leq \left(\eps_2^{-5/4}\right)^{(5/4)\eps_2 k}=\eps_2^{-(25/16)\eps_2 k}\leq \eps_2^{-2\eps_2 k},
\end{multline*}
using that $\eps_2\leq e^{-4}$ by (\ref{ineq-eps2-100}) and $2\leq \frac{1}{4}\eps_2 k$ by (\ref{ineq-eps5-logk-k}).

For each subset $Y\su V(H)\sm \lbrace x,x'\rbrace$ of size $\vert Y\vert = \lceil(1-\eps_2)k-2\rceil$ with $\phi(y)\in V(H)\sm S$ for all $y\in Y$, let us bound the number of $\phi$-loyal embeddings $\theta:H\into\Gamma$ with $\theta(x)=w$ and $\theta(x')=w'$ such that $\theta(y)\in V_{\phi(y)}$ for all $y\in Y$. First, let us consider the number of possibilities for $\theta\vert_{Y\cup \lbrace x,x'\rbrace}$. We need $\theta(x)=w$ and $\theta(x')=w'$. Furthermore, for each $y\in Y$ we need to choose $\theta(y)\in V_{\phi(y)}\sm \lbrace w\rbrace$ such that
\[a_{\Gamma}(w,\theta(y))=a_{\Gamma}(\theta(x),\theta(y))=a_H(x,y)=a_{\tH}(\phi(x), \phi(y)).\]
For each $y\in Y\sm I'$, there are clearly at most $\vert V_{\phi(y)}\vert$ possibilities to choose $\theta(y)\in V_{\phi(y)}$. On the other hand, for each $y\in Y\cap I'$, we have $a_\Gamma(w,V_{\phi(y)})\neq a_{\tH}(\phi(x),\phi(y))$ (and $w\not\in V_{\phi(y)}$). Hence there are at most $\frac{1}{2}\vert V_{\phi(y)}\vert$ possibilities for $\theta(y)\in V_{\phi(y)}$ with $a_{\Gamma}(w,\theta(y))=a_{\tH}(\phi(x), \phi(y))$. So all in all we see that the number of possibilities for $\theta\vert_{Y\cup \lbrace x,x'\rbrace}$ is at most
\[\prod_{y\in Y\sm I'}\vert V_{\phi(y)}\vert\cdot \prod_{y\in Y\cap I'}\frac{1}{2}\vert V_{\phi(y)}\vert=2^{-\vert Y\cap I'\vert}\prod_{y\in Y}\vert V_{\phi(y)}\vert\leq 2^{-\vert Y\cap I'\vert}\E_{\vert Y\vert}\left(\sum_{y\in Y}\vert V_{\phi(y)}\vert\right).\]
Note that (using $\tk\leq k+\frac{1}{4}\log \tk\leq k+\log k$)
\[\vert Y\cap I'\vert\geq \vert Y\vert+\vert I'\vert -\tk\geq (1-\eps_2)k-2+\eps_1 k-\frac{5}{q}\log k -2-k-\log k\geq \eps_1 k-\eps_2 k-\frac{6}{q}\log k-4.\]
As $\eps_2<\frac{1}{4}\eps_1$ by (\ref{ineq-eps2-eps1}), $\frac{6}{q}\log k\leq \frac{1}{8}\eps_1 k$ by (\ref{ineq-eps5-logk-k}) and $4\leq \frac{1}{8}\eps_1 k$ also by (\ref{ineq-eps5-logk-k}), we obtain $\vert Y\cap I'\vert\geq \frac{1}{2}\eps_1 k$. Hence the number of possibilities for $\theta\vert_{Y\cup \lbrace x,x'\rbrace}$ is at most
\[2^{-\eps_1 k/2}\E_{\vert Y\vert}\left(\sum_{y\in Y}\vert V_{\phi(y)}\vert\right)=2^{-\eps_1 k/2}\E_{\vert Y\vert}(n-\vert U\vert),\]
where $U$ denotes the set $V(\Gamma)\sm \bigcup_{y\in Y} V_{\phi(y)}$ (recall that the sets $V_{\phi(y)}$ are all disjoint).

Now, for each of the possible maps $\theta\vert_{Y\cup \lbrace x,x'\rbrace}$, let us bound the number of ways to extend $\theta\vert_{Y\cup \lbrace x,x'\rbrace}$ to a $\phi$-loyal embedding $\theta:H\into\Gamma$. By Definition \ref{defi-loyal}, the image of $\theta$ can only contain at most one vertex from each of the sets $V_{\phi(y)}$ for $y\in Y$ and since we already have $\theta(y)\in V_{\phi(y)}$, we must have $\theta(v)\in U$ for all $v\in V(H)\sm (Y\cup \lbrace x,x'\rbrace)$. Note that the set $Y\cup \lbrace x,x'\rbrace$ has size $\vert Y\vert +2\geq (1-\eps_2)k\geq (1-q)k$, so by Lemma \ref{lemma-large-signature} the set $Y\cup \lbrace x,x'\rbrace$ is a signature. Hence we can apply Lemma \ref{lemma-signature-extensions} and obtain that the number of $\phi$-loyal embeddings $\theta:H\into\Gamma$ extending $\theta\vert_{Y\cup \lbrace x,x'\rbrace}$ is at most $\E_{k-\vert Y\vert-2}(\vert U\vert)$.

So for each of the possibilities for $Y$, the number of $\phi$-loyal embeddings $\theta:H\into\Gamma$ is at most
\[2^{-\eps_1 k/2}\E_{\vert Y\vert}(n-\vert U\vert)\cdot \E_{k-\vert Y\vert-2}(\vert U\vert)\leq 2^{-\eps_1 k/2}\E_{k-2}(n)\leq 2^{-\eps_1 k/2}e^{6}\left(\frac{k}{n}\right)^{2}\E_k(n)\]
(here we used Lemma \ref{lemma-functions-E}(ii) and Lemma \ref{lemma-functions-E}(iii), recalling that $n\geq \tk\geq k$).

Multiplying this by the number of possibilities for $Y$, we see that the number of $\phi$-loyal embeddings $\theta:H\into\Gamma$ with $\theta(x)=w$ and $\theta(x')=w'$ is at most
\begin{multline*}
\eps_2^{-2\eps_2 k}\cdot 2^{-\eps_1 k/2}e^{6}\left(\frac{k}{n}\right)^{2}\E_k(n)=\exp\left(2\log(1/\eps_2)\eps_2 k-\frac{\log 2}{2}\eps_1 k\right)e^{6}k^{2}\frac{\E_k(n)}{n^2}\\
\leq \exp\left(-\frac{\log 2}{4}\eps_1 k\right)e^{6}k^{2}\frac{\E_k(n)}{n^2}\leq \exp\left(-10(\log k)-6\right)e^{6}k^{2}\frac{\E_k(n)}{n^2}=k^{-8}\frac{\E_k(n)}{n^2},
\end{multline*}
where for the first inequality we used (\ref{ineq-eps2-eps1}) and for the second inequality we used (\ref{ineq-eps5-logk-k}). This finishes the proof of Lemma \ref{lemma-phi-loyal-wrong-W}.
\end{proof}

From Lemma \ref{lemma-phi-loyal-wrong-W} we obtain the following corollary, from which we will deduce Lemma  \ref{lemma-loyal-unregulated}.

\begin{corollary}\label{coro-phi-loyal-wrong-W}Let $\phi$ be a rotation or reflection of $\tH$ and let $x\in V(H)$ and $w\in V(\Gamma)$ be such that $w\not\in W_{\phi(x)}$. Then there are at most $k^{-8}\cdot \E_k(n)/n$ different $\phi$-loyal embeddings $\theta:H\into\Gamma$ with $\theta(x)=w$.
\end{corollary}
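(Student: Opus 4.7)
The plan is to derive Corollary \ref{coro-phi-loyal-wrong-W} directly from Lemma \ref{lemma-phi-loyal-wrong-W} by summing over all possible images of a second vertex. Since $H$ has $k\ge 2$ vertices (in fact $k \geq \tk - \frac14\log\tk$ is very large), we may fix any auxiliary vertex $x' \in V(H)\setminus\{x\}$. Every $\phi$-loyal embedding $\theta : H \into \Gamma$ with $\theta(x) = w$ has some specific image $\theta(x') = w' \in V(\Gamma)$, so the collection of such embeddings is partitioned according to the value of $w'$.

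Thus I would write
\[
\#\{\phi\text{-loyal } \theta : \theta(x)=w\} \;=\; \sum_{w' \in V(\Gamma)} \#\{\phi\text{-loyal } \theta : \theta(x)=w,\ \theta(x') = w'\}.
\]
Since $w \notin W_{\phi(x)}$, the hypothesis of Lemma \ref{lemma-phi-loyal-wrong-W} is satisfied for every choice of $w' \in V(\Gamma)$ (the lemma only requires $x \neq x'$ and $w \notin W_{\phi(x)}$, with no condition on $w'$). Therefore each summand is bounded by $k^{-8}\cdot \E_k(n)/n^{2}$, and since $|V(\Gamma)| = n$, the total is at most
\[
n \cdot k^{-8}\cdot \frac{\E_k(n)}{n^{2}} \;=\; k^{-8}\cdot \frac{\E_k(n)}{n},
\]
as required.

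This is a routine pigeonhole-style reduction with no genuine obstacle: the only point worth checking is that $V(H)$ contains at least two vertices so that some $x'\neq x$ exists, which is ensured by $k \geq \tk - \frac14\log\tk$ and $\tk \geq 10^{200}$. No new properties of $V_i$, $W_j$, or the signature $S$ are needed beyond what Lemma \ref{lemma-phi-loyal-wrong-W} already packages.
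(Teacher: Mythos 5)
Your argument is correct and is essentially identical to the paper's own proof: fix an auxiliary vertex $x' \neq x$, partition the $\phi$-loyal embeddings with $\theta(x)=w$ by the value $w'=\theta(x')$, apply Lemma \ref{lemma-phi-loyal-wrong-W} to each summand, and sum over the $n$ choices of $w'$. No discrepancy.
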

\begin{proof}Fix any $x'\in V(H)$ with $x'\neq x$ (recall $\vert V(H)\vert = k \geq \tk/2\geq 10^{199}$). Then by Lemma \ref{lemma-phi-loyal-wrong-W}, for each $w'\in V(\Gamma)$ there are at most $k^{-8}\cdot \E_k(n)/n^{2}$ different $\phi$-loyal embeddings $\theta:H\into\Gamma$ with $\theta(x)=w$ and $\theta(x')=w'$. Adding this up for all $w'\in V(\Gamma)$ shows that there are at most $k^{-8}\cdot \E_k(n)/n$ different $\phi$-loyal embeddings $\theta:H\into\Gamma$ with $\theta(x)=w$.
\end{proof}

\begin{proof}[Proof of Lemma \ref{lemma-loyal-unregulated}]
It suffices to show that for each fixed $x\in V(H)$, there are at most $k^{-8}\cdot \E_k(n)$ different $\phi$-loyal embeddings $\theta:H\into\Gamma$ with $\theta(x)\not\in W_{\phi(x)}$. For every $w\in V(\Gamma)\sm W_{\phi(x)}$, by Corollary \ref{coro-phi-loyal-wrong-W} there are at most $k^{-8}\cdot \E_k(n)/n$ different $\phi$-loyal embeddings $\theta:H\into\Gamma$ with $\theta(x)=w$. Adding this up for all $\vert V(\Gamma)\sm W_{\phi(x)}\vert\leq n$ choices for $w$ gives the desired statement.
\end{proof}

Furthermore, Corollary \ref{coro-phi-loyal-wrong-W} also yields the following statement, which we will need later in order to prove Lemma \ref{lemma-W-partition}.

\begin{corollary}\label{coro-vertex-W-rest}Let $w\in V(\Gamma)$ and assume that $w\not\in W_j$ for all $j\in V(\tH)$. Then $w$ is contained in the image of at most $k^{-5}\E_k(n)/n$ different loyal embeddings $H\into \Gamma$.
\end{corollary}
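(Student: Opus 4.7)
The plan is to reduce Corollary \ref{coro-vertex-W-rest} to Corollary \ref{coro-phi-loyal-wrong-W} by summing over all possible rotations/reflections and over all possible preimages of $w$. Concretely, any loyal embedding $\theta:H\into\Gamma$ is by definition $\phi$-loyal for at least one rotation or reflection $\phi$ of $\tH$, of which there are $2\tk\le 4k$. Moreover, if $w$ lies in the image of $\theta$, then since $\theta$ is injective there is a unique vertex $x\in V(H)$ with $\theta(x)=w$, and there are at most $k$ choices for this vertex.

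The key point is the hypothesis $w\notin W_j$ for every $j\in V(\tH)$: applied with $j=\phi(x)$, this guarantees $w\notin W_{\phi(x)}$ regardless of which $(\phi,x)$ pair we consider. Hence each $(\phi,x)$ fits the hypothesis of Corollary \ref{coro-phi-loyal-wrong-W}, which bounds the number of $\phi$-loyal embeddings $\theta:H\into\Gamma$ with $\theta(x)=w$ by $k^{-8}\cdot \E_k(n)/n$.

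Summing this bound over all $k$ choices of $x\in V(H)$ and all $2\tk\le 4k$ rotations and reflections $\phi$ of $\tH$ yields an upper bound of
\[
k\cdot 4k\cdot k^{-8}\cdot \frac{\E_k(n)}{n}\;=\;4k^{-6}\cdot \frac{\E_k(n)}{n}\;\le\; k^{-5}\cdot \frac{\E_k(n)}{n}
\]
on the total number of loyal embeddings $H\into\Gamma$ whose image contains $w$ (each such embedding is counted at least once in the sum, and overcounting only helps the inequality). Since essentially all work is already done by Corollary \ref{coro-phi-loyal-wrong-W}, there is no genuine obstacle here; the only care needed is to verify that the hypothesis $w\notin W_{\phi(x)}$ holds uniformly for every pair $(\phi,x)$, which is immediate from the stronger assumption $w\notin\bigcup_{j\in V(\tH)}W_j$, and to check that the factor $2\tk\le 4k$ (coming from $\tk\le 2k$, which follows from $k\ge\tk-\tfrac14\log\tk$) leaves enough room to absorb a factor of $k^2$ and still obtain the desired exponent $k^{-5}$.
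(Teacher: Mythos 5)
Your proof is correct and follows the same route as the paper: both apply Corollary \ref{coro-phi-loyal-wrong-W} to each pair $(\phi,x)$, using the hypothesis $w\notin\bigcup_j W_j$ to verify $w\notin W_{\phi(x)}$, and then sum. The only difference is cosmetic: the paper first sums over $x$ for a fixed $\phi$ (getting $k^{-7}\E_k(n)/n$ per $\phi$) and then multiplies by $2\tk\le k^2$, whereas you sum over both at once using the sharper $2\tk\le 4k$, yielding $4k^{-6}\E_k(n)/n\le k^{-5}\E_k(n)/n$; the bound $\tk\le 2k$ you rely on is justified exactly as you say.
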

\begin{proof}Fix some rotation or reflection $\phi$ of $\tH$. For each $x\in V(H)$, by Corollary \ref{coro-phi-loyal-wrong-W} there are at most $k^{-8}\cdot \E_k(n)/n$ different $\phi$-loyal embeddings $\theta:H\into\Gamma$ with $\theta(x)=w$. Summing this up, there are at most $k^{-7}\cdot \E_k(n)/n$ different $\phi$-loyal embeddings $H\into\Gamma$ whose image contains $w$. Since there are at most $2\tk\leq 4k\leq k^2$ rotations and reflections $\phi$ of $\tH$, all in all we obtain that there are at most $k^{-5}\E_k(n)/n$ loyal embeddings $H\into \Gamma$ whose image contains $w$.
\end{proof}

Finally, we can also deduce Lemma \ref{lemma-phi-loyal-between-W} from Lemma \ref{lemma-phi-loyal-wrong-W}.

\begin{proof}[Proof of Lemma \ref{lemma-phi-loyal-between-W}]
Fix some rotation or reflection $\phi$ of $\tH$. For any $\phi$-loyal embedding $\theta: H\into \Gamma$ whose image contains both $w$ and $w'$, there  are $x,x'\in V(H)$ with $\theta(x)=w$ and $\theta(x')=w'$. As $j\neq j'$, by Claim \ref{claim-W-disjoint} the sets $W_j$ and $W_{j'}$ are disjoint and therefore $w\neq w'$. Hence we must have $x\neq x'$. Furthermore $a_\Gamma(w,w')=a_\Gamma(\theta(x),\theta(x'))=a_H(x,x')=a_{\tH}(\phi(x),\phi(x'))$. So the assumption $a_\Gamma(w,w')\neq  a_{\tH}(j,j')$ implies $a_{\tH}(\phi(x),\phi(x'))\neq a_{\tH}(j,j')$ and in particular $j\neq \phi(x)$ or $j'\neq \phi(x')$. By Claim \ref{claim-W-disjoint}, this implies $w\not\in W_{\phi(x)}$ or $w'\not\in W_{\phi(x')}$.

For any choice of $x,x'\in V(H)$ with $x\neq x'$ and $w\not\in W_{\phi(x)}$, by Lemma \ref{lemma-phi-loyal-wrong-W} there at most $k^{-8}\E_k(n)/n^2$ different $\phi$-loyal embeddings $\theta: H\into \Gamma$ with $\theta(x)=w$ and $\theta(x')=w'$. For any choice of $x,x'\in V(H)$ with $x\neq x'$ and $w'\not\in W_{\phi(x')}$ the same statement holds analogously (in this case apply Lemma  \ref{lemma-phi-loyal-wrong-W} with the roles of $x$ and $x'$ interchanged and the roles of $w$ and $w'$ interchanged).

There are at most $k^2$ possibilities to choose $x,x'\in V(H)$ with $x\neq x'$ and $w\not\in W_{\phi(x)}$ or $w'\not\in W_{\phi(x')}$. Hence there are at most  $k^{-6}\E_k(n)/n^2$ different $\phi$-loyal embeddings $H\into \Gamma$ whose image contains both $w$ and $w'$.

There are at most $2\tk\leq 4k\leq k^2$ rotations and reflections $\phi$ of $\tH$. Summing over all possible $\phi$ yields that there are at most  $k^{-4}\E_k(n)/n^2$ loyal embeddings $H\into \Gamma$ whose image contains both $w$ and $w'$.\end{proof}

For Lemma \ref{lemma-W-partition}, we have already seen in Claim \ref{claim-W-disjoint} that the sets $W_j$ are disjoint. The following claim states that their union is $V(\Gamma)$, as desired.

\begin{claim}\label{claim-W-union}The union of the sets $W_j$ for all $j\in V(\tH)$ is the whole vertex set $V(\Gamma)$.
\end{claim}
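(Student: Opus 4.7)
The plan is to argue by contradiction: suppose there exists $w \in V(\Gamma)$ with $w \notin W_j$ for every $j \in V(\tH)$. The goal will then be to show that $w$ is contained in the image of too few embeddings $H \into \Gamma$, contradicting the lower bound from Corollary \ref{coro-each-vertex-many-embeddings} (which guarantees that every vertex is in the image of at least $\E_k(n)/n$ embeddings).

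The argument proceeds by splitting the embeddings $H \into \Gamma$ containing $w$ into loyal and disloyal embeddings and bounding each count separately. For the loyal embeddings, Corollary \ref{coro-vertex-W-rest} directly gives that at most $k^{-5}\E_k(n)/n$ loyal embeddings contain $w$ in their image, since $w$ belongs to no $W_j$. For the disloyal embeddings, I would apply Corollary \ref{coro-few-unloyal}: for each vertex $x \in V(H)$, the number of disloyal embeddings $\theta$ with $\theta(x)=w$ is at most $k^{-6}\E_k(n)/n$, so summing over the $k$ choices of $x$, the number of disloyal embeddings whose image contains $w$ is at most $k \cdot k^{-6}\E_k(n)/n = k^{-5}\E_k(n)/n$.

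Combining these two bounds gives that at most $2k^{-5}\E_k(n)/n$ embeddings $H \into \Gamma$ have $w$ in their image. Comparing with Corollary \ref{coro-each-vertex-many-embeddings} forces $\E_k(n)/n \leq 2k^{-5}\E_k(n)/n$, i.e.\ $k^5 \leq 2$, which contradicts $k \geq \tk/2 \geq 10^{199}$. Hence every vertex of $\Gamma$ lies in some $W_j$.

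There is no real obstacle here: the work has already been front-loaded into Corollaries \ref{coro-each-vertex-many-embeddings}, \ref{coro-few-unloyal}, and \ref{coro-vertex-W-rest}, and the proof of Claim \ref{claim-W-union} is just a short counting argument that combines them. The only thing to verify carefully is the bookkeeping of the summation over $x \in V(H)$ when converting the ``per-vertex-of-$H$'' bound in Corollary \ref{coro-few-unloyal} into a bound on the number of embeddings whose image contains a fixed vertex of $\Gamma$.
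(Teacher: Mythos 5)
Your proof is correct and follows exactly the same route as the paper: decompose the embeddings containing $w$ into loyal (handled by Corollary \ref{coro-vertex-W-rest}) and disloyal (handled by Corollary \ref{coro-few-unloyal} summed over $x \in V(H)$), and compare the resulting bound $2k^{-5}\E_k(n)/n$ against the lower bound of Corollary \ref{coro-each-vertex-many-embeddings}. No meaningful difference.
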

\begin{proof}Suppose there was a vertex $w\in V(\Gamma)$ with $w\not\in W_j$ for all $j\in V(\tH)$. By Corollary \ref{coro-vertex-W-rest}, the vertex $w$ is contained in the image of at most $k^{-5}\E_k(n)/n$ loyal embeddings $H\into \Gamma$. Furthermore, for every $x\in V(H)$, by Corollary \ref{coro-few-unloyal} there are at most $k^{-6}\E_k(n)/n$ disloyal embeddings $\theta: H\into \Gamma$ with $\theta(x)=w$. Hence, summing over all $x\in V(H)$, the vertex $w$ is contained in the image of at most $k^{-5}\E_k(n)/n$ disloyal embeddings $H\into \Gamma$. Thus, all in all $w$ is in the image of at most $2k^{-5}\E_k(n)/n$ embeddings $H\into \Gamma$. As $k>2$, this is a contradiction to Corollary \ref{coro-each-vertex-many-embeddings}. Thus, there cannot be a vertex $w\in V(\Gamma)$ with $w\not\in W_j$ for all $j\in V(\tH)$.
\end{proof}

Now Lemma \ref{lemma-W-partition} follows immediately from Claims \ref{claim-W-disjoint} and \ref{claim-W-union}. Finally, let us prove Lemma \ref{lemma-size-Wj}.

\begin{proof}[Proof of Lemma \ref{lemma-size-Wj}]
Suppose that $W_j=\emptyset$ for some $j\in V(\tH)$. Then, by Definition \ref{defi-regulated}, there cannot be any $\phi$-regulated embeddings $H\into \Gamma$ for any rotation or reflection $\phi$ with $j\in\phi(V(H))$. However, by Lemma \ref{lemma-number-rot-refl} there are at most $2(\tk-k)\leq \frac{1}{2}\log \tk$ rotations or reflections $\phi$ with $j\not\in\phi(V(H))$ and by Claim \ref{claim-number-regulated} for each of them there are at most $\E_k(n)$ different $\phi$-regulated embeddings $H\into \Gamma$. Hence the total number of regulated embeddings is most $\frac{1}{2}(\log \tk)\cdot \E_k(n)<(\tk^{1/4}-1)\cdot \E_k(n)$, using (\ref{ineq-k-logtk}). This is a contradiction to Corollary \ref{coro-number-regulated-embeddings}. Thus, we must have $\vert W_j\vert\geq 1$ for every $j\in V(\tH)$.

Now suppose that $\vert W_j\vert> k^{-4/5}\cdot n$ for some $j\in V(\tH)$. We claim that then for each rotation or reflection $\phi$ of $\tH$, the number of $\phi$-regulated embeddings $H\into \Gamma$ is at most $n\cdot \E_{k-1}(n-k^{-4/5}n)$. In order to check this claim, let us fix some rotation or reflection $\phi$. If $j\in \phi(V(H))$, let $y\in V(H)$ be such that $\phi(y)=j$. Otherwise, fix some arbitrary $y\in V(H)$. Then in either case we have
\[\sum_{x\in V(H)\sm\lbrace y\rbrace}\vert W_{\phi(x)}\vert\leq n-\vert W_j\vert\leq n-k^{-4/5}n,\]
since $W_j$ and $W_{\phi(x)}$ for $x\in V(H)\sm\lbrace y\rbrace$ are disjoint subsets of $V(\Gamma)$, see Claim \ref{claim-W-disjoint}. Therefore, by Claim \ref{claim-number-regulated}, the number of $\phi$-regulated embeddings $H\into \Gamma$ is at most
\[\prod_{x\in V(H)}\vert W_{\phi(x)}\vert=\vert W_{\phi(y)}\vert\cdot \prod_{x\in V(H)\sm\lbrace y\rbrace}\vert W_{\phi(x)}\vert\leq n\cdot \E_{k-1}(n-k^{-4/5}n),\]
using Lemma \ref{lemma-functions-E}(i).

Thus, for each rotation or reflection $\phi$ of $\tH$, the number of $\phi$-regulated embeddings $H\into \Gamma$ is indeed at most $n\cdot \E_{k-1}(n-k^{-4/5}n)$. But then the total number of regulated embeddings $H\into \Gamma$ is at most
\[2\tk\cdot n\cdot \E_{k-1}(n-k^{-4/5}n)\leq 2\tk n\cdot e^3\exp(-k^{-4/5}\cdot k/2)\cdot \frac{k}{n}\E_k(n)\leq 4e^3\cdot k^2\cdot \exp(-k^{1/5}/2)\cdot \E_k(n).\]
But by (\ref{ineq-k-logtk}) and $\tk\geq 10^{200}$ we have $\exp(k^{1/5}/2)\geq \tk^3\geq  4e^3\cdot k^2$ and hence this number is at most $\E_k(n)$. This again contradicts Corollary \ref{coro-number-regulated-embeddings}. Thus, we have $\vert W_j\vert\leq k^{-4/5}\cdot n$ for every $j\in V(\tH)$.
\end{proof}

\section{Preparations for the proof of Proposition \ref{propo-few-unloyal}}\label{sect-properties-Vi}

In this section, we will establish several important properties of the sets $V_i'$ and $V_i$ for $i\in V(H)\sm S$. These properties will be used in the proof of Proposition \ref{propo-few-unloyal} in the next section.

Recall that $S\su V(H)$ is a signature of $H$ of size $\vert S\vert\leq \frac{5}{q}\log k$, and $\psi :S\to V(\Gamma)$ is a map such that there are at least $\emb(H,\Gamma)/n^{\vert S\vert}$ embeddings $H\into \Gamma$ extending $\psi$. Furthermore, recall from the beginning of Section \ref{sect-proof-blow-up} that for each $i\in V(H)\sm S$ the set $V_i'\su V(\Gamma)$ consists of those vertices $v\in V(\Gamma)$ for which there exists an embedding $H\into \Gamma$ extending $\psi$ and mapping $i$ to $v$. We saw that the sets $V_i'$ for $i\in V(H)\sm S$ are all disjoint. Recall that we defined in Definition \ref{defi-bad} what it means for a vertex $v\in V_i'$ to be bad.

\begin{lemma}\label{lemma-bad-vertex}Let $i\in V(H)\sm S$ and assume that $v\in V_i'$ is a bad vertex. Then there are at most $\E_k(n)/n^{\vert S\vert+1}$ embeddings $\theta: H\into \Gamma$ with $\theta(i)=v$ and $\theta(s)=\psi(s)$ for all $s\in S$.
\end{lemma}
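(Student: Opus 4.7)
The plan is to apply Lemma \ref{lemma-signature-extensions} with the signature $S \cup \{i\}$ and then use the bad-vertex condition to restrict the codomain of the other vertices.

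First, I would let $B \subseteq V(\Gamma)$ denote the set of vertices $w$ such that $(v,w)$ is a bad pair, i.e.\ $w \in V_j'$ for some $j \in V(H) \setminus (S \cup \{i\})$ and $a_\Gamma(v,w) \neq a_H(i,j)$. Since $v$ is bad, $|B| \geq \eps_5 n$ by Definition \ref{defi-bad}. The key observation is that for any embedding $\theta : H \hookrightarrow \Gamma$ extending $\psi$ with $\theta(i)=v$, and any $j \in V(H) \setminus (S \cup \{i\})$, one has $\theta(j) \in V_j'$ (since $\theta$ is an embedding extending $\psi$ mapping $j$ to $\theta(j)$), and moreover $\theta(j) \notin B$: otherwise, because the sets $V_{j'}'$ are disjoint, $\theta(j)$ would lie in $V_j' \cap B$, which means $a_\Gamma(v,\theta(j)) \neq a_H(i,j)$, contradicting that $\theta$ preserves adjacency. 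Hence every such $\theta$ sends $V(H) \setminus (S \cup \{i\})$ into $U := V(\Gamma) \setminus B$, which has size $|U| \leq (1-\eps_5)n$.

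Next, since $S$ is a signature of $H$, so is $S \cup \{i\}$. Applying Lemma \ref{lemma-signature-extensions} with signature $S \cup \{i\}$, the fixed map $\psi$ extended by $i \mapsto v$, and target set $U$, the number of embeddings in question is at most
\[
\E_{k - |S| - 1}(|U|) \leq \E_{k-|S|-1}\bigl((1-\eps_5)n\bigr).
\]
Now I invoke Lemma \ref{lemma-functions-E}(iii) with $\ell = k$, $\ell' = k - |S| - 1$, $m = n$, $m' = (1-\eps_5)n$, $\mu = \eps_5$, to obtain
\[
\E_{k - |S| - 1}\bigl((1-\eps_5)n\bigr) \leq e^{3(|S|+1) - \eps_5 k/2} \left(\frac{k}{n}\right)^{|S|+1} \E_k(n).
\]

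Finally, it remains to verify that $e^{3(|S|+1) - \eps_5 k/2} k^{|S|+1} \leq 1$, equivalently $(|S|+1)(\log k + 3) \leq \eps_5 k/2$. Using $|S| \leq \frac{5}{q}\log k$ together with $\log k \geq 3$ (which follows from $\tk \geq 10^{200}$), we get $|S|+1 \leq \frac{6}{q}\log k$ and $\log k + 3 \leq 2\log k$, so the left-hand side is at most $\frac{12}{q}(\log k)^2$. By inequality \eqref{ineq-eps5-logk-k}, $\eps_5 > \frac{40}{q}\cdot \frac{(\log k)^2}{k}$, which yields $\eps_5 k/2 > \frac{20}{q}(\log k)^2 > \frac{12}{q}(\log k)^2$, as required. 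The main (though modest) obstacle is the careful bookkeeping to ensure that the slack built into $\eps_5$ via \eqref{ineq-eps5-logk-k} is large enough to absorb both the $k^{|S|+1}$ factor and the $e^{3(|S|+1)}$ factor coming from Lemma \ref{lemma-functions-E}(iii). No step requires any new combinatorial idea beyond the disjointness of the $V_j'$ established in Claim \ref{claim-Viprime-disj} and the definition of a bad vertex.
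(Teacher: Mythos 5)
Your proof is correct and follows essentially the same route as the paper: restrict the codomain of $V(H)\setminus(S\cup\{i\})$ to the complement $U$ of the bad-pair neighborhood, apply Lemma \ref{lemma-signature-extensions} with the signature $S\cup\{i\}$, then use Lemma \ref{lemma-functions-E}(iii) and inequality \eqref{ineq-eps5-logk-k} to absorb the resulting $k^{|S|+1}e^{3(|S|+1)}$ factor. The only cosmetic difference is that you make the disjointness of the $V_j'$ explicit when arguing $\theta(j)\notin B$, whereas the paper leaves that implicit; otherwise the two arguments are identical.
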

\begin{proof}Let $U\su V(\Gamma)$ be the complement of the set of vertices $w\in V(\Gamma)$ such that $(v,w)$ is a bad pair. Since $v$ is a bad vertex, there are at least $\eps_5 n $ such bad pairs, so we have $\vert U\vert\leq (1-\eps_5)n$.

Recall that every embedding $\theta: H\into \Gamma$ extending $\psi$ satisfies $\theta(j)\in V_j'$ for all $j\in V(H)\sm S$. We claim that every embedding $\theta$ with the properties in the lemma must furthermore satisfy $\theta(j)\in U$ for all $j\in V(H)\sm (S\cup \lbrace i\rbrace)$. Indeed, for each $j\in V(H)\sm (S\cup \lbrace i\rbrace)$ we have
\[a_\Gamma(v,\theta(j))=a_\Gamma(\theta(i),\theta(j))=a_H(i,j).\]
As $\theta(j)\in V_j'$ and $v\in V_i'$, this means that $(v,\theta(j))$ is not a bad pair, so $\theta(j)\in U$.

Now let $X=S\cup\lbrace i\rbrace$ (note that $X$ is a signature of $H$) and define $f:X\to V(\Gamma)$ by $f(s)=\psi(s)$ for $s\in S$ and $f(i)=v$. Then all the embeddings $\theta: H\into \Gamma$ with the properties in the lemma must satisfy $\theta(x)=f(x)$ for all $x\in X$ and $\theta(j)\in U$ for all $j\in  V(H)\sm X$. So by Lemma \ref{lemma-signature-extensions} the number of such embeddings is at most
\[\E_{k-\vert X\vert}(\vert U\vert)\leq \E_{k-\vert S\vert-1}((1-\eps_5) n)\leq e^{3(\vert S\vert+1)-\eps_5 k/2}\left(\frac{k}{n}\right)^{\vert S\vert+1}\E_k(n)=
e^{(\log k+3)(\vert S\vert+1)-\eps_5 k/2}\frac{\E_k(n)}{n^{\vert S\vert+1}}.\]
Here we used Lemma \ref{lemma-functions-E}(iii), recalling that $n\geq \tk\geq k$. As
\[(\log k+3)(\vert S\vert+1)\leq (\log k+3)\left(\frac{5}{q}\log k+1\right)\leq 2\log k\cdot \frac{6}{q}\log k=\frac{12}{q}(\log k)^2\leq \frac{\eps_5 k}{2}\]
by (\ref{ineq-eps5-logk-k}), this finishes the proof of the lemma.
\end{proof}

Recall that for each $i\in V(H)\sm S$, we defined $V_i$ to be the set obtained from $V_i'$ by deleting all bad vertices in $V_i'$.

\begin{lemma}\label{lemma-embeddings-Vi} There are at least $\E_k(n)/n^{\vert S\vert}$ embeddings $\theta: H\into \Gamma$ with $\theta(s)=\psi(s)$ for all $s\in S$ and $\theta(i)\in V_i$ for all $i\in V(H)\sm S$.
\end{lemma}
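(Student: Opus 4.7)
The plan is a straightforward inclusion-exclusion style count, leveraging the bound from Lemma \ref{lemma-bad-vertex} together with the global lower bound on $\emb(H,\Gamma)$ from Lemma \ref{lemma-emb-H-prime}.

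First, I would recall from the choice of $\psi$ that there are at least $\emb(H,\Gamma)/n^{\vert S\vert}$ embeddings $\theta:H\into\Gamma$ extending $\psi$. Combining with (\ref{eq-emb-H-Gamma-big}), which says $\emb(H,\Gamma)\geq 2\tk^{1/4}\cdot \E_k(n)$, the number of embeddings extending $\psi$ is at least $2\tk^{1/4}\cdot \E_k(n)/n^{\vert S\vert}$. By definition of $V_i'$, every such embedding automatically satisfies $\theta(i)\in V_i'$ for every $i\in V(H)\sm S$, so the only obstruction to $\theta(i)\in V_i$ is that $\theta(i)$ might be a bad vertex for some $i$.

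Next, I would upper bound the number of embeddings $\theta:H\into \Gamma$ extending $\psi$ such that $\theta(i)$ is a bad vertex of $V_i'$ for at least one $i\in V(H)\sm S$. By Lemma \ref{lemma-bad-vertex}, for each bad vertex $v\in V_i'$ the number of embeddings $\theta$ extending $\psi$ with $\theta(i)=v$ is at most $\E_k(n)/n^{\vert S\vert+1}$. Since the sets $V_i'$ are disjoint subsets of $V(\Gamma)$ (Claim \ref{claim-Viprime-disj}), the total number of bad vertices across all $V_i'$ is at most $n$. Summing over all bad vertices (and noting that an embedding hitting several bad vertices is counted multiple times, which is fine for an upper bound), the count of ``bad'' embeddings extending $\psi$ is at most $n\cdot \E_k(n)/n^{\vert S\vert+1} = \E_k(n)/n^{\vert S\vert}$.

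Subtracting the bad count from the total gives at least
\[
\frac{2\tk^{1/4}\cdot \E_k(n)}{n^{\vert S\vert}} - \frac{\E_k(n)}{n^{\vert S\vert}} = \frac{(2\tk^{1/4}-1)\,\E_k(n)}{n^{\vert S\vert}} \geq \frac{\E_k(n)}{n^{\vert S\vert}}
\]
embeddings $\theta$ extending $\psi$ with $\theta(i)\in V_i$ for every $i\in V(H)\sm S$, as required. There is no real obstacle here: the argument is essentially a union bound, and the key quantitative input, the factor $2\tk^{1/4}$ from Lemma \ref{lemma-emb-H-prime}, comfortably absorbs the single unit subtracted for the bad vertices (and would allow significantly weaker parameters in principle). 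The only subtlety worth double-checking is that Lemma \ref{lemma-bad-vertex} applies uniformly to all bad vertices across all $V_i'$, which it does since the sets $V_i'$ are disjoint.
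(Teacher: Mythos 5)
Your argument matches the paper's proof essentially step for step: both lower-bound the number of embeddings extending $\psi$ by $2\tk^{1/4}\,\E_k(n)/n^{|S|}$, both use Lemma \ref{lemma-bad-vertex} and the disjointness of the $V_i'$ to bound the ``bad'' embeddings by $n\cdot\E_k(n)/n^{|S|+1}=\E_k(n)/n^{|S|}$, and both conclude by subtraction. The only cosmetic difference is that the paper first rounds $2\tk^{1/4}$ down to $2$ before subtracting, while you subtract directly; this changes nothing.
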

\begin{proof} By definition of the sets $V_i'$, every embedding $\theta: H\into \Gamma$ with $\theta(s)=\psi(s)$ for all $s\in S$ must satisfy $\theta(i)\in V_i'$ for all $i\in V(H)\sm S$. The total number of embeddings $\theta: H\into \Gamma$ with $\theta(s)=\psi(s)$ for all $s\in S$ is at least
\[\frac{\emb(H,\Gamma)}{n^{\vert S\vert}}\geq \tk^{1/4}\cdot \frac{\E_k(n)}{n^{\vert S\vert}}\geq 2\cdot \frac{\E_k(n)}{n^{\vert S\vert}},\]
where we used (\ref{eq-emb-H-Gamma-big}) for the first inequality.

For every $i\in V(H)\sm S$ and every vertex $v\in V_i'\sm V_i$, by Lemma \ref{lemma-bad-vertex} there are at most $\E_k(n)/n^{\vert S\vert+1}$ embeddings $\theta: H\into \Gamma$ with $\theta(s)=\psi(s)$ for all $s\in S$ and $\theta(i)=v$. Hence, for each $i\in V(H)\sm S$, there are at most $\vert V_i'\sm V_i\vert\cdot \E_k(n)/n^{\vert S\vert+1}$ embeddings $\theta: H\into \Gamma$ with $\theta(s)=\psi(s)$ for all $s\in S$ and $\theta(i)\not\in V_i$.
Thus, in total there are at most
\[\sum_{i\in V(H)\sm S}\vert V_i'\sm V_i\vert\cdot \frac{\E_k(n)}{n^{\vert S\vert+1}}\leq n\cdot \frac{\E_k(n)}{n^{\vert S\vert+1}}=\frac{\E_k(n)}{n^{\vert S\vert}}\]
embeddings $\theta: H\into \Gamma$ with $\theta(s)=\psi(s)$ for all $s\in S$ and $\theta(i)\not\in V_i$ for some $i\in V(H)\sm S$.

So there must be at least $2\E_k(n)/n^{\vert S\vert}-\E_k(n)/n^{\vert S\vert}=\E_k(n)/n^{\vert S\vert}$ embeddings $\theta: H\into \Gamma$ with $\theta(s)=\psi(s)$ for all $s\in S$ and $\theta(i)\in V_i$ for all $i\in V(H)\sm S$.
\end{proof}

On the other hand, the number of embeddings $\theta: H\into \Gamma$ with $\theta(s)=\psi(s)$ for all $s\in S$ and $\theta(i)\in V_i$ for all $i\in V(H)\sm S$ is clearly at most $\prod_{i\in V(H)\sm S} \vert V_i\vert$. So we obtain from Lemma \ref{lemma-embeddings-Vi} that
\begin{equation}\label{ineq-product-Vi}
\prod_{i\in V(H)\sm S} \vert V_i\vert\geq \frac{\E_k(n)}{n^{\vert S\vert}}.
\end{equation}
In particular, we can conclude that each of the sets $V_i$ for $i\in V(H)\sm S$ is non-empty.

As the sets $V_i'$ for $i\in V(H)\sm S$ are all disjoint, the sets $V_i$ for $i\in V(H)\sm S$ are disjoint as well. Set
\[V_{\textnormal{rest}}=V(\Gamma)\sm\bigcup_{i\in V(H)\sm S} V_i,\]
then the sets $V_{\textnormal{rest}}$ and $V_i$ for all $i\in V(H)\sm S$ together form a partition of $V(\Gamma)$.

\begin{lemma}\label{lemma-V-small}For every $i\in V(H)\sm S$, we have
\[\vert V_{i}\vert+\vert V_{\textnormal{rest}}\vert \leq \frac{24}{q}\cdot \frac{(\log k)^2}{k}n.\]
\end{lemma}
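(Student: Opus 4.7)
The strategy is to combine the lower bound on the product $\prod_{i\in V(H)\sm S}\vert V_i\vert$ coming from \eqref{ineq-product-Vi} with the upper bound provided by the AM-GM-type inequality encoded in Lemma \ref{lemma-functions-E}(i), and then quantify the loss incurred when the parts $V_{i_0}$ and $V_{\textnormal{rest}}$ are too large using the exponential decay estimate of Lemma \ref{lemma-functions-E}(iii). The point is that if the mass $M=\vert V_{i_0}\vert+\vert V_{\textnormal{rest}}\vert$ is too large, then only $n-M$ total mass is distributed among the $k-\vert S\vert -1$ remaining sets $V_i$ (with $i\neq i_0$), which forces their product to be strictly smaller than $\E_k(n)/n^{\vert S\vert}$.

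Concretely, fix any $i_0\in V(H)\sm S$ and set $a=\vert V_{i_0}\vert$, $b=\vert V_{\textnormal{rest}}\vert$, $M=a+b$. Since the sets $V_i$ and $V_{\textnormal{rest}}$ partition $V(\Gamma)$, we have $\sum_{i\in V(H)\sm S,\; i\neq i_0}\vert V_i\vert=n-M$. By Lemma \ref{lemma-functions-E}(i) this yields
\[\prod_{i\in V(H)\sm S,\; i\neq i_0}\vert V_i\vert\;\leq\; \E_{k-\vert S\vert-1}(n-M).\]
Applying Lemma \ref{lemma-functions-E}(iii) with $\l=k$, $m=n$, $\l'=k-\vert S\vert-1$, $m'=n-M$ and $\mu=M/n$ (so that $m'=(1-\mu)m$ and $\l-\l'=\vert S\vert+1$), we obtain
\[\E_{k-\vert S\vert-1}(n-M)\;\leq\; e^{3(\vert S\vert+1)-(M/n)\cdot k/2}\left(\frac{k}{n}\right)^{\vert S\vert+1}\E_k(n).\]
Multiplying by $a$ and combining with \eqref{ineq-product-Vi} gives
\[\frac{\E_k(n)}{n^{\vert S\vert}}\;\leq\;\prod_{i\in V(H)\sm S}\vert V_i\vert\;\leq\; a\cdot e^{3(\vert S\vert+1)-(M/n)\cdot k/2}\left(\frac{k}{n}\right)^{\vert S\vert+1}\E_k(n).\]

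Cancelling $\E_k(n)$ and using $a\leq M$ yields
\[\frac{M}{n}\cdot e^{3(\vert S\vert+1)-(M/n)\cdot k/2}\cdot k^{\vert S\vert+1}\;\geq\; 1.\]
Taking logarithms (the case $M=0$ is trivial) and using $\log(M/n)\leq 0$ leaves us with
\[\frac{Mk}{2n}\;\leq\; 3(\vert S\vert+1)+(\vert S\vert+1)\log k\;\leq\; (\vert S\vert+1)\cdot 2\log k,\]
since $\log k\geq 3$. Finally, $\vert S\vert\leq (5/q)\log k$ gives $\vert S\vert+1\leq (6/q)\log k$ (as $q<1$ and $\log k$ is large), whence
\[\frac{M}{n}\;\leq\; \frac{4(\vert S\vert+1)\log k}{k}\;\leq\; \frac{24}{q}\cdot \frac{(\log k)^2}{k},\]
which is exactly the desired bound.

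The only real subtlety is verifying that the hypotheses of Lemma \ref{lemma-functions-E}(iii) are met (notably $m\geq \l$, which holds because $n\geq \tk\geq k$) and that the rough bound $a\leq M$ does not lose too much — which it does not, since the multiplicative loss $\log(M/n)\leq 0$ is absorbed for free. Everything else is routine bookkeeping; no additional input about the structure of $V_i$ or $V_{\textnormal{rest}}$ beyond their disjointness and the product lower bound is required.
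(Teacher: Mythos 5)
Your proof is correct and follows essentially the same route as the paper: partition the mass, apply Lemma~\ref{lemma-functions-E}(i) then (iii) with $\mu=(\vert V_i\vert+\vert V_{\textnormal{rest}}\vert)/n$, compare against the lower bound \eqref{ineq-product-Vi}, and take logarithms. The only cosmetic difference is that you bound the leftover factor by $\vert V_{i_0}\vert\leq M$ and then drop $\log(M/n)\leq 0$, whereas the paper simply bounds $\vert V_{i}\vert\leq n$; both give the same final inequality.
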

\begin{proof}Fix some $i\in V(H)\sm S$ and let $\mu=(\vert V_i\vert +\vert V_{\textnormal{rest}}\vert)/n$. Then
\[\sum_{j\in V(H)\sm(S\cup \lbrace i\rbrace)}\vert V_j\vert= n-\vert V_i\vert -\vert V_{\textnormal{rest}}\vert=(1-\mu)n.\]
Hence  Lemma \ref{lemma-functions-E}(i) and Lemma \ref{lemma-functions-E}(iii) yield (recall that $n\geq \tk\geq k$)
\[\prod_{j\in V(H)\sm(S\cup \lbrace i\rbrace)}\vert V_j\vert\leq \E_{k-\vert S\vert-1}((1-\mu)n)\leq e^{3(\vert S\vert+1)-\mu k/2}\left(\frac{k}{n}\right)^{\vert S\vert+1}\E_k(n).\]
So
\[\prod_{j\in V(H)\sm S}\vert V_j\vert\leq n\cdot \prod_{j\in V(H)\sm(S\cup \lbrace i\rbrace)}\vert V_j\vert\leq  n\cdot e^{3(\vert S\vert+1)-\mu k/2}\left(\frac{k}{n}\right)^{\vert S\vert+1}\E_k(n)=k^{\vert S\vert+1}e^{3(\vert S\vert+1)-\mu k/2}\frac{\E_k(n)}{n^{\vert S\vert}}.\]
Combining this with (\ref{ineq-product-Vi}), we obtain
\[1\leq k^{\vert S\vert+1}e^{3(\vert S\vert+1)-\mu k/2}=e^{(3+\log k)(\vert S\vert+1)}e^{-\mu k/2}.\]
In other words,
\[\frac{1}{2}\mu k\leq (3+\log k)(\vert S\vert+1)\leq 2(\log k) \cdot \left(\frac{5}{q}(\log k)+1\right)\leq 2(\log k) \cdot \frac{6}{q}(\log k)=\frac{12}{q}(\log k)^2.\]
So we can conclude that
\[\mu\leq \frac{24}{q}\cdot \frac{(\log k)^2}{k}.\]
Thus,
\[\vert V_{i}\vert+\vert V_{\textnormal{rest}}\vert= \mu n\leq \frac{24}{q}\cdot \frac{(\log k)^2}{k}n,\]
which is the desired bound.
\end{proof}

The next lemma provides an upper bound for the number of disjoint pairs $\lbrace i,j\rbrace\su V(H)\sm S$ such that $a_\Gamma(V_i, V_j)\neq a_H(i, j)$.

\begin{lemma}\label{lemma-Vi-disjoint-pairs-wrong-adj}Suppose $i_1,\dots,i_m$ and  $j_1,\dots,j_m$ are distinct elements of $V(H)\sm S$ such that for each $\l=1,\dots,m$ we have $a_\Gamma(V_{i_\l}, V_{j_\l})\neq a_H(i_\l, j_\l)$. Then $m \leq \frac{20}{q} (\log k)^2$.
\end{lemma}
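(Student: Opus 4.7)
The plan is to double-count the embeddings $\theta:H\into \Gamma$ with $\theta(s)=\psi(s)$ for all $s\in S$ and $\theta(v)\in V_v$ for all $v\in V(H)\sm S$. By Lemma \ref{lemma-embeddings-Vi}, there are at least $\E_k(n)/n^{\vert S\vert}$ such embeddings. On the other hand, I would bound this count from above by exploiting the fact that for each disjoint pair $\lbrace i_\ell, j_\ell\rbrace$, only a small fraction of the pairs in $V_{i_\ell}\times V_{j_\ell}$ have the ``right'' adjacency matching $a_H(i_\ell,j_\ell)$.

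The key observation is that $a_\Gamma(V_{i_\ell},V_{j_\ell})\neq a_H(i_\ell,j_\ell)$ means that the number of pairs $(u,w)\in V_{i_\ell}\times V_{j_\ell}$ with $a_\Gamma(u,w)=a_H(i_\ell,j_\ell)$ is at most $\tfrac{1}{2}\vert V_{i_\ell}\vert\vert V_{j_\ell}\vert$ (directly from the definition of $a_\Gamma$ on sets). Since the pairs $\lbrace i_\ell, j_\ell\rbrace$ for $\ell=1,\dots,m$ are disjoint as subsets of $V(H)\sm S$, the constraints can be multiplied independently. Together with the trivial bound $\vert V_v\vert$ on the number of choices for $\theta(v)$ on all other vertices $v\in V(H)\sm S$, this yields the upper bound
\[2^{-m}\prod_{v\in V(H)\sm S}\vert V_v\vert\]
for the number of embeddings in question.

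Combining the two bounds gives $\E_k(n)/n^{\vert S\vert}\leq 2^{-m}\prod_{v\in V(H)\sm S}\vert V_v\vert$. To turn this into a bound on $m$, I would estimate the product on the right in terms of $\E_k(n)$: since the sets $V_v$ for $v\in V(H)\sm S$ are disjoint subsets of $V(\Gamma)$, their sizes sum to at most $n$, and so by Lemma \ref{lemma-functions-E}(i) the product is at most $\E_{k-\vert S\vert}(n)$. Applying Lemma \ref{lemma-functions-E}(iii) (with $\mu=0$) then gives $\E_{k-\vert S\vert}(n)\leq e^{3\vert S\vert}(k/n)^{\vert S\vert}\E_k(n)$. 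Plugging this in and cancelling yields $2^m\leq e^{(3+\log k)\vert S\vert}$.

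Taking logarithms and using $\vert S\vert\leq \frac{5}{q}\log k$ together with $3+\log k\leq 2\log k$ (valid since $k\geq 10^{199}$), I obtain $m\log 2\leq \frac{10}{q}(\log k)^2$, hence $m\leq \frac{10}{q\log 2}(\log k)^2\leq \frac{20}{q}(\log k)^2$ as claimed. There is no real obstacle here; the proof is a clean counting argument once one sees that disjointness of the pairs $\lbrace i_\ell,j_\ell\rbrace$ allows the factor $2^{-m}$ to be extracted.
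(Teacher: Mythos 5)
Your proof is correct and takes essentially the same approach as the paper's. The paper phrases the key step probabilistically — picking $\theta(i)\in V_i$ uniformly at random and observing that the $m$ disjoint adjacency constraints each succeed with probability at most $\tfrac12$, independently — while you phrase the identical argument as a direct count, which is merely a cosmetic difference; the remaining steps (lower bound from Lemma \ref{lemma-embeddings-Vi}, upper bound on $\prod\vert V_i\vert$ via Lemma \ref{lemma-functions-E}(i) and (iii) with $\mu=0$, and the final arithmetic) match the paper's proof exactly.
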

\begin{proof}Consider a random map $\theta: V(H)\to V(\Gamma)$ chosen as follows: Let $\theta(s)=\psi(s)$ for all $s\in S$ and choose $\theta(i)\in V_i$ uniformly at random, independently for all $i\in V(H)\sm S$.

We want to bound the probability that $\theta$ forms an embedding $H\into \Gamma$. For each $\l=1,\dots,m$, we have $a_\Gamma(V_{i_\l}, V_{j_\l})\neq a_H(i_\l, j_\l)$. Hence, by the definition of $a_\Gamma(V_{i_\l}, V_{j_\l})$, the probability for $a_\Gamma(\theta(i_\l),\theta(j_\l))=a_H(i_\l, j_\l)$ is at most $\frac{1}{2}$. As $i_1,\dots,i_m, j_1,\dots,j_m$ are distinct, these events are independent for all $\l=1,\dots,m$. Thus, the probability that $a_\Gamma(\theta(i_\l),\theta(j_\l))=a_H(i_\l, j_\l)$ for all $\l=1,\dots,m$ is at most $2^{-m}$. Therefore the probability that $\theta$ forms an embedding $H\into \Gamma$ is at most $2^{-m}$.

Hence there are at most
\[2^{-m}\prod_{i\in V(H)\sm S} \vert V_i\vert\]
embeddings $\theta: H\into \Gamma$ with $\theta(s)=\psi(s)$ for all $s\in S$ and $\theta(i)\in V_i$ for all $i\in V(H)\sm S$. Combining this with Lemma \ref{lemma-embeddings-Vi}, we obtain
\[\prod_{i\in V(H)\sm S} \vert V_i\vert\geq 2^{m}\cdot \frac{\E_k(n)}{n^{\vert S\vert}}.\]
On the other hand, $\sum_{i\in V(H)\sm S}\vert V_i\vert\leq n$ and therefore by Lemma \ref{lemma-functions-E}(i) and Lemma \ref{lemma-functions-E}(iii) (recall that $n\geq \tk\geq k$) we have
\[\prod_{i\in V(H)\sm S} \vert V_i\vert\leq \E_{k-\vert S\vert}(n)\leq e^{3\vert S\vert}\left(\frac{k}{n}\right)^{\vert S\vert}\E_k(n)=e^{(3+\log k)\vert S\vert}\frac{\E_k(n)}{n^{\vert S\vert}}.\]
Combining the previous two inequalities, we obtain
\[2^{m}\leq e^{(3+\log k)\vert S\vert}.\]
Hence
\[m\leq \frac{1}{\log 2}\cdot (3+\log k)\vert S\vert\leq \frac{1}{\log 2}\cdot 2(\log k)\cdot \frac{5}{q}\log k=\frac{10}{(\log 2)\cdot q}\cdot (\log k)^2\leq \frac{20}{q}\cdot (\log k)^2,\]
as desired.
\end{proof}

The lemmas above give useful properties of the sets $V_i$ that we will use later. From now on, we can disregard the map $\psi:S\to V(\Gamma)$. Instead of studying the embeddings $H\into \Gamma$ extending $\psi$, we will now consider all embeddings $H\into \Gamma$.

\begin{lemma}\label{lemma-typ1}Let $x, x'\in V(H)$ be distinct vertices and let $z, z'\in V(\Gamma)$. Then there are at most $k^{-7}\E_k(n)/n^2$ embeddings $\theta: H\into \Gamma$ with $\theta(x)=z$ and $\theta(x')=z'$ as well as $\theta(y)\in V_{\textnormal{rest}}$ for at least $\eps_3 k/3$ vertices $y\in V(H)\sm \lbrace x,x'\rbrace$.
\end{lemma}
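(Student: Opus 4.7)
The plan is to reduce to Corollary \ref{coro-embedding-small-set} applied with $U=V_{\textnormal{rest}}$. First, I would pass from embeddings of $H$ to embeddings of a slightly smaller induced subgraph. Let $H'=H[V(H)\sm\lbrace x,x'\rbrace]$, which is an induced subgraph of $H$ on $k'=k-2\geq k-4$ vertices. Every embedding $\theta:H\into\Gamma$ with $\theta(x)=z$ and $\theta(x')=z'$ restricts to an embedding $\theta':H'\into\Gamma$, and distinct $\theta$'s give distinct $\theta'$'s (since they agree on $\lbrace x,x'\rbrace$). Moreover, if $\theta$ sends at least $\eps_3 k/3$ vertices of $V(H)\sm\lbrace x,x'\rbrace$ into $V_{\textnormal{rest}}$, then $\theta'$ sends at least $\eps_3 k/3$ vertices of $V(H')$ into $V_{\textnormal{rest}}$. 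So the quantity to bound is at most the number of embeddings $\theta':H'\into\Gamma$ with $\theta'(y)\in V_{\textnormal{rest}}$ for at least $\eps_3 k/3$ vertices $y\in V(H')$.

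Next, I would verify the hypotheses of Corollary \ref{coro-embedding-small-set} with $U=V_{\textnormal{rest}}$, $\gamma=\frac{24}{q}\cdot\frac{(\log k)^2}{k}$, and $\beta=\eps_3/3$. By Lemma \ref{lemma-V-small}, $\vert V_{\textnormal{rest}}\vert\leq \gamma n$. From (\ref{ineq-eps3-k}) we have $\eps_3>\frac{10^6}{q}\cdot\frac{(\log k)^2}{k}$, which gives $\gamma<\beta$ and also $\beta k=\eps_3 k/3>\frac{20}{q}(\log k)^2$. The inequality $\beta\leq q/3$ holds since $\eps_3<q$ by (\ref{ineq-eps1-q}). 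The corollary then yields an upper bound of $\bigl(e^6\gamma/\beta^2\bigr)^{\beta k}\cdot \E_k(n)/n^{k-k'}=\bigl(e^6\gamma/\beta^2\bigr)^{\eps_3 k/3}\cdot \E_k(n)/n^2$.

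Finally, I would check that this is at most $k^{-7}\E_k(n)/n^2$, which amounts to a numerical estimate. Writing $A=e^6\gamma/\beta^2=9e^6\cdot 24(\log k)^2/(qk\eps_3^2)$ and using $\eps_3^2>10^6(\log k)^2/(qk)$ from (\ref{ineq-eps3-k}) gives $A<216\cdot e^6/10^6<1/8$, so $\log(1/A)\geq 2$. Hence $A^{\eps_3 k/3}\leq \exp(-2\eps_3 k/3)$. By (\ref{ineq-eps5-logk-k}) we have $\eps_3 k>40(\log k)^2/q\geq 40\log k$, so $\exp(-2\eps_3 k/3)\leq \exp(-(80/3)\log k)\leq k^{-7}$, as desired. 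The only mildly delicate step is verifying these numerical inequalities with the correct constants; the conceptual work is entirely captured by the reduction to $H'$ and the application of Corollary \ref{coro-embedding-small-set}.
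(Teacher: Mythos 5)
Your argument is correct and is essentially the paper's own proof: reduce to $H'=H\sm\lbrace x,x'\rbrace$, apply Corollary~\ref{coro-embedding-small-set} with $U=V_{\textnormal{rest}}$, $\gamma=\frac{24}{q}\frac{(\log k)^2}{k}$, $\beta=\eps_3/3$ (verifying its hypotheses via Lemma~\ref{lemma-V-small} and the parameter inequalities), and then bound $\bigl(e^6\gamma/\beta^2\bigr)^{\beta k}$ numerically. The only differences from the paper are cosmetic numerical choices (you bound $e^6\gamma/\beta^2<1/8$ where the paper shows $\leq 1/4$), and both give $k^{-7}$.
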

\begin{proof}Set $\beta=\frac{1}{3}\eps_3$ and $\gamma=\frac{24}{q}(\log k)^2/k$. Then by (\ref{ineq-eps3-k}) and (\ref{ineq-eps1-q}) we have $0<\gamma<\beta\leq \frac{q}{3}$ and also $\beta k>\frac{20}{q}(\log k)^2$. Furthermore observe, using that $9e^6\leq 3^8= 81^2\leq 10^4$ and using (\ref{ineq-eps3-k})
\[\frac{e^6\gamma}{\beta^2}=\frac{1}{\eps_3^2}\cdot 9e^6\cdot \frac{24}{q}\cdot \frac{(\log k)^2}{k}\leq \frac{1}{4\eps_3^2}\cdot 10^4\cdot \frac{100}{q}\cdot \frac{(\log k)^2}{k}=\frac{1}{4\eps_3^2}\cdot \frac{10^6}{q}\cdot \frac{(\log k)^2}{k}\leq \frac{1}{4}.\]
By (\ref{ineq-eps5-logk-k}), this implies
\[\left(\frac{e^6\gamma}{\beta^2}\right)^{\beta k}\leq \left(\frac{1}{4}\right)^{\eps_3 k/3}<e^{-\eps_3 k/3}\leq e^{-7\log k}=k^{-7}.\]
By Lemma \ref{lemma-V-small}, we have
\[\vert V_{\textnormal{rest}}\vert\leq \frac{24}{q}\cdot \frac{(\log k)^2}{k}n=\gamma n.\]
Let $H'=H\sm \lbrace x,x'\rbrace$. Then by Corollary \ref{coro-embedding-small-set}, the number of embeddings $\theta: H'\into \Gamma$ with $\theta(y)\in V_{\textnormal{rest}}$ for at least $\eps_3 k/3=\beta k$ vertices $y\in V(H')$ is at most
\[ \left(\frac{e^6\gamma}{\beta^2}\right)^{\beta k}\cdot \frac{\E_k(n)}{n^{2}}\leq k^{-7}\cdot \frac{\E_k(n)}{n^{2}}.\]
In particular, there can also be at most $k^{-7}\E_k(n)/n^2$ embeddings $\theta: H\into \Gamma$ with $\theta(x)=z$ and $\theta(x')=z'$ as well as $\theta(y)\in V_{\textnormal{rest}}$ for at least $\eps_3 k/3$ vertices $y\in V(H)\sm \lbrace x,x'\rbrace$.
\end{proof}

\begin{lemma}\label{lemma-typ2}Let $x, x'\in V(H)$ be distinct vertices and let $z, z'\in V(\Gamma)$. Then there are at most $k^{-7}\E_k(n)/n^2$ embeddings $\theta: H\into \Gamma$ with $\theta(x)=z$ and $\theta(x')=z'$ and such that the image of $\theta$ contains two vertices from the same set $V_i$ for some $i\in V(H)\sm S$.
\end{lemma}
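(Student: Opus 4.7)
The plan is to decompose the embeddings being counted based on the distribution of $\theta$'s image across $V_{\text{rest}}$ and the sets $V_j$, and then handle each piece by either appealing to Lemma \ref{lemma-typ1}, applying Corollary \ref{coro-embedding-small-set}, or using a structural argument leveraging property (c) of $H$ being $(q,\delta)$-reasonable. Throughout, I would work under the natural interpretation that the two colliding image-vertices lie in $\theta(V(H)\sm\{x,x'\})$, since the degenerate sub-case with $z,z'\in V_i$ is incompatible with the bound and must be excluded from the statement.

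First, I would separate off the embeddings with many $V_{\text{rest}}$-vertices: for those $\theta$ with at least $\eps_3 k/3$ vertices $y\in V(H)\sm\{x,x'\}$ satisfying $\theta(y)\in V_{\text{rest}}$, Lemma \ref{lemma-typ1} immediately supplies the bound $k^{-7}\E_k(n)/n^2$. So I may restrict attention to $\theta$ with $\theta(x)=z$, $\theta(x')=z'$, a collision in some $V_i$, and fewer than $\eps_3 k/3$ vertices of $V(H)\sm\{x,x'\}$ sent to $V_{\text{rest}}$.

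Next, for each $j\in V(H)\sm S$, I would bound separately the number of such $\theta$ with $|\theta^{-1}(V_j)\cap(V(H)\sm\{x,x'\})|\geq \eps_3 k/3$. Here I apply Corollary \ref{coro-embedding-small-set} to $H'=H\sm\{x,x'\}$ with $U=V_j$, $\gamma=|V_j|/n\leq 24(\log k)^2/(qk)$, and $\beta=\eps_3/3$. The inequalities (\ref{ineq-eps1-q}), (\ref{ineq-eps3-k}) and (\ref{ineq-eps5-logk-k}) give $\gamma<\beta\leq q/3$ and $\beta k\geq 20(\log k)^2/q$, and crucially $e^6\gamma/\beta^2<1/10$ (by (\ref{ineq-eps3-k})), so $(e^6\gamma/\beta^2)^{\beta k}\leq e^{-\eps_3 k/10}\leq k^{-100}$. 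Summing the resulting bound over $j\in V(H)\sm S$ contributes at most $k^{-99}\E_k(n)/n^2$.

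The remaining residual case is the heart of the argument: every $V_j$ receives fewer than $\eps_3 k/3$ image-vertices, yet some $V_i$ receives at least two. For these $\theta$, I would enumerate over the colliding pair $(y_1,y_2)\in(V(H)\sm\{x,x'\})^2$ and the collision index $i$, and for each fixed triple enumerate over the specific images $(w_1,w_2)\in V_i\times V_i$ with $w_1\neq w_2$, bounding the number of embeddings with the four prescribed values via a signature argument using Lemma \ref{lemma-signature-extensions}. The summation over $(w_1,w_2)$ uses the estimate $\sum_i|V_i|^2\leq \gamma n\cdot\sum_i|V_i|\leq\gamma n^2$ (Lemma \ref{lemma-V-small}), producing a saving of order $\gamma$. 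The main obstacle is that the signature-based extension bound carries an exponential factor of the form $k^{O(\log k/q)}=e^{O((\log k)^2/q)}$ which is not automatically absorbed by $\gamma$. I would overcome this by observing that in the residual regime most of $V(H)$ is mapped singly into distinct $V_j$'s, so the induced partial map $f\colon V(H)\sm\theta^{-1}(V_{\text{rest}})\to V(H)\sm S$ would be injective and adjacency-preserving up to the bad-pair count from Lemma \ref{lemma-Vi-disjoint-pairs-wrong-adj} if no collision occurred; but the collision forces $f$ to be non-injective on a set of size $\geq(1-\delta)k$, which by property (c) of $(q,\delta)$-reasonability is incompatible with $f$ being close to any rotation or reflection of $\tH$. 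Quantifying this rigidity via a union bound over the $2\tk$ rotations and reflections and the discrepant vertex, in the spirit of the proofs of Lemma \ref{lemma-phi-loyal-wrong-W} and Lemma \ref{lemma-loyal-unregulated}, should produce the sub-polynomial decay needed to absorb the signature factor and finish the bound $k^{-7}\E_k(n)/n^2$.
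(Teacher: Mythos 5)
Your proposal takes a genuinely different (and ultimately incomplete) route from the paper's proof, and the residual case contains a gap that you acknowledge but do not close.

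The paper's argument is a single clean step: fix the two vertices $x_1,x_2\in V(H)$ and the two colliding image-vertices $z_1,z_2\in V_i$. Let $U$ be the set of $w\in V(\Gamma)$ with $a_\Gamma(w,z_1)\neq a_\Gamma(w,z_2)$, and let $Y\su V(H)$ be the set of $y$ with $a_H(y,x_1)\neq a_H(y,x_2)$. Every such embedding must map all of $Y$ into $U$, because adjacency forces $a_\Gamma(\theta(y),z_1)=a_H(y,x_1)\neq a_H(y,x_2)=a_\Gamma(\theta(y),z_2)$. Now the decisive observation is that $z_1,z_2\in V_i$ are, by the very definition of $V_i$, \emph{not bad} vertices, and a vertex $w\in U$ lying in some $V_j$ with $j\neq i$ necessarily forms a bad pair with $z_1$ or with $z_2$; together with Lemma \ref{lemma-V-small} this gives $|U|\leq 3\eps_5 n$. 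Meanwhile $|Y|\geq qk-2\geq \tfrac{q}{3}k$ by condition (b). A single application of Lemma \ref{lemma-embedding-small-set} with $\beta=\tfrac{q}{3}$ and $\gamma=3\eps_5$ then gives the bound, uniformly in whether or not $\{x_1,x_2\}$ overlaps $\{x,x'\}$. This approach never needs a decomposition by $V_{\textnormal{rest}}$-load or by $V_j$-load and never runs into the $e^{O((\log k)^2/q)}$ factor.

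Your proposal, by contrast, correctly handles the first two sub-cases (many vertices to $V_{\textnormal{rest}}$, or many to a single $V_j$), but the third residual sub-case is where the lemma actually lives, and the argument you outline there does not work. You enumerate over the colliding pair $(y_1,y_2)$, the index $i$, and the images $(w_1,w_2)\in V_i\times V_i$, then apply a signature extension bound; this inevitably produces the $k^{O(\log k/q)}$ factor you flag. The ``rigidity via property (c)'' fix you then sketch cannot close this: property (c) applies to an \emph{injective} adjacency-preserving map on a set of size $\geq(1-\delta)k$, but the partial ``part map'' $f(v)=j$ (where $\theta(v)\in V_j$) is precisely \emph{not} injective at the collision; applying (c) only to the non-colliding part tells you that the restriction is a rotation or reflection, which says nothing quantitative about how many colliding embeddings exist. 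Moreover you cite Lemmas \ref{lemma-phi-loyal-wrong-W} and \ref{lemma-loyal-unregulated} as templates, but those are stated in terms of the sets $W_j$ and the loyal/regulated machinery, which are built in Section \ref{sect-properties-Wj} \emph{using} Proposition \ref{propo-few-unloyal} and hence this very lemma; appealing to them here would be circular. You have not identified the ingredient --- namely that $z_1,z_2\in V_i$ are not bad, so almost all vertices agree in their adjacency to both --- that makes the count collapse.

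One smaller point: your preliminary remark that ``the degenerate sub-case with $z,z'\in V_i$ is incompatible with the bound and must be excluded from the statement'' is not correct. The paper's proof explicitly covers the case $\{x_1,x_2\}=\{x,x'\}$ (i.e.\ $m=2$), where the claimed bound becomes $k^{-9}\E_k(n)/n^2$: if $z,z'$ lie in the same $V_i$, then indeed \emph{every} embedding with $\theta(x)=z$, $\theta(x')=z'$ has the collision, and the lemma asserts there are at most $k^{-7}\E_k(n)/n^2$ such embeddings; this is exactly what $|U|\leq 3\eps_5 n$ and $|Y|\geq qk$ together with Lemma \ref{lemma-embedding-small-set} deliver. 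The statement does not need to be weakened.
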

\begin{proof}We claim that for any distinct vertices $x_1, x_2\in V(H)$ and for any distinct vertices $z_1,z_2\in V(\Gamma)$ that lie in the same set $V_i$ for some $i\in V(H)\sm S$, there are at most
\[k^{-9}\cdot \frac{\E_k(n)}{n^{\vert \lbrace x_1,x_2,x,x'\rbrace\vert}}\]
embeddings $\theta: H\into \Gamma$ with $\theta(x)=z$, $\theta(x')=z'$, $\theta(x_1)=z_1$ and $\theta(x_2)=z_2$.

First, let us check that this claim implies the desired statement. For any embedding $\theta: H\into \Gamma$ with the properties in the lemma, there exist distinct vertices $x_1, x_2\in V(H)$ such that $\theta(x_1)$ and $\theta(x_2)$ lie in the same set $V_i$ for some $i\in V(H)\sm S$. There are at most $k^2$ possibilities for $x_1, x_2\in V(H)$.

If $x_1$ and $x_2$ are distinct from $x$ and $x'$, then $\vert \lbrace x_1,x_2,x,x'\rbrace\vert=4$ and there are at most $n^2$ possibilities for $z_1=\theta(x_1)$ and $z_2=\theta(x_2)$ lying in the same set $V_i$ for some $i\in V(H)\sm S$. By the claim above, each of these possibilities yields at most $k^{-9}\E_k(n)/n^{4}$ options for $\theta:H\into \Gamma$. So for fixed $x_1, x_2\in V(H)$ distinct from $x$ and $x'$, there are at most $k^{-9}\E_k(n)/n^{2}$ options for $\theta$.

If $x_1=x$ but $x_2\neq x'$, then $\vert \lbrace x_1,x_2,x,x'\rbrace\vert=3$ and there are at most $n$ possibilities for $z_2=\theta(x_2)$ lying in the same set $V_i$ as $z_1=\theta(x_1)=\theta(x)=z$. By the claim above, each of these possibilities yields at most $k^{-9}\E_k(n)/n^{3}$ options for $\theta:H\into \Gamma$. So for fixed $x_1, x_2\in V(H)$ with $x_1=x$ and $x_2\neq x'$, there are at most $k^{-9}\E_k(n)/n^{2}$ options for $\theta$. The case of $x_1=x'$ and $x_2\neq x$, the case of $x_2=x$ and $x_1\neq x'$, and the case of $x_2=x'$ and $x_1\neq x$ are analogous.

If $x_1=x$ and $x_2=x'$, then $\vert \lbrace x_1,x_2,x,x'\rbrace\vert=2$ and there is at most one possibility for $z_1=\theta(x_1)$ and $z_2=\theta(x_2)$, since $z_1=\theta(x_1)=\theta(x)=z$ and $z_2=\theta(x_2)=\theta(x')=z'$. By the claim above, there are at most $k^{-9}\E_k(n)/n^{2}$ options for $\theta:H\into \Gamma$ if $x_1=x$ and $x_2=x'$. The case of $x_1=x'$ and $x_2=x$ is analogous.

So we have seen that for any distinct vertices $x_1, x_2\in V(H)$, there are at most $k^{-9}\E_k(n)/n^2$ embeddings $\theta: H\into \Gamma$ with $\theta(x)=z$ and $\theta(x')=z'$ and such $\theta(x_1)$ and $\theta(x_2)$ lie in the same set $V_i$ for some $i\in V(H)\sm S$. As there are at most $k^2$ possibilities for $x_1, x_2\in V(H)$, this means that there are at most $k^{-7}\E_k(n)/n^2$ embeddings $\theta: H\into \Gamma$ with the properties in the lemma.

It remains to prove the claim above. So let us fix distinct vertices $x_1, x_2\in V(H)$ and distinct vertices $z_1,z_2\in V(\Gamma)$ such that $z_1, z_2\in V_i$ for some $i\in V(H)\sm S$. Let $m=\vert \lbrace x_1,x_2,x,x'\rbrace\vert$ and note that $2\leq m\leq 4$. Define $x_t\in \lbrace x,x'\rbrace$ for $3\leq t\leq m$ such that $x_1,\dots, x_m$ are distinct and $\lbrace x_1,\dots,x_m\rbrace=\lbrace x_1,x_2,x,x'\rbrace$. In other words, for $m=4$ let $x_3$ and $x_4$ be $x$ and $x'$, for $m=3$ let $x_3\in \lbrace x,x'\rbrace$ be distinct from $x_1$ and $x_2$, and for $m=2$ the elements $x_1,x_2$ already satisfy $\lbrace x_1,x_2\rbrace=\lbrace x_1,x_2,x,x'\rbrace$. For $\l=3,\dots,m$ let $z_\l=z$ if $x_\l=x$ and $z_\l=z'$ if $x_\l=x'$. With this notation, we need to prove that there are at most $k^{-9}\E_k(n)/n^m$ embeddings $\theta: H\into \Gamma$ with $\theta(x_\l)=z_\l$ for $\l=1,\dots,m$.

Let $U\su V(\Gamma)$ be the set of vertices $w\in V(\Gamma)\sm \lbrace z_1,z_2\rbrace$ such that $a_\Gamma(w,z_1)\neq a_\Gamma(w,z_2)$, and let $Y\su V(H)\sm  \lbrace x_1,\dots,x_m\rbrace$ be the set of vertices $y\in V(H)\sm \lbrace x_1,\dots,x_m\rbrace$ such that $a_H(y,x_1)\neq a_H(y,x_2)$. Finally, let $H'=H\sm \lbrace x_1,\dots,x_m\rbrace$, so $\vert V(H')\vert = k-m\geq k-4$.

We claim that for each embedding $\theta: H\into \Gamma$ with $\theta(x_\l)=z_\l$ for $\l=1,\dots,m$, we must have $\theta(y)\in U$ for all $y\in Y$. Indeed, each $y\in Y$ must satisfy $a_\Gamma(\theta(y),z_1)=a_\Gamma(\theta(y),\theta(x_1))=a_H(y,x_1)$ and $a_\Gamma(\theta(y),z_2)=a_\Gamma(\theta(y),\theta(x_2))=a_H(y,x_2)$. As $a_H(y,x_1)\neq a_H(y,x_2)$, we can conclude $a_\Gamma(\theta(y),z_1)\neq a_\Gamma(\theta(y),z_2)$, so $\theta(y)\in U$ (note that $\theta(y)\neq z_1$ since $y\neq x_1$ and $\theta$ is injective, analogously $\theta(y)\neq z_2$).

Hence each embedding $\theta: H\into \Gamma$ with $\theta(x_\l)=z_\l$ for $\l=1,\dots,m$ restricts to an embedding $\theta: H'\into \Gamma$ with $\theta(y)\in U$ for all $y\in Y$. Using the conditions $\theta(x_\l)=z_\l$ for $\l=1,\dots,m$, the original embedding $\theta$ can be reconstructed from its restriction to $H'$. Hence it suffices to prove that there are at most $k^{-9}\E_k(n)/n^m$ embeddings $\theta: H'\into \Gamma$ with $\theta(y)\in U$ for all $y\in Y$.

Let $\gamma=3\eps_5$ and $\beta=\frac{q}{3}$. Note that then by (\ref{ineq-eps5-q})
\begin{equation}\label{ineq-proof-lemma-typ2}
\frac{\gamma}{\beta}=9\cdot \frac{\eps_5}{q}\leq 9\cdot 10^{-4}< 10^{-3}< 3^{-5}< e^{-5}.
\end{equation}
In particular, $0<\gamma<\beta=\frac{q}{3}$. Furthermore, $\beta k=\frac{q}{3}k\geq \frac{20}{q}(\log k)^2$ by (\ref{eq-ineq-q-k1}).

By condition (b) in Definition \ref{defi-reasonable}, there are at least $qk$ vertices $y\in V(H)\sm \lbrace x_1,x_2\rbrace$ with $a_H(y,x_1)\neq a_H(y,x_2)$. Hence there are at least $qk-(m-2)$ such vertices $y\in V(H)\sm \lbrace x_1,\dots,x_m\rbrace$. Thus, using $qk\geq 3$ by (\ref{ineq-q-lower-bound}), we obtain
\[\vert Y\vert\geq qk-(m-2)\geq qk-2\geq \frac{q}{3}k=\beta k.\]

Let us now prove $\vert U\vert\leq \gamma n$. Every vertex $w\in U$ is contained in $V_{\textnormal{rest}}$, in $V_{i}$ or in $V_j$ for some $j\in V(H)\sm S$ with $j\neq i$. If $w\in V_j$ for some for some $j\in V(H)\sm S$ with $j\neq i$, then by $a_\Gamma(z_1, w)\neq a_\Gamma(z_2, w)$ we must have $a_\Gamma(z_1, w)\neq a_H(i,j)$ or $a_\Gamma(z_2, w)\neq a_H(i,j)$. As $z_1,z_2\in V_i\su V_i'$, this means that $(z_1,w)$ is a bad pair or $(z_2,w)$ is a bad pair (see Definition \ref{defi-bad}). Since $z_1\in V_i$, the vertex $z_1$ is not bad and therefore it is part of at most $\eps_5 n$ bad pairs $(z_1,w)$. In particular, there can only be at most $\eps_5 n$ vertices $w\in U$ such that $(z_1,w)$ is a bad pair. Similarly, there can be at most $\eps_5 n$ vertices $w\in U$ such that $(z_2,w)$ is a bad pair. Hence there can be at most $2\eps_5 n$ vertices $w\in U$ with $w\in V_j$ for some for some $j\in V(H)\sm S$ with $j\neq i$. Thus,
\[\vert U\vert\leq 2\eps_5 n+\vert V_{i}\vert +\vert V_{\textnormal{rest}}\vert.\]
By Lemma \ref{lemma-V-small} and (\ref{ineq-eps5-logk-k}), we have
\[\vert V_{i}\vert +\vert V_{\textnormal{rest}}\vert\leq \frac{24}{q}\cdot \frac{(\log k)^2}{k}n<\eps_5 n.\]
Thus, $\vert U\vert\leq 3\eps_5 n=\gamma n$ as desired.

Hence by Lemma \ref{lemma-embedding-small-set}, the number of embeddings $\theta: H'\into \Gamma$ with $\theta(y)\in U$ for all $y\in Y$ is at most
\[\left(\frac{e^4\gamma}{\beta}\right)^{\beta k}\cdot \frac{\E_k(n)}{n^{m}} \leq \left(\frac{1}{e}\right)^{qk/3}\cdot \frac{\E_k(n)}{n^{m}}=e^{-qk/3}\cdot \frac{\E_k(n)}{n^{m}}\leq e^{-9\log k}\cdot \frac{\E_k(n)}{n^{m}}=k^{-9}\cdot \frac{\E_k(n)}{n^{m}},\]
where in the first inequality we used (\ref{ineq-proof-lemma-typ2}) and in the second inequality we used (\ref{eq-ineq-q-k1}). This finishes the proof of Lemma \ref{lemma-typ2}.
\end{proof}

\section{Proof of Proposition \ref{propo-few-unloyal}}\label{sect-few-unloyal}

In this section, we will finally prove Proposition \ref{propo-few-unloyal}. At the end of the section, we will also deduce Corollary \ref{coro-few-unloyal} from Proposition \ref{propo-few-unloyal}.

First, let us make several definitions.

\begin{definition}\label{defi-aligned}Let $A\su V(H)$ and let $f: A\to V(H)\sm S$ be an injective function. Then an embedding $\theta: H\into \Gamma$ is called \emph{$f$-aligned} if $\theta(v)\in V_{f(a)}$ for all $v\in A$ and the image of $\theta$ contains at most one vertex from $V_i$ for each $i\in V(H)\sm S$.
\end{definition}

\begin{definition}Let $A\su V(H)$ and let $f: A\to V(H)\sm S$ be an injective function. A pair $\lbrace v,v'\rbrace$ of distinct vertices in $A$ is called \emph{$f$-consistent} if $a_H(f(v),f(v'))=a_H(v,v')$, and it is called \emph{$f$-inconsistent} otherwise. The function $f: A\to V(H)\sm S$ is called \emph{locally mostly consistent} if each vertex $v\in A$ is part of at most $\eps_4 k$ different $f$-inconsistent pairs $\lbrace v,v'\rbrace\su A$.
\end{definition}

Recall that we defined the concept of an $r$-super-signature in Definition \ref{defi-super-signature}.

\begin{definition}Let $B\su V(H)$ be such that $\vert B\vert\geq (1-\eps_3)k$ and let $f: B\to V(H)\sm S$ be an injective function. Then $f$ is called \emph{fancy} if there exists a $\frac{q}{4}$-super-signature $T\su B$ of $H$ of size $\vert T\vert \leq \frac{33}{q}\log k$ such that for each vertex $v\in B\sm T$ the number of $f$-inconsistent pairs $\lbrace v,t\rbrace$ with $t\in T$ is at most $\frac{q}{10}\cdot \vert T\vert$.
\end{definition}

Also recall that we defined the notion of a loyal embedding in Definition \ref{defi-loyal}. The importance of the definitions above for our proof of Proposition \ref{propo-few-unloyal} is due to the following lemma.

\begin{lemma}\label{lemma-five-properties}Every embedding $\theta: H\into \Gamma$ has at least one of the following five properties:
\begin{itemize}
\item[(I)] $\theta(v)\in V_{\textnormal{rest}}$ for at least $\eps_3 k/2$ vertices $v\in V(H)$.
\item[(II)] The image of $\theta$ contains two vertices from the same set $V_i$ for some $i\in V(H)\sm S$.
\item[(III)] There  is some subset $A\su V(H)$ such that $\theta$ is $f$-aligned for some injective function $f: A\to V(H)\sm S$ such that $f$ is not locally mostly consistent.
\item[(IV)] There is some subset $B\su V(H)$ of size $\vert B\vert\geq (1-\eps_3)k$ such that $\theta$ is $f$-aligned for some fancy injective function $f: B\to V(H)\sm S$ such that there are at least $\eps_2 k/4$ mutually disjoint $f$-inconsistent pairs in $B$.
\item[(V)] $\theta$ is loyal.
\end{itemize}
\end{lemma}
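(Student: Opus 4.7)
The plan is to prove the contrapositive: I will assume that $\theta : H \into \Gamma$ fails properties (I), (II), (III), and (IV), and deduce (V). The failure of (II) says that the image of $\theta$ meets each $V_i$ in at most one vertex, so if I let $A_0 = \{v \in V(H) : \theta(v) \notin V_{\textnormal{rest}}\}$ and define $f_0 : A_0 \to V(H)\sm S$ by $f_0(v) = i$ where $i\in V(H)\sm S$ is the unique index with $\theta(v) \in V_i$, then $f_0$ is well-defined and injective, and $\theta$ is $f_0$-aligned. The failure of (I) gives $\vert A_0\vert \geq (1 - \eps_3/2)k$, and the failure of (III) then forces $f_0$ to be locally mostly consistent.

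Next, I will apply Lemma \ref{lemma-small-super-signature} with $X = A_0$ (which has size at least $(1-q/2)k$ by (\ref{ineq-eps1-q})) to obtain a $\tfrac{q}{4}$-super-signature $T \su A_0$ of size $\vert T\vert \leq \tfrac{33}{q}\log k$. Local mostly consistency of $f_0$ bounds the number of $f_0$-inconsistent pairs meeting $T$ by $\vert T\vert \cdot \eps_4 k$, so by averaging the set of $v \in A_0 \sm T$ that form more than $\tfrac{q}{10}\vert T\vert$ such inconsistent pairs has size at most $\tfrac{10\eps_4 k}{q}$. Let $B$ consist of $T$ together with the remaining vertices of $A_0$; then $f_0|_B$ is fancy with super-signature $T$, and the inequality (\ref{ineq-eps4-eps3}) gives $\vert B\vert \geq (1-\eps_3/2)k - \tfrac{10 \eps_4 k}{q} \geq (1-\eps_3)k$.

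Because $\theta$ is $f_0|_B$-aligned and (IV) fails, a maximum matching of $f_0|_B$-inconsistent pairs in $B$ has fewer than $\eps_2 k/4$ edges. Its vertex set (of size less than $\eps_2 k/2$) is a vertex cover for the $f_0|_B$-inconsistent pairs, so removing it from $B$ produces $B' \su B$ on which $f_0|_{B'}$ has no inconsistent pairs; equivalently, $f_0|_{B'}$ is an injective, adjacency-preserving map $B' \to V(\tH)$. The lower bound $\vert B'\vert \geq (1-\eps_3)k - \eps_2 k/2$, combined with (\ref{ineq-eps3-eps2}) and (\ref{ineq-eps2-delta}), yields $\vert B'\vert \geq (1-\delta)k$.

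Finally, I will apply condition (c) of Definition \ref{defi-reasonable} to $f_0|_{B'}$: there exists a rotation or reflection $\phi$ of $\tH$ with $\phi|_{B'} = f_0|_{B'}$. Then $\theta(v) \in V_{f_0(v)} = V_{\phi(v)}$ for every $v \in B'$, and the image of $\theta$ contains at most one vertex from each $V_i$ by the failure of (II). Using (\ref{ineq-eps3-eps2}) once more to deduce $\vert B'\vert \geq (1-\eps_2)k$, these two facts exhibit $\theta$ as $\phi$-loyal, so $\theta$ has property (V). The main obstacle will be the bookkeeping chaining the parameters $\eps_1,\dots,\eps_5$ and $\delta$: at each reduction (from $A_0$ to $B$, then to $B'$) one loses a controlled number of vertices, and the inequalities of Lemma \ref{lemma-epsilon-parameters} must be precisely chained so that $B'$ remains large enough both for Definition \ref{defi-reasonable}(c) and for the $\phi$-loyalty threshold $(1-\eps_2)k$.
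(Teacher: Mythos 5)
Your proof is correct and follows essentially the same route as the paper's: assume (I)--(IV) fail, extract the injective map $f_0$ from the failure of (I) and (II), use the failure of (III) to get local mostly consistency, pass to a large subset on which the restriction is fancy, use the failure of (IV) via a maximal matching of inconsistent pairs to obtain a subset with no inconsistent pairs, and then invoke Definition \ref{defi-reasonable}(c) to produce a rotation or reflection certifying loyalty. The only difference is cosmetic: where you inline the super-signature/averaging argument to construct $B$, the paper packages that same argument into a separate statement (Lemma \ref{lemma-locally-mostly-consistent-fancy}) and simply cites it.
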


According to this lemma, every disloyal embedding $\theta: H\into \Gamma$ needs to satisfy one of the properties (I) to (IV) above. In order to prove Proposition \ref{propo-few-unloyal}, we will show that for each of those four properties there are only few embeddings $\theta: H\into \Gamma$ with that property and the conditions in Proposition \ref{propo-few-unloyal}.

Before we can prove Lemma \ref{lemma-five-properties}, we need another lemma.

\begin{lemma}\label{lemma-locally-mostly-consistent-fancy}Let $A\su V(H)$ be of size $\vert A\vert\geq (1-\frac{\eps_3}{2})k$ and let $f:A\to V(H)\sm S$ be an injective function that is locally mostly consistent. Then there exists a subset $B\su A$ of size $\vert B\vert\geq (1-\eps_3)k$, such that the restriction $f\vert_B: B\to V(H)\sm S$ is fancy.
\end{lemma}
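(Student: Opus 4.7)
The plan is to construct the required $B$ by first finding a small super-signature inside $A$ and then discarding a small number of vertices that interact badly with it.

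First I would apply Lemma~\ref{lemma-small-super-signature} to the set $A$. Since $\eps_3<\eps_1<q/2$ by (\ref{ineq-eps1-q}), the hypothesis $|A|\geq (1-\eps_3/2)k$ implies $|A|\geq (1-q/2)k$, so the lemma furnishes a $\frac{q}{4}$-super-signature $T\subseteq A$ of $H$ with $|T|\leq \frac{33}{q}\log k$. Note that $T$ will automatically remain a $\frac{q}{4}$-super-signature of $H$ no matter which superset of $T$ inside $V(H)$ we end up with, because being a super-signature is a property of $T$ as a subset of $V(H)$, not relative to $A$ or $B$.

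Next I would identify the vertices of $A\setminus T$ that fail the definition of fancy. Call $v\in A\setminus T$ \emph{rogue} if it participates in more than $\frac{q}{10}|T|$ $f$-inconsistent pairs $\{v,t\}$ with $t\in T$. A double-counting argument bounds the number of rogue vertices: since $f$ is locally mostly consistent, each $t\in T$ is in at most $\eps_4 k$ $f$-inconsistent pairs within $A$, so the total number of $f$-inconsistent pairs between $T$ and $A\setminus T$ is at most $|T|\cdot \eps_4 k$. Hence the number of rogue vertices is at most
\[
\frac{|T|\cdot \eps_4 k}{(q/10)|T|}=\frac{10\,\eps_4}{q}\,k\leq \frac{\eps_3}{2}\,k,
\]
where the last inequality uses (\ref{ineq-eps4-eps3}).

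Let $B$ be the set obtained from $A$ by removing all rogue vertices. Then $T\subseteq B$ and
\[
|B|\geq |A|-\frac{\eps_3}{2}k\geq \left(1-\frac{\eps_3}{2}\right)k-\frac{\eps_3}{2}k=(1-\eps_3)k.
\]
By construction every $v\in B\setminus T$ is in at most $\frac{q}{10}|T|$ $f$-inconsistent pairs with $T$, and $T$ is still a $\frac{q}{4}$-super-signature of $H$ of size at most $\frac{33}{q}\log k$ contained in $B$. Therefore $f\vert_B$ is fancy, as required.

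There is no real obstacle here; the argument is a one-step averaging after invoking Lemma~\ref{lemma-small-super-signature}, and the chain of parameter inequalities in Lemma~\ref{lemma-epsilon-parameters} (particularly $\eps_3<q/2$ and $\eps_4<\frac{q}{20}\eps_3$) has been calibrated precisely so that both the size condition $|B|\geq (1-\eps_3)k$ and the per-vertex threshold $\frac{q}{10}|T|$ balance out correctly.
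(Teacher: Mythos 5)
Your proof is correct and follows essentially the same approach as the paper: invoke Lemma~\ref{lemma-small-super-signature} to obtain a $\frac{q}{4}$-super-signature $T\subseteq A$, then use the locally-mostly-consistent hypothesis and a double-counting/averaging argument together with (\ref{ineq-eps4-eps3}) to show that at most $\frac{\eps_3}{2}k$ vertices of $A\setminus T$ have too many $f$-inconsistent pairs into $T$, and delete them to form $B$. The only difference is cosmetic (you use a strict threshold ``more than $\frac{q}{10}|T|$'' where the paper uses ``at least''), which makes no difference to the argument.
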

\begin{proof}As $\vert A\vert \geq (1-\frac{q}{2})k$ by (\ref{ineq-eps1-q}), by Lemma \ref{lemma-small-super-signature} there exists a $\frac{q}{4}$-super-signature $T$ of $H$ of size $\vert T\vert \leq \frac{33}{q}\log k$ with $T\su A$. Since $f$ is locally mostly consistent, every vertex $t\in T$ is contained in at most $\eps_4 k$ different $f$-inconsistent pairs $\lbrace v,t\rbrace$ with $v\in A\sm T$. In particular, there are at most $\eps_4 k\cdot \vert T\vert$ different $f$-inconsistent pairs $\lbrace v,t\rbrace$ with $v\in A\sm T$ and $t\in T$. Hence, using (\ref{ineq-eps4-eps3}) there can be at most
\[\frac{\eps_4 k\cdot \vert T\vert}{\frac{q}{10}\cdot \vert T\vert}=\frac{10}{q}\eps_4 k\leq \frac{10}{q}\cdot \frac{q}{20} \eps_3\cdot k=\frac{\eps_3}{2}k\]
vertices $v\in A\sm T$ that are contained in at least $\frac{q}{10}\cdot \vert T\vert$ different $f$-inconsistent pairs $\lbrace v,t\rbrace$ with $t\in T$. Let $B$ be obtained from $A$ by deleting all the vertices $v\in A\sm T$ with this property. Then $B\su A$ has size $\vert B\vert \geq \vert A\vert -\frac{\eps_3}{2}k\geq (1-\eps_3)k$ and satisfies $T\su B$. Furthermore, for each vertex $v\in B\sm T$ the number of $f$-inconsistent pairs $\lbrace v,t\rbrace$ with $t\in T$ is at most $\frac{q}{10}\cdot \vert T\vert$. Thus, $f\vert_B: B\to V(H)\sm S$ is fancy.
\end{proof}

Now we are ready for the proof of Lemma \ref{lemma-five-properties}.

\begin{proof}[Proof of Lemma \ref{lemma-five-properties}]Suppose $\theta: H\into \Gamma$ is an embedding that does not satisfy any of the five properties (I) to (V).

Let $A\su V(H)$ be the set of vertices $v\in V(H)$ with $\theta(v)\not\in V_{\textnormal{rest}}$. As $\theta$ does not satisfy property (I), we have $\vert A\vert\geq (1-\frac{\eps_3}{2})k$. For each $v\in A$, there is a unique index $i\in V(H)\sm S$ with $\theta(v)\in V_i$. For each $v\in A$, set $f_1(v)=i$ for this index $i\in V(H)\sm S$ with $\theta(v)\in V_i$. Then we obtain a function $f_1: A\to V(H)\sm S$ such that $\theta(v)\in V_{f_1(v)}$ for all $v\in A$.

Since $\theta$ does not satisfy (II), the image of $\theta$ contains at most one vertex from $V_i$ for each $i\in V(H)\sm S$.  Thus, the function $f_1: A\to V(H)\sm S$ is injective. We can furthermore conclude that $\theta$ is $f_1$-aligned.

Note that $f_1$ must be locally mostly consistent, since otherwise $\theta$ would have property (III). Thus, we can apply Lemma \ref{lemma-locally-mostly-consistent-fancy} and obtain a subset $B\su A$ of size $\vert B\vert\geq (1-\eps_3)k$, such that the restriction of $f_1$ to $B$ is fancy. Let $f_2=f_1\vert_B$ this restriction, then $f_2:B\to V(H)\sm S$ is injective and fancy and furthermore $\theta$ is $f_2$-aligned.

As $\theta$ does not satisfy (IV), there cannot be at least $\eps_2 k/4$ mutually disjoint $f_2$-inconsistent pairs in $B$. Let $\lbrace v_1,v_1'\rbrace, \dots, \lbrace v_\l,v_\l\rbrace$ be a maximal collection of mutually disjoint $f_2$-inconsistent pairs in $B$. Then $\l\leq \eps_2 k/4$. Let $Y$ be obtained from $B$ by deleting $v_1,\dots,v_\l$ and $v_1',\dots,v_\l'$. Then, using (\ref{ineq-eps3-eps2}), we obtain
\[\vert Y\vert=\vert B\vert-2\l\geq (1-\eps_3)k-2\cdot \frac{\eps_2}{4}k\geq \left(1-\frac{\eps_2}{2}\right)k-2\cdot \frac{\eps_2}{4}k=(1-\eps_2)k.\]
Furthermore, there are no $f_2$-inconsistent pairs in $Y$, because $\lbrace v_1,v_1'\rbrace, \dots, \lbrace v_\l,v_\l\rbrace$ was a maximal collection of mutually disjoint $f_2$-inconsistent pairs in $B$. This means that for all distinct vertices $v,v'\in Y$ we have $a_H(f_2(v),f_2(v'))=a_H(v,v')$.

Let $f_3$ be the restriction of $f_2$ to $Y$ (which is also the restriction of $f_1$ to $Y$). Then $f_3: Y\to V(H)\sm S$ is injective and $a_H(f_3(v),f_3(v'))=a_H(v,v')$ for all distinct vertices $v,v'\in Y$. Furthermore $\vert Y\vert\geq (1-\eps_2)k\geq (1-\delta)k$ by (\ref{ineq-eps2-delta}). So by condition (c) in Definition \ref{defi-reasonable}, there exists a rotation or reflection $\phi$ of $\tH$ such that $f_3=\phi\vert_Y$. So for each $y\in Y\su A$ we have $f_1(y)=f_2(y)=f_3(y)=\phi(y)$. Thus, $\phi(y)\in V(H)\sm S$ and $\theta(y)\in V_{f_1(y)}=V_{\phi(y)}$ for all $y\in Y$. Recall that $\vert Y\vert\geq (1-\eps_2)k$ and that the image of $\theta$ contains at most one vertex from $V_i$ for each $i\in V(H)\sm S$. Hence $\theta$ is $\phi$-loyal. But then $\theta$ satisfies (V), contradiction.\end{proof}

Recall that our goal is to prove Proposition \ref{propo-few-unloyal}. So let $x, x'\in V(H)$ be distinct vertices and let $z, z'\in V(\Gamma)$. We need to show that there are at most $k^{-6}\cdot \E_k(n)/n^{2}$ disloyal embeddings $\theta:H\into\Gamma$ with $\theta(x)=z$ and $\theta(x')=z'$. By Lemma \ref{lemma-five-properties}, each disloyal embedding $H\into \Gamma$ has at least one of the properties (I) to (IV). We will prove that for each of the properties (I) to (IV), there are at most $k^{-7}\cdot \E_k(n)/n^{2}$  embeddings $\theta:H\into\Gamma$ with that property and with $\theta(x)=z$ and $\theta(x')=z'$. Then the total number of disloyal embeddings $\theta:H\into\Gamma$ with $\theta(x)=z$ and $\theta(x')=z'$ is at most
\[4\cdot k^{-7}\cdot \frac{\E_k(n)}{n^2}\leq k^{-6}\cdot \frac{\E_k(n)}{n^2}\]
as desired. 

For property (I), the desired bound is given by the following claim.

\begin{claim}There are at most $k^{-7}\E_k(n)/n^2$ embeddings $\theta: H\into \Gamma$ with $\theta(x)=z$ and $\theta(x')=z'$ as well as $\theta(v)\in V_{\textnormal{rest}}$ for at least $\eps_3 k/2$ vertices $v\in V(H)$.
\end{claim}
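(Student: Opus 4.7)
The plan is to derive this claim as a direct corollary of Lemma \ref{lemma-typ1}, which gives essentially the same bound but with the slightly weaker hypothesis that at least $\eps_3 k/3$ vertices $y \in V(H) \setminus \{x, x'\}$ satisfy $\theta(y) \in V_{\textnormal{rest}}$. The only substantive task is to verify that the hypothesis of the current claim implies the hypothesis of Lemma \ref{lemma-typ1}.

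Suppose $\theta: H \into \Gamma$ satisfies $\theta(v) \in V_{\textnormal{rest}}$ for at least $\eps_3 k/2$ vertices $v \in V(H)$. Throwing away the (at most two) vertices $x$ and $x'$, we obtain at least $\eps_3 k/2 - 2$ vertices $y \in V(H) \setminus \{x, x'\}$ with $\theta(y) \in V_{\textnormal{rest}}$. By (\ref{ineq-eps5-logk-k}), we have $\eps_3 k > 10^3/q > 10^3 \geq 12$, so $\eps_3 k/2 - 2 \geq \eps_3 k/2 - \eps_3 k/6 = \eps_3 k/3$. Hence $\theta$ also satisfies the hypothesis of Lemma \ref{lemma-typ1}.

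The main (and only) obstacle is this bookkeeping step, which is entirely routine. Applying Lemma \ref{lemma-typ1} then bounds the number of such embeddings by $k^{-7} \E_k(n)/n^2$, completing the proof of the claim.
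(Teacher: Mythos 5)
Your proof is correct and takes essentially the same approach as the paper: reduce to Lemma \ref{lemma-typ1} by noting that discarding $x$ and $x'$ leaves at least $\eps_3 k/2 - 2 \geq \eps_3 k/3$ vertices mapped to $V_{\textnormal{rest}}$. The paper cites inequality (\ref{ineq-eps3-k}) rather than (\ref{ineq-eps5-logk-k}) to justify $2 \leq \eps_3 k/6$, but both suffice; this is an immaterial difference.
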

\begin{proof}For each such embedding $\theta$, there are at least
\[\frac{\eps_3}{2}k-2\geq \frac{\eps_3}{3}k\]
vertices $v\in V(H)\sm \lbrace x,x'\rbrace$ with $\theta(v)\in V_{\textnormal{rest}}$ (for the inequality we used (\ref{ineq-eps3-k})). So by Lemma \ref{lemma-typ1}, there can by at most $k^{-7}\E_k(n)/n^2$ such embeddings $\theta: H\into \Gamma$.
\end{proof}

For property (II), the desired bound already follows from Lemma \ref{lemma-typ2}. For property (III), the desired bound is given by the following lemma.

\begin{lemma}There are at most $k^{-7}\E_k(n)/n^2$ embeddings $\theta: H\into \Gamma$ with $\theta(x)=z$ and $\theta(x')=z'$ and such that $\theta$ has property (III).
\end{lemma}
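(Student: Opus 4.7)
The plan is as follows. Given an embedding $\theta:H\into\Gamma$ with $\theta(x)=z$ and $\theta(x')=z'$ satisfying property (III), I will extract a ``witness triple'' $(v_0, i_0, w_0)$ and then apply Corollary \ref{coro-embedding-small-set} to bound the number of such embeddings associated to each triple. Since $f$ is not locally mostly consistent, there is a vertex $v_0 \in A$ contained in at least $\eps_4 k$ different $f$-inconsistent pairs $\lbrace v_0, v'\rbrace \su A$. Setting $i_0 = f(v_0)$ and $w_0 = \theta(v_0)$, which lies in $V_{i_0}$ since $\theta$ is $f$-aligned, the key observation is that for each such $v'$ we have $a_\Gamma(w_0, \theta(v')) = a_H(v_0, v') \neq a_H(i_0, f(v'))$, with $\theta(v') \in V_{f(v')}'$ and $f(v') \neq i_0$ by injectivity. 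Hence $(w_0, \theta(v'))$ is a bad pair in the sense of Definition \ref{defi-bad}; in particular $\theta(v') \in U := \lbrace w : (w_0, w) \text{ is a bad pair}\rbrace$. Because $w_0 \in V_{i_0}$ is not a bad vertex, we have $\vert U\vert \leq \eps_5 n$, while at least $\eps_4 k$ vertices of $V(H) \sm \lbrace v_0\rbrace$ are mapped into $U$ by $\theta$.

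Next I will fix the triple $(v_0, i_0, w_0)$ and bound the number of such $\theta$, assuming for now $v_0 \notin \lbrace x, x'\rbrace$. Restricted to $H' := H[V(H)\sm\lbrace v_0, x, x'\rbrace]$ (which has $k' = k-3 \geq k - 4$ vertices), $\theta$ is an embedding $H' \into \Gamma$ mapping at least $\eps_4 k - 2 \geq \eps_4 k/2$ vertices into $U$, while the remaining images $\theta(v_0), \theta(x), \theta(x')$ are already fixed. I will therefore apply Corollary \ref{coro-embedding-small-set} with $\gamma = \eps_5$ and $\beta = \eps_4/2$, whose hypotheses follow from Lemma \ref{lemma-epsilon-parameters}, obtaining a per-triple bound of $(4e^6\eps_5/\eps_4^2)^{\eps_4 k/2} \cdot \E_k(n)/n^3$. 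The crucial inequality (\ref{ineq-eps5-eps4}), namely $\eps_5 \leq 10^{-5}\eps_4^2$, makes the geometric factor at most $(4e^6 \cdot 10^{-5})^{\eps_4 k/2} \leq e^{-3\eps_4 k/2}$, and (\ref{ineq-eps5-logk-k}) gives $\eps_4 k \geq 10^3\log k/q \geq 10^3 \log k$, so the per-triple bound is at most $k^{-10}\E_k(n)/n^3$.

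Finally I will sum over the at most $k$ choices of $v_0$ and, since the sets $V_{i_0}$ are disjoint, at most $n$ choices of $(i_0, w_0)$ with $w_0 \in V_{i_0}$; this gives a total of at most $k \cdot n \cdot k^{-10}\E_k(n)/n^3 = k^{-9}\E_k(n)/n^2$ for the main case. The boundary cases $v_0 \in \lbrace x, x'\rbrace$ are handled identically with $H'$ of size $k - 2$, contributing at most $k \cdot k^{-10} \E_k(n)/n^2 = k^{-9} \E_k(n)/n^2$ each. The three cases combine to at most $3k^{-9} \E_k(n)/n^2 \leq k^{-7}\E_k(n)/n^2$, as required. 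The main technical point is that the quadratic relationship $\eps_5 \leq 10^{-5}\eps_4^2$ in (\ref{ineq-eps5-eps4}) provides precisely enough slack so that the $n$ choices for the witness vertex $w_0$ can be absorbed into the per-triple exponential savings coming from the vertices forced into the small set $U$.
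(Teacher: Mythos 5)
Your proof is correct and follows essentially the same route as the paper: you identify the witness vertex $v_0$ (the paper's $y$) with $\geq\eps_4 k$ inconsistent pairs, note that $\theta(v_0)$ lies in some $V_i$ and is therefore not a bad vertex so that $U=\{w:(\theta(v_0),w)\text{ bad}\}$ has size $\leq\eps_5 n$, restrict to $H'=H\setminus\{v_0,x,x'\}$, and apply Corollary \ref{coro-embedding-small-set} with $\gamma=\eps_5$, $\beta=\eps_4/2$ before summing over choices of $v_0$ and $\theta(v_0)$. The only cosmetic difference is that you package the choice of $(v_0,\theta(v_0))$ as a witness triple $(v_0,i_0,w_0)$ while the paper fixes $y$ and then $\theta(y)$, and your constants ($k^{-10}$ per triple versus the paper's $k^{-8}$) differ slightly but both comfortably suffice.
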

\begin{proof}In order for $\theta$ to have property (III), $\theta$ must be $f$-aligned for some injective function $f: A\to V(H)\sm S$ with $A\su V(H)$ such that $f$ is not locally mostly consistent. In particular, there must exist some vertex $y\in A$ that is part of at least $\eps_4 k$ different $f$-inconsistent pairs $\lbrace y,v\rbrace$ for $v\in A\sm \lbrace y\rbrace$. Let
\[A'=\lbrace v\in A\sm \lbrace y\rbrace\mid a_H(f(y),f(v))\neq a_H(y,v)\rbrace.\]
In other words, $A'$ consists of those $v\in A\sm \lbrace y\rbrace$ such that $\lbrace y,v\rbrace$ is an $f$-inconsistent pair. Then $\vert A'\vert\geq \eps_4 k$. Note that for each $v\in A'$ we have $a_\Gamma(\theta(y), \theta(v))=a_H(y,v)$, hence
$a_\Gamma(\theta(y), \theta(v))\neq a_H(f(y),f(v))$. On the other hand, $\theta(y)\in V_{f(y)}\su V_{f(y)}'$ and $\theta(v)\in V_{f(v)}\su V_{f(v)}'$, since $\theta$ is $f$-aligned. Furthermore, $f(y)\neq f(v)$ as $f$ is injective. Thus, $(\theta(y), \theta(v))$ is a bad pair for each $v\in A'\su V(H)\sm \lbrace y\rbrace$ (see Definition \ref{defi-bad}). In particular, there are at least $\vert A'\vert-2\geq \eps_4 k-2\geq \eps_4 k/2$ vertices $v\in V(H)\sm \lbrace x,x',y\rbrace$ such that $(\theta(y), \theta(v))$ is a bad pair (the inequality $2\leq \eps_4 k/2$ follows from (\ref{ineq-eps5-logk-k})). Note that $\theta(y)\in V_i$ for $i=f(y)\in V(H)\sm S$. So we have seen that for each embedding $\theta: H\into \Gamma$ with the properties in the lemma, there exists a vertex $y\in V(H)$ with $\theta(y)\in V_i$ for some $i\in V(H)\sm S$ and such that $(\theta(y), \theta(v))$ is a bad pair for at least $\eps_4 k/2$ vertices $v\in V(H)\sm \lbrace x,x',y\rbrace$.

There are $k$ possibilities for the vertex $y\in V(H)$. We will prove that for each $y\in V(H)$, there are at most $k^{-8}\E_k(n)/n^2$ embeddings $\theta: H\into \Gamma$ with $\theta(x)=z$ and $\theta(x')=z'$ as well as $\theta(y)\in V_i$ for some $i\in V(H)\sm S$ and such that $(\theta(y), \theta(v))$ is a bad pair for at least $\eps_4 k/2$ vertices $v\in V(H)\sm \lbrace x,x',y\rbrace$. Summing this for all $k$ possibilities for $y\in V(H)$ yields that there can be at most $k^{-7}\E_k(n)/n^2$ embeddings $\theta: H\into \Gamma$ with the properties in the lemma.

So let us fix some $y\in V(H)$. If $y\not\in \lbrace x,x'\rbrace$, there are at most $n$ possibilities for $\theta(y)\in V(\Gamma)$ with $\theta(y)\in V_i$ for some $i\in V(H)\sm S$. If $y\in \lbrace x, x'\rbrace$, then $\theta(y)$ is already determined by the conditions $\theta(x)=z$ and $\theta(x')=z'$, so there is at most one possibility for $\theta(y)$. So it suffices to show that for every fixed $y\in V(H)$ and every fixed $\theta(y)\in V(\Gamma)$ with $\theta(y)\in V_i$ for some $i\in V(H)\sm S$, there are at most $k^{-8}\E_k(n)/n^{\vert \lbrace x,x',y\rbrace\vert}$ embeddings $\theta: H\into \Gamma$ with $\theta(x)=z$ and $\theta(x')=z'$ and such that $(\theta(y), \theta(v))$ is a bad pair for at least $\eps_4 k/2$ vertices $v\in V(H)\sm \lbrace x,x',y\rbrace$.

So let us fix $y\in V(H)$ and $\theta(y)\in V(\Gamma)$ with $\theta(y)\in V_i$ for some $i\in V(H)\sm S$. Set $H'=H\sm \lbrace x,x',y\rbrace$, then $\vert V(H')\vert\geq k-3$. Any embedding $\theta$ with $\theta(x)=z$ and $\theta(x')=z'$ and the fixed value of $\theta(y)$ can be reconstructed from its restriction to $H'$. Let $U$ be the set of vertices $u\in V(\Gamma)$ such that $(\theta(y), u)$ is a bad pair. Since $\theta(y)\in V_i$ for some $i\in V(H)\sm S$, by the definition of $V_i$ the vertex $\theta(y)$ is not bad and therefore $\vert U\vert\leq \eps_5 n$. Every embedding $\theta: H\into \Gamma$ such that $(\theta(y), \theta(v))$ is a bad pair for at least $\eps_4 k/2$ vertices $v\in V(H)\sm \lbrace x,x',y\rbrace$ restricts to an embedding $\theta\vert_{H'}:H'\into \Gamma$ with $\theta(v)\in U$ for at least $\eps_4 k/2$ vertices $v\in V(H')$. Note that $0<\eps_5<\frac{1}{2}\eps_4<\frac{q}{3}$ by (\ref{ineq-eps5-eps4}) and (\ref{ineq-eps1-q}) and $\frac{1}{2}\eps_4 k>\frac{20}{q}(\log k)^2$ by (\ref{ineq-eps5-logk-k}). Furthermore, note that by (\ref{ineq-eps5-eps4}) we have
\[\frac{\eps_5}{\eps_4^2}\leq \frac{1}{10^5}\leq \frac{1}{10}\cdot \frac{1}{81^2}\leq \frac{1}{4}3^{-8}\leq \frac{1}{4}e^{-8}.\]
So by Corollary \ref{coro-embedding-small-set} the number of embeddings $\theta: H'\into \Gamma$ with $\theta(v)\in U$ for at least $\eps_4 k/2$ vertices $v\in V(H')$ is at most
\[ \left(\frac{e^6\eps_5}{(\eps_4/2)^2}\right)^{\eps_4 k/2}\cdot \frac{\E_k(n)}{n^{k-\vert V(H')\vert}}\leq \left(e^{-2}\right)^{\eps_4 k/2}\cdot \frac{\E_k(n)}{n^{k-\vert V(H')\vert}}=e^{-\eps_4 k}\cdot \frac{\E_k(n)}{n^{\vert \lbrace x,x',y\rbrace\vert}}\leq k^{-8}\cdot \frac{\E_k(n)}{n^{\vert \lbrace x,x',y\rbrace\vert}},\]
where we used $\eps_4 k\geq 8\log k$ by (\ref{ineq-eps5-logk-k}). Thus, for each fixed $y\in V(H)$ and fixed $\theta(y)\in V(\Gamma)$ with $\theta(y)\in V_i$ for some $i\in V(H)\sm S$, there are at most $k^{-8}\E_k(n)/n^{\vert \lbrace x,x',y\rbrace\vert}$ embeddings $\theta: H\into \Gamma$ with $\theta(x)=z$ and $\theta(x')=z'$ and such that $(\theta(y), \theta(v))$ is a bad pair for at least $\eps_4 k/2$ vertices $v\in V(H)\sm \lbrace x,x',y\rbrace$.
\end{proof}

For property (IV), the desired bound will follow from the following two lemmas.

\begin{lemma}\label{lemma-typ4-f}There are at most $e^{4\log(1/\eps_3)\eps_3 k}$ fancy injective functions $f: B\to V(H)\sm S$ with $B\su V(H)$ of size $\vert B\vert\geq (1-\eps_3)k$.
\end{lemma}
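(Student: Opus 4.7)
The plan is to enumerate fancy functions stage by stage. First I would observe that the domain $B \su V(H)$ with $\vert B\vert \geq (1-\eps_3)k$ can be chosen in at most $\binom{k}{\leq \eps_3 k} \leq \exp((\log(1/\eps_3) + O(1))\eps_3 k)$ ways, and that the subset $T \su B$ with $\vert T\vert \leq (33/q)\log k$ and the injective restriction $f\vert_T : T \to V(H) \sm S$ can together be chosen in at most $k^{2\vert T\vert + 1} \leq \exp(O((\log k)^2/q))$ ways. By (\ref{ineq-eps3-k}) this latter factor is $\exp(o(\eps_3 k))$, so the combined contribution of these three preliminary choices to the count is at most $\exp((\log(1/\eps_3) + o(1))\eps_3 k)$.

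The heart of the argument is to show that, for each fixed $(B, T, f\vert_T)$, the number of extensions to a fancy injective function $f: B \to V(H) \sm S$ is at most $e^{O(\eps_3 k)}$. For every $v \in B \sm T$ I would define
\[X_v = \bigl\{u \in V(H) \sm (S \cup f(T)) : \vert \{t \in T : a_H(u, f(t)) \neq a_H(v, t)\}\vert \leq (q/10)\vert T\vert\bigr\},\]
so that the fancy condition becomes precisely $f(v) \in X_v$ for each $v \in B \sm T$. The key observation is that the sets $X_v$ are pairwise disjoint: if some $u$ lay in both $X_v$ and $X_{v'}$ for distinct $v, v' \in B \sm T$, then by the triangle inequality in the Hamming metric on $\lbrace 0, 1\rbrace^T$ the vectors $(a_H(v, t))_{t \in T}$ and $(a_H(v', t))_{t \in T}$ would differ on at most $(q/5)\vert T\vert$ coordinates, contradicting the $\frac{q}{4}$-super-signature property of $T$ (which applies since $v, v' \in V(H) \sm T$).

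Because the $X_v$'s are disjoint subsets of $V(H) \sm S$, we have $\sum_{v \in B \sm T} \vert X_v\vert \leq k$, and $m := \vert B \sm T\vert \geq (1-2\eps_3)k$ by (\ref{ineq-eps3-k}). Hence by AM-GM the number of injective extensions satisfying $f(v) \in X_v$ for every $v$ is at most $\prod_v \vert X_v\vert \leq (k/m)^m \leq \exp(4\eps_3 k)$. Multiplying the contributions of all four stages gives a total count of at most $\exp((\log(1/\eps_3) + 5 + o(1))\eps_3 k)$, which is bounded by $e^{4\log(1/\eps_3)\eps_3 k}$ because $\log(1/\eps_3) \geq \log 100$ is comfortably larger than $5/3$ by (\ref{ineq-eps2-100}). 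The main obstacle will be the disjointness step: one must verify carefully that the super-signature bound $(q/4)\vert T\vert$ combined with the fancy tolerance $(q/10)\vert T\vert$ together via the triangle inequality forbids any $u$ from being an allowed target for two distinct $v$'s, and this is where the specific constants in the definition of fancy are essential.
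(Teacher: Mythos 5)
Your proposal is correct and takes essentially the same approach as the paper: enumerate $B$, $T$, and $f\vert_T$, then bound the number of extensions by observing that the candidate sets $X_v$ (which the paper calls $U_v$) are pairwise disjoint, using precisely the triangle-inequality-in-Hamming-distance argument against the $\frac{q}{4}$-super-signature property that you describe. The only cosmetic differences are that the paper bounds $\prod_v\vert U_v\vert$ via the function $\E_\l(m)$ by noting the balanced parts must all be $1$ or $2$ (yielding exactly $2^{\eps_3 k}$), whereas you use AM-GM to get $e^{4\eps_3 k}$, and the paper enumerates $T$ and $f\vert_T$ as separate stages; both lead to the same final bound after the constants are absorbed.
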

\begin{proof}
First of all, as $\vert B\vert\geq (1-\eps_3)k$ and $1<\eps_3 k<k/3$ by (\ref{ineq-eps1-q}) and (\ref{ineq-eps5-logk-k}), there are at most
\[\sum_{\l=\lceil(1-\eps_3)k\rceil}^{k}\binom{k}{ \l}=\sum_{\l=0}^{\lfloor \eps_3 k\rfloor}\binom{k}{\l}
\leq (\eps_3 k+1)\binom{k}{ \lceil \eps_3 k\rceil}\leq 2\eps_3k\left(\frac{ek}{\lceil \eps_3 k\rceil}\right)^{\lceil \eps_3 k\rceil}
\leq  k\left(\frac{ek}{\eps_3 k}\right)^{\eps_3 k+1}
\leq k\left(\frac{e}{\eps_3}\right)^{2\eps_3 k}\]
possibilities for $B\su V(H)$.

Now let us fix $B\su V(H)$ of size $\vert B\vert\geq (1-\eps_3)k$. For every fancy injective function $f: B\to V(H)\sm S$, there exists a $\frac{q}{4}$-super-signature $T\su B$ of $H$ of size $\vert T\vert \leq \frac{33}{q}\log k$ such that for each vertex $v\in B\sm T$ the number of $f$-inconsistent pairs $\lbrace v,t\rbrace$ with $t\in T$ is at most $\frac{q}{10}\cdot \vert T\vert$.

Note that there are at most
\[\sum_{\l=1}^{\lfloor \frac{33}{q}\log k\rfloor}\binom{\vert B\vert}{ \l}\leq \sum_{\l=1}^{\lfloor \frac{33}{q}\log k\rfloor}\binom{k}{\l}\leq k^{\lfloor (33/q)\log k\rfloor}\leq k^{(33/q)\log k}=e^{(33/q)(\log k)^2}\]
possibilities for $T$.

Let us now fix $B$ and $T\su B$ and let us bound the number of possibilities for $f: B\to V(H)\sm S$. Clearly, there are at most $k^{\vert T\vert}$ possibilities for $f\vert_T$. So let us now fix $f\vert_T$. For each vertex $v\in B\sm T$, let $U_v\su V(H)\sm S$ be the set of those vertices $u\in V(H)\sm S$ for which there exists at least one extension $f$ (with the properties above) of the chosen map $f\vert_T$ such that $f(v)=u$. Then the number of extensions $f: B\to V(H)\sm S$ with the desired properties is at most $\prod_{v\in B\sm T}\vert U_v\vert$. Since $f$ needs to be injective, all the sets $U_v$ for $v\in B\sm T$ are disjoint from the image of $f\vert_T$.

We claim that the sets $U_v$ for $v\in B\sm T$ are mutually disjoint. Suppose there is a vertex $u\in U_v\cap U_{v'}$ for distinct $v,v'\in B\sm T$. Then there are extensions $f$ and $f'$ (with the properties above) of $f\vert_T$ satisfying $f(v)=u$ and $f'(v')=u$. The number of $f$-inconsistent pairs $\lbrace v,t\rbrace$ with $t\in T$ is at most $\frac{q}{10}\cdot \vert T\vert$ and the number of $f'$-inconsistent pairs $\lbrace v',t\rbrace$ with $t\in T$ is also at most $\frac{q}{10}\cdot \vert T\vert$. Hence for at least $(1-\frac{2}{10}q)\cdot \vert T\vert$ vertices $t\in T$ we have both $a_H(f(v),f(t))=a_H(v,t)$ and $a_H(f'(v'),f'(t))=a_H(v',t)$. But $f(v)=f'(v)=u$ and $f(t)=f'(t)=f\vert_T (t)$ (since both $f$ and $f'$ are extensions of $f\vert_T$). Hence we obtain
\[a_H(v,t)=a_H(f(v),f(t))=a_H(u,f\vert_T(t))=a_H(f'(v'),f'(t))=a_H(v',t)\]
for at least $(1-\frac{2}{10}q)\cdot \vert T\vert=(1-\frac{q}{5})\cdot \vert T\vert$ vertices $t\in T$. Thus, $\vert (N(v)\Delta N(v'))\cap T\vert\leq \frac{q}{5}\cdot \vert T\vert$. As $\vert T\vert>0$, this implies $\vert (N(v)\Delta N(v'))\cap T\vert< \frac{q}{4}\cdot \vert T\vert$, which is a contradiction to $T$ being a $\frac{q}{4}$-super-signature (see Definition \ref{defi-super-signature}).

Hence the sets $U_v$ for $v\in B\sm T$ are indeed mutually disjoint. All of them are subsets of $V(H)$ that are disjoint from the image of $f\vert_T$ (note that this image has size $\vert T\vert$, because $f$ must be injective). Thus, $\sum_{v\in B\sm T}\vert U_v\vert\leq k-\vert T\vert$. So, using Lemma \ref{lemma-functions-E}(i), the number of extensions of $f\vert_T$ to a map $f: B\to V(H)\sm S$ with the desired properties is at most
\[\prod_{v\in B\sm T}\vert U_v\vert\leq \E_{\vert B\vert-\vert T\vert}(k-\vert T\vert).\]

We claim that $\E_{\vert B\vert-\vert T\vert}(k-\vert T\vert)\leq 2^{\eps_3 k}$. Indeed, note that $\vert B\vert-\vert T\vert\geq (1-\eps_3) k-\frac{33}{q}\log k\geq (1-\frac{1}{4}) k-\frac{1}{4}k=k/2$ by (\ref{ineq-eps1-q}) and (\ref{eq-ineq-q-k1}), and therefore
\[1\leq \frac{k-\vert T\vert}{\vert B\vert-\vert T\vert}=1+\frac{k-\vert B\vert}{\vert B\vert-\vert T\vert}\leq 1+\frac{\eps_3 k}{k/2}=1+2\eps_3<2.\]
So the integers $m_1,\dots,m_{\vert B\vert-\vert T\vert}$ in the definition of $\E_{\vert B\vert-\vert T\vert}(k-\vert T\vert)$ (see Definition \ref{defi-functions-E}) satisfy
\[2\geq \left\lceil \frac{k-\vert T\vert}{\vert B\vert-\vert T\vert}\right\rceil\geq m_1\geq \dots\geq m_{\vert B\vert-\vert T\vert}\geq \left\lfloor \frac{k-\vert T\vert}{\vert B\vert-\vert T\vert}\right\rfloor=1.\]
As
\[m_1+\dots +m_{\vert B\vert-\vert T\vert}=k-\vert T\vert=(\vert B\vert-\vert T\vert)+(k-\vert B\vert),\]
there are precisely $k-\vert B\vert$ twos among $m_1,\dots,m_{\vert B\vert-\vert T\vert}$, and the remaining variables are one. Thus,
\[\E_{\vert B\vert-\vert T\vert}(k-\vert T\vert)=m_1\dotsm m_{\vert B\vert-\vert T\vert}=2^{k-\vert B\vert}\leq 2^{\eps_3 k}.\]

So we have seen that for fixed $B$ and fixed $T\su B$, there are at most $k^{\vert T\vert}$ possibilities to choose $f\vert_T$ and each of these choices can be extended to at most $\E_{\vert B\vert-\vert T\vert}(k-\vert T\vert)\leq 2^{\eps_3 k}$ maps $f: B\to V(H)\sm S$ with the desired properties. Hence, given $B$ and $T\su B$, the number of possibilities for $f$ is at most
\[k^{\vert T\vert}\cdot 2^{\eps_3 k}\leq k^{(33/q)\log k}\cdot 2^{\eps_3 k}\leq e^{(33/q)(\log k)^2}\cdot 2^{\eps_3 k}.\]

Recall that there are at most $k(e/\eps_3)^{2\eps_3 k}$ choices for $B$ and for each of them, there are at most $e^{(33/q)(\log k)^2}$ choices for $T$. Hence the total number of injective fancy functions as in the lemma is at most
\[k\left(\frac{e}{\eps_3}\right)^{2\eps_3 k}\cdot e^{(33/q)(\log k)^2}\cdot e^{(33/q)(\log k)^2}\cdot 2^{\eps_3 k}\leq \left(\frac{2e}{\eps_3}\right)^{2\eps_3 k}e^{(67/q)(\log k)^2}. \]
As $(67/q)(\log k)^2\leq \eps_3 k$ by (\ref{ineq-eps3-k}), this number is at most
\[\left(\frac{2e}{\eps_3}\right)^{2\eps_3 k}e^{\eps_3 k}\leq \left(\frac{2e^2}{\eps_3}\right)^{2\eps_3 k}\leq \left(\frac{1}{\eps_3^2}\right)^{2\eps_3 k}=\left(\frac{1}{\eps_3}\right)^{4\eps_3 k}=e^{4\log(1/\eps_3)\eps_3 k},\]
where for the second inequality we used that $\eps_3\leq 10^{-20}\leq 1/(2e^2)$ by (\ref{ineq-eps1-q}).
\end{proof}

\begin{lemma}\label{lemma-typ4-theta}Let $B\su V(H)$ be of size $\vert B\vert\geq (1-\eps_3)k$ and let $f: B\to V(H)\sm S$ be an injective function such that there are at least $\eps_2 k/4$ mutually disjoint $f$-inconsistent pairs in $B$. Then there are at most $e^{-\eps_2 k/20}\E_k(n)/n^2$ different $f$-aligned embeddings $\theta:H\into \Gamma$ with $\theta(x)=z$ and $\theta(x')=z'$.
\end{lemma}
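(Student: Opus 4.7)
The plan is to exploit the $f$-inconsistent pairs to save a factor of $2$ per pair in the count of $f$-aligned embeddings. The key observation is that if $\{v,v'\}$ is an $f$-inconsistent pair and $a_\Gamma(V_{f(v)},V_{f(v')})=a_H(f(v),f(v'))$, then the required adjacency $a_\Gamma(\theta(v),\theta(v'))=a_H(v,v')$ is the ``minority'' adjacency between $V_{f(v)}$ and $V_{f(v')}$, so at most $\tfrac{1}{2}\vert V_{f(v)}\vert\vert V_{f(v')}\vert$ pairs $(\theta(v),\theta(v'))\in V_{f(v)}\times V_{f(v')}$ satisfy it.

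\textbf{Step 1: Trimming the pair collection.} Starting from the given collection of at least $\eps_2 k/4$ mutually disjoint $f$-inconsistent pairs in $B$, I use that $f$ is injective, so their $f$-images are mutually disjoint pairs in $V(H)\sm S$. By Lemma~\ref{lemma-Vi-disjoint-pairs-wrong-adj}, at most $\tfrac{20}{q}(\log k)^2$ of these images satisfy $a_\Gamma(V_{f(v)},V_{f(v')})\neq a_H(f(v),f(v'))$, and by (\ref{ineq-eps3-k}) combined with $\eps_2\geq \eps_3$ this is at most $\eps_2 k/20$. Discarding these pairs and the at most two pairs meeting $\{x,x'\}$, and using $\eps_2 k\geq 10^3\log k$ from (\ref{ineq-eps5-logk-k}) to absorb the $-2$, I obtain a subcollection $P''=\{\{v_i,v_i'\}\}_{i=1}^m$ with $m\geq \eps_2 k/6$ mutually disjoint pairs contained in $B\sm\{x,x'\}$ and satisfying $a_\Gamma(V_{f(v_i)},V_{f(v_i')})=a_H(f(v_i),f(v_i'))\neq a_H(v_i,v_i')$.

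\textbf{Step 2: Signature plus counting.} Let $X=B\cup\{x,x'\}$; since $\vert X\vert\geq\vert B\vert\geq (1-\eps_3)k\geq (1-q)k$, Lemma~\ref{lemma-large-signature} makes $X$ a signature of $H$. I bound the number of partial maps $\theta\vert_X$ compatible with $f$-alignment and with $\theta(x)=z,\theta(x')=z'$ as follows: for $v\in B\sm\{x,x'\}$ not in any $P''$-pair, there are at most $\vert V_{f(v)}\vert$ choices for $\theta(v)\in V_{f(v)}$; for each pair $\{v_i,v_i'\}\in P''$, the key observation gives jointly at most $\tfrac{1}{2}\vert V_{f(v_i)}\vert \vert V_{f(v_i')}\vert$ choices. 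Since the pairs of $P''$ are disjoint and avoid $\{x,x'\}$, this yields
\[2^{-m}\prod_{v\in B\sm\{x,x'\}}\vert V_{f(v)}\vert\leq 2^{-m}\,\E_{\vert B\sm\{x,x'\}\vert}(n),\]
using Lemma~\ref{lemma-functions-E}(i) together with $\sum_{v\in B\sm\{x,x'\}}\vert V_{f(v)}\vert\leq n$ (the $V_i$ are disjoint and $f$ is injective). Each such $\theta\vert_X$ extends to an embedding $H\into\Gamma$ in at most $\E_{k-\vert X\vert}(n)$ ways by Lemma~\ref{lemma-signature-extensions}. Combining via Lemma~\ref{lemma-functions-E}(ii) and the identity $\vert B\sm\{x,x'\}\vert+(k-\vert X\vert)=k-2$, and then applying Lemma~\ref{lemma-functions-E}(iii) with $\mu=0$, the total count is bounded by
\[2^{-m}\,\E_{k-2}(n)\leq 2^{-m}\cdot e^{6} k^{2}\cdot\frac{\E_k(n)}{n^{2}}.\]

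\textbf{Step 3: Numerical check.} With $m\geq \eps_2 k/6$, the desired inequality $2^{-m}e^{6}k^{2}\leq e^{-\eps_2 k/20}$ reduces to $\bigl(\tfrac{\log 2}{6}-\tfrac{1}{20}\bigr)\eps_2 k\geq 6+2\log k$, which is a very comfortable consequence of $\eps_2 k\geq 10^{3}\log k$ from (\ref{ineq-eps5-logk-k}). The only delicate point in the whole argument is the interaction between the fixed images $\theta(x)=z,\theta(x')=z'$ and the $f$-aligned structure on $B$, since the constraints coming from pairs in $P$ that happen to touch $\{x,x'\}$ would involve already-fixed vertices; this is handled cleanly by the two cosmetic discards in Step~1 and by taking the signature to be $X=B\cup\{x,x'\}$ rather than $B$ itself.
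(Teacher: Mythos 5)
Your overall strategy matches the paper's almost line for line: the same key observation about $f$-inconsistent pairs that keep the ``correct'' $V$-adjacency being minority adjacencies, the same pruning via Lemma~\ref{lemma-Vi-disjoint-pairs-wrong-adj}, the same probabilistic $2^{-m}$ saving on the partial map, and the same signature-extension machinery. The numerics at the end are also fine. But there is a genuine error at the combination step in Step~2.

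You bound the partial-map count by $2^{-m}\,\E_{\vert B\sm\{x,x'\}\vert}(n)$ and the extension count by $\E_{k-\vert X\vert}(n)$, then ``combine via Lemma~\ref{lemma-functions-E}(ii) and the identity $\vert B\sm\{x,x'\}\vert+(k-\vert X\vert)=k-2$'' to get $\E_{k-2}(n)$. This is false: Lemma~\ref{lemma-functions-E}(ii) gives $\E_{\l}(m)\E_{\l'}(m')\leq \E_{\l+\l'}(m+m')$, so with both budgets equal to $n$ you would obtain $\E_{k-2}(2n)$, not $\E_{k-2}(n)$, and $\E_{k-2}(2n)\approx 2^{k-2}\,\E_{k-2}(n)$. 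This exponential loss is not covered by the saving $2^{-m}$, since $m\leq \eps_2 k/4\ll k$, nor by anything in Step~3; and Lemma~\ref{lemma-functions-E}(iii) does not apply to pass from $\E_{k-2}(2n)$ to a multiple of $\E_k(n)$ because it requires $m'\leq (1-\mu)m$ with $\mu\geq 0$, which fails for $m'=2n>n=m$.

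The missing ingredient is precisely the restriction that $f$-alignment imposes on the extension vertices, and this is exactly where the paper's proof diverges from yours. Since $\theta$ is $f$-aligned, its image contains at most one vertex from each $V_i$; once $\theta(v)\in V_{f(v)}$ has been fixed for every $v\in B\sm\{x,x'\}$, every remaining vertex of $H$ must be mapped into $U:=V(\Gamma)\sm\bigcup_{v\in B\sm\{x,x'\}}V_{f(v)}$. Invoking Lemma~\ref{lemma-signature-extensions} with this $U$ gives $\E_{k-\vert X\vert}(\vert U\vert)$ extensions, and the partial-map product is at most $\E_{\vert B\sm\{x,x'\}\vert}(n-\vert U\vert)$ because $\sum_{v\in B\sm\{x,x'\}}\vert V_{f(v)}\vert=n-\vert U\vert$. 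Now the budgets add to $n$ and Lemma~\ref{lemma-functions-E}(ii) correctly yields $\E_{k-2}(n)$, after which the rest of your calculation goes through. (Equivalently, note that the disjoint sets $U_i$ supplied by the proof of Lemma~\ref{lemma-signature-extensions} are, by $f$-alignment, disjoint from every $V_{f(v)}$, so the combined product is a product over a single family of disjoint subsets of $V(\Gamma)$ and Lemma~\ref{lemma-functions-E}(i) applies directly.) Without this step the bound you have is far too weak for the stated reason to work.
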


\begin{proof}Let $B'=B\sm \lbrace x,x'\rbrace$. Then there are still at least $\eps_2 k/4-2\geq (3/16)\eps_2 k$ mutually disjoint $f$-inconsistent pairs in $B'$ (note that $2\leq \eps_2 k/16$ by (\ref{ineq-eps3-k})). So let $\lbrace v_1,w_1\rbrace,\dots, \lbrace v_m,w_m\rbrace$ be mutually disjoint $f$-inconsistent pairs in $B'$ with $m\geq (3/16)\eps_2 k$. Then $v_1,\dots,v_m$ and $w_1,\dots,w_m$ are distinct elements of $B'$ and for each $\l=1,\dots,m$ we have $a_H(f(v_\l),f(w_\l))\neq a_H(v_\l, w_\l)$. As $f$ is injective, $f(v_1),\dots,f(v_m)$ and $f(w_1),\dots,f(w_m)$ are distinct elements of $V(H)\sm S$. So by Lemma \ref{lemma-Vi-disjoint-pairs-wrong-adj}, there can be at most $(20/q)(\log k)^2$ different indices $1\leq \l\leq m$ with $a_\Gamma(V_{f(v_\l)}, V_{f(w_\l)})\neq a_H(f(v_\l), f(w_\l))$. Note that $(20/q)(\log k)^2\leq \eps_2 k/16$ by (\ref{ineq-eps3-k}). Thus, there are at least
\[m-\frac{\eps_2 k}{16}\geq \frac{3}{16}\eps_2 k-\frac{\eps_2 k}{16}=\frac{\eps_2 k}{8}\]
indices $1\leq \l\leq m$ with $a_\Gamma(V_{f(v_\l)}, V_{f(w_\l)})= a_H(f(v_\l), f(w_\l))$. Without loss of generality, let us assume that $a_\Gamma(V_{f(v_\l)}, V_{f(w_\l)})= a_H(f(v_\l), f(w_\l))$ for $\l=1,\dots,\lceil \eps_2 k/8\rceil$. Then for $\l=1,\dots,\lceil \eps_2 k/8\rceil$ we have $a_\Gamma(V_{f(v_\l)}, V_{f(w_\l)})\neq a_H(v_\l, w_\l)$.

We need to bound the number of $f$-aligned embeddings $\theta:H\into \Gamma$. First, let us consider the number of possibilities for $\theta\vert_{B'\cup \lbrace x,x'\rbrace}$. We need to choose $\theta(v)\in V_{f(v)}$ for all $v\in B'$. Furthermore, $\theta(x)=z$ and $\theta(x')=z'$ are already determined.

Suppose we choose $\theta(v)\in V_{f(v)}$ uniformly at random, independently for all $v\in B'$. Recall that we have $a_\Gamma(V_{f(v_\l)}, V_{f(w_\l)})\neq a_H(v_\l, w_\l)$ for $\l=1,\dots,\lceil \eps_2 k/8\rceil$. Hence for each $\l=1,\dots,\lceil \eps_2 k/8\rceil$, the probability for $a_\Gamma(\theta(v_\l), \theta(w_\l))=a_H(v_\l, w_\l)$ is at most $\frac{1}{2}$. As $f(v_1),\dots,f(v_m), f(w_1),\dots,f(w_m)$ are distinct, these events are independent for all $\l=1,\dots,\lceil \eps_2 k/8\rceil$. Thus, the probability that $a_\Gamma(\theta(v_i), \theta(w_i))=a_H(v_\l, w_\l)$ for all $\l=1,\dots,\lceil \eps_2 k/8\rceil$ is at most $2^{-\lceil \eps_2 k/8\rceil}\leq 2^{-\eps_2 k/8}$. In particular, the probability that $\theta\vert_{B'\cup \lbrace x,x'\rbrace}$ forms an embedding is at most $2^{-\eps_2 k/8}$.

Hence, by Lemma \ref{lemma-functions-E}(i), the number of possibilities for $\theta\vert_{B'\cup \lbrace x,x'\rbrace}$ is at most
\[2^{-\eps_2 k/8}\cdot \prod_{v\in B'}\vert V_{f(v)}\vert\leq 2^{-\eps_2 k/8}\cdot\E_{\vert B'\vert}(n-\vert U\vert),\]
where $U=V(\Gamma)\sm \bigcup_{v\in B'}V_{f(v)}$ denotes the complement of the union of the sets  $V_{f(v)}$ for $v\in B'$ (note that these sets are all disjoint).

We claim that for every vertex $w\in V(H)\sm (B'\cup \lbrace x,x'\rbrace)$ we must have $\theta(w)\in U$. Suppose that $\theta(w)\in V_{f(v)}$ for some $v\in B'$. Note that we also have $\theta(v)\in V_{f(v)}$ and $v\neq w$. Hence the image of $\theta$ would contain two vertices from the set $V_{f(v)}$ (as $\theta$ must be injective), but then $\theta$ cannot be $f$-aligned (see Definition \ref{defi-aligned}). Hence we must indeed have $\theta(w)\in U$ for all $w\in V(H)\sm (B'\cup \lbrace x,x'\rbrace)$.

Note that by Lemma \ref{lemma-large-signature} the set $B'\cup \lbrace x,x'\rbrace$ is a signature, since we have $\vert B'\cup \lbrace x,x'\rbrace\vert\geq \vert B\vert\geq (1-\eps_3)k\geq (1-q)k$. Thus, for every fixed $\theta\vert_{B'\cup \lbrace x,x'\rbrace}$, by Lemma \ref{lemma-signature-extensions} there are at most $\E_{k-\vert B'\vert-2}(\vert U\vert)$ possibilities to extend $\theta\vert_{B'\cup \lbrace x,x'\rbrace}$ to an embedding $\theta:H\into \Gamma$ with the desired properties.

Thus, all in all the number of $f$-aligned embeddings $\theta:H\into \Gamma$ with $\theta(x)=z$ and $\theta(x')=z'$ is at most
\[2^{-\eps_2 k/8}\cdot\E_{\vert B'\vert}(n-\vert U\vert)\cdot \E_{k-\vert B'\vert-2}(\vert U\vert)\leq 2^{-\eps_2 k/8}\cdot \E_{k-2}(n)\leq 2^{-\eps_2 k/8}e^6k^2\frac{\E_k(n)}{n^2},\]
where we used Lemma \ref{lemma-functions-E}(ii) and Lemma \ref{lemma-functions-E}(iii), recalling that $n\geq \tk\geq k$. Note that by $k\geq \tk/2\geq 10^{199}$ and (\ref{ineq-eps3-k}), we have
\[2^{-\eps_2 k/8}e^6k^2\leq 2^{-\eps_2 k/8}k^3\leq e^{-\eps_2 k/16}e^{3\log k}<e^{-\eps_2 k/16}e^{\eps_2 k/80}=e^{-\eps_2 k/20}.\]
Hence there are at most $e^{-\eps_2 k/20}\E_k(n)/n^2$ different $f$-aligned embeddings $\theta:H\into \Gamma$ with $\theta(x)=z$ and $\theta(x')=z'$.
\end{proof}

Now  the desired bound for property (IV) is given by the following claim.

\begin{claim}There are at most $k^{-7}\E_k(n)/n^2$ embeddings $\theta: H\into \Gamma$ with $\theta(x)=z$ and $\theta(x')=z'$ and such that $\theta$ has property (IV).
\end{claim}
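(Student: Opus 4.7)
The plan is to combine Lemma \ref{lemma-typ4-f} and Lemma \ref{lemma-typ4-theta} by summing over all possible choices of the subset $B\su V(H)$ and the fancy injective function $f: B\to V(H)\sm S$ that are witnessing property (IV) for the embedding $\theta$. By definition, every embedding $\theta$ with property (IV) is $f$-aligned for at least one such pair $(B,f)$, where $\vert B\vert\geq (1-\eps_3)k$, the function $f$ is fancy, and there are at least $\eps_2 k/4$ mutually disjoint $f$-inconsistent pairs in $B$.

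By Lemma \ref{lemma-typ4-f}, the number of candidates for $(B,f)$ is bounded by $e^{4\log(1/\eps_3)\eps_3 k}$. By Lemma \ref{lemma-typ4-theta}, for each fixed such $(B,f)$, the number of $f$-aligned embeddings $\theta:H\into\Gamma$ with $\theta(x)=z$ and $\theta(x')=z'$ is at most $e^{-\eps_2 k/20}\cdot \E_k(n)/n^2$. Multiplying, the total count is at most
\[
e^{4\log(1/\eps_3)\eps_3 k - \eps_2 k/20}\cdot \frac{\E_k(n)}{n^2}.
\]

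The main routine check is that the exponent is more negative than $-7\log k$. For this I would first apply inequality (\ref{ineq-eps3-eps2}), which gives $\log(1/\eps_3)\cdot \eps_3 < \eps_2/100$, and therefore $4\log(1/\eps_3)\eps_3 k < \eps_2 k/25$. Hence the exponent satisfies
\[
4\log(1/\eps_3)\eps_3 k - \frac{\eps_2 k}{20} < \frac{\eps_2 k}{25} - \frac{\eps_2 k}{20} = -\frac{\eps_2 k}{100}.
\]
Then inequality (\ref{ineq-eps5-logk-k}) yields $\eps_2 k > 10^3\log k/q > 10^3\log k$, so $\eps_2 k/100 > 10\log k \geq 7\log k$. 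Combining, the total count is bounded by $e^{-7\log k}\cdot \E_k(n)/n^2 = k^{-7}\cdot \E_k(n)/n^2$, as desired. There is no real obstacle beyond correctly invoking the two lemmas and verifying the numerical inequality; the work was already done when setting up the parameters $\eps_2, \eps_3$ in Lemma \ref{lemma-epsilon-parameters}.
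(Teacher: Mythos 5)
Your proof is correct and follows essentially the same approach as the paper: multiply the bound from Lemma \ref{lemma-typ4-f} on the number of fancy $(B,f)$ by the per-$f$ bound from Lemma \ref{lemma-typ4-theta}, then close the numerics using (\ref{ineq-eps3-eps2}) to cancel $4\log(1/\eps_3)\eps_3 k$ against $\eps_2 k/25$ and a lower bound on $\eps_2 k$ to beat $7\log k$. The only cosmetic difference is that you cite (\ref{ineq-eps5-logk-k}) rather than (\ref{ineq-eps3-k}) for the final estimate $\eps_2 k/100 > 7\log k$; both suffice.
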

\begin{proof}
In order for $\theta$ to have property (IV), $\theta$ must be $f$-aligned for some fancy injective function $f: B\to V(H)\sm S$ with $B\su V(H)$ of size $\vert B\vert\geq (1-\eps_3)k$ such that there are at least $\eps_2 k/4$ mutually disjoint $f$-inconsistent pairs in $B$. By Lemma \ref{lemma-typ4-f}, there are at most $e^{4\log(1/\eps_3)\eps_3 k}$ possibilities for the function $f$. By Lemma \ref{lemma-typ4-theta}, for each fixed $f$ there are at most $e^{-\eps_2 k/20}\E_k(n)/n^2$ $f$-aligned embeddings $\theta:H\into \Gamma$ with $\theta(x)=z$ and $\theta(x')=z'$. So all in all the number of possibilities for $\theta$ with the properties in the claim is at most
\[e^{4\log(1/\eps_3)\eps_3 k}\cdot e^{-\eps_2 k/20}\frac{\E_k(n)}{n^2}=e^{4\log(1/\eps_3)\eps_3 k}\cdot e^{-\eps_2 k/25}e^{-\eps_2 k/100}\frac{\E_k(n)}{n^2}\leq k^{-7}\frac{\E_k(n)}{n^2},\]
where we used $\eps_2/25>4\log(1/\eps_3)\eps_3$ by (\ref{ineq-eps3-eps2}) and $\eps_2 k/100>7\log k$ by (\ref{ineq-eps3-k}).
\end{proof}

This finishes the proof of Proposition \ref{propo-few-unloyal}. Finally, let us deduce Corollary \ref{coro-few-unloyal}.

\begin{proof}[Proof of Corollary \ref{coro-few-unloyal}]
For the first statement, fix some $x'\in V(H)$ with $x'\neq x$ (recall $\vert V(H)\vert = k \geq \tk/2\geq 10^{199}$). Then by Proposition \ref{propo-few-unloyal}, for each $z'\in V(\Gamma)$ there are at most $k^{-6}\cdot \E_k(n)/n^{2}$ disloyal embeddings $\theta:H\into\Gamma$ with $\theta(x)=z$ and $\theta(x')=z'$. Adding this up for all $z'\in V(\Gamma)$ shows that there are at most $k^{-6}\cdot \E_k(n)/n$ disloyal embeddings $\theta:H\into\Gamma$ with $\theta(x)=z$.

For the second statement, fix  some vertex $x\in V(H)$. For each $z\in V(\Gamma)$ there are at most $k^{-6}\cdot \E_k(n)/n$ disloyal embeddings $\theta:H\into\Gamma$ with $\theta(x)=z$. Adding this up for all $z\in V(\Gamma)$ shows that the total number of disloyal embeddings $H\into\Gamma$ is at most $k^{-6}\cdot \E_k(n)$.
\end{proof}

\section{Proof of Proposition \ref{propo-optimization}}\label{sect-optimization}

\subsection{Some preparations}

\begin{lemma}\label{lemma-optimization0} For any positive integer $m$, we have $\emb(H,m)\leq k^{-k+2}m^k$.
\end{lemma}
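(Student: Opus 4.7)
The statement holds trivially when $m<k$: no injective map $V(H)\to V(\Gamma)$ exists for any $m$-vertex graph $\Gamma$, so $\emb(H,m)=0$. Thus I focus on $m\geq k$, and plan to reduce to the regime $m\geq\tk$ where Claim~\ref{claim-embHn-small} already supplies the bound $\emb(H,n)\leq 3k^{-k+1}n^{k}$.

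The reduction is a standard blow-up trick. Given any $m$-vertex graph $\Gamma$ and any positive integer $t$, let $\Gamma^{(t)}$ be the balanced $t$-blow-up of $\Gamma$: replace each vertex by $t$ twin copies, keeping the same adjacency pattern between parts. For every embedding $\theta\colon H\into\Gamma$ and every function $c\colon V(H)\to\{1,\dots,t\}$, the map $v\mapsto(\theta(v),c(v))$ is an embedding $H\into\Gamma^{(t)}$, and distinct pairs $(\theta,c)$ yield distinct embeddings. This gives
\[\emb(H,\Gamma^{(t)})\;\geq\; t^{k}\,\emb(H,\Gamma),\]
and maximising over $m$-vertex graphs $\Gamma$ yields $\emb(H,tm)\geq t^{k}\emb(H,m)$.

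I will apply this with $t=\lceil \tk/m\rceil\geq 1$, so that $tm\geq\tk$ and Claim~\ref{claim-embHn-small} applies to give $\emb(H,tm)\leq 3k^{-k+1}(tm)^{k}$. Dividing by $t^{k}$ collapses this to
\[\emb(H,m)\;\leq\; 3k^{-k+1}m^{k}\;\leq\; k^{-k+2}m^{k},\]
where the last inequality uses only $k\geq 3$ (and in our setting $k\geq\tk/2\geq 10^{199}$).

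There is no genuine obstacle here; the argument is essentially bookkeeping on top of the previously established Claim~\ref{claim-embHn-small}. The only point worth a brief check is that the lift $(\theta,c)\mapsto\theta_{c}$ really does take pairs to distinct embeddings $H\into\Gamma^{(t)}$, which is immediate from the injectivity of $\theta$ together with the definition of the blow-up.
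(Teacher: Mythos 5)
Your proof is correct, and for the intermediate range $k\leq m<\tk$ it takes a genuinely different route from the paper's. The paper handles this range by the trivial monotonicity $\emb(H,m)\leq\emb(H,\tk)$ followed by the numerical estimate $\tk^k\leq\bigl(k+\tfrac12\log k\bigr)^k\leq e^{(\log k)/2}k^k$ (which uses $\tk-k\leq\tfrac14\log\tk$) to land at $k^{-k+2}m^k$; your blow-up lift $\emb(H,tm)\geq t^k\emb(H,m)$ with $t=\lceil\tk/m\rceil$ bypasses that estimate entirely and in fact yields the slightly stronger conclusion $\emb(H,m)\leq 3k^{-k+1}m^k$ uniformly in $m\geq k$. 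The lift is sound as stated: every constructed embedding sends distinct vertices of $H$ into distinct parts of the blow-up (by injectivity of $\theta$), so the adjacency inside a part is never consulted and the count of embeddings does not depend on how twins are connected; and distinct pairs $(\theta,c)$ produce distinct maps, again by injectivity of $\theta$. Both arguments are of comparable length; yours is a bit more self-contained in that it does not lean on the numerical closeness of $k$ and $\tk$.
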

\begin{proof}
Recall that our arguments in Section \ref{sect-proof-blow-up} are valid for all $n\geq \tk$. Hence, from Claim \ref{claim-embHn-small} we obtain $\emb(H,m)\leq 3k^{-k+1}m^k\leq k^{-k+2}m^k$ for all $m\geq \tk$.

If $m<k$, we clearly have $\emb(H,m)=0\leq k^{-k+2}m^k$. So it only remains to consider the case that $k\leq m<\tk$. But then, using Claim \ref{claim-embHn-small} for $n=\tk$,
\[\emb(H,m)\leq \emb(H,\tk)\leq 3k^{-k+1}\tk^k\leq 3k^{-k+1}\left(k+\frac{1}{2}\log k\right)^k\leq 3k^{-k+1}e^{(\log k)/2}k^k\leq k^{-k+2}m^k,\]
as $k\geq \tk-\frac{1}{4}\log \tk\geq \tk-\frac{1}{2}\log k$ and $k\geq \tk/2\geq 10^{199}$.
\end{proof}

The two inequalities in the following lemma were already observed by Pippenger and Golumbic, see the proofs of Propositions 1 and 2 in \cite{pippenger-golumbic}. We repeat the proof here for the reader's convenience.

\begin{lemma}\label{lemma-optimization1} For any integer $m\geq 2$, we have
\[\frac{k}{m-1}\cdot \emb(H,m-1)\leq \emb(H,m)-\emb(H,m-1)\leq \frac{k}{m}\cdot \emb(H,m).\]
\end{lemma}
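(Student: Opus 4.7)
The plan is to prove the two inequalities separately, via standard averaging/duplication arguments (as already hinted by the reference to Pippenger--Golumbic).

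For the upper bound, I would pick an $m$-vertex graph $\Gamma$ with $\emb(H,\Gamma)=\emb(H,m)$ and sum $\emb(H,\Gamma\setminus\{v\})$ over all $v\in V(\Gamma)$. Each embedding $\theta:H\hookrightarrow\Gamma$ survives inside $\Gamma\setminus\{v\}$ precisely for the $m-k$ vertices $v$ outside $\theta(V(H))$, so the sum equals $(m-k)\,\emb(H,m)$. On the other hand every summand is at most $\emb(H,m-1)$ by maximality, giving $(m-k)\emb(H,m)\le m\,\emb(H,m-1)$, which rearranges to the desired bound.

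For the lower bound, I would pick an $(m-1)$-vertex graph $\Gamma'$ with $\emb(H,\Gamma')=\emb(H,m-1)$. Since each embedding uses $k$ vertices, averaging gives some $w\in V(\Gamma')$ in the image of at least $\frac{k}{m-1}\,\emb(H,m-1)$ embeddings. Form $\Gamma$ on $m$ vertices by adding a twin $w'$ of $w$ (same adjacencies as $w$ to $V(\Gamma')\setminus\{w\}$; the $w\text{--}w'$ adjacency is irrelevant). Then every embedding $H\hookrightarrow\Gamma'$ is still an embedding $H\hookrightarrow\Gamma$; moreover, for each embedding $\theta$ whose image contains $w$, replacing $w$ by $w'$ produces a further embedding into $\Gamma$ whose image differs from any $\Gamma'$-embedding (it contains $w'\notin V(\Gamma')$). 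These two families are disjoint, so
\[
\emb(H,m)\ \ge\ \emb(H,\Gamma)\ \ge\ \emb(H,m-1)+\frac{k}{m-1}\,\emb(H,m-1),
\]
which is exactly the left inequality.

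There is no real obstacle: the only point to check carefully is that the ``twinning'' construction produces genuinely new embeddings, which follows because $w'$ shares all relevant adjacencies with $w$ and because injectivity of $\theta$ prevents $w$ and $w'$ from both appearing in a replacement-embedding's image.
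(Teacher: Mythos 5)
Your proof is correct and takes essentially the same route as the paper: the upper bound via deleting a vertex contained in few embeddings (you phrase it as a double count over all vertex deletions, the paper picks a below-average vertex, but these are the same averaging argument), and the lower bound via twinning a vertex contained in many embeddings. The only stylistic difference is that you note the $w$--$w'$ adjacency is irrelevant, while the paper simply declares $w'$ unconnected to $w$; both are fine since $w$ and $w'$ never co-occur in the image of the new embeddings.
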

\begin{proof}
Let $\Gamma_m$ be a graph on $m$ vertices with $\emb(H,\Gamma_m)=\emb(H,m)$. On average, each vertex $v\in V(\Gamma_m)$ appears in $\frac{k}{m}\emb(H,\Gamma_m)$ embeddings $H\into \Gamma_m$. Thus, there exists a vertex $v\in V(\Gamma_m)$ that appears in at most $\frac{k}{m}\emb(H,\Gamma_m)$ embeddings $H\into \Gamma_m$. Now, let us delete this vertex $v$ from $\Gamma_m$ and observe that
\[\emb(H,m-1)\geq \emb(H,\Gamma_m\sm\lbrace v\rbrace)\geq \emb(H,\Gamma_m)-\frac{k}{m}\emb(H,\Gamma_m)=\left(1-\frac{k}{m}\right)\emb(H,m).\]
This proves the second inequality.

For the first inequality, let $\Gamma_{m-1}$ be a graph on $m-1$ vertices with $\emb(H,\Gamma_{m-1})=\emb(H,m-1)$. On average, each vertex $v\in V(\Gamma_{m-1})$ appears in $\frac{k}{m-1}\emb(H,\Gamma_{m-1})$ embeddings $H\into \Gamma_{m-1}$. Thus, there exists a vertex $v\in V(\Gamma_{m-1})$ that appears in at least $\frac{k}{m-1}\emb(H,\Gamma_{m-1})$ embeddings $H\into \Gamma_{m-1}$. Now, let the graph $\Gamma_{m}'$ be obtained from $\Gamma_{m-1}$ by making an additional copy of the vertex $v$ (say, unconnected to the original vertex $v$). Then $\Gamma_{m}'$ has $m$ vertices and
\[\emb(H,m)\geq \emb(H,\Gamma_{m}')\geq \emb(H,\Gamma_{m-1})+\frac{k}{m-1}\emb(H,\Gamma_{m-1})
=\left(1+\frac{k}{m-1}\right)\emb(H,m-1),\]
which indeed proves the first inequality.
\end{proof}

\begin{lemma}\label{lemma-optimization2} For any integer $m\geq 3$, we have
\[\emb(H,m)-2\emb(H,m-1)+\emb(H,m-2)\leq 2k^{-k+4}\cdot m^{k-2}.\]
\end{lemma}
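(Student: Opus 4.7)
The plan is to combine both directions of Lemma \ref{lemma-optimization1} to bound the second difference $F(m) - 2F(m-1) + F(m-2)$, where $F := \emb(H,\cdot)$, by an explicit rational function of $m$ and $k$ times $F(m)$, and then to handle small and large $m$ separately. The upper-bound half of Lemma \ref{lemma-optimization1} gives $F(m-1) \geq \tfrac{m-k}{m}F(m)$, and chaining its lower-bound half twice (once bounding $F(m-1)$ from above in terms of $F(m)$, once bounding $F(m-2)$ from above in terms of $F(m-1)$) gives $F(m-2) \leq \tfrac{(m-2)(m-1)}{(m-2+k)(m-1+k)}F(m)$. Substituting these into the second difference and simplifying, one arrives at
\[F(m) - 2F(m-1) + F(m-2) \leq F(m) \cdot \frac{k(k-1)(3m+2k-4)}{m(m-2+k)(m-1+k)}.\]

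Next, one analyzes cases according to the size of $m$. If $m < k$, the LHS vanishes. If $k \leq m \leq k\sqrt{2}$, monotonicity $F(m-2) \leq F(m-1)$ (an isolated vertex can always be added without losing embeddings) yields $\text{LHS} \leq F(m)$, and Lemma \ref{lemma-optimization0} gives $F(m) \leq k^{-k+2}m^k = k^{-k+4}m^{k-2}\cdot(m/k)^2 \leq 2k^{-k+4}m^{k-2}$, using $m^2 \leq 2k^2$. For the remaining case $m > k\sqrt{2}$, one first verifies $\tk < k\sqrt{2}$—since $\tk - k \leq \tfrac{1}{4}\log\tk \ll k(\sqrt{2}-1)$ when $\tk \geq 10^{200}$—which implies $m \geq \tk$, allowing the sharper bound $F(m) \leq 3k^{-k+1}m^k$ from Claim \ref{claim-embHn-small}. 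Bounding the factor is now easy: the denominator satisfies $m(m-2+k)(m-1+k) \geq m^3$ (since $k \geq 2$) while the numerator is at most $5k^2m$ (since $k \leq m$), so the factor is at most $5k^2/m^2$, and multiplying yields
\[F(m) - 2F(m-1) + F(m-2) \leq 3k^{-k+1}m^k \cdot \frac{5k^2}{m^2} = 15k^{-k+3}m^{k-2} \leq 2k^{-k+4}m^{k-2},\]
the last inequality because $15 \leq 2k$ (we have $k \geq 10^{199}$).

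I expect the main obstacle to be the algebraic identity collapsing $1 - 2(m-k)/m + (m-2)(m-1)/[(m-2+k)(m-1+k)]$ into $k(k-1)(3m+2k-4)/[m(m-2+k)(m-1+k)]$: one must clear denominators, carefully collect terms, and recognize the factorization. The threshold $m = k\sqrt{2}$ is essentially sharp, because the naive bound $F(m) \leq k^{-k+2}m^k$ meets the target $2k^{-k+4}m^{k-2}$ exactly there—so below the threshold the trivial upper bound $\text{LHS} \leq F(m)$ already suffices, while above it the factor approach combined with Claim \ref{claim-embHn-small}'s sharper bound must be used in tandem.
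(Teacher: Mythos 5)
Your proof is correct, but it takes a more circuitous route than the paper's. The paper writes the second difference as $(\emb(H,m)-\emb(H,m-1)) - (\emb(H,m-1)-\emb(H,m-2))$, bounds the first bracket from above by $\tfrac{k}{m}\emb(H,m)$ via the second inequality of Lemma~\ref{lemma-optimization1}, and bounds the second bracket from below by $\tfrac{k}{m}\emb(H,m-2)$ via the first inequality of Lemma~\ref{lemma-optimization1} applied at $m-1$ together with $m-2\leq m$. This yields the second difference $\leq \tfrac{k}{m}\bigl(\emb(H,m)-\emb(H,m-2)\bigr)\leq \tfrac{2k^2}{m^2}\emb(H,m)$, and a single application of Lemma~\ref{lemma-optimization0} then gives exactly $2k^{-k+4}m^{k-2}$ with no case analysis. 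Your approach instead expresses both $\emb(H,m-1)$ and $\emb(H,m-2)$ as explicit fractions of $\emb(H,m)$; after the heavier algebraic simplification (your factorization $k(k-1)(3m+2k-4)$ is correct, I checked it), the resulting factor is asymptotically $\tfrac{3k^2}{m^2}$ rather than $\tfrac{2k^2}{m^2}$, so the naive bound from Lemma~\ref{lemma-optimization0} falls just short, forcing you to split into three ranges of $m$ and to invoke the sharper Claim~\ref{claim-embHn-small} (valid only for $m\geq\tk$) in the large-$m$ regime. The extra constant factor is exactly what necessitates the case analysis; the paper's ``mixed'' choice of one upper and one lower bound on the two first differences sidesteps this cleanly.
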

\begin{proof}
First,  note that applying the second inequality in Lemma \ref{lemma-optimization1} both for $m$ and for $m-1$, we obtain
\begin{multline}\label{eq-ineq-optimization-prep}
\emb(H,m)-\emb(H,m-2)=(\emb(H,m)-\emb(H,m-1))+(\emb(H,m-1)-\emb(H,m-2))\\
\leq \frac{k}{m}\cdot \emb(H,m)+\frac{k}{m-1}\cdot \emb(H,m-1)\leq \frac{2k}{m}\cdot \emb(H,m),
\end{multline}
where in the last step we used the comparison between the first and third term in Lemma \ref{lemma-optimization1}.

Also note that the first inequality in Lemma \ref{lemma-optimization1} applied to $m-1$ gives
\[\emb(H,m-1)-\emb(H,m-2)\geq \frac{k}{m-2}\cdot \emb(H,m-2)\geq \frac{k}{m}\cdot \emb(H,m-2).\]

Using this together with the second inequality in Lemma \ref{lemma-optimization1}, we obtain
\begin{multline*}
\emb(H,m)-2\emb(H,m-1)+\emb(H,m-2)\\
=(\emb(H,m)-\emb(H,m-1))-(\emb(H,m-1)-\emb(H,m-2))\\
\leq \frac{k}{m}\cdot \emb(H,m)-\frac{k}{m}\cdot \emb(H,m-2)\leq \frac{2k^2}{m^2}\cdot \emb(H,m)\leq 2k^{-k+4}m^{k-2},
\end{multline*}
where in the second-last step we used (\ref{eq-ineq-optimization-prep}) and in the last step Lemma \ref{lemma-optimization0}.
\end{proof}

\begin{corollary}\label{coro-optimization3} For positive integers $m'<m$ with $m-m'\geq 2$, we have
\[\emb(H,m)-\emb(H,m-1)-\emb(H,m'+1)+\emb(H,m')\leq (m-m')\cdot 2k^{-k+4}\cdot m^{k-2}.\]
\end{corollary}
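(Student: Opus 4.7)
The plan is to recognize the left-hand side as a telescoping sum of the second-difference quantities already controlled by Lemma \ref{lemma-optimization2}. Specifically, writing $F(j) = \emb(H,j)$, the expression
\[
F(m) - F(m-1) - F(m'+1) + F(m')
\]
is exactly the difference of the forward first differences $F(m) - F(m-1)$ and $F(m'+1) - F(m')$. Telescoping gives
\[
[F(m) - F(m-1)] - [F(m'+1) - F(m')] = \sum_{j=m'+2}^{m} \bigl( F(j) - 2F(j-1) + F(j-2)\bigr),
\]
which expresses the quantity as a sum of exactly $m - m' - 1$ second differences.

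Next, I would apply Lemma \ref{lemma-optimization2} to each summand: for every $j$ with $m'+2 \leq j \leq m$ (in particular $j \geq 3$, which is fine since $m \geq m'+2 \geq 3$),
\[
F(j) - 2F(j-1) + F(j-2) \leq 2k^{-k+4}\, j^{k-2} \leq 2k^{-k+4}\, m^{k-2},
\]
where the second inequality uses $j \leq m$ and $k \geq 2$. Summing over the $m - m' - 1 \leq m - m'$ values of $j$ yields the desired bound
\[
F(m) - F(m-1) - F(m'+1) + F(m') \leq (m - m') \cdot 2k^{-k+4}\, m^{k-2}.
\]

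There is no real obstacle here; the only thing to double-check is the index bookkeeping (making sure $j \geq 3$ throughout so that Lemma \ref{lemma-optimization2} applies, and that the count of summands is correctly bounded by $m-m'$). Since $m - m' \geq 2$ and $m' \geq 1$, we have $m \geq 3$ and every index $j$ in the telescoping range satisfies $j \geq m'+2 \geq 3$, so Lemma \ref{lemma-optimization2} is applicable at each step. Thus the corollary follows immediately.
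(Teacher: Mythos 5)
Your proof is correct and follows exactly the same telescoping argument as the paper: write the left-hand side as the difference of consecutive first differences, expand as a sum of $m-m'-1$ second differences, and bound each summand by Lemma \ref{lemma-optimization2} using $j\leq m$.
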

\begin{proof} For all $\l=m'+2,\dots,m$ we have by Lemma \ref{lemma-optimization2}
\[(\emb(H,\l)-\emb(H,\l-1))-(\emb(H,\l-1)-\emb(H,\l-2))\leq 2k^{-k+4}\l^{k-2}\leq 2k^{-k+4}m^{k-2}.\]
Thus,
\begin{multline*}
\emb(H,m)-\emb(H,m-1)-\emb(H,m'+1)+\emb(H,m')\\
=(\emb(H,m)-\emb(H,m-1))-(\emb(H,m'+1)-\emb(H,m'))\\
=\sum_{\l=m'+2}^{m}(\emb(H,\l)-\emb(H,\l-1))-(\emb(H,\l-1)-\emb(H,\l-2))\\
\leq \sum_{\l=m'+2}^{m}2k^{-k+4}m^{k-2}=(m-m'-1)\cdot 2k^{-k+4}m^{k-2}< (m-m')\cdot 2k^{-k+4}\cdot m^{k-2},
\end{multline*}
as desired.
\end{proof}

\subsection{Proof}

Now, we finally prove Proposition \ref{propo-optimization}. Let $n_j$ for $j\in V(\tH)$ be non-negative integers with $\sum_{j\in V(\tH)} n_j=n$ satisfying the assumptions of Proposition \ref{propo-optimization}. In particular, this means that
\[T((n_j')_{j\in V(\tH)})\leq T((n_j)_{j\in V(\tH)})\]
for any non-negative integers $n_j'$ for $j\in V(\tH)$ with $\sum_{j\in V(\tH)} n_j'=n$. Also recall that we are assuming $n\geq \tk\geq k$ (this assumption was made at the beginning of Section \ref{sect-proof-blow-up}).

To simplify notation, set
\[N_\phi=N_\phi((n_j)_{j\in V(\tH)})=\prod_{j\in \phi(V(H))}n_j\]
for all rotations and reflections $\phi$ of $\tH$, and set
\[N=\sum_{\phi}N_\phi.\]
Then by the assumptions on $(n_j)_{j\in V(\tH)}$ we have $N\geq (\tk^{1/4}-1)\cdot \E_k(n)$. Furthermore, by Lemma \ref{lemma-functions-E}(i) we have $N_\phi\leq \E_k(n)$ for every rotation or reflection $\phi$.

Let $a\in V(\tH)$ and $b\in V(\tH)$ be chosen such that $n_a$ is maximal and $n_b$ is minimal among the $n_j$ for $j\in V(\tH)$. Then, for proving Proposition \ref{propo-optimization}, it suffices to show $n_a-n_b\leq 1$. So let us assume for contradiction that $n_a-n_b\geq 2$.

Note that the assumptions on $(n_j)_{j\in V(\tH)}$ imply that $1\leq n_b\leq n_a\leq  k^{-4/5}\cdot n$.

\begin{lemma}\label{lemma-optimization4} For every subset $I\su V(\tH)$ of size $\vert I\vert=k-2$ we have
\[\prod_{i\in I}n_i\leq 2e^6k^{7/4}\cdot \frac{N}{n^2}.\]
\end{lemma}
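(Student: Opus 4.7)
The plan is to chain together three elementary facts from earlier in the paper; no clever choice of rotation/reflection or averaging argument is needed. First, since the $n_i$ are non-negative integers with $\sum_{i\in I} n_i \leq \sum_{j\in V(\tH)} n_j = n$ and $|I| = k-2$, applying Lemma \ref{lemma-functions-E}(i) to the sequence $(n_i)_{i\in I}$ with $\l = k-2$ and $m = n$ immediately yields
\[\prod_{i\in I} n_i \leq \E_{k-2}(n).\]

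Next, I will invoke Lemma \ref{lemma-functions-E}(iii) with $\l = k$, $\l' = k-2$, $m = m' = n$ and $\mu = 0$. The hypothesis $m \geq \l$ is satisfied because $n \geq \tk \geq k$, and the other hypotheses are trivial. This gives
\[\E_{k-2}(n) \leq e^{3(k-(k-2))}\left(\frac{k}{n}\right)^{k-(k-2)}\E_k(n) = e^{6}\,\frac{k^2}{n^2}\,\E_k(n).\]

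Finally, the standing assumption of Proposition \ref{propo-optimization} gives $N \geq \tk^{1/4}\,\E_k(n)$, i.e.\ $\E_k(n) \leq N/\tk^{1/4}$. Combining these three bounds and using $\tk \geq k$ (hence $\tk^{1/4} \geq k^{1/4}$), I obtain
\[\prod_{i\in I} n_i \leq e^6\,\frac{k^2}{n^2}\,\E_k(n) \leq e^6\,\frac{k^2}{n^2}\cdot\frac{N}{\tk^{1/4}} \leq e^6\,\frac{k^2}{k^{1/4}}\cdot\frac{N}{n^2} = e^6\,k^{7/4}\cdot\frac{N}{n^2},\]
which is the desired bound.

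There is essentially no obstacle in this argument: the exponent $7/4$ in the lemma is calibrated precisely so that $k^2/\tk^{1/4} \leq k^{7/4}$, and each intermediate step is a direct citation of either Lemma \ref{lemma-functions-E} or the hypotheses of Proposition \ref{propo-optimization}. The constraint $n_j \leq k^{-4/5}n$ is not needed here—it will presumably be used later in conjunction with this lemma to force $(n_j)$ to be (nearly) balanced.
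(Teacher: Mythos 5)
Your proof is correct and is essentially identical to the paper's own proof: both chain Lemma \ref{lemma-functions-E}(i), then Lemma \ref{lemma-functions-E}(iii) with $\mu=0$, then the standing hypothesis $N\geq\tk^{1/4}\E_k(n)$ together with $\tk\geq k$. Your observation that the constraint $n_j\leq k^{-4/5}n$ is not needed here is also accurate; the paper likewise does not use it in this lemma.
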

\begin{proof}
Note that $\sum_{i\in I}n_i\leq \sum_{j\in V(\tH)} n_j=n$. So from Lemma \ref{lemma-functions-E}(i) and (iii) we obtain
\[\prod_{i\in I}n_i\leq \E_{k-2}(n)\leq e^6\left(\frac{k}{n}\right)^2\E_k(n)\leq 2e^6k^{7/4}\cdot \frac{N}{n^2},\]
where in the last step we used that $N\geq (\tk^{1/4}-1)\cdot \E_k(n)\geq \frac{1}{2}k^{1/4}\cdot \E_k(n)$.
\end{proof}

\begin{lemma}\label{lemma-optimization5} Let $I,J\su V(\tH)$ be subsets with $\vert I\vert=\vert J\vert=k-1$. Then
\[\prod_{i\in I}n_i-\prod_{j\in J}n_j\leq (n_a-n_b)\cdot 2e^6k^{7/4}(\log \tk)\cdot \frac{N}{n^2}.\]
\end{lemma}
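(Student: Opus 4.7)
The plan is to compare the two products via a telescoping sum along a short sequence of single-element swaps, where each swap's contribution is controlled by Lemma \ref{lemma-optimization4}.

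First I would bound the symmetric difference of $I$ and $J$. Since $|I|=|J|=k-1$ and $V(\tH)$ has size $\tk$, the inclusion-exclusion estimate $|I\cap J|\geq 2(k-1)-\tk$ yields
\[s:=|I\setminus J|=|J\setminus I|\leq \tk-(k-1)\leq \tfrac{1}{4}\log\tk+1\leq \log\tk,\]
where the last inequality uses $\tk\geq 10^{200}$. So $I$ and $J$ differ in at most $\log\tk$ elements on each side.

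Next, enumerate $I\setminus J=\{p_1,\dots,p_s\}$ and $J\setminus I=\{q_1,\dots,q_s\}$ and define the intermediate sets $I_0=I$ and $I_t=(I\setminus\{p_1,\dots,p_t\})\cup\{q_1,\dots,q_t\}$ for $1\le t\le s$, so that $I_s=J$ and consecutive sets differ by a single swap $p_{t+1}\leftrightarrow q_{t+1}$. Writing $K_t=I_t\cap I_{t+1}$, which has size $k-2$, one telescopes
\[\prod_{i\in I}n_i-\prod_{j\in J}n_j=\sum_{t=0}^{s-1}\left(\prod_{i\in I_t}n_i-\prod_{i\in I_{t+1}}n_i\right)=\sum_{t=0}^{s-1}(n_{p_{t+1}}-n_{q_{t+1}})\prod_{i\in K_t}n_i.\]

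Finally, estimate each summand. By the extremality of $a$ and $b$, $|n_{p_{t+1}}-n_{q_{t+1}}|\leq n_a-n_b$, and by Lemma \ref{lemma-optimization4}, $\prod_{i\in K_t}n_i\leq e^6k^{7/4}\cdot N/n^2$ since $|K_t|=k-2$. Summing over $s\leq \log\tk$ swaps gives the claimed bound $(n_a-n_b)\cdot e^6k^{7/4}(\log\tk)\cdot N/n^2$. There is no real obstacle here: the only delicate point is the counting that forces $s\leq \log\tk$, which is immediate from $\tk-k\leq \frac{1}{4}\log\tk$; everything else is Lemma \ref{lemma-optimization4} applied once per term of the telescope.
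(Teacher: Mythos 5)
Your proof is correct and follows essentially the same approach as the paper: bound the symmetric difference of $I$ and $J$ by $\log\tk$ using $\tk-k\le\frac14\log\tk$, telescope the difference of products into at most $\log\tk$ single-swap terms, and control each term by $(n_a-n_b)$ times a product over a $(k-2)$-element set bounded via Lemma \ref{lemma-optimization4}. The paper phrases the telescoping directly as the algebraic identity for $n_{i_1}\dotsm n_{i_m}\prod_{I\cap J}n_i - n_{j_1}\dotsm n_{j_m}\prod_{I\cap J}n_j$, whereas you phrase it via the intermediate sets $I_t$; the resulting terms coincide.
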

\begin{proof}
Note that $\vert V(\tH)\sm I\vert=\vert V(\tH)\sm J\vert=\tk-k+1\leq \frac{1}{2}\log \tk$ and therefore
\[\vert I\cap J\vert\geq \tk-2\cdot \frac{1}{2}\log \tk=\tk-\log \tk.\]
Let $m=k-1-\vert I\cap J\vert$, then $m\leq \log \tk$ and $m=\vert I\sm J\vert=\vert J\sm I\vert$. So let $I\sm J=\lbrace i_1,\dots,i_m\rbrace$ and $J\sm I=\lbrace j_1,\dots,j_m\rbrace$. Then $i_1,\dots,i_m,j_1,\dots,j_m$ are all distinct and none of them is contained in $I\cap J$. Now,
\[\prod_{i\in I}n_i-\prod_{j\in J}n_j=n_{i_1}\dotsm n_{i_m}\cdot \prod_{i\in I\cap J}n_i-n_{j_1}\dotsm n_{j_m}\cdot \prod_{j\in I\cap J}n_j
=\sum_{\l=1}^{m}n_{j_1}\dotsm n_{j_{\l-1}}\cdot (n_{i_\l}-n_{j_\l})\cdot n_{i_{\l+1}}\dotsm n_{i_m}\cdot \prod_{i\in I\cap J}n_i\]
Note that for each $\l=1,\dots,m$ we have $n_{i_\l}-n_{j_\l}\leq n_a-n_b$ and furthermore
\[0\leq n_{j_1}\dotsm n_{j_{\l-1}}\cdot n_{i_{\l+1}}\dotsm n_{i_m}\cdot \prod_{i\in I\cap J}n_i\leq  2e^6k^{7/4}\cdot \frac{N}{n^2}\]
by Lemma \ref{lemma-optimization4} applied to the set $\lbrace j_1,\dots,j_{\l-1},i_{\l+1},\dots,n_m\rbrace\cup (I\cap J)$, noting that $m-1+\vert I\cap J\vert=k-2$. Hence
\[\prod_{i\in I}n_i-\prod_{j\in J}n_j\leq \sum_{\l=1}^{m}(n_a-n_b)\cdot 2e^6k^{7/4}\cdot \frac{N}{n^2}=m\cdot (n_a-n_b)\cdot 2e^6k^{7/4}\cdot \frac{N}{n^2}\leq (n_a-n_b)\cdot 2e^6k^{7/4}(\log \tk)\cdot \frac{N}{n^2},\]
as $m\leq \log \tk$.
\end{proof}

Now let us consider $(n_j')_{j\in V(\tH)}$ given by $n_a'=n_a-1$ and $n_b'=n_b+1$ as well as $n_j'=n_j$ for all $j\in V(\tH)\sm \lbrace a,b\rbrace$. Then all $n_j'$ are non-negative integers and we have $\sum_{j\in V(\tH)}n_j'=\sum_{j\in V(\tH)}n_j=n$. To simplify notation, set
\[N_\phi'=N_\phi((n_j')_{j\in V(\tH)})\]
for all rotations and reflections $\phi$ of $\tH$. Recalling the assumptions on $(n_j)_{j\in V(\tH)}$, we have
\[\sum_{\phi}N_{\phi}+\sum_{j\in V(\tH)}\emb(H,n_j)=T((n_j)_{j\in V(\tH)})\geq T((n_j')_{j\in V(\tH)})=\sum_{\phi}N_{\phi}'+\sum_{j\in V(\tH)}\emb(H,n_j').\]
In other words,
\begin{equation}\label{eq-ineq-opt1}
\sum_{\phi}(N_{\phi}-N_{\phi}')+\sum_{j\in V(\tH)}(\emb(H,n_j)-\emb(H,n_j'))\geq 0.
\end{equation}
Recall that for each rotation and reflection $\phi$ of $\tH$,
\[N_\phi=N_\phi((n_j)_{j\in V(\tH)})=\prod_{j\in \phi(V(H))}n_j\]
and similarly for $n_\phi'$. Hence in the case $a,b\not\in \phi(V(H))$ we have $N_\phi=N_\phi'$. Now, let $\Phi_{a,b}$ denote the set of those rotations and reflections $\phi$ of $\tH$ with $a,b\in \phi(V(H))$. Furthermore, let $\Phi_a$ denote the set of those $\phi$ with $a\in \phi(V(H))$ but $b\not\in \phi(V(H))$, and similarly let $\Phi_b$ denote the set of those $\phi$ with $b\in \phi(V(H))$ but $a\not\in \phi(V(H))$. Then
\begin{equation}\label{eq-optimization-Ns}
\sum_{\phi}(N_{\phi}-N_{\phi}')=\sum_{\phi\in \Phi_{a,b}}(N_{\phi}-N_{\phi}')+\sum_{\phi\in \Phi_{a}}(N_{\phi}-N_{\phi}')+\sum_{\phi\in \Phi_{b}}(N_{\phi}-N_{\phi}').
\end{equation}
Note that the sets $\Phi_{a,b}$, $\Phi_a$ and $\Phi_b$ are disjoint. By Lemma \ref{lemma-number-rot-refl}, the number of rotations and reflections $\phi$ with $a\in \phi(V(H))$ equals the number of rotations and reflections $\phi$ with $b\in \phi(V(H))$. Therefore we have $\vert \Phi_{a,b}\vert+\vert \Phi_{a}\vert=\vert \Phi_{a,b}\vert+\vert \Phi_{b}\vert$, and hence $\vert \Phi_{a}\vert=\vert \Phi_{b}\vert$. Furthermore, by Lemma \ref{lemma-number-rot-refl} there are at most $2(\tk-k)\leq \frac{1}{2}\log \tk$ rotations and reflections $\phi$ with $b\not\in \phi(V(H))$. Thus, $\vert \Phi_{a}\vert\leq \frac{1}{2}\log \tk$ and all in all we obtain
\[\vert \Phi_{a}\vert=\vert \Phi_{b}\vert\leq \frac{1}{2}\log \tk.\]

Recall that we set $n_j'=n_j$ for all $j\in V(\tH)\sm \lbrace a,b\rbrace$. Hence $\emb(H,n_j)-\emb(H,n_j')=0$ for $j\in V(\tH)\sm \lbrace a,b\rbrace$ and we have
\begin{multline*}
\sum_{j\in V(\tH)}(\emb(H,n_j)-\emb(H,n_j'))=\emb(H,n_a)-\emb(H,n_a')-\emb(H,n_b')+\emb(H,n_b)\\
=\emb(H,n_a)-\emb(H,n_a-1)-\emb(H,n_b+1)+\emb(H,n_b)\leq (n_a-n_b)\cdot 2k^{-k+4}\cdot n_a^{k-2}.
\end{multline*}
where in the last step we used Corollary \ref{coro-optimization3}. Now recall that $n_a\leq k^{-4/5}\cdot n\leq \frac{1}{8}n$. Hence
\begin{multline}\label{eq-ineq-opt-emb}
\sum_{j\in V(\tH)}(\emb(H,n_j)-\emb(H,n_j'))\leq (n_a-n_b)\cdot 2k^{-k+4}\cdot n_a^{k-2}\leq (n_a-n_b)\cdot 2k^4k^{-k}\cdot \left(\frac{n}{8}\right)^{k-2}\\
=(n_a-n_b)\cdot 2k^4\cdot \left(\frac{n}{8k}\right)^{k}\cdot \frac{64}{n^2}=
(n_a-n_b)\cdot 128k^4\cdot \left(\frac{1}{4}\right)^{k}\cdot \left(\frac{n}{2k}\right)^{k}\cdot \frac{1}{n^2}\\
\leq (n_a-n_b)\cdot 128k^4\cdot \left(\frac{1}{4}\right)^{k}\cdot \left\lfloor \frac{n}{k}\right\rfloor^{k}\cdot \frac{1}{n^2}\leq (n_a-n_b)\cdot \left(\frac{1}{2}\right)^{k}\cdot \frac{N}{n^2},
\end{multline}
where in the last step we used $128k^4\leq 2^k$ and $N\geq (\tk^{1/4}-1)\cdot \E_k(n)\geq \E_k(n)\geq \left\lfloor \frac{n}{k}\right\rfloor^{k}$.

Plugging (\ref{eq-optimization-Ns}) and (\ref{eq-ineq-opt-emb}) into (\ref{eq-ineq-opt1}), we obtain
\[\sum_{\phi\in \Phi_{a,b}}(N_{\phi}-N_{\phi}')+\sum_{\phi\in \Phi_{a}}(N_{\phi}-N_{\phi}')+\sum_{\phi\in \Phi_{b}}(N_{\phi}-N_{\phi}')+(n_a-n_b)\cdot 2^{-k}\cdot \frac{N}{n^2}\geq 0.\]
Hence
\begin{equation}\label{eq-ineq-opt2}
\sum_{\phi\in \Phi_{a,b}}(N_{\phi}'-N_{\phi})\leq \sum_{\phi\in \Phi_{a}}(N_{\phi}-N_{\phi}')+\sum_{\phi\in \Phi_{b}}(N_{\phi}-N_{\phi}')+(n_a-n_b)\cdot 2^{-k}\cdot \frac{N}{n^2}.
\end{equation}

Note that for each $\phi\in \Phi_a$ we have
\[N_\phi-N_\phi'=\prod_{j\in \phi(V(H))}n_j-\prod_{j\in \phi(V(H))}n_j'=n_a\cdot \prod_{j\in \phi(V(H))\sm \lbrace a\rbrace}n_j-n_a'\cdot \prod_{j\in \phi(V(H))\sm \lbrace a\rbrace}n_j'.\]
However, as $b\not\in \phi(V(H))$, we have $n_j=n_j'$ for all $j\in \phi(V(H))\sm \lbrace a\rbrace$ and therefore, recalling $n_a-n_a'=1$,
\[N_\phi-N_\phi'=(n_a-n_a')\cdot \prod_{j\in \phi(V(H))\sm \lbrace a\rbrace}n_j=\prod_{j\in \phi(V(H))\sm \lbrace a\rbrace}n_j.\]
Similarly we can show that for each $\phi\in \Phi_b$ we have
\[N_\phi-N_\phi'=-\prod_{j\in \phi(V(H))\sm \lbrace b\rbrace}n_j.\]
Thus,
\[\sum_{\phi\in \Phi_{a}}(N_{\phi}-N_{\phi}')+\sum_{\phi\in \Phi_{b}}(N_{\phi}-N_{\phi}')=\sum_{\phi\in \Phi_{a}}\left(\prod_{i\in \phi(V(H))\sm \lbrace a\rbrace}n_i\right)-\sum_{\phi'\in \Phi_{b}}\left(\prod_{j\in \phi'(V(H))\sm \lbrace b\rbrace}n_j\right).\]
As $\vert \Phi_{a}\vert=\vert \Phi_{b}\vert\leq \frac{1}{2}\log \tk$, we can write this as the sum of at most $\frac{1}{2}\log \tk$ expressions of the form
\[\prod_{i\in \phi(V(H))\sm \lbrace a\rbrace}n_i-\prod_{j\in \phi'(V(H))\sm \lbrace a\rbrace}n_j.\]
By Lemma \ref{lemma-optimization5}, each of these expressions is at most $(n_a-n_b)\cdot 2e^6k^{7/4}(\log \tk)\cdot N/n^2$. Hence
\[\sum_{\phi\in \Phi_{a}}(N_{\phi}-N_{\phi}')+\sum_{\phi\in \Phi_{b}}(N_{\phi}-N_{\phi}')\leq \frac{1}{2}\log \tk\cdot (n_a-n_b)\cdot 2e^6k^{7/4}(\log \tk)\cdot \frac{N}{n^2}\leq (n_a-n_b)\cdot e^6k^{7/4}(\log \tk)^2\cdot \frac{N}{n^2}.\]
Plugging this into (\ref{eq-ineq-opt2}), we obtain
\begin{equation}\label{eq-ineq-opt3}
\sum_{\phi\in \Phi_{a,b}}(N_{\phi}'-N_{\phi})\leq (n_a-n_b)\cdot e^6k^{7/4}(\log \tk)^2\cdot \frac{N}{n^2}+(n_a-n_b)\cdot 2^{-k}\cdot \frac{N}{n^2}.
\end{equation}

It remains to find a lower bound for the sum on the left-hand side of (\ref{eq-ineq-opt3}). For each $\phi\in \Phi_{a,b}$ we have
\[N_\phi'-N_\phi=\prod_{j\in \phi(V(H))}n_j'-\prod_{j\in \phi(V(H))}n_j=n_a'\cdot n_b'\cdot \prod_{j\in \phi(V(H))\sm \lbrace a,b\rbrace}n_j'-n_a\cdot n_b\cdot \prod_{j\in \phi(V(H))\sm \lbrace a,b\rbrace}n_j.\]
Recall that $n_j=n_j'$ for all $j\in \phi(V(H))\sm \lbrace a,b\rbrace$ as well as $n_a'=n_a-1$ and $n_b'=n_b+1$. Thus,
\[N_\phi'-N_\phi=((n_a-1)\cdot (n_b+1)-n_a\cdot n_b)\cdot \prod_{j\in \phi(V(H))\sm \lbrace a,b\rbrace}n_j=(n_a-n_b-1)\cdot \prod_{j\in \phi(V(H))\sm \lbrace a,b\rbrace}n_j.\]
From our assumption $n_a-n_b\geq 2$ we obtain $n_a-n_b-1\geq \frac{1}{2}(n_a-n_b)$ and therefore
\[N_\phi'-N_\phi\geq \frac{1}{2}(n_a-n_b)\cdot \prod_{j\in \phi(V(H))\sm \lbrace a,b\rbrace}n_j=\frac{1}{2}(n_a-n_b)\cdot \frac{N_\phi}{n_a\cdot n_b}.\]
Hence
\begin{equation}\label{eq-opt-Phiab}
\sum_{\phi\in \Phi_{a,b}}(N_{\phi}'-N_{\phi})\geq \frac{1}{2}\cdot \frac{n_a-n_b}{n_a\cdot n_b}\cdot \sum_{\phi\in \Phi_{a,b}}N_{\phi}.
\end{equation}
On the other hand,
\[N=\sum_{\phi}N_\phi=\sum_{\phi\in \Phi_{a,b}}N_{\phi}+\sum_{\phi\not\in \Phi_{a,b}}N_{\phi}.\]
By Lemma \ref{lemma-number-rot-refl} there are at most $2(\tk-k)$ rotations and reflections $\phi$ with $a\not\in \phi(V(H))$ and at most $2(\tk-k)$ rotations and reflections $\phi$ with $b\not\in \phi(V(H))$. Thus, there are at most $4(\tk-k)\leq \log \tk$ rotations and reflections $\phi$ with $\phi\not\in \Phi_{a,b}$ and for each of them we have $N_\phi\leq \E_k(n)$. Thus, by (\ref{ineq-k-logtk}),
\[\sum_{\phi\not\in \Phi_{a,b}}N_{\phi}\leq (\log \tk)\cdot \E_k(n)\leq \frac{1}{2}(\tk^{1/4}-1)\cdot \E_k(n)\leq \frac{N}{2}\]
and consequently
\[\sum_{\phi\in \Phi_{a,b}}N_{\phi}=N-\sum_{\phi\not\in \Phi_{a,b}}N_{\phi}\geq \frac{N}{2}.\]
Thus, (\ref{eq-opt-Phiab}) yields
\[\sum_{\phi\in \Phi_{a,b}}(N_{\phi}'-N_{\phi})\geq \frac{1}{2}\cdot \frac{n_a-n_b}{n_a\cdot n_b}\cdot \frac{N}{2}=\frac{1}{4}\cdot (n_a-n_b)\cdot \frac{N}{n_a\cdot n_b}.\]
As $n_b$ is minimal among the $n_j$ with $j\in V(\tH)$ and $\sum_{j\in V(\tH)} n_j=n$, we have $n_b\leq n/\tk\leq n/k$. Furthermore recall that $n_a\leq k^{-4/5}n$. Hence
\[\sum_{\phi\in \Phi_{a,b}}(N_{\phi}'-N_{\phi})\geq \frac{1}{4}\cdot (n_a-n_b)\cdot \frac{N}{k^{-4/5}n \cdot k^{-1}n}=\frac{1}{4}\cdot (n_a-n_b)\cdot k^{9/5}\cdot \frac{N}{n^2}.\]
Plugging this into (\ref{eq-ineq-opt3}) yields
\[\frac{1}{4}\cdot (n_a-n_b)\cdot k^{9/5}\cdot \frac{N}{n^2}\leq (n_a-n_b)\cdot e^6k^{7/4}(\log \tk)^2\cdot \frac{N}{n^2}+(n_a-n_b)\cdot 2^{-k}\cdot \frac{N}{n^2}.\]
Recall that $n_a-n_b\geq 2>0$ and $N\geq (\tk^{1/4}-1)\cdot \E_k(n)>0$. Thus,
\[\frac{1}{4}\cdot k^{9/5}\leq e^6k^{7/4}(\log \tk)^2+2^{-k}\leq 10^3k^{7/4}(\log \tk)^2,\]
which means
\[k^{1/20}\leq 4\cdot 10^3(\log \tk)^2,\]
but this contradicts (\ref{ineq-k-logtk}). This proves Proposition \ref{propo-optimization}.

\section{Proof of Theorem  \ref{thm-tH-typcial-whp}}\label{sect-tH-typcial-whp}

The goal of this section is to prove Theorem \ref{thm-tH-typcial-whp}. In other words, we assume that $G$ is an abelian group of size $\tk$ and $0<p<1$ is such that
\begin{equation}\label{eq-bound-pstrich}
p'=\min(p,1-p)\geq 10^3(\log \tk)^{1/2}\cdot \tk^{-1/5},
\end{equation}
and we want to prove that for $q_0=p'/50$ and $\delta_0=p'/100$, the graph $\tH=\Cay(G,\Lambda)$ satisfies the conditions (i) to (iv) in Definition \ref{defi-typical} with probability $1-o(1)$ (where $\Lambda$ is chosen randomly according to Procedure \ref{procedure-lambda}). Recall that all $o(1)$-terms in this paper tend to zero as $\tk\to\infty$, independently of $p$.

Instead of condition (iv) in Definition \ref{defi-typical}, it will be easier to work with the following variant:
\begin{itemize}
\item[(iv')]For every subset $X\su V(\tH)$ of size $\vert X\vert\geq (1-\delta_0)\tk$ and every injective map $f:X\to V(\tH)$ with $a_{\tH}(v,w)=a_{\tH}(f(v),f(w))$ for all distinct $v,w\in X$, the following holds: Among the $\tk$ rotations and $\tk$ reflections of $\tH$, there exists some $\phi$ such that $f(v)=\phi(v)$ for at least $(1-2\delta_0)\tk$ vertices $v\in X$.
\end{itemize}
Note that the difference between (iv) and (iv') is the slightly weaker conclusion in (iv'): Whereas we demand $f(v)=\phi(v)$ for all vertices $v\in X$ in condition (iv), in condition (iv') we only demand $f(v)=\phi(v)$ for at least $(1-2\delta_0)\tk$ vertices $v\in X$.

\begin{claim}As long as $2\delta_0\leq q_0$, every Cayley graph $\tH=\Cay(G,\Lambda)$ satisfying (ii) and (iv'), also satisfies (iv).
\end{claim}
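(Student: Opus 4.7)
The plan is to show that if $\tH = \Cay(G, \Lambda)$ satisfies (ii) and (iv'), then for any subset $X \su V(\tH)$ of size $\vert X\vert \geq (1-\delta_0)\tk$ and any injective adjacency-preserving map $f : X \to V(\tH)$, there is a rotation or reflection $\phi$ of $\tH$ with $f = \phi\vert_X$. First I would invoke condition (iv') to obtain some rotation or reflection $\phi$ of $\tH$ such that $f(v) = \phi(v)$ holds on a subset $Y \su X$ of size $\vert Y\vert \geq (1-2\delta_0)\tk$. The task then reduces to promoting this partial agreement to full agreement on $X$.

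For this, I would suppose for contradiction that there exists $w \in X$ with $f(w) \neq \phi(w)$; in particular $w \notin Y$. For each $v \in Y$, since $f$ preserves adjacencies we have $a_\tH(f(v), f(w)) = a_\tH(v, w)$, and since $\phi$ is an automorphism of $\tH$ we also have $a_\tH(\phi(v), \phi(w)) = a_\tH(v, w)$. Using $f(v) = \phi(v)$, this yields $a_\tH(\phi(v), f(w)) = a_\tH(\phi(v), \phi(w))$ whenever both sides are defined, i.e.\ whenever $\phi(v) \notin \lbrace f(w), \phi(w)\rbrace$. Since $\phi$ is a bijection and $w \notin Y$, we have $\phi(w) \notin \phi(Y)$, while at most one $v \in Y$ satisfies $\phi(v) = f(w)$. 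Thus the set $\phi(Y) \sm \lbrace f(w)\rbrace$ is contained in $V(\tH) \sm \lbrace f(w), \phi(w)\rbrace$, has size at least $(1-2\delta_0)\tk - 1$, and every vertex in it has the same adjacency to $f(w)$ as to $\phi(w)$.

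The final step applies condition (ii) to the distinct vertices $f(w)$ and $\phi(w)$: there must be at least $q_0 \tk$ vertices in $V(\tH) \sm \lbrace f(w), \phi(w)\rbrace$ adjacent to exactly one of them. Yet by the previous paragraph, at most $(\tk - 2) - ((1-2\delta_0)\tk - 1) = 2\delta_0 \tk - 1$ such vertices can exist. Combining these estimates gives $q_0 \tk \leq 2\delta_0 \tk - 1$, i.e.\ $2\delta_0 > q_0$, contradicting the hypothesis $2\delta_0 \leq q_0$. Hence no such $w$ exists, and $f = \phi\vert_X$, establishing (iv).

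I do not anticipate a real obstacle; the argument is a short counting comparison between the partial agreement furnished by (iv') and the vertex-pair separation furnished by (ii). The only subtle point is the bookkeeping of the two excluded vertices $f(w)$ and $\phi(w)$: the $-1$ arising from removing the (at most one) $v \in Y$ with $\phi(v) = f(w)$ is precisely what turns the non-strict hypothesis $2\delta_0 \leq q_0$ into a strict contradiction.
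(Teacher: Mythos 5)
Your proof is correct and takes essentially the same approach as the paper: apply (iv') to get a rotation or reflection $\phi$ agreeing with $f$ on a set of size at least $(1-2\delta_0)\tk$, then for a hypothetical exceptional $w$ observe that every $\phi(v)$ from the agreement set has the same adjacency to $f(w)$ as to $\phi(w)$, contradicting (ii). The only cosmetic difference is that the paper uses injectivity of $f$ to rule out $\phi(v)=f(w)$ entirely (yielding the bound $2\delta_0\tk-2$) while you just subtract one for that possible coincidence (yielding $2\delta_0\tk-1$); both comfortably contradict the lower bound $q_0\tk$ from (ii).
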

\begin{proof} Let $X\su V(\tH)$ be a subset of size $\vert X\vert\geq (1-\delta_0)\tk$ and $f:X\to V(\tH)$ an injective map with $a_{\tH}(v,w)=a_{\tH}(f(v),f(w))$ for all distinct $v,w\in X$. By (iv'), we can find a rotation or reflection $\phi$ of $\tH$ and a subset $U\su X$ of size $\vert U\vert \geq (1-2\delta_0)\tk\geq (1-q_0)\tk$ such that $f(v)=\phi(v)$ for all $v\in U$. Assume that there was some vertex $w\in X$ with $f(w)\neq \phi(w)$. Then $w\not\in U$ and therefore $\phi(w)\neq \phi(v)$ for all $v\in U$. Furthermore, for all $v\in U$ we have $f(v)=\phi(v)$ and consequently, by the injectivity of $f$ also $f(w)\neq \phi(v)$. For every $v\in U$, we now obtain
\[a_{\tH}(\phi(v),f(w))=a_{\tH}(f(v),f(w))=a_{\tH}(v,w)=a_{\tH}(\phi(v),\phi(w)).\]
So each of the vertices $\phi(v)$ for $v\in U$ is adjacent to either both of $f(w)$ and $\phi(w)$ or to neither of them. Hence there can be at most $\tk-2-\vert U\vert\leq q_0\tk-2$ vertices in $V(\tH)\sm \lbrace f(w),\phi(w)\rbrace$ that are adjacent to exactly one of the vertices $f(w)$ and $\phi(w)$. But this contradicts condition (ii). Hence we must have $f(w)= \phi(w)$ for all $w\in X$ and condition (iv) is satisfied.
\end{proof}

So it suffices to prove that for $q_0=p'/50$ and $\delta_0=p'/100$, the graph $\tH=\Cay(G,\Lambda)$ satisfies the conditions (i), (ii), (iii) and (iv') with probability $1-o(1)$. We will check this for each of these four conditions individually in the following four subsection.

Since we are only proving an asymptotic statement, we may assume that $\tk$ is very large. In particular, let us assume that $\log \tk\geq 2^{10}$ and $\tk^{2/5}\geq 9$. Then in particular,
\begin{equation}\label{eq-ineq-k25floor}
\lfloor \tk^{2/5}\rfloor\geq \tk^{2/5}-1\geq \frac{8}{9}\tk^{2/5}.
\end{equation}
Note that we also have the inequality
\begin{equation}\label{eq-p-ineq1}
p(1-p)\geq \frac{1}{2}\min(p,1-p)=\frac{1}{2}p'\geq \frac{10^3}{2}(\log \tk)^{1/2}\cdot \tk^{-1/5}
\end{equation}
and consequently
\begin{equation}\label{eq-p-ineq2}
p^2+(1-p)^2=1-2p(1-p)\leq 1-10^3(\log \tk)^{1/2}\cdot \tk^{-1/5}.
\end{equation}

\subsection{Condition (i)}

In order to check condition (i) note that it suffices to prove that with probability $1-o(1)$ we have $\deg(v)\geq q_0\tk$ for all vertices $v\in V(\tH)$. Indeed, if we replace $p$ by $1-p$, then the roles of edges and non-edges get interchanged (while the value of $q_0$ stays the same). So by applying the lower bound on the degrees for sampling probability $1-p$ in Procedure \ref{procedure-lambda}, we obtain that for the original problem with probability $1-o(1)$ we have $\deg(v)\leq (\tk-1)-q_0\tk\leq (1-q_0)\tk$ for all $v\in V(\tH)$, as desired.

Note that for each vertex $v\in V(\tH)$ we have $\deg v=\vert \Lambda\vert$. Hence the following lemma immediately implies that with probability $1-o(1)$ we have $\deg(v)\geq q_0\tk$ for all vertices $v\in V(\tH)$.

\begin{lemma}With probability $1-o(1)$ we have $\vert \Lambda\vert\geq q_0\tk$.
\end{lemma}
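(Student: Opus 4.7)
The plan is to apply a Chernoff-type concentration inequality to $|\Lambda|$. I will write $|\Lambda|$ as a sum of independent bounded random variables indexed by the pairs $\{g,-g\}$ of Procedure \ref{procedure-lambda}, compute its expectation exactly, and then show that $q_0\tk$ is much smaller than the expectation, so that a large-deviation bound gives failure probability $\exp(-\Omega(\tk^{4/5}))=o(1)$.

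More concretely, let $P_1,\dots,P_m$ be the pairs $\{g,-g\}\subseteq G\setminus\{0\}$ used by Procedure \ref{procedure-lambda}: each such pair has $|P_i|\in\{1,2\}$, where $|P_i|=1$ exactly when $P_i=\{g\}$ for some element $g$ of order $2$. For each $i$ let $Y_i$ be the independent Bernoulli$(p)$ indicator of $P_i\subseteq\Lambda$. Then
\[
|\Lambda|=\sum_{i=1}^m |P_i|\,Y_i,\qquad \E[|\Lambda|]=\sum_{i=1}^m |P_i|\,p=(\tk-1)p,
\]
since the $P_i$ partition $G\setminus\{0\}$. In particular, $|\Lambda|$ is a sum of independent random variables each taking values in $[0,2]$.

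Using $p\geq p'$ and the hypothesis (\ref{eq-bound-pstrich}), the expectation satisfies
\[
\E[|\Lambda|]=(\tk-1)p\geq \tfrac{1}{2}\tk\,p'\geq 500\,(\log\tk)^{1/2}\,\tk^{4/5},
\]
while the target lower bound is $q_0\tk=p'\tk/50\leq \tfrac{1}{25}\E[|\Lambda|]$. The standard Chernoff bound for a sum of independent $[0,2]$-valued random variables (see, e.g., \cite[Theorem A.1.13]{alon-spencer}) therefore yields
\[
\P\bigl[|\Lambda|<q_0\tk\bigr]\leq \P\bigl[|\Lambda|<\tfrac{1}{25}\E[|\Lambda|]\bigr]\leq \exp\!\left(-c\,\E[|\Lambda|]\right)\leq \exp\!\left(-c'\tk^{4/5}\right)
\]
for absolute constants $c,c'>0$, which is $o(1)$ as $\tk\to\infty$.

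There is no real obstacle here: the argument is just concentration of a sum of independent bounded variables around a mean that dwarfs the threshold. The only minor bookkeeping point is that the individual summands may equal $2$ (coming from pairs $\{g,-g\}$ with $g\neq -g$), which is why we phrase the Chernoff bound for $[0,2]$-valued variables rather than Bernoullis; this loses only a constant factor in the exponent and is absorbed into $c,c'$.
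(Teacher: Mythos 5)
Your proof is essentially the same as the paper's: both decompose $|\Lambda|$ over the independent pair-choices of Procedure \ref{procedure-lambda}, observe that the expectation is much larger than $q_0\tk$, and finish with a Chernoff-type lower-tail bound. The one small bookkeeping difference is that the paper first drops to the genuine binomial $\sum_i Z_i \le |\Lambda|$ (discarding the weight $2$ on the pairs $\{g,-g\}$ with $g\neq -g$) so that \cite[Theorem A.1.13]{alon-spencer} applies verbatim, whereas you apply a multiplicative Chernoff bound directly to the weighted sum $\sum_i |P_i| Y_i$; both are correct, but note that A.1.13 as stated is for centered Bernoulli sums rather than general $[0,2]$-valued summands, so your parenthetical citation is slightly imprecise even though the standard multiplicative Chernoff bound for bounded variables does give $\exp(-c\operatorname{E}[|\Lambda|])$ as you claim.
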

\begin{proof}Recall that in Procedure \ref{procedure-lambda}, we consider all subsets $\lbrace g,-g\rbrace\su G\sm \lbrace 0\rbrace$ independently. Let us choose a representative from each of these subsets. So let $g_1,\dots, g_m\in G\sm \lbrace 0\rbrace$ be such that all the sets $\lbrace g_i,-g_i\rbrace$ for $i=1,\dots,m$ are disjoint and
\[G\sm \lbrace 0\rbrace=\bigcup_{i=1}^{m} \lbrace g_i,-g_i\rbrace.\]
In particular $m\geq (\tk-1)/2\geq \tk/4$. We can assume without loss of generality that for some $0\leq \l\leq m$ we have $g_i\neq -g_i$ whenever $1\leq i\leq \l$ and $g_i= -g_i$ whenever $\l+1\leq i\leq m$. For $i=1,\dots ,m$, let $Z_i$ be the indicator random variable of the event $\lbrace g_i,-g_i\rbrace\su \Lambda$. These events are mutually independent and each of them has probability $p$. Remember that we either have  $\lbrace g_i,-g_i\rbrace\su \Lambda$ or  $\lbrace g_i,-g_i\rbrace\cap \Lambda=\emptyset$. Hence
\[\vert \Lambda\vert=\sum_{i=1}^{\l}2Z_i+\sum_{i=\l+1}^{m}Z_i\geq \sum_{i=1}^{m}Z_i\]
Thus, as
\[\frac{p}{10}\cdot m\geq \frac{p}{50}\cdot \tk\geq \frac{p'}{50}\cdot \tk=q_0\tk,\]
it suffices to prove that with probability $1-o(1)$ we have $Z_1+\dots+Z_m\geq pm/10$.

The Chernoff bound for lower tails of binomial random variables (see for example \cite[Theorem A.1.13]{alon-spencer}) gives
\[\P\left[Z_1+\dots+Z_m<\frac{pm}{10}\right]= \P\left[Z_1+\dots+Z_m<pm-\frac{9pm}{10}\right]< e^{-(9pm/10)^2/(2pm)}\leq e^{-pm/4}.\]
Note that
\[\frac{pm}{4}\geq \frac{1}{4}\cdot 10^3(\log \tk)^{1/2}\cdot \tk^{-2/5}\cdot \frac{\tk}{4}\geq \tk^{3/5}.\]
Thus,
\[\P\left[Z_1+\dots+Z_m<\frac{pm}{10}\right]\leq \exp(-pm/4)\leq \exp(-\tk^{3/5})=o(1),\]
as desired.
\end{proof}

\subsection{Condition (ii)}

We need to prove that with probability $1-o(1)$ we have that for any two distinct vertices $v,w\in V(\tH)$, there are at least $q_0\tk$ vertices in $V(\tH)\sm\lbrace v,w\rbrace$ that are adjacent to exactly one of the vertices $v$ and $w$. This means that for any two distinct elements $g,h\in G$ there are at least $q_0\tk$  elements $x\in G\sm \lbrace g,h\rbrace$ with $\vert \lbrace g-x, h-x\rbrace\cap \Lambda \vert =1$. We obtain the desired statement from the following lemma by summing over all choices for $g,h\in G$.

\begin{lemma}For any fixed distinct elements $g,h\in G$, with probability $1-o(\tk^{-2})$ there are at least $q_0\tk$  elements $x\in G\sm \lbrace g,h\rbrace$ with $\vert \lbrace g-x, h-x\rbrace\cap \Lambda \vert =1$.
\end{lemma}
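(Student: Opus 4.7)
The plan is to define, for each $x \in G \sm \{g, h\}$, the indicator $Y_x$ of the event $|\{g-x, h-x\} \cap \Lambda| = 1$, and to show $\sum_x Y_x \geq q_0 \tk$ with probability $1 - o(\tk^{-2})$ via a first-moment estimate combined with a bounded-differences concentration inequality. Writing $a = g - x$ and $b = h - x$, both are nonzero and $a \neq b$ because $g \neq h$ and $x \notin \{g, h\}$. If $a \neq -b$, then $\{a, -a\}$ and $\{b, -b\}$ are disjoint pairs, so by Procedure \ref{procedure-lambda} the events $a \in \Lambda$ and $b \in \Lambda$ are independent, each with probability $p$, giving $\P[Y_x = 1] = 2p(1-p) \geq p'$. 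If $a = -b$ (equivalently, $2x = g+h$), then $a \in \Lambda$ if and only if $b \in \Lambda$, so $Y_x = 0$ deterministically.

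Next I would bound the number of exceptional $x$'s (those with $2x = g+h$). Their set is either empty or a coset of the $2$-torsion subgroup $G[2]$, so has size $0$ or $|G[2]|$. From $|G[2]| \cdot |2G| = \tk$, either $|G[2]| \leq \tk/2$, or $|G[2]| = \tk$ and $G$ is $2$-elementary. In the latter case $2G = \{0\}$, so a solution to $2x = g+h$ would force $g + h = 0$, i.e., $g = -h = h$, contradicting $g \neq h$. Hence the number of exceptional $x$ is at most $\tk/2$ in all cases, and therefore $\E[\sum_x Y_x] \geq p'(\tk/2 - 2)$.

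For concentration, enumerate the pairs $\{g_i, -g_i\} \su G \sm \{0\}$ for $i = 1, \dots, m$ (so $m \leq \tk$), and let $Z_i \in \{0, 1\}$ be the independent Bernoulli$(p)$ indicator that $\{g_i, -g_i\} \su \Lambda$; then $\Lambda$, and hence $\sum_x Y_x$, is a function of $(Z_1, \dots, Z_m)$. Flipping $Z_i$ toggles membership in $\Lambda$ of at most two group elements, and so alters $Y_x$ only for $x \in \{g \pm g_i, h \pm g_i\}$, changing $\sum_x Y_x$ by at most $4$. McDiarmid's bounded differences inequality then yields
\[
\P\!\left[\left|\sum_x Y_x - \E\!\left[\sum_x Y_x\right]\right| \geq \lambda\right] \leq 2\exp\!\left(-\frac{\lambda^2}{8\tk}\right).
\]
Choosing $\lambda = p'\tk/4$, the good event forces $\sum_x Y_x \geq p'(\tk/2 - 2) - p'\tk/4 \geq p'\tk/5 \geq q_0 \tk$ (for the large $\tk$ already assumed in this subsection), while the failure probability is at most $2\exp(-p'^2 \tk / 128)$. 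The hypothesis $p' \geq 10^3(\log \tk)^{1/2}\tk^{-1/5}$ gives $p'^2 \tk \geq 10^6 (\log \tk)\tk^{3/5}$, so the failure probability is $o(\tk^{-2})$ as required.

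The only step requiring care is the treatment of the degenerate case $a = -b$: a naive bound $|G[2]| \leq \tk$ would be useless when $G$ has large $2$-torsion, but the dichotomy above shows that in the extreme case where $|G[2]|$ exceeds $\tk/2$, the equation $2x = g + h$ has no solutions at all. All remaining steps are routine applications of independence and standard concentration.
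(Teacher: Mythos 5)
Your proof is correct, and it takes a genuinely different route to concentration than the paper. The paper handles the dependency among the events $\{|\{g-x,h-x\}\cap\Lambda|=1\}$ by greedily extracting a subfamily $Y\subseteq X$ of size at least $|X|/7$ on which the relevant differences $\pm(g-x),\pm(h-x)$ are all pairwise distinct, so that the indicators restricted to $Y$ are mutually independent, and then applies the Chernoff lower-tail bound. You instead keep the full (correlated) sum and apply McDiarmid's bounded-differences inequality in the underlying independent Bernoulli variables $Z_1,\dots,Z_m$: toggling a single $Z_i$ flips membership of $\{g_i,-g_i\}$ in $\Lambda$, which affects $Y_x$ only for $x\in\{g\pm g_i,\,h\pm g_i\}$, giving a Lipschitz constant of $4$; with $m\leq\tk$ variables this yields failure probability $2\exp(-\lambda^2/8\tk)$, which with $\lambda=p'\tk/4$ is $2\exp(-p'^2\tk/128)=o(\tk^{-2})$ under the hypothesis on $p'$. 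Your treatment of the degenerate $2x=g+h$ case via the index identity $|G[2]|\cdot|2G|=\tk$ is a compact repackaging of the paper's Claim~\ref{claim-solutions-xxgh} and gives the same bound of $\tk/2$. The trade-off is that your route avoids the ad hoc greedy subfamily extraction at the cost of invoking a slightly heavier concentration tool; the paper's route keeps the toolbox to Chernoff at the cost of discarding a constant fraction of the summands. Both methods comfortably clear the $o(\tk^{-2})$ threshold.
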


\begin{proof}First, let us prove the following claim.

\begin{claim}\label{claim-solutions-xxgh}There are at most $\tk/2$ elements $x\in G$ with $x+x=g+h$.\end{claim}
\begin{proof}Let $X_0\su G$ be the subgroup formed by all elements $x\in G$ with $x+x=0$. Since $\vert X_0\vert$ divides $\vert G\vert=\tk$, we either have $X_0=G$ or $\vert X_0\vert\leq \tk/2$.

Let $X_1\su G$ consist of all those $x\in G$ with $x+x=g+h$. Then $X_1$ is either empty or a coset of $X_0$, so in particular $\vert X_1\vert\leq \vert X_0\vert$. If $\vert X_0\vert\leq \tk/2$ we immediately obtain $\vert X_1\vert\leq \tk/2$.

On the other hand, if $X_0=G$, then we have $x+x=0$ for all $x\in G$. Since $g\neq h$, this in particular implies $g+h\neq 0$. But then there cannot be any solutions for $x+x=g+h$, so $X_1$ must be empty. Thus, we have $\vert X_1\vert\leq \tk/2$ in any case.\end{proof}

Let $X\su G\sm \lbrace g,h\rbrace$ consist of all those $x\in G\sm \lbrace g,h\rbrace$ with $x+x\neq g+h$. By Claim \ref{claim-solutions-xxgh} we have
\[\vert X\vert\geq \tk-2-\frac{1}{2}\tk\geq \frac{3}{7}\tk.\]
Note that for each $x\in X$ we have $h-x\neq x-g$ and we clearly also have $h-x\neq g-x$. So for each $x\in X$ the two events $g-x\in \Lambda$ and $h-x\in \Lambda$ are independent and each of them happens with probability $p$. Thus, for each $x\in X$ the probability of $\vert \lbrace g-x, h-x\rbrace\cap \Lambda \vert =1$ is $2p(1-p)$.

Note that for each $x\in X$ there are at most three different $x'\in X\sm \lbrace x\rbrace$ with
\[g-x\in \lbrace g-x', x'-g, h-x', x'-h\rbrace.\]
Similarly, there are at most three different $x'\in X\sm \lbrace x\rbrace$ with
\[h-x\in \lbrace g-x', x'-g, h-x', x'-h\rbrace.\]
Hence there are at most six different $x'\in X\sm \lbrace x\rbrace$ with
\[\lbrace g-x, x-g, h-x, x-h\rbrace\cap \lbrace g-x', x'-g, h-x', x'-h\rbrace\neq \emptyset.\]
So by a simple greedy algorithm we can take a subset $Y\su X$ of size $\vert Y\vert\geq \vert X\vert/7\geq 3\tk/50$, such that for any distinct $x,x'\in Y$ we have 
\[\lbrace g-x, x-g, h-x, x-h\rbrace\cap \lbrace g-x', x'-g, h-x', x'-h\rbrace= \emptyset.\]
Then the events $\vert \lbrace g-x, h-x\rbrace\cap \Lambda \vert =1$ are independent for all $x\in Y$. Recall that each of these events happens with probability $2p(1-p)$. By the Chernoff bound for lower tails of binomial random variables (see for example \cite[Theorem A.1.13]{alon-spencer}) the probability that fewer than  $p(1-p)\vert Y\vert$ of the events $\vert \lbrace g-x, h-x\rbrace\cap \Lambda \vert =1$ for $x\in Y$ occur is at most
\begin{multline*}
\exp\left(-\frac{(p(1-p)\vert Y\vert)^2}{4p(1-p)\vert Y\vert}\right)= \exp\left(-\frac{1}{4}p(1-p)\vert Y\vert\right)\leq \exp\left(-\frac{1}{4}\cdot \frac{10^3}{2}(\log \tk)^{1/2}\tk^{-1/5}\cdot \frac{3\tk}{50}\right)\\
\leq \exp(-\tk^{4/5})=o(\tk^{-2}),
\end{multline*}
where we used (\ref{eq-p-ineq1}). Hence with probability $1-o(\tk^{-2})$ we have $\vert \lbrace g-x, h-x\rbrace\cap \Lambda \vert =1$ for at least $p(1-p)\vert Y\vert$ elements $x\in Y\su X\su G\sm \lbrace g,h\rbrace$. As
\[p(1-p)\vert Y\vert\geq \frac{p'}{2}\cdot \frac{3\tk}{50}\geq\frac{p'}{50}\cdot \tk=q_0\tk,\]
this finishes the proof of the lemma.
\end{proof}

\subsection{Condition (iii)}

In order to check conditions (iii) and (iv'), the following notation will be useful: For any $g\in G\sm\lbrace 0\rbrace$, set $\kap(g)=\lbrace g,-g\rbrace\su G\sm\lbrace 0\rbrace$. The relevance of this notion is that for $g,g'\in G\sm\lbrace 0\rbrace$ with $\kap(g)\neq \kap(g')$ the events $g\in \Lambda$ and $g'\in \Lambda$ are independent (and similarly for a collection $\lbrace g_i\rbrace_{i=1}^m\su G\sm\lbrace 0\rbrace$ such that $\kap(g_i)$ for $i=1,\dots, m$ are all distinct, the events $g_i\in \Lambda$ are mutually independent).

In order to check condition (iii), let us first make the following definition.

\begin{definition}\label{defi-pleasant}Let us call a pair $(X', Y')$ of disjoint subsets $X', Y'\su G$ with $\vert X'\vert=\vert Y'\vert=\lfloor \tk^{2/5}\rfloor$ \emph{pleasant} if there exists a collection of pairs $(x_1,y_1),\dots,(x_m,y_m)\in X'\times Y'$ of size $m\geq \frac{1}{8}\tk^{4/5}$ such that $\kap(x_i-y_i)$ for $i=1,\dots,m$ are all distinct.
\end{definition}

We will first show that it is very unlikely for the graph $\tH$ to be complete or empty between $X'$ and $Y'$ for some pleasant pair $(X', Y')$ . Afterwards, we will conclude that condition (iii) holds with probability $1-o(1)$.

\begin{lemma}\label{lemma-pleasant-smallprob}Let $(X',Y')$ be a pleasant pair of disjoint subsets $X', Y'\su G$. Then the probability that the graph $\tH=\Cay(G,\Lambda)$ is either complete or empty between $X'$ and $Y'$ is at most $\exp(-\tk^{3/5}).$
\end{lemma}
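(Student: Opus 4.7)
The plan is to exploit the independence structure built into Procedure \ref{procedure-lambda}: the events $\{g\in\Lambda\}$ for $g\in G\sm\lbrace 0\rbrace$ are independent precisely across distinct classes $\kap(g)=\lbrace g,-g\rbrace$. First I would fix a collection of pairs $(x_1,y_1),\dots,(x_m,y_m)\in X'\times Y'$ with $m\geq \frac{1}{8}\tk^{4/5}$ and with $\kap(x_i-y_i)$ all distinct, as guaranteed by $(X',Y')$ being pleasant (Definition \ref{defi-pleasant}). Since the edge in $\tH$ between $x_i$ and $y_i$ is present iff $x_i-y_i\in\Lambda$, the distinctness of the classes $\kap(x_i-y_i)$ makes the $m$ events $\{x_i-y_i\in\Lambda\}$ mutually independent under Procedure \ref{procedure-lambda}, each with probability $p$.

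Next I would split the ``complete or empty between $X'$ and $Y'$'' event. If $\tH$ is complete between $X'$ and $Y'$, then in particular $x_i-y_i\in\Lambda$ for every $i=1,\dots,m$, an event of probability $p^m$ by independence. If $\tH$ is empty between $X'$ and $Y'$, then $x_i-y_i\notin\Lambda$ for every $i$, an event of probability $(1-p)^m$. A union bound therefore gives
\[
\P[\tH \text{ is complete or empty between } X'\text{ and }Y']\leq p^m+(1-p)^m\leq 2(1-p')^m\leq 2\exp(-p' m),
\]
using $\max(p,1-p)=1-p'$ and $1-p'\leq e^{-p'}$.

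Finally I would plug in the assumed lower bounds. With $p'\geq 10^3(\log\tk)^{1/2}\tk^{-1/5}$ and $m\geq \frac{1}{8}\tk^{4/5}$, we obtain
\[
p'm\geq \frac{10^3}{8}(\log\tk)^{1/2}\tk^{3/5}\geq 125\cdot(\log\tk)^{1/2}\tk^{3/5}.
\]
Since $\log\tk\geq 2^{10}$ by our asymptotic assumption at the start of Section~\ref{sect-tH-typcial-whp}, we have $125(\log\tk)^{1/2}\geq 125\cdot 32\geq 2$, so $p'm\geq \tk^{3/5}+\log 2$ comfortably, and hence $2\exp(-p'm)\leq \exp(-\tk^{3/5})$, which is the claimed bound.

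There is no real obstacle here: the entire content of the lemma is the independence provided by the distinctness of the classes $\kap(x_i-y_i)$; the rest is a Chernoff/union-bound-style computation. The genuinely substantive work — namely, showing that \emph{typical} pairs $(X',Y')$ of the right size are pleasant, so that this per-pair estimate can be combined with a union bound over all pairs to yield condition (iii) — will happen in the subsequent lemma, not here.
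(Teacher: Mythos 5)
Your proof is correct and follows essentially the same approach as the paper: fix the $m\geq\frac{1}{8}\tk^{4/5}$ pairs guaranteed by pleasantness, use the distinctness of the $\kap(x_i-y_i)$ to get $m$ independent indicator events, bound the complete and empty events separately by $p^m$ and $(1-p)^m$, and then apply $\max(p,1-p)=1-p'\leq e^{-p'}$ together with the lower bound on $p'$. The only cosmetic difference is that the paper bounds each of the two probabilities separately and then adds, whereas you combine them as $p^m+(1-p)^m\leq 2(1-p')^m$ up front; the final numerics are identical.
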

\begin{proof}We will first show that the probability that $\tH=\Cay(G,\Lambda)$ is complete between $X'$ and $Y'$ is at most
\[\exp\left(-\frac{10^3}{8}(\log \tk)^{1/2}\tk^{3/5}\right).\]
Let $(x_1,y_1),\dots,(x_m,y_m)\in X'\times Y'$ be a collection of pairs as in Definition \ref{defi-pleasant}. Then $m\geq \frac{1}{8}\tk^{4/5}$ and $\kap(x_i-y_i)$ for $i=1,\dots,m$ are all distinct. Note that in order for $\tH$ to be complete between $X'$ and $Y'$, we in particular need to have $x_i-y_i\in \Lambda$ for all $i=1,\dots,m$. For each $i=1,\dots,m$ the event $x_i-y_i\in \Lambda$ happens with probability $p$ (note that $x_i-y_i\neq 0$ as $X'$ and $Y'$ are disjoint). Furthermore, as the $\kap(x_i-y_i)$ are all distinct, these events are mutually independent. Hence the probability that $x_i-y_i\in \Lambda$ happens for all $i=1,\dots,m$ equals $p^m$. Thus, the probability that $\tH$ is complete between $X'$ and $Y'$ is at most
\[p^{m}\leq p^{\tk^{4/5}/8}\leq (1-p')^{\tk^{4/5}/8}\leq \exp\left(-\frac{1}{8}\tk^{4/5}\cdot p'\right)\leq \exp\left(-\frac{10^3}{8}(\log \tk)^{1/2}\tk^{3/5}\right).\]
Here we used (\ref{eq-bound-pstrich}) and the fact that $1-p\geq p'$ implies $p\leq 1-p'$ .

Analogously we can prove that the probability that $\tH=\Cay(G,\Lambda)$ is empty between $X'$ and $Y'$ is also at most
\[\exp\left(-\frac{10^3}{8}(\log \tk)^{1/2}\tk^{3/5}\right).\]

Hence the probability that $\tH$ is either complete or empty between $X'$ and $Y'$ is at most
\[2\exp\left(-\frac{10^3}{8}(\log \tk)^{1/2}\tk^{3/5}\right)\leq \exp\left(-\tk^{3/5}\right),\]
as desired.\end{proof}

\begin{corollary}\label{coro-prop3}With probability $o(1)$ the graph $\tH=\Cay(G,\Lambda)$ has the property that there exists a pleasant pair $(X', Y')$ of disjoint subsets $X', Y'\su G$ such that $\tH$ is either complete or empty between $X'$ and $Y'$.
\end{corollary}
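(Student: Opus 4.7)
The plan is a straightforward union bound. Lemma \ref{lemma-pleasant-smallprob} gives us an exponentially small upper bound, $\exp(-\tk^{3/5})$, for the probability that $\tH$ is complete or empty between $X'$ and $Y'$ for any fixed pleasant pair. So the strategy is to bound the total number of pleasant pairs crudely by the number of all pairs $(X',Y')$ of disjoint subsets of $G$ of size $\lfloor \tk^{2/5}\rfloor$, and then show that the resulting expected number of ``bad'' pleasant pairs is $o(1)$.

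First I would estimate the number of pleasant pairs. Since a pleasant pair is in particular a pair of disjoint subsets of $G$, each of size $\lfloor \tk^{2/5}\rfloor$, the total number of such pairs is at most
\[
\binom{\tk}{\lfloor \tk^{2/5}\rfloor}^{2} \;\le\; \tk^{2\lfloor \tk^{2/5}\rfloor}\;\le\; \exp\!\bigl(2\tk^{2/5}\log \tk\bigr).
\]
Then I would apply the union bound: the probability that there exists a pleasant pair $(X',Y')$ such that $\tH$ is complete or empty between $X'$ and $Y'$ is at most
\[
\exp\!\bigl(2\tk^{2/5}\log \tk\bigr)\cdot \exp\!\bigl(-\tk^{3/5}\bigr)\;=\;\exp\!\bigl(2\tk^{2/5}\log \tk-\tk^{3/5}\bigr).
\]

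The final step is to note that $\tk^{3/5}$ dominates $2\tk^{2/5}\log \tk$ as $\tk\to\infty$, since $\tk^{3/5}/(2\tk^{2/5}\log \tk) = \tk^{1/5}/(2\log \tk)\to \infty$. Therefore the expression above tends to $0$ as $\tk\to\infty$, which gives the desired $o(1)$ bound. There is no real obstacle here; everything was already set up in Definition \ref{defi-pleasant} and Lemma \ref{lemma-pleasant-smallprob} so that the exponent $-\tk^{3/5}$ comfortably beats the crude counting exponent $2\tk^{2/5}\log \tk$.
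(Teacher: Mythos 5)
Your proof is correct and follows exactly the paper's argument: bound the number of pleasant pairs crudely by $\binom{\tk}{\lfloor\tk^{2/5}\rfloor}^2\le\exp(2(\log\tk)\tk^{2/5})$, multiply by the per-pair bound $\exp(-\tk^{3/5})$ from Lemma \ref{lemma-pleasant-smallprob} via a union bound, and observe that the exponent $-\tk^{3/5}$ dominates. No differences to note.
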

\begin{proof}Recall that for each pleasant pair $(X', Y')$ we have $\vert X'\vert=\vert Y'\vert=\lfloor \tk^{2/5}\rfloor$. Thus, the number of pleasant pairs $(X', Y')$ is at most
\[\binom{\tk}{\lfloor \tk^{2/5}\rfloor}\cdot \binom{\tk}{\lfloor \tk^{2/5}\rfloor}\leq \tk^{\lfloor \tk^{2/5}\rfloor}\cdot \tk^{\lfloor \tk^{2/5}\rfloor}\leq \tk^{2\tk^{2/5}}=\exp(2(\log \tk)\tk^{2/5}).\]
Using Lemma \ref{lemma-pleasant-smallprob}, by a union bound we obtain that the probability that the graph $\tH=\Cay(G,\Lambda)$ is either complete or empty between $X'$ and $Y'$ for some pleasant pair $(X', Y')$ is at most
\[\exp(2(\log \tk)\tk^{2/5})\cdot \exp(-\tk^{3/5})=o(1).\]
This proves the corollary.\end{proof}

\begin{lemma}\label{lemma2-prop3}For any disjoint subsets $X,Y\su G$ with sizes $\vert X\vert\geq 2\tk^{4/5}$ and $\vert Y\vert\geq 2\tk^{4/5}$, we can find subsets $X'\su X$ and $Y'\su Y$ with $\vert X'\vert=\vert Y'\vert=\lfloor \tk^{2/5}\rfloor$ such that $(X', Y')$ is a pleasant pair.
\end{lemma}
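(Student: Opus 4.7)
My plan is to prove the lemma by a probabilistic argument. I will choose $X'\su X$ and $Y'\su Y$ uniformly at random among subsets of size $M:=\lfloor \tk^{2/5}\rfloor$ each; these are automatically disjoint since $X$ and $Y$ are. It will suffice to show that the expected size of $D := \{\kap(x-y) : (x,y)\in X'\times Y'\}$ is at least $\tk^{4/5}/8$, for then some specific outcome $(X',Y')$ achieves this, and taking one representative pair per element of $D$ yields the collection of $m \geq \tk^{4/5}/8$ pairs with distinct $\kap$-values required by Definition~\ref{defi-pleasant}.

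The key tool will be a Cauchy--Schwarz estimate. Writing $n(\kap) := \lvert\{(x,y)\in X'\times Y' : \kap(x-y)=\kap\}\rvert$, so that $\sum_\kap n(\kap) = M^2$, Cauchy--Schwarz yields
\[
\lvert D\rvert \;\geq\; \frac{\bigl(\sum_\kap n(\kap)\bigr)^2}{\sum_\kap n(\kap)^2} \;=\; \frac{M^4}{M^2 + B_4},
\]
where $B_4 := \sum_\kap n(\kap)(n(\kap)-1)$ counts ordered pairs of distinct elements of $X'\times Y'$ sharing a $\kap$-value. Since $b\mapsto M^4/(M^2+b)$ is convex, Jensen's inequality then gives $\mathbb{E}[\lvert D\rvert]\geq M^4/(M^2 + \mathbb{E}[B_4])$, so everything reduces to bounding $\mathbb{E}[B_4]$.

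Bounding $\mathbb{E}[B_4]$ is the heart of the plan. Splitting bad ordered pairs $((x,y),(x',y'))$ in $(X\times Y)^2$ according to the two ways $\kap(x-y)=\kap(x'-y')$ can occur, I will treat Case A ($x-x'=y-y'$) and Case B ($x+x'=y+y'$). The number of Case A tuples equals $\sum_{d\neq 0}\lvert X\cap(X+d)\rvert\cdot\lvert Y\cap(Y+d)\rvert$, which I will bound by $\lvert X\rvert\lvert Y\rvert\min(\lvert X\rvert,\lvert Y\rvert)$ via the elementary estimate $\max_d \lvert X\cap(X+d)\rvert\leq \lvert X\rvert$ combined with $\sum_d \lvert Y\cap(Y+d)\rvert = \lvert Y\rvert^2$ (and its symmetric variant). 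Each such 4-tuple with $x\neq x'$ and $y\neq y'$ survives in $X'\times Y'$ with probability at most $4M^4/(\lvert X\rvert\lvert Y\rvert)^2$, so Case A contributes at most $4M^4\min(\lvert X\rvert,\lvert Y\rvert)/(\lvert X\rvert\lvert Y\rvert) = 4M^4/\max(\lvert X\rvert,\lvert Y\rvert)\leq 2M^4/\tk^{4/5}$ to the expectation; Case B is completely symmetric, and the degenerate sub-cases where $x=x'$ or $y=y'$ contribute only $O(M^3/\tk^{4/5}) = o(M^2)$. Combining, $\mathbb{E}[B_4]\leq 4M^4/\tk^{4/5} + o(M^2)$.

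The final obstacle, and the delicate part, will be verifying the constants. Writing $\alpha := M^2/\tk^{4/5}$, \eqref{eq-ineq-k25floor} gives $\alpha\geq 64/81$, and the bound above reads $\mathbb{E}[B_4]\leq 4\alpha M^2 + o(M^2)$. The target inequality $M^4/(M^2+\mathbb{E}[B_4])\geq \tk^{4/5}/8 = M^2/(8\alpha)$ is equivalent to $\mathbb{E}[B_4]\leq (8\alpha-1)M^2$, and since $\alpha\geq 64/81$ yields $8\alpha-1\geq 431/81 > 5.3$ while $4\alpha\leq 4$, this holds with a margin that comfortably absorbs the $o(M^2)$ error term for all sufficiently large $\tk$. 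The reason the argument is not sloppier is that the ratio $\alpha = M^2/\tk^{4/5}$ barely exceeds the threshold $1/4$ at which the Jensen bound becomes useful, meaning the parameters in Definition~\ref{defi-pleasant} are chosen essentially as tight as the second-moment method allows.
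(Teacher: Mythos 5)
Your proof is correct. The underlying strategy is the same as the paper's: choose $X'$ and $Y'$ uniformly at random of size $M=\lfloor\tk^{2/5}\rfloor$, and bound the expected number of ``collision'' quadruples to argue that some outcome gives many pairs with distinct $\kap$-values. You package the second moment differently: the paper computes $\mathbb{E}[Z]$ for the quadruple count with $x-y=x'-y'$ only (your Case A), fixes a good outcome, then runs a deterministic ``delete one pair per bad quadruple'' step followed by a greedy selection which loses a factor of $2$ to absorb the $\kap$-ambiguity (your Case B); you instead fold both Case A and Case B into the collision count $B_4$ and invoke Cauchy--Schwarz together with Jensen's inequality to lower-bound $\mathbb{E}[|D|]$ directly. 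The two routes are arithmetically equivalent in power here; yours is slightly slicker and avoids the deletion/greedy bookkeeping, while the paper's is more elementary (no convexity needed) and perhaps easier to audit constant by constant. A few details in your write-up are slightly off but harmless: in Case A, $x=x'$ already forces $y=y'$, so the ``degenerate sub-case'' arises only in Case B, and the factor $4$ in your probability bound $4M^4/(|X||Y|)^2$ is unnecessary (since $M\leq|X|,|Y|$ one has $\frac{M(M-1)}{|X|(|X|-1)}\leq\frac{M^2}{|X|^2}$, so the true bound is $M^4/(|X||Y|)^2$); both slacks are absorbed by the comfortable margin $8\alpha-1>5$ versus $4\alpha\leq4$ that you correctly compute from $\alpha\geq 64/81$.
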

\begin{proof}Let us choose $X'$ uniformly at random among all subsets $X'\su X$ of size $\vert X'\vert=\lfloor \tk^{2/5}\rfloor$, and let us also choose $Y'$ uniformly at random among all subsets $Y'\su Y$ of size $\vert Y'\vert=\lfloor \tk^{2/5}\rfloor$ (and independently of $X'$).

Let $Z$ be the number of quadruples $(x,y,x',y')\in X'\times Y'\times X'\times Y'$ with $x-y=x'-y'$ and $x\neq x'$. Note that for each such quadruple the four entries $x,y,x',y'\in G$ must be distinct (as $X'$ and $Y'$ are disjoint). The total number of quadruples $(x,y,x',y')\in X\times Y\times X\times Y$ with distinct entries and $x-y=x'-y'$ is at most
\[\vert X\vert  \cdot \vert Y\vert\cdot \vert X\vert=\vert X\vert^2\cdot \vert Y\vert.\]
For each quadruple $(x,y,x',y')\in X\times Y\times X\times Y$ with distinct entries, the probability for $x,x'\in X'$ and $y,y'\in Y'$ is
\[\frac{\lfloor \tk^{2/5}\rfloor}{\vert X\vert}\cdot \frac{\lfloor \tk^{2/5}\rfloor-1}{\vert X\vert-1}\cdot \frac{\lfloor \tk^{2/5}\rfloor}{\vert Y\vert}\cdot \frac{\lfloor \tk^{2/5}\rfloor-1}{\vert Y\vert-1}\leq \frac{\lfloor \tk^{2/5}\rfloor}{\vert X\vert}\cdot \frac{\lfloor \tk^{2/5}\rfloor}{\vert X\vert}\cdot \frac{\lfloor \tk^{2/5}\rfloor}{\vert Y\vert}\cdot \frac{\lfloor \tk^{2/5}\rfloor}{\vert Y\vert}\leq \frac{\tk^{8/5}}{\vert X\vert^2\cdot \vert Y\vert^2}.\]
Hence the expected value of $Z$ is at most
\[\frac{\tk^{8/5}}{\vert X\vert^2\cdot \vert Y\vert^2}\cdot \vert X\vert^2\cdot \vert Y\vert=\frac{\tk^{8/5}}{\vert Y\vert}\leq \frac{\tk^{8/5}}{2\tk^{4/5}}=\frac{1}{2}\tk^{4/5}.\]
So we can choose subsets $X'\su X$ and $Y'\su Y$ of sizes $\vert X'\vert=\vert Y'\vert=\lfloor \tk^{2/5}\rfloor$ such that there are at most $\frac{1}{2}\tk^{4/5}$ quadruples $(x,y,x',y')\in X'\times Y'\times X'\times Y'$ with $x-y=x'-y'$ and $x\neq x'$. Since $X$ and $Y$ are disjoint, the sets $X'$ and $Y'$ are disjoint as well.

Let us make a list of all pairs $(x,y)\in X'\times Y'$. This list has length
\[\lfloor \tk^{2/5}\rfloor^2\geq \left(\frac{8}{9}\tk^{2/5}\right)^2\geq \frac{3}{4}\tk^{4/5},\]
where we used (\ref{eq-ineq-k25floor}).

Now, for every quadruple $(x,y,x',y')\in X'\times Y'\times X'\times Y'$ with $x-y=x'-y'$ and $x\neq x'$, let us delete the pair $(x',y')\in X'\times Y'$ from the list. Afterwards, the list still has length at least $\frac{3}{4}\tk^{4/5}-\frac{1}{2}\tk^{4/5}=\frac{1}{4}\tk^{4/5}$. Furthermore, we cannot find two distinct pairs $(x,y)$ and $(x',y')$ on the list with $x-y=x'-y'$ anymore (note that $x-y=x'-y'$ and $(x,y)\neq (x',y')$ automatically imply $x\neq x'$). So the differences $x-y$ are distinct for all the pairs $(x,y)$ on the list.

In order to show that $(X',Y')$ is a pleasant pair, we need a collection of pairs $(x_1,y_1),\dots,(x_m,y_m)\in X'\times Y'$ of size $m\geq \frac{1}{8}\tk^{4/5}$ such that $\kap(x_i-y_i)$ for $i=1,\dots,m$ are all distinct. We can find such a collection by greedily choosing pairs from our list: For every pair $(x,y)$ on the list, there is no other pair $(x',y')$ on the list with $x-y=x'-y'$. Furthermore there is at most one pair $(x',y')$ with $x'-y'=y-x$ (as the differences $x'-y'$ for all the pairs are all distinct). Thus, for every pair $(x,y)$ on the list, there is at most one other pair $(x',y')$ on the list with $\lbrace x'-y', y'-x'\rbrace=\lbrace x-y, y-x\rbrace$, which means $\kap(x-y)=\kap(y'-x')$. So by greedily choosing pairs $(x_i,y_i)\in X'\times Y'$ from our list we can find a collection $(x_1,y_1),\dots,(x_m,y_m)\in X'\times Y'$ of size $m\geq \frac{1}{2}\cdot \frac{1}{4}\tk^{4/5}=\frac{1}{8}\tk^{4/5}$ with all $\kap(x_i-y_i)$ being distinct. This shows that $(X',Y')$ is a pleasant pair.
\end{proof}

Now it is easy to see that $\tH$ satisfies condition (iii) with probability $1-o(1)$. Indeed, by Corollary \ref{coro-prop3}, with probability $1-o(1)$ the graph $\tH$ is not complete or empty between any pleasant pair $(X',Y')$ of disjoint subsets of $G$. Suppose $\tH$ satisfies this, but it does not satisfy condition (iii). Then there are disjoint subsets $X, Y\su V(\tH)=G$ with sizes $\vert X\vert\geq 2\tk^{4/5}$ and $\vert Y\vert\geq 2\tk^{4/5}$ such that between the sets $X$ and $Y$ the graph $\tH$ is complete or empty. But by Lemma \ref{lemma2-prop3} there exist subsets $X'\su X$ and $Y'\su Y$ such that $(X', Y')$ is a pleasant pair, and then $\tH$ is in also complete or empty between $X'$ and $Y'$. This is a contradiction. Hence $\tH$ must satisfy condition (iii) with probability $1-o(1)$.

\subsection{Condition (iv')}

Recall that for any $g\in G\sm\lbrace 0\rbrace$, we defined $\kap(g)=\lbrace g,-g\rbrace\su G\sm\lbrace 0\rbrace$.

Our approach for checking condition (iv') is similar to the way we checked condition (iii) in the previous subsection. However, the technical details are slightly more complicated here. As in the previous subsection, we first start with a definition which the argument will then build on in a similar way.

\begin{definition}\label{defi-nice}For a subset $Y\su G$ of size $\vert Y\vert=\lfloor \tk^{2/5}\rfloor$, let us call an injective map $f:Y\to G$ \emph{nice} if there exists a collection of pairs $(x_1,y_1),\dots,(x_m,y_m)\in Y\times Y$  of size $m\geq \frac{1}{200}(\log \tk)^{1/2}\tk^{3/5}$ such that $x_i\neq y_i$ for every $i=1,\dots,m$ and such that the $2m$ sets $\kap(x_i-y_i)$ and $\kap(f(x_i)-f(y_i))$ for $i=1,\dots,m$ are all distinct.
\end{definition}

We will first show that it is very unlikely for the graph $\tH$ to have the property that there exists a nice map $f:Y\to G$ with $a_{\tH}(v,w)=a_{\tH}(f(v),f(w))$ for all distinct $v,w\in Y$. Afterwards, we will conclude that condition (iv') holds with probability $1-o(1)$.

\begin{lemma}\label{lemma-nice-smallprob}Let $f:Y\to G$ be a nice map. Then the probability that the graph $\tH=\Cay(G,\Lambda)$ satisfies $a_{\tH}(v,w)=a_{\tH}(f(v),f(w))$ for all distinct $v,w\in Y$ is at most $\exp(-5(\log \tk)\cdot \tk^{2/5})$.
\end{lemma}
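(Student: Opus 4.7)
The plan is to exploit the combinatorial structure provided by niceness, which guarantees $m \geq \tfrac{1}{200}(\log \tk)^{1/2} \tk^{3/5}$ pairs $(x_i, y_i)$ such that the $2m$ sets $\kap(x_i - y_i)$ and $\kap(f(x_i) - f(y_i))$ are pairwise distinct. By the construction of $\Lambda$ in Procedure \ref{procedure-lambda}, the membership indicators $\mathbf{1}[g \in \Lambda]$ and $\mathbf{1}[g' \in \Lambda]$ are independent whenever $\kap(g) \neq \kap(g')$. Therefore, across all $i = 1, \ldots, m$, the $2m$ indicator random variables $\mathbf{1}[x_i - y_i \in \Lambda]$ and $\mathbf{1}[f(x_i) - f(y_i) \in \Lambda]$ are mutually independent.

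Next, I will focus only on the $m$ constraints $a_{\tH}(x_i, y_i) = a_{\tH}(f(x_i), f(y_i))$ rather than the full condition over all pairs in $Y$. Writing $a_{\tH}(x_i, y_i) = \mathbf{1}[x_i - y_i \in \Lambda]$ (and similarly for the $f$-image), this single constraint is equivalent to saying that two independent $\mathrm{Ber}(p)$-variables take the same value, which happens with probability exactly $p^2 + (1-p)^2$. By the independence just established across $i$, the probability that all $m$ constraints simultaneously hold is exactly $(p^2 + (1-p)^2)^m$.

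Finally, it remains to push through the numerics using the bound $p^2 + (1-p)^2 \leq 1 - 10^3 (\log \tk)^{1/2} \tk^{-1/5}$ from (\ref{eq-p-ineq2}). This gives an upper bound of $\exp\!\bigl(-10^3 (\log \tk)^{1/2} \tk^{-1/5} \cdot m\bigr)$, and substituting $m \geq \tfrac{1}{200}(\log \tk)^{1/2} \tk^{3/5}$ yields an exponent of at least $5(\log \tk) \tk^{2/5}$, producing the claimed bound $\exp(-5(\log \tk) \tk^{2/5})$. There is no real obstacle here: the content of the lemma is entirely in how Definition \ref{defi-nice} was set up, and the proof amounts to identifying that the distinctness of the $\kap$-sets is precisely what is needed to apply independence, then plugging numbers in.
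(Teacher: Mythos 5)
Your proposal is correct and follows essentially the same route as the paper: restrict attention to the $m$ pairs guaranteed by niceness, observe that the distinctness of the $2m$ sets $\kap(\cdot)$ makes the corresponding membership indicators mutually independent, compute the per-pair agreement probability as $p^2 + (1-p)^2$, and then substitute (\ref{eq-p-ineq2}) and the lower bound on $m$. The numerics match exactly.
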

\begin{proof}Let $(x_1,y_1),\dots,(x_m,y_m)\in Y\times Y$ be a collection of pairs as in Definition \ref{defi-nice}. Note that $x_i\neq y_i$ for $i=1,\dots, m$ and hence the property considered in the lemma in particular implies $a_{\tH}(x_i,y_i)=a_{\tH}(f(x_i),f(y_i))$ for $i=1,\dots, m$. In other words, for every $i=1,\dots, m$ we need $x_i-y_i\in \Lambda$ if and only if $f(x_i)-f(y_i)\in \Lambda$. Note that each of the events $x_i-y_i\in \Lambda$ and also each of the events $f(x_i)-f(y_i)\in \Lambda$ has probability $p$ . Furthermore, the $2m$ events $x_i-y_i\in \Lambda$ and $f(x_i)-f(y_i)\in \Lambda$ for $i=1,\dots,m$ are mutually independent, since $\kap(x_i-y_i)$ and $\kap(f(x_i)-f(y_i))$ for $i=1,\dots,m$ are all distinct. Hence for each $i=1,\dots, m$ the probability that both or neither of $x_i-y_i\in \Lambda$ and $f(x_i)-f(y_i)\in \Lambda$ happen is $p^2+(1-p)^2$. Now, the probability that this is the case for all $i=1,\dots, m$ is
\begin{multline*}
(p^2+(1-p)^2)^m\leq \left(1-10^3(\log \tk)^{1/2}\cdot \tk^{-1/5}\right)^m\leq \exp(-10^3(\log \tk)^{1/2}\cdot \tk^{-1/5}\cdot m)\\
\leq \exp(-10^3(\log \tk)^{1/2}\cdot \tk^{-1/5}\cdot \frac{1}{200}(\log \tk)^{1/2}\tk^{3/5})=\exp(-5(\log \tk)\cdot \tk^{2/5}).
\end{multline*}
Here, we used (\ref{eq-p-ineq2}) for the first inequality. Thus, the probability that $\tH$ satisfies $a_{\tH}(v,w)=a_{\tH}(f(v),f(w))$ for all distinct $v,w\in Y$ is at most $\exp(-5(\log \tk)\cdot \tk^{2/5})$.
\end{proof}

\begin{corollary}\label{coro-prop4}The probability that there is a nice map $f:Y\to G$ such that the graph $\tH=\Cay(G,\Lambda)$ satisfies $a_{\tH}(v,w)=a_{\tH}(f(v),f(w))$ for all distinct $v,w\in Y$ is at most $o(1)$.
\end{corollary}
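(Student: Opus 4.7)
The plan is to deduce Corollary \ref{coro-prop4} from Lemma \ref{lemma-nice-smallprob} by a straightforward union bound over all possible pairs $(Y, f)$, in complete analogy with the proof of Corollary \ref{coro-prop3} from Lemma \ref{lemma-pleasant-smallprob}.

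First I would count the number of potential pairs $(Y, f)$. By Definition \ref{defi-nice}, any nice map $f$ is defined on a set $Y\su G$ of size exactly $\lfloor \tk^{2/5}\rfloor$. The number of such subsets $Y\su G$ is at most
\[
\binom{\tk}{\lfloor \tk^{2/5}\rfloor}\leq \tk^{\lfloor \tk^{2/5}\rfloor}\leq \exp((\log \tk)\cdot \tk^{2/5}).
\]
For each such $Y$, the number of injective maps $f:Y\to G$ is at most $\tk^{\vert Y\vert}\leq \exp((\log \tk)\cdot \tk^{2/5})$. Multiplying, the total number of pairs $(Y,f)$ with $Y\su G$ of size $\lfloor \tk^{2/5}\rfloor$ and $f:Y\to G$ injective is at most $\exp(2(\log \tk)\cdot \tk^{2/5})$.

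Next, I would apply Lemma \ref{lemma-nice-smallprob}, which says that for each nice map $f:Y\to G$, the probability that $\tH=\Cay(G,\Lambda)$ satisfies $a_{\tH}(v,w)=a_{\tH}(f(v),f(w))$ for all distinct $v,w\in Y$ is at most $\exp(-5(\log\tk)\cdot \tk^{2/5})$. By a union bound over all candidate pairs $(Y,f)$, the probability that there exists a nice map $f:Y\to G$ with $a_{\tH}(v,w)=a_{\tH}(f(v),f(w))$ for all distinct $v,w\in Y$ is at most
\[
\exp(2(\log \tk)\cdot \tk^{2/5})\cdot \exp(-5(\log\tk)\cdot \tk^{2/5}) = \exp(-3(\log\tk)\cdot \tk^{2/5}) = o(1),
\]
as desired. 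There is no real obstacle here: the gap of a factor of $5$ versus $2$ in the exponents, coming from the definition of niceness (with its $\frac{1}{200}(\log\tk)^{1/2}\tk^{3/5}$ threshold) and the bound $p^2+(1-p)^2\leq 1-10^3(\log\tk)^{1/2}\tk^{-1/5}$, was precisely engineered to beat the entropy of the union bound.

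The real work, which will need to happen after Corollary \ref{coro-prop4}, is the combinatorial lemma analogous to Lemma \ref{lemma2-prop3}: given any subset $X\su V(\tH)$ of size at least $(1-\delta_0)\tk$ and any injective map $f:X\to V(\tH)$ that disagrees with every rotation and reflection $\phi$ on more than $2\delta_0\tk$ vertices of $X$, one must be able to extract a subset $Y\su X$ of size $\lfloor \tk^{2/5}\rfloor$ so that $f\vert_Y$ is nice. This extraction step will be the main obstacle, since it has to turn a ``global'' failure to match any rotation or reflection into the ``local'' existence of a large collection of pairs $(x_i,y_i)$ with the $\kap$-values of $x_i-y_i$ and $f(x_i)-f(y_i)$ all distinct. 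Once that lemma is in hand, however, condition (iv') will follow from Corollary \ref{coro-prop4} by exactly the same argument used to deduce condition (iii) from Corollary \ref{coro-prop3}.
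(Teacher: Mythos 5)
Your proof is correct and follows the paper's own argument essentially verbatim: count the at most $\exp(2(\log\tk)\tk^{2/5})$ candidate pairs $(Y,f)$, apply Lemma \ref{lemma-nice-smallprob}, and take a union bound to get $\exp(-3(\log\tk)\tk^{2/5})=o(1)$. Your closing remarks about the remaining work also correctly anticipate the role of Lemma \ref{lemma-prop4}.
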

\begin{proof}Recall that for each nice map  $f:Y\to G$ we have $\vert Y\vert=\lfloor \tk^{2/5}\rfloor$. Hence the number of nice maps $f:Y\to G$ is at most
\[\binom{\tk}{ \lfloor \tk^{2/5}\rfloor}\tk^{\lfloor \tk^{2/5}\rfloor}\leq \tk^{\lfloor \tk^{2/5}\rfloor}\cdot \tk^{\lfloor \tk^{2/5}\rfloor}=\exp(2(\log \tk)\cdot\lfloor \tk^{2/5}\rfloor)\leq \exp(2(\log \tk)\cdot \tk^{2/5}).\]
Using Lemma \ref{lemma-nice-smallprob}, by a union bound we obtain that with probability at most
\[\exp(2(\log \tk)\cdot \tk^{2/5})\cdot \exp(-5(\log \tk)\cdot \tk^{2/5})=\exp(-3(\log \tk)\cdot \tk^{2/5})=o(1).\]
there is a nice map $f:Y\to G$ such that the graph $\tH=\Cay(G,\Lambda)$ satisfies $a_{\tH}(v,w)=a_{\tH}(f(v),f(w))$ for all distinct $v,w\in Y$.\end{proof}

The main step for checking condition (iv') will be to prove the following lemma.

\begin{lemma}\label{lemma-prop4}Let $X\su V(\tH)$ be a subset of size $\vert X\vert\geq (1-\delta_0)\tk$ and let $f:X\to G$ be an injective map. Assume that there is no element $g\in G$ such that $f(x)=x+g$ for at least $(1-2\delta_0)\tk$ elements $x\in X$, and also assume that there is no element $g\in G$ such that $f(x)=-x+g$ for at least $(1-2\delta_0)\tk$ elements $x\in X$. 
Then we can find a subset $Y\su X$ of size $\vert Y\vert=\lfloor \tk^{2/5}\rfloor$ such that $f\vert_Y$ is a nice map.
\end{lemma}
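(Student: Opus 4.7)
My plan is to proceed in three steps. First define $A_g=\{x\in X: f(x)=x+g\}$ and $B_g=\{x\in X:f(x)=-x+g\}$, so that the hypotheses become $a:=\max_g|A_g|<(1-2\delta_0)\tk$ and $b:=\max_g|B_g|<(1-2\delta_0)\tk$; together with $|X|\geq(1-\delta_0)\tk$ these give $|X|-a,\,|X|-b>\delta_0\tk$. Call an ordered pair $(x,y)\in X\times X$ with $x\neq y$ \emph{good} if $\kap(x-y)\neq\kap(f(x)-f(y))$; the bad pairs are exactly those in some $A_g\times A_g$ (``type $A$'') or $B_g\times B_g$ (``type $B$''). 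The structural inequality $|A_g\cap B_h|\leq|G[2]|$ for all $g,h$ (where $G[2]$ is the 2-torsion subgroup, since $x\in A_g\cap B_h$ forces $2x=h-g$) will play a crucial role. The three steps are: (I) show the number of good ordered pairs in $X\times X$ is at least $c\delta_0\tk^2$ for an absolute constant $c>0$; (II) sample $Y\subseteq X$ of size $t=\lfloor\tk^{2/5}\rfloor$ uniformly and show w.h.p.\ that $Y$ contains many good pairs with no $\kap$-value represented too often; (III) extract the nice collection from a greedy matching in an auxiliary graph.

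For (I), letting $g_0$ realize $|A_{g_0}|=a$, for $x\in X\setminus A_{g_0}$ and $y\in A_{g_0}$ the pair $(x,y)$ is not type-$A$ bad and is type-$B$ bad only if $x\in B_{2y+g_0}$, which by the structural inequality happens for at most $|G[2]|$ values of $y\in A_{g_0}$ per fixed $x$; this gives $\geq(|X|-a)(a-|G[2]|)\geq\delta_0\tk\,(a-|G[2]|)$ good pairs, with a symmetric bound $\delta_0\tk(b-|G[2]|)$ from $B_{h_0}$. A case split handles all regimes: if $\max(a,b)\geq\tk/3$ and $|G[2]|\leq\tk/6$ this already gives $\geq\delta_0\tk^2/6$; if $\max(a,b)<\tk/3$ then $\sum|A_g|^2+\sum|B_g|^2\leq(a+b)|X|\leq(2\tk/3)|X|$ so good pairs exceed $|X|^2-(2\tk/3)|X|\geq\tk^2/4$; and the ``mostly 2-torsion'' case $|G[2]|>\tk/6$ reduces to the extreme $G=G[2]$ where $A_g=B_g$ and good pairs $\geq|X|(|X|-a)\geq\delta_0\tk^2/2$, with intermediate cases handled coset-by-coset on $G/G[2]$. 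So $c=1/6$ works.

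For (II), linearity gives expected good pairs in $Y\times Y$ at least $(c/2)\delta_0 t^2$, and a second-moment bound (variance dominated by pairs sharing one vertex, controlled by $O(\mathbb{E}[\mu])+O(|X|^2\cdot t^3/|X|^2)$) combined with Chebyshev yields good pairs $\geq(c/4)\delta_0 t^2$ in $Y$ w.h.p. For each $g\in G\setminus\{0\}$, the count $\mu_g$ of $\{x,y\}\subseteq Y$ with $x-y\in\{g,-g\}$ satisfies $\mathbb{E}\bigl[\binom{\mu_g}{C}\bigr]\leq|X|^C(t/|X|)^{2C}=\tk^{-C/5}$, so $\Pr[\mu_g\geq C]\leq\tk^{-C/5}$; a union bound over the $\leq\tk$ values of $g$ and over the analogous count $\mu_g'$ for $f(x)-f(y)\in\{g,-g\}$ (using injectivity of $f$) gives $\max_g\max(\mu_g,\mu_g')\leq D:=10$ w.h.p. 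For (III), fix such a $Y$ and form the multigraph $H$ on vertex set $\{\kap(g):g\in G\setminus\{0\}\}$ with an edge between $\kap(x-y)$ and $\kap(f(x)-f(y))$ for each good pair $\{x,y\}\subseteq Y$ (these vertices differ by goodness); the max degree is $\leq 2D=20$, so greedy matching gives a matching of size $\geq|E(H)|/(4D)\geq c\delta_0 t^2/(16D)$. Using $\delta_0\geq p'/100\geq 10(\log\tk)^{1/2}\tk^{-1/5}$ and $t^2\geq\tk^{4/5}/2$, this is at least $(5c/(16D))(\log\tk)^{1/2}\tk^{3/5}\geq(1/192)(\log\tk)^{1/2}\tk^{3/5}>m$ for $c=1/6$ and $D=10$. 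The matching directly yields the required $m$ pairs with all $2m$ values $\kap(x_i-y_i)$ and $\kap(f(x_i)-f(y_i))$ distinct, so $f|_Y$ is nice.

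The main obstacle is step (I): the naive bound $(a+b)|X|$ on bad pairs can exceed $|X|^2$, so the argument must carefully exploit $|A_g\cap B_h|\leq|G[2]|$ via cross-pair counting and separately handle the case where $G$ is mostly 2-torsion, since there rotations and reflections nearly coincide and the two hypotheses of the lemma become nearly redundant.
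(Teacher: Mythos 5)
Your proposal follows the same high-level skeleton as the paper's proof (establish $\Omega(\delta_0\tk^2)$ ``good''/``tame'' pairs in $X$, pass to a random $Y$ of size $\lfloor\tk^{2/5}\rfloor$, and extract a large $\kap$-distinct collection), and your Steps (II) and (III) are sound, though they use a different implementation than the paper: you use Chebyshev plus moment bounds on the per-difference multiplicities $\mu_g$ and then a greedy matching in an auxiliary bounded-degree multigraph, whereas the paper works with a single expectation $\mathbb{E}[Z_0-Z_1-Z_2]$, chooses $Y$ by pigeonhole, and does a direct list-and-delete greedy extraction. Either works.

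The problem is Step (I), specifically your case~(c), the range $\tk/6<\vert G[2]\vert\leq\tk/2$. The paper's Lemma~\ref{lemma-many-kappa-pairs} handles this via an explicit partition $X=A_1\cup A_2$ (where $A_1$ is a union of $h$-preimages of size between roughly $\tk/3$ and $2\tk/3$) and a careful analysis of how $A_1,A_2$ intersect the single large $h'$-preimage $B$; you instead propose a case split on $\max(a,b)$ versus $\tk/3$ and $\vert G[2]\vert$ versus $\tk/6$, dismissing the leftover as ``handled coset-by-coset on $G/G[2]$.'' This phrase is not a proof, and the case is genuinely subtle. Concretely, take $\vert G[2]\vert=\tk/2$ and $a=b=\tk/2$ (allowed by the hypotheses when $\delta_0<1/4$): your case~(a) argument gives good pairs at least $(\vert X\vert-a)(a-\vert G[2]\vert)=0$, so it is vacuous, and your case~(b) argument gives good pairs at least $\vert X\vert^2-(a+b)\vert X\vert=\vert X\vert(\vert X\vert-\tk)<0$, also vacuous. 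To salvage case~(c) one has to use the inclusion--exclusion correction term: bad pairs equal $\sum_g\vert A_g\vert^2+\sum_g\vert B_g\vert^2-\sum_C\sum_g\vert A_g\cap C\vert^2$ (the last sum over $G[2]$-cosets $C$, using that $h(x)=h(y)$ and $h'(x)=h'(y)$ together force $x-y\in G[2]$), lower-bound the correction by $\frac{1}{2j}(\sum\vert A_g\vert^2+\sum\vert B_g\vert^2)$ where $j=[G:G[2]]\leq 5$, and then do a \emph{further} threshold split on $\max(a,b)$ relative to $\vert G[2]\vert$ (falling back to the case-(a) bound $(\vert X\vert-a)(a-\vert G[2]\vert)$ when $a$ exceeds $\vert G[2]\vert$ by a constant fraction of $\tk$). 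This is more work than your sketch suggests, and without it the count of good pairs in $X\times X$ is not established in all regimes, so the rest of the argument cannot proceed.
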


Before starting the proof of Lemma \ref{lemma-prop4}, let us prove another lemma.

\begin{lemma}\label{lemma-many-kappa-pairs}Suppose that $X\su V(H)$ and $f:X\to G$ satisfy the assumptions of Lemma \ref{lemma-prop4}. Then there are at least $\frac{1}{12}\delta_0\tk^2$ pairs $(x,y)\in X\times X$ satisfying $x\neq y$ and $\kap(x-y)\neq \kap(f(x)-f(y))$.
\end{lemma}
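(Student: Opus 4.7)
The plan is to reformulate the problem in terms of two auxiliary functions $h_1(x) = f(x)-x$ and $h_2(x) = f(x)+x$: the condition $\kap(x-y) = \kap(f(x)-f(y))$ is equivalent to $f(x)-f(y) \in \{x-y, y-x\}$, i.e., $h_1(x) = h_1(y)$ or $h_2(x) = h_2(y)$. Writing $A_g = h_1^{-1}(g)$, $B_g = h_2^{-1}(g)$, and $C_{g,b} = A_g \cap B_b$, the assumptions translate to $|A_g|, |B_g| < (1-2\delta_0)\tk$ for every $g \in G$. A straightforward double-counting then identifies the number of good pairs as
\[
|X|^2 - S_A - S_B + T, \qquad S_A = \sum_g |A_g|^2,\ S_B = \sum_g |B_g|^2,\ T = \sum_{g,b} |C_{g,b}|^2.
\]
The strategy is to prove this quantity is at least $\frac{1}{12}\delta_0 \tk^2$. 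The main structural input is that if $x, y \in C_{g,b}$, then $2(x-y) = 0$, so $|C_{g,b}| \leq |K|$ where $K = \{k \in G : 2k = 0\}$ is the $2$-torsion subgroup.

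I would split the argument according to whether $G$ has exponent $2$. If $|K| = \tk$, then $h_1 \equiv h_2$ and $A_g = B_g$, so the count collapses to $|X|^2 - S_A \geq |X|(|X| - \max_g |A_g|) > (1-\delta_0)\tk \cdot \delta_0\tk$, already much more than needed. Otherwise $|K| \leq \tk/2$. Let $g_0$ maximize $|A_g|$, set $M_A = |A_{g_0}|$ and $u = |X| - M_A \geq \delta_0 \tk$. Decomposing each $B_b$ as $(B_b \cap A_{g_0}) \sqcup (B_b \setminus A_{g_0})$ with summands bounded by $|K|$ and $u$ respectively, expanding $|B_b|^2$, and applying the elementary inequalities $\sum_b |B_b \cap A_{g_0}|^2 \leq |K| M_A$ and $\sum_b |B_b \setminus A_{g_0}|^2 \leq u^2$ together with the analogous cross-term estimate, should yield the structural bound
\[
S_B \leq |K| M_A + 2u|K| + u^2.
\]
Combined with the standard concentration bound $S_A \leq M_A^2 + u^2$ (valid when $M_A \geq |X|/2$; the opposite case $M_A < |X|/2$ is easier, since then $S_A + S_B \leq 2M_A|X| \leq |X|^2 - \Omega(\delta_0\tk^2)$ directly), the good-pair count becomes at least
\[
2u M_A - |K|(M_A + 2u) - u^2 + T,
\]
using $|X|^2 - M_A^2 = u(|X| + M_A) \geq 2uM_A$.

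The hard part is closing the argument when $|K|$ is moderately large (between $\Theta(\delta_0 \tk)$ and $\tk/2$), because then the term $-|K|M_A$ is not dominated by $2uM_A$. Here I would exploit $T$ by noting that the support of $r_{g,\cdot} = |C_{g,\cdot}|$ lies in the coset $g + 2G$ and so has size at most $\tk/|K|$; by Cauchy--Schwarz this gives $\sum_b |C_{g,b}|^2 \geq |A_g|^2 \cdot |K|/\tk$, hence $T \geq |K| M_A^2/\tk$. Plugging this back cancels the dominant $|K| M_A$ loss (since $|K|M_A^2/\tk \geq |K|M_A \cdot (1 - 2\delta_0)$ when $M_A$ is near maximal). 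The remaining terms are of order $2uM_A \approx 2\delta_0(1-2\delta_0)\tk^2$, which exceeds the target $\frac{1}{12}\delta_0\tk^2$ with ample slack. Finally, symmetry in the roles of $M_A$ and $M_B$ lets one assume $M_A \geq M_B$ throughout, and the case $M_A < (1-5\delta_0)\tk$ (so that neither max is near the threshold) is handled directly by combining $S_A + S_B \leq (M_A + M_B)|X|$ with the constraint $M_A + M_B \leq |X| + |K|$ derived from $|A_{g_0} \cap B_{b_0}| \leq |K|$.
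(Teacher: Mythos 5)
Your reformulation via inclusion--exclusion is a genuinely different route from the paper's. The paper does not compute the exact count; instead it defines $h(x)=f(x)-x$ and $h'(x)=f(x)+x$, constructs a set $A_1$ as a union of $h$-level sets with $\tfrac{1}{3}\tk\leq\vert A_1\vert$, sets $A_2=X\setminus A_1$, and then counts pairs $(x,y)\in A_1\times A_2$ with $h'(x)\neq h'(y)$ (such pairs automatically have $h(x)\neq h(y)$); a short case analysis based on whether some $h'$-level set dominates $A_2$ then finishes. Your strategy of writing the answer exactly as $\vert X\vert^2-S_A-S_B+T$ and bounding each piece is a valid plan in principle, and your observation that $\vert C_{g,b}\vert\leq\vert K\vert$ and that the support of $b\mapsto\vert C_{g,b}\vert$ lies in a coset of $2G$ is correct and nicely exploits the group structure. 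The exponent-two case ($\vert K\vert=\tk$) is also handled correctly.

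However, two of the claimed closing steps do not actually hold, and this leaves a genuine gap. First, the asserted cancellation ``$\vert K\vert M_A^2/\tk\geq\vert K\vert M_A(1-2\delta_0)$ when $M_A$ is near maximal'' is equivalent to $M_A\geq(1-2\delta_0)\tk$, which contradicts the hypothesis $M_A<(1-2\delta_0)\tk$; the inequality is therefore never true, and even if $M_A$ is close to the threshold, the residual $\vert K\vert M_A(1-M_A/\tk)$ together with the $-2\vert K\vert u$ term from your $S_B$-decomposition is of the same order as the claimed gain $2uM_A$ (when $\vert K\vert$ is of order $\tk$ and $u$ is of order $\delta_0\tk$), so the middle regime is not controlled. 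Second, the ``direct'' handling of the case $M_A<(1-5\delta_0)\tk$ via $S_A+S_B\leq(M_A+M_B)\vert X\vert\leq(\vert X\vert+\vert K\vert)\vert X\vert$ only yields the lower bound $\vert X\vert^2-(\vert X\vert+\vert K\vert)\vert X\vert+T=-\vert K\vert\vert X\vert+T$, which is not bounded below by a positive quantity: when $\vert K\vert$ is small the Cauchy--Schwarz lower bound on $T$ is also small, and when $\vert K\vert$ is large the negative term dominates. So the argument as written does not establish the bound in the intermediate range of $M_A$ (neither near $(1-2\delta_0)\tk$ nor tiny), and the key quantitative steps would need to be substantially reworked or replaced by something closer in spirit to the paper's bipartite-counting argument, which avoids ever needing to control $T$ from below.
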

\begin{proof}Consider the map $h:X\to G$ defined by $h(x)=f(x)-x$ for all $x\in X$. Then, for each $g\in G$, the preimage $h^{-1}(g)$ consist precisely of those $x\in X$ with $f(x)=x+g$. Hence, by the assumptions in Lemma \ref{lemma-prop4} we have $\vert h^{-1}(g)\vert<(1-2\delta_0)\tk$ for every $g\in G$.

Similarly, let us consider the map $h':X\to G$ defined by $h'(x)=f(x)+x$ for all $x\in X$. Then, for each $g\in G$, the preimage $h'^{-1}(g)$ consist precisely of those $x\in X$ with $f(x)=-x+g$. Hence, by the assumptions in Lemma \ref{lemma-prop4} we also have $\vert h'^{-1}(g)\vert<(1-2\delta_0)\tk$ for every $g\in G$.

\begin{claim}\label{claim-hhprime}If we have $\vert h^{-1}(g)\cap h'^{-1}(g')\vert>\tk/2$ for some (not necessarily distinct) elements $g,g'\in G$, then $h=h'$.
\end{claim}
\begin{proof}
Assume that $g,g'\in G$ satisfy $\vert h^{-1}(g)\cap h'^{-1}(g')\vert>\tk/2$, and set $U=h^{-1}(g)\cap h'^{-1}(g')$. Then for every $x\in U$ we have $f(x)-x=h(x)=g$ and $f(x)+x=h'(x)=g'$. Hence
\[x+x=(f(x)+x)-(f(x)-x)=g'-g\]
for every $x\in U$. Now, set
\[U'=\lbrace x\in G\mid x+x=g'-g\rbrace,\]
then $U\su U'$ and consequently $\vert U'\vert\geq \vert U\vert=\vert h^{-1}(g)\cap h'^{-1}(g')\vert>\tk/2$. On the other hand, let $U_0\su G$ be the subgroup consisting of all those $y\in G$ with $y+y=0$. Then $U'$ is a coset of $U_0$ and in particular $\vert U'\vert>\tk/2$ implies $\vert U_0\vert>\tk/2$. But as $U_0\su G$ is a subgroup, this means that we must have $U_0=G$. Thus, $y+y=0$ for all $y\in G$ and in particular for all $x\in X$ we have $h(x)=f(x)-x=f(x)+x=h'(x)$.
\end{proof}

We claim that for distinct $x,y\in X$ with $h(x)\neq h(y)$ and $h'(x)\neq h'(y)$ we have $\kap(x-y)\neq \kap(f(x)-f(y))$. Indeed, note that $\kap(x-y)= \kap(f(x)-f(y))$ would mean $\lbrace x-y,y-x\rbrace=\lbrace f(x)-f(y), f(y)-f(x)\rbrace$. Hence $f(x)-f(y)=x-y$ or $f(x)-f(y)=y-x$. But then $h(x)=f(x)-x=f(y)-y=h(y)$ or $h'(x)=f(x)+x=f(y)+y=h'(y)$, a contradiction. So for any distinct $x,y\in X$ with $h(x)\neq h(y)$ and $h'(x)\neq h'(y)$ we indeed have $\kap(x-y)\neq \kap(f(x)-f(y))$.

Now, we want to construct a suitably chosen set $A_1\su X$ which is the union of some of the preimages $h^{-1}(g)$ and satisfies $\vert A_1\vert\geq \frac{1}{3}\tk$. If there exists a $g\in G$ with $\vert h^{-1}(g)\vert\geq \frac{1}{3}\tk$, then let us take $A_1=h^{-1}(g)$ (if there are multiple choices for $g\in G$ with $\vert h^{-1}(g)\vert\geq \frac{1}{3}\tk$ just take any of them). Otherwise, we have $\vert h^{-1}(g)\vert<\frac{1}{3}\tk$ for every $g\in G$. But then starting from $A_1=\emptyset$ and successively adding the preimages $h^{-1}(g)$ to $A_1$, one at a time, we will reach a point when $\frac{1}{3}\tk\leq \vert A_1\vert\leq \frac{2}{3}\tk$. Then let us fix $A_1$ at that point. So in either case the following claim holds by construction of $A_1$.

\begin{claim}\label{claim-A1}The set $A_1\su X$ satisfies $\vert A_1\vert\geq \frac{1}{3}\tk$ and it is the union of some of the preimages $h^{-1}(g)$. Furthermore, if $\vert A_1\vert> \frac{2}{3}\tk$, then $A_1$ just consists of a single preimage $h^{-1}(g)$ for some $g\in G$.
\end{claim}

Now set $A_2=X\sm A_1$. Then $A_2$ is the union of the remaining preimages $h^{-1}(g)$, and as $A_1\cap A_2=\emptyset$ this implies $h(x)\neq h(y)$ for all $(x,y)\in A_1\times A_2$.

\begin{claim}\label{claim-A2}We always have $\vert A_2\vert\geq\delta_0\tk$ and furthermore  $\vert A_2\vert<\frac{1}{4}\tk$ is only possible if the set $A_1$ just consists of a single preimage $h^{-1}(g)$ for some $g\in G$.
\end{claim}
\begin{proof}
Recall that $\delta_0=p'/100<\frac{1}{12}$. Suppose that $\vert A_2\vert<\frac{1}{4}\tk$. Then $\vert A_1\vert=\vert X\vert- \vert A_2\vert>(1-\delta_0)\tk-\frac{1}{4}\tk>\frac{2}{3}\tk$. By the last part of Claim \ref{claim-A1}, this is indeed only possible if $A_1$ just consists of a single preimage $h^{-1}(g)$ for some $g\in G$. But then in particular $\vert A_1\vert=\vert h^{-1}(g)\vert<(1-2\delta_0)\tk$, and therefore $\vert A_2\vert\geq (1-\delta_0)\tk-\vert A_1\vert\geq \delta_0\tk$.
\end{proof}

From the first part of Claim \ref{claim-A1} and the first part of Claim \ref{claim-A2}, we in particular obtain
\begin{equation}\label{eq-A1-A2}
\vert A_1\vert\cdot \vert A_2\vert\geq \frac{1}{3}\tk\cdot \delta_0\tk=\frac{1}{3}\delta_0\tk^2.
\end{equation}

It suffices to show that there are at least $\frac{1}{12}\delta_0\tk^2$ distinct pairs $(x,y)\in A_1\times A_2$ with $h'(x)\neq h'(y)$. Indeed, recall that $h(x)\neq h(y)$ for all $(x,y)\in A_1\times A_2$ and that any distinct $x,y\in X$ with $h(x)\neq h(y)$ and $h'(x)\neq h'(y)$ satisfy the desired property $\kap(x-y)\neq \kap(f(x)-f(y))$.

If $h=h'$, then for all $(x,y)\in A_1\times A_2$ we have $h'(x)\neq h'(y)$ (since  $h(x)\neq h(y)$). So in this case the number of pairs $(x,y)\in A_1\times A_2$ with $h'(x)\neq h'(y)$ is $\vert A_1\vert\cdot \vert A_2\vert\geq\frac{1}{3}\delta_0\tk^2$ by (\ref{eq-A1-A2}) and we are done.

So from now on we can assume that $h\neq h'$. Then Claim \ref{claim-hhprime} implies $\vert h^{-1}(g)\cap h'^{-1}(g')\vert\leq \tk/2$ for all $g,g'\in G$.

First suppose that we have $\vert h'^{-1}(g')\cap A_2\vert\leq \frac{1}{2}\vert A_2\vert$ for all $g'\in G$. Then in particular, for every $x\in A_1$ we have $\vert h'^{-1}(h'(x))\cap A_2\vert\leq \frac{1}{2}\vert A_2\vert$, which means that there are at most $\frac{1}{2}\vert A_2\vert$ elements $y\in A_2$ with $h'(y)=h'(x)$. Thus, for each $x\in A_1$ there are at least $\frac{1}{2}\vert A_2\vert$ elements $y\in A_2$ with $h'(x)\neq h'(y)$. So the number of pairs $(x,y)\in A_1\times A_2$ with $h'(x)\neq h'(y)$ is at least $\vert A_1\vert\cdot \frac{1}{2}\vert A_2\vert\geq\frac{1}{6}\delta_0\tk^2$ by (\ref{eq-A1-A2}) and we are done.

So we may assume that there exists some $g'\in G$ with $\vert h'^{-1}(g')\cap A_2\vert> \frac{1}{2}\vert A_2\vert$. Clearly, such a $g'$ is unique. Set $B=h'^{-1}(g')$, then $\vert A_2\cap B\vert> \frac{1}{2}\vert A_2\vert\geq \frac{1}{2}\delta_0\tk$ by Claim \ref{claim-A2}.

Note that whenever $(x,y)\in A_1\times A_2$ satisfy $x\not\in B$ and $y\in B$, then $h'(x)\neq h'(y)$. Hence, the number of pairs $(x,y)\in A_1\times A_2$ with $h'(x)\neq h'(y)$ is at least $\vert A_1\sm B\vert\cdot \vert A_2\cap B\vert$. If $\vert A_1\sm B\vert\geq \frac{1}{6}\tk$, this is at least $\frac{1}{6}\tk\cdot \frac{1}{2}\delta_0\tk=\frac{1}{12}\delta_0\tk^2$ and we are done. Hence we may assume that $\vert A_1\sm B\vert< \frac{1}{6}\tk$.

We claim that then we must have $\vert A_2\vert\geq \frac{1}{4}\tk$. Indeed, let us assume that   $\vert A_2\vert< \frac{1}{4}\tk$. Then by Claim \ref{claim-A2} we have $A_1=h^{-1}(g)$ for some $g\in G$ and therefore $\vert A_1\cap B\vert=\vert h^{-1}(g)\cap h'^{-1}(g')\vert\leq \tk/2$ by Claim \ref{claim-hhprime}. Furthermore $\vert A_1\vert=\vert X\vert-\vert A_2\vert\geq (1-\delta_0)\tk-\frac{1}{4}\tk\geq \frac{2}{3}\tk$. Hence $\vert A_1\sm B\vert=\vert A_1\vert-\vert A_1\cap B\vert\geq \frac{2}{3}\tk-\frac{1}{2}\tk=\frac{1}{6}\tk$, but this contradicts the assumption we made at the end of the previous paragraph.

Thus, we indeed have $\vert A_2\vert\geq \frac{1}{4}\tk$ and consequently $\vert  A_2\cap B\vert> \frac{1}{2}\vert A_2\vert\geq \frac{1}{8}\tk$. From $\vert A_1\sm B\vert< \frac{1}{6}\tk$ and $\vert A_1\vert\geq \frac{1}{3}\tk$ (by Claim \ref{claim-A1}), we also obtain $\vert A_1\cap B\vert> \frac{1}{6}\tk\geq \frac{1}{8}\tk$.

We already saw that whenever $(x,y)\in A_1\times A_2$ satisfy $x\not\in B$ and $y\in B$, then $h'(x)\neq h'(y)$. Similarly, whenever $(x,y)\in A_1\times A_2$ satisfy $x\in B$ and $y\not\in B$, we also have $h'(x)\neq h'(y)$. Hence, the number of pairs $(x,y)\in A_1\times A_2$ with $h'(x)\neq h'(y)$ is at least
\[\vert A_1\sm B\vert\cdot \vert A_2\cap B\vert+\vert A_2\sm B\vert\cdot \vert A_1\cap B\vert\geq \vert A_1\sm B\vert\cdot \frac{1}{8}\tk+\vert A_2\sm B\vert\cdot \frac{1}{8}\tk=\vert X\sm B\vert\cdot \frac{1}{8}\tk.\]
Recall that $\vert B\vert=\vert h'^{-1}(g')\vert\leq (1-2\delta_0)\tk$. Thus, $\vert X\sm B\vert\geq \delta_0\tk$ and the number of pairs $(x,y)\in A_1\times A_2$ with $h'(x)\neq h'(y)$ is at least $\delta_0\tk\cdot \frac{1}{8}\tk=\frac{1}{8}\delta_0\tk^2$. This finishes the proof of Lemma \ref{lemma-many-kappa-pairs}.\end{proof}

Now, we are ready for the proof of Lemma \ref{lemma-prop4}.

\begin{proof}[Proof of  Lemma \ref{lemma-prop4}]Recall that $X\su V(\tH)$ is a subset of size $\vert X\vert\geq (1-\delta_0)\tk$ and $f:X\to G$ is an injective map. Using the further assumptions, Lemma \ref{lemma-many-kappa-pairs} implies that there are at least $\frac{1}{12}\delta_0\tk^2$ pairs $(x,y)\in X\times X$ satisfying $x\neq y$ and $\kap(x-y)\neq \kap(f(x)-f(y))$. Let us call these pairs \emph{tame}.

Let us choose $Y\su X$ uniformly at random among all subsets $Y\su X$ of size $\vert Y\vert=\lfloor \tk^{2/5}\rfloor$.

Let $Z_0$ be the number of tame pairs $(x,y)\in Y\times Y$. For each tame pair $(x,y)\in X\times X$ the probability for $x,y\in Y$ is, using (\ref{eq-ineq-k25floor}),
\[\frac{\lfloor \tk^{2/5}\rfloor}{\vert X\vert}\cdot \frac{\lfloor \tk^{2/5}\rfloor-1}{\vert X\vert-1}\geq \frac{\frac{8}{9}\tk^{2/5}}{\tk}\cdot \frac{\frac{1}{2}\cdot \frac{8}{9}\tk^{2/5}}{\tk}=\frac{32}{81}\tk^{-6/5}\geq \frac{1}{3}\tk^{-6/5}.\]
Since there are at least $\frac{1}{12}\delta_0\tk^2$ tame pairs in $X\times X$, the expectation for the number $Z_0$ of tame pairs in $Y\times Y$ is at least
\[\frac{1}{12}\delta_0\tk^2\cdot \frac{1}{3}\tk^{-6/5}=\frac{1}{36}\delta_0\tk^{4/5}=\frac{1}{36}\cdot \frac{p'}{100}\cdot \tk^{4/5}\geq \frac{1}{4}\cdot (\log \tk)^{1/2}\cdot \tk^{3/5}.\]
Here we used (\ref{eq-bound-pstrich}).

Let $Z_1$ be the number of quadruples $(x,y,x',y')\in Y^4$ with distinct entries $x,y,x',y'$ and $x-y=x'-y'$. The total number of quadruples $(x,y,x',y')\in X^4$ with distinct entries and $x-y=x'-y'$ is at most $\vert X\vert^3$. For each quadruple $(x,y,x',y')\in X^4$ with distinct entries, the probability for $x,y,x',y'\in Y$ is
\[\frac{\lfloor \tk^{2/5}\rfloor}{\vert X\vert}\cdot \frac{\lfloor \tk^{2/5}\rfloor-1}{\vert X\vert-1}\cdot \frac{\lfloor \tk^{2/5}\rfloor-2}{\vert X\vert-2}\cdot \frac{\lfloor \tk^{2/5}\rfloor-3}{\vert X\vert-3}\leq \left(\frac{\lfloor \tk^{2/5}\rfloor}{\vert X\vert}\right)^4\leq \frac{\tk^{8/5}}{\vert X\vert^4}.\]
Hence the expected value of $Z_1$ is at most
\[\frac{\tk^{8/5}}{\vert X\vert^4}\cdot \vert X\vert^3=\frac{\tk^{8/5}}{\vert X\vert}\leq \frac{\tk^{8/5}}{(1-\delta_0)\tk}\leq 2\tk^{3/5}.\]

Similarly, let $Z_2$ be the number of quadruples $(x,y,x',y')\in Y^4$ with distinct entries $x,y,x',y'$ and with $f(x)-f(y)=f(x')-f(y')$. By the injectivity of $f$, the total number of quadruples $(x,y,x',y')\in X^4$ with distinct entries and $f(x)-f(y)=f(x')-f(y')$ is at most $\vert X\vert^3$. We already saw that for each quadruple $(x,y,x',y')\in X^4$ with distinct entries, the probability for $x,y,x',y'\in Y$ is at most $\tk^{8/5}/\vert X\vert^4$. Hence the expected value of $Z_2$ is also at most
\[\frac{\tk^{8/5}}{\vert X\vert^4}\cdot \vert X\vert^3\leq 2\tk^{3/5}.\]

All in all, we find that
\[\mathbb{E}[Z_0-Z_1-Z_2]\geq \frac{1}{4}\cdot (\log \tk)^{1/2}\cdot \tk^{3/5}-2\tk^{3/5}-2\tk^{3/5}\geq \frac{1}{8}\cdot (\log \tk)^{1/2}\cdot \tk^{3/5},\]
recalling that we assumed $\log \tk\geq 2^{10}$. So we can choose a subset $Y\su X$ of size $\vert Y\vert=\lfloor \tk^{2/5}\rfloor$ such that $Z_0-Z_1-Z_2\geq \frac{1}{8}\cdot (\log \tk)^{1/2}\cdot \tk^{3/5}$.

Let us make a list of all the $Z_0$ tame pairs $(x,y)\in Y\times Y$.

Now, for every quadruple $(x,y,x',y')\in Y^4$ with distinct entries $x,y,x',y'$ and $x-y=x'-y'$, let us delete the pair $(x',y')$ from the list (if it appears on the list). Afterwards, any two pairs $(x,y)$ and $(x',y')$ on the list with $x-y=x'-y'$ must satisfy $x=y'$ or $y=x'$ (note that if $x=x'$ or $y=y'$, then already $(x,y)=(x',y')$). Hence for each pair $(x,y)$ on the list, there can be at most two other pairs $(x',y')$ on the list satisfying $x-y=x'-y'$. Therefore, each difference $x-y$ occurs for at most three pairs $(x,y)$ on the list.

Furthermore, for every quadruple $(x,y,x',y')\in Y^4$ with distinct entries $x,y,x',y'$ and $f(x)-f(y)=f(x')-f(y')$, let us delete the pair $(x',y')$ from the list (if it appears on the list). Using that $f$ is injective, we can see similarly to the previous argument that afterwards  each difference $f(x)-f(y)$ occurs for at most three pairs $(x,y)$ on the list.

After the deletion process, the list still has length at least $Z_0-Z_1-Z_2\geq \frac{1}{8}\cdot (\log \tk)^{1/2}\cdot \tk^{3/5}$.

In order to establish that $f\vert_Y$ is a nice map, we need a collection of pairs $(x_1,y_1),\dots,(x_m,y_m)\in Y\times Y$  of size $m\geq \frac{1}{200}(\log \tk)^{1/2}\tk^{3/5}$ such that $x_i\neq y_i$ for every $i=1,\dots,m$ and such that the $2m$ sets $\kap(x_i-y_i)$ and $\kap(f(x_i)-f(y_i))$ for $i=1,\dots,m$ are all distinct. We can find such a collection by greedily choosing pairs from our list: For every pair $(x,y)$ on the list, among the other pairs of the list there are at most three pairs $(x',y')$ with $x'-y'=x-y$, at most three pairs with $x'-y'=y-x$, at most three pairs with $x'-y'=f(x)-f(y)$ and at most three pairs with $x'-y'=f(y)-f(x)$. Thus, the list contains at most $12$ pairs $(x',y')$ with $x'-y'\in \lbrace x-y,y-x,f(x)-f(y),f(y)-f(x)\rbrace$ and similarly at most $12$ pairs $(x',y')$ with $f(x')-f(y')\in \lbrace x-y,y-x,f(x)-f(y),f(y)-f(x)\rbrace$. Hence there are at most 24 other pairs $(x',y')$ on the list with $\kap(x'-y')=\kap(x-y)$ or $\kap(x'-y')=\kap(f(x)-f(y))$ or $\kap(f(x')-f(y'))=\kap(x-y)$ or $\kap(f(x')-f(y'))=\kap(f(x)-f(y))$. So by greedily choosing pairs $(x_i,y_i)\in X'\times Y'$ from our list we can find a collection $(x_1,y_1),\dots,(x_m,y_m)\in X'\times Y'$ of size $m\geq \frac{1}{25}\cdot \frac{1}{8}\cdot (\log \tk)^{1/2}\cdot \tk^{3/5}=\frac{1}{200}\cdot (\log \tk)^{1/2}\cdot \tk^{3/5}$ such that $\kap(x_i-y_i)\neq \kap(x_j-y_j)$ and $\kap(x_i-y_i)\neq \kap(f(x_j)-f(y_j))$ and $\kap(f(x_i)-f(y_i))\neq \kap(f(x_j)-f(y_j))$ for all $i\neq j$. Since each pair $(x_i,y_i)$ is tame, we also have $x_i\neq y_i$ and $\kap(x_i-y_i)\neq \kap(f(x_i)-f(y_i))$ for $i=1,\dots,m$. Hence the $2m$ sets $\kap(x_i-y_i)$ and $\kap(f(x_i)-f(y_i))$ for $i=1,\dots,m$ are all distinct and $f\vert_Y$ is a nice map.\end{proof}

Now it is easy to see that $\tH$ satisfies condition (iv') with probability $1-o(1)$. Indeed, by Corollary \ref{coro-prop3}, with probability $1-o(1)$ there does not exist a nice map $f:Y\to G$ such that the graph $\tH=\Cay(G,\Lambda)$ satisfies $a_{\tH}(v,w)=a_{\tH}(f(v),f(w))$ for all distinct $v,w\in Y$. We claim that in this case $\tH$ automatically has condition (iv'). Suppose that there is injective map $f:X\to V(\tH)$ with $\vert X\vert\geq (1-2\delta_0)\tk$ and $a_{\tH}(v,w)=a_{\tH}(f(v),f(w))$ for all distinct $v,w\in X$, which contradicts condition (iv'). Then $f$ interpreted as a map $X\to G$ satisfies the assumptions of Lemma \ref{lemma-prop4}. But then, by Lemma \ref{lemma-prop4}, we can find a subset $Y\su X$ such that $f\vert_Y$ is a nice map. This is a contradiction. Hence $\tH$ satisfies condition (iv') with probability $1-o(1)$.


\textit{Acknowledgements.} We would like to thank the anonymous referee for their careful reading of this paper and many helpful comments.

\section*{Appendix}

\subsection*{Proof of Lemma \ref{lemma-functions-E}}

Here, we give proof of Lemma \ref{lemma-functions-E}. Note that the proof of part (iii) is essentially identical with the proof of Proposition 4.3(ii) in \cite{fox-huang-lee}, we just need a slightly better estimate here.

For (i), we need to show that $\tilde{m}_1\dotsm \tilde{m}_\l\leq \E_\l(m)$ for any non-negative integers $\tilde{m}_1,\dots,\tilde{m}_\l$ with $\tilde{m}_1+\dots+\tilde{m}_\l\leq m$. We may assume that $\tilde{m}_1,\dots,\tilde{m}_\l$ are chosen such that the product $\tilde{m}_1\dotsm \tilde{m}_\l$ is maximized under the constraint $\tilde{m}_1+\dots+\tilde{m}_\l\leq m$. Then clearly $\tilde{m}_1+\dots +\tilde{m}_\l=m$. We may also assume without loss of generality that $\tilde{m}_1\geq \dots\geq \tilde{m}_\l$. Suppose we had $\tilde{m}_1-\tilde{m}_\l\geq 2$. Then replacing $\tilde{m}_1$ with $\tilde{m}_1-1$ and replacing $\tilde{m}_\l$ with $\tilde{m}_\l+1$ would strictly increase the product $\tilde{m}_1\dotsm \tilde{m}_\l$, a contradiction to our choice of $\tilde{m}_1,\dots,\tilde{m}_\l$. Hence we must have $\tilde{m}_1-\tilde{m}_\l\leq 1$. But this implies that $\lceil \frac{m}{\l}\rceil\geq \tilde{m}_1\geq \dots\geq \tilde{m}_\l\geq \lfloor \frac{m}{\l}\rfloor$ and $\tilde{m}_1,\dots,\tilde{m}_\l$ agree with the non-negative integers $m_1,\dots,m_{\l}$ in Definition \ref{defi-functions-E}. Hence $\tilde{m}_1\dotsm \tilde{m}_\l= \E_\l(m)$. This shows that $\tilde{m}_1\dotsm \tilde{m}_\l\leq \E_\l(m)$ for any choice of non-negative integers $\tilde{m}_1,\dots,\tilde{m}_\l$ with $\tilde{m}_1+\dots+\tilde{m}_\l\leq m$.

For (ii) and (iii), let $m_1,\dots,m_{\l}$ be the non-negative integers with $\lceil \frac{m}{\l}\rceil\geq m_1\geq \dots\geq m_\l\geq \lfloor \frac{m}{\l}\rfloor$ and $m=m_1+\dots+m_{\l}$. Then $\E_\l(m)=m_1\dotsm m_\l$. Similarly, let $m_1',\dots,m_{\l'}'$ be the non-negative integers with $\lceil \frac{m'}{\l'}\rceil\geq m_1'\geq \dots\geq m_{\l'}'\geq \lfloor \frac{m'}{\l'}\rfloor$ and $m'=m_1'+\dots+m_{\l'}'$, then $\E_{\l'}(m')=m_1'\dotsm m_{\l'}'$.

For (ii), note that we have $m+m'=m_1+\dots+m_{\l}+m_1'+\dots+m_{\l'}$. Therefore, by part (i),
\[\E_\l(m)\cdot\ \E_{\l'}(m')=m_1\dotsm m_\l\cdot m_1'\dotsm m_{\l'}'\leq \E_{\l+\l'}(m+m').\]

For (iii), recall that we are assuming $m\geq \l$ and $\l'\leq \l$ as well as $m'\leq (1-\mu)m$ for some $\mu\geq 0$. From $m\geq \l$ we obtain $\lfloor \frac{m}{\l}\rfloor\geq 1$, hence $m_i\geq 1$ for $i=1,\dots,\l$. Furthermore we have $m_i\geq \lfloor \frac{m}{\l}\rfloor\geq \frac{m}{2\l}$ and $m_i\leq \lceil \frac{m}{\l}\rceil\leq \frac{2m}{\l}$ for each $i=1,\dots,\l$.

Note that $m_{1}+\dots+m_{\l'}$ is the sum of the $\l'$ largest of the $\l$ summands in $m_1+\dots+m_\l=m$, hence
\[m_1+\dots+m_{\l'}\geq \frac{\l'}{\l}\cdot (m_1+\dots+m_\l)=\frac{\l'}{\l}\cdot m.\]
Therefore
\[\sum_{i=1}^{\l'}(m_i-m_i')=\sum_{i=1}^{\l'} m_i-\sum_{i=1}^{\l'}m_i'\geq \frac{\l'}{\l}\cdot m-m'\geq \left(1-\frac{\l-\l'}{\l}\right)\cdot m-(1-\mu)m=\left(\mu-\frac{\l-\l'}{\l}\right)\cdot m.\]

We first claim that we either have $m_i\geq m_i'$ for all $i=1,\dots,\l'$ or $m_i\leq m_i'$ for all $i=1,\dots,\l'$. Suppose for contradiction that there is both an index $i\in \lbrace 1,\dots,\l'\rbrace$ with $m_i< m_i'$ and an index $j\in \lbrace 1,\dots,\l'\rbrace$ with $m_j>m_j'$. Clearly, $i\neq j$. If $i<j$, then $m_i'>m_i\geq m_j>m_j'$ and therefore $m_i'-m_j'\geq 2$, which is a contradiction to $\lceil \frac{m'}{\l'}\rceil\geq m_1'\geq \dots\geq m_{\l'}'\geq \lfloor \frac{m'}{\l'}\rfloor$. If $i>j$, then $m_j>m_j'\geq m_i'>m_i$ and $m_j-m_i\geq 2$ yields a similar contradiction. So we indeed either have have $m_i\geq m_i'$ for all $i=1,\dots,\l'$ or $m_i\leq m_i'$ for all $i=1,\dots,\l'$.

If $m_i\geq m_i'$ for $i=1,\dots,\l'$, then we have
\begin{multline*}
\E_{\l'}(m')=\prod_{i=1}^{\l'}m_i'=\prod_{i=1}^{\l'}\left(1-\frac{m_i-m_i'}{m_i}\right)m_i\leq \prod_{i=1}^{\l'}\left(1-\frac{m_i-m_i'}{2m/\l}\right)m_i\leq \prod_{i=1}^{\l'} e^{-(m_i-m_i')\cdot \l/(2m)}m_i\\
=\left(\prod_{i=1}^{\l'}m_i\right)\cdot \exp\left(-\sum_{i=1}^{\l'}(m_i-m_i')\cdot \frac{\l}{2m}\right)\leq \left(\prod_{i=1}^{\l'}m_i\right)\cdot \exp\left(-\left(\mu-\frac{\l-\l'}{\l}\right)\cdot \frac{\l}{2}\right)\\
=e^{(\l-\l')-\mu\l/2}\cdot \prod_{i=1}^{\l'}m_i\leq e^{(\l-\l')-\mu\l/2}\cdot \left(\frac{2\l}{m}\right)^{\l-\l'}\cdot \prod_{i=1}^{\l}m_i\leq e^{2(\l-\l')-\mu\l/2}\cdot \left(\frac{\l}{m}\right)^{\l-\l'}\cdot \E_\l(m).
\end{multline*}

If $m_i\leq m_i'$ for $i=1,\dots,\l'$, then we have
\begin{multline*}
\E_{\l'}(m')=\prod_{i=1}^{\l'}m_i'=\prod_{i=1}^{\l'}\left(1+\frac{m_i'-m_i}{m_i}\right)m_i\leq \prod_{i=1}^{\l'}\left(1+\frac{m_i'-m_i}{m/(2\l)}\right)m_i\leq \prod_{i=1}^{\l'} e^{-(m_i-m_i')\cdot 2\l/m}m_i\\
=\left(\prod_{i=1}^{\l'}m_i\right)\cdot \exp\left(-\sum_{i=1}^{\l'}(m_i-m_i')\cdot \frac{2\l}{m}\right)\leq \left(\prod_{i=1}^{\l'}m_i\right)\cdot \exp\left(-\left(\mu-\frac{\l-\l'}{\l}\right)\cdot 2\l\right)\\
=e^{2(\l-\l')-2\mu\l}\cdot \prod_{i=1}^{\l'}m_i\leq e^{2(\l-\l')-2\mu\l}\cdot \left(\frac{2\l}{m}\right)^{\l-\l'}\cdot \prod_{i=1}^{\l}m_i\leq e^{3(\l-\l')-2\mu\l}\cdot \left(\frac{\l}{m}\right)^{\l-\l'}\cdot \E_\l(m).
\end{multline*}
This finishes the proof of Lemma \ref{lemma-functions-E}.

\subsection*{Proof of Lemma \ref{lemma-epsilon-parameters}}

As $q\leq 10^{-20}$ by (\ref{ineq-q-lower-bound}), inequality (\ref{ineq-eps1-q}) follows straight from the definition of the $\eps_i$. Similarly (\ref{ineq-eps2-100}), (\ref{ineq-eps4-eps3}), (\ref{ineq-eps5-eps4}) and (\ref{ineq-eps5-q}) follow immediately from the definitions. Furthermore, note that (\ref{ineq-eps2-delta}) follows straight from (\ref{ineq-delta-q}). Let us now check the remaining inequalities (\ref{ineq-eps2-eps1}), (\ref{ineq-eps3-eps2}), (\ref{ineq-eps3-k}) and (\ref{ineq-eps5-logk-k}).

For (\ref{ineq-eps2-eps1}) and (\ref{ineq-eps3-eps2}) recall that we assumed $q\leq 10^{-20}$, see (\ref{ineq-q-lower-bound}). Hence
\[10^{-3}\cdot q^{-1}\geq q^{-1/2}\geq \log (q^{-1/2})=\frac{1}{2}\log (1/q)\]
and therefore
\[\frac{2\cdot 10^{-3}}{\log(1/q)}\geq q.\]

Now, for (\ref{ineq-eps2-eps1}), note that 
\[\eps_2=10^{-2}\frac{q}{\log(1/q)}>\frac{2\cdot 10^{-3}}{\log(1/q)}\cdot q\geq q^2\]
and consequently $\log(1/\eps_2)<\log (1/q^2)=2\log(1/q)$. Thus, as $\log 2>\frac{1}{2}$,
\[\log(1/\eps_2)\cdot \eps_2<2\log(1/q)\cdot \eps_2=\frac{q}{50}<\frac{\log 2}{8}\cdot \frac{q}{3}=\frac{\log 2}{8}\eps_1.\]

Similarly, for (\ref{ineq-eps3-eps2}), note that
\[\eps_3=10^{-5}\frac{q}{(\log(1/q))^2}>\left(\frac{2\cdot 10^{-3}}{\log(1/q)}\right)^2\cdot q\geq q^3\]
and consequently $\log(1/\eps_2)<\log (1/q^3)=3\log(1/q)$
\[\log(1/\eps_3)\cdot \eps_3<3\log(1/q)\cdot \eps_3=3\cdot 10^{-5}\frac{q}{\log(1/q)}<10^{-4}\frac{q}{\log(1/q)}=\frac{1}{100}\eps_2.\]

For (\ref{ineq-eps3-k}), first note that from (\ref{ineq-q-lower-bound}) we obtain $q^3\geq 10^{12}(\log k)^3k^{-3/5}$. On the other hand, (\ref{ineq-k-logtk}) implies $k^{2/5}>k^{2/20}>10^6(\log k)^4$. Thus,
\[q^3>10^{18}\frac{(\log k)^7}{k}>10^{16}\frac{(\log k)^6}{k}.\]
and therefore (as $q>1/k$ by (\ref{ineq-q-lower-bound}))
\[\frac{q^3}{(\log(1/q))^{4}}>\frac{q^3}{(\log k)^{4}}>10^{16}\frac{(\log k)^2}{k}.\]
Now we indeed obtain
\[\eps_3^2=10^{-10}\frac{q^2}{(\log(1/q))^4}>\frac{10^6}{q}\cdot \frac{(\log k)^2}{k},\]
as desired.

For (\ref{ineq-eps5-logk-k}), note that (\ref{ineq-q-lower-bound}) implies
\[q^5\geq 10^{20}\frac{(\log k)^6}{k}.\]
and therefore (as $q>k^{-1/5}$ by (\ref{ineq-q-lower-bound}))
\[\frac{q^5}{(\log(1/q))^{4}}>\frac{q^5}{(\frac{1}{5}\log k)^{4}}\geq 5^4\cdot 10^{20}\cdot \frac{(\log k)^2}{k}>10^{19}\cdot 40\cdot \frac{(\log k)^2}{k}.\]
Hence indeed
\[\eps_5=10^{-19}\frac{q^4}{(\log(1/q))^4}>\frac{40}{q}\cdot \frac{(\log k)^2}{k}>\frac{10^3}{q}\cdot \frac{\log k}{k},\]
where in the last step we used $k\geq \tk/2\geq 10^{199}$.
\end{document}